\newcommand{\Rmnum}[1]{\expandafter\@slowromancap\romannumeral #1@}
\newtheorem{theorem}{Theorem}[section]
\newtheorem{lemma}{Lemma}[section]
\newtheorem{corollary}{Corollary}[section]
\newtheorem{proposition}{Proposition}[section]
\newtheorem{remark}{Remark}[section]
\theoremstyle{definition}
\numberwithin{equation}{section}
\begin{document}

\author[J.A. Carrillo]{Jos\'{e} A. Carrillo}\address{Jos\'{e} A. Carrillo\newline\indent Mathematical Institute\newline\indent University of Oxford, Oxford OX2 6GG, UK }
\email{carrillo@maths.ox.ac.uk}

\author[G.-Y. Hong]{Guangyi Hong}\address{Guangyi Hong\newline\indent School of Mathematics\newline\indent South China University of Technology\newline\indent Guangzhou 510641, P.R. China}
\email{magyhong@scut.edu.cn}

\author[Z.-A. Wang]{Zhi-an Wang}\address{Zhi-an Wang\newline\indent Department of Applied Mathematics\newline\indent The Hong Kong Polytechnic University\newline\indent Hung Hom,
Kowloon, Hong Kong}
\email{mawza@polyu.edu.hk}

\title[convergence of boundary  layers]{\bf Convergence of boundary layers of chemotaxis models with physical boundary conditions~I: degenerate initial data}

\begin{abstract}
The celebrated experiment of Tuval et al. \cite{tuval2005bacterial} showed that the bacteria living a water drop can form a thin layer near the air-water interface, where a so-called chemotaxis-fluid system with physical boundary conditions was proposed  to interpret the mechanism underlying the pattern formation alongside numerical simulations.  However, the rigorous proof for the existence and convergence of the boundary layer solutions to the proposed model still remains open. This paper shows that the model with physical boundary conditions proposed in \cite{tuval2005bacterial} in one dimension can generate boundary layer solution as the oxygen diffusion rate $\varepsilon>0$ is small.  Specifically, we show that the solution of the model with $\varepsilon>0$ will converge to the solution with $\varepsilon=0$ (outer-layer solution) plus the boundary layer profiles (inner-layer solution) with a sharp transition near the boundary as $ \varepsilon \rightarrow 0$. There are two major difficulties in our analysis.  First, the global well-posedness  of the model is hard to prove since the Dirichlet boundary condition can not contribute to the gradient estimates needed for the cross-diffusion structure in the model. Resorting to the technique of taking anti-derivative, we remove the cross-diffusion structure such that the Dirichlet boundary condition can facilitate the needed estimates.  Second,  the outer-layer profile of bacterial density is required to be degenerate at the boundary as $ t \rightarrow 0 ^{+}$, which makes the traditional cancellation technique incapable. Here we employ the Hardy inequality and delicate weighted energy estimates to overcome this obstacle and derive the requisite uniform-in-$\varepsilon$ estimates allowing us to pass the limit $\varepsilon \to 0$ to achieve our results.

\vspace*{4mm}
\noindent{\sc 2020 Mathematics Subject Classification. }35B40, 35K57, 35Q92, 92C17

\vspace*{1mm}
 \noindent{\sc Keywords. }Boundary layers; chemotaxis; linear sensitivity; physical boundary conditions; anti-derivative
\end{abstract}
\maketitle

\vspace{4mm}

\section{Introduction}  \label{sec:introduction}
The directional movement of cells in response to a chemical concentration gradient is referred to as chemotaxis, which is said to be endogenous if the chemical is secreted by the cell itself and exogenous if the chemical comes from an external source (like oxygen, light or food). Chemotaxis is a common biological migration strategy occurring in various biological processes, such as aggregation of bacteria (cf. \cite{tyson-lubkin-1999-JMB}), slime mold formation (cf. \cite{hofer1995cellular}), or tumor angiogenesis (cf. \cite{chaplain1993model,Corrias-2003-CRMASParis}). The mathematical models of chemotaxis mostly studied nowadays are of the Keller-Segel type originally proposed in \cite{keller1970initiation-produce, keller1971traveling}. The prototype of Keller-Segel model describing the exogenous chemotaxis reads as
\begin{gather}\label{ks1}
 \displaystyle \begin{cases}
 \displaystyle  u _{t}=\Delta u - \nabla \cdot  \left(u\nabla \phi(v) \right),\\
\displaystyle v _{t}=\varepsilon \Delta v - uv,
 \end{cases}
\end{gather}
where $u$ and $v$ denote the cell density and chemical concentration, respectively, at position $x \in \Omega$ and time $t>0$. $\varepsilon>0$ denotes the chemical diffusivity, and $\phi(v)$ is called the chemotactic sensitivity function which has two prototypes: $\phi(v)=\ln v$ (logarithmic sensitivity) and $\phi(v)=v$ (linear sensitivity). The logarithmic sensitivity was first proposed in \cite{keller1971traveling} based on the Weber-Fechner law (the sensory response to a stimulus is logarithmic) which has various prominent biological applications (cf. \cite{kalinin2009logarithmic, dehaene2003neural,LSN-Mathbio}). It was mentioned in \cite[p.241]{keller1971traveling} that the chemical (i.e. oxygen) diffusion rate $\varepsilon$ is negligible (i.e. $0<\varepsilon\ll1$) compared to the bacterial diffusion rate. The most important application of the logarithmic sensitivity lies in its capability of producing traveling waves to interpret the experiment findings (cf. \cite{KellerOdell75ms}), motivating a great deal of interesting mathematical works on the study of existence and stability of traveling wave solutions  \cite{li2011asymptotic, li2014stability, ChoiK1, ChoiK2, davis2017absolute}, just to mention a few.
The system \eqref{ks1} with linear sensitivity $\phi(v)=v$ was employed in a chemotaxis-fluid model proposed in \cite{tuval2005bacterial} to  interpret the boundary accumulation layer of aerobic bacterial chemotaxis towards the drop edge (air-water interface) in a sessile drop mixed with {\it Bacillus subtilis} bacteria. The model in \cite{tuval2005bacterial} reads as
\begin{gather}\label{fluid-chemo}
\displaystyle \begin{cases}
\displaystyle  u _{t}+ {\bf w} \cdot \nabla u= \Delta u- \nabla \cdot \left( u \nabla v \right)&\mbox{in}\ \ \Omega,\\
  \displaystyle v _{t}+{\bf w} \cdot \nabla v=D\Delta v-uv&\mbox{in}\ \ \Omega,\\
  \displaystyle \rho( {\bf w}_{t}+{\bf w} \cdot \nabla {\bf w})=\mu \Delta {\bf w}+ \nabla p- V_b g u (\rho_b-\rho) {\bf z}&\mbox{in}\ \ \Omega,\\
  \nabla \cdot {\bf w}=0,
\end{cases}
\end{gather}
with the following physical zero flux - Dirichlet-no slip mixed boundary conditions
\begin{gather} \label{bc}
\displaystyle  \left( \nabla u-u \nabla v \right)\cdot \nu =0, \ \ v =v _{\ast}, \ {\bf w}=0 \ \mbox{on}\ \ \partial \Omega,
\end{gather}
where $u$ and $v$ denote the bacterial and oxygen concentrations at $x\in \Omega$ and $t>0$, respectively, and ${\bf w}$ is the fluid velocity governed by the incompressible Navier-Stokes equations with the pure fluid density $\rho$ and viscosity $\mu$. $p$ is a pressure function, $V_b g u (\rho_b-\rho) {\bf z}$ denotes the buoyant force along the upward unit vector ${\bf z}$ where $V_b$ and $\rho_b$ are the bacterial volume and density, respectively, and $g$ is the gravitational constant. In \eqref{bc}, $\nu$ denotes the outward unit normal vector of $\partial \Omega$ and $v_{\ast}>0$ is a constant representing the saturation of oxygen at the air-water interface (i.e. boundary). The numerical simulations in works \cite{chertock2012sinking,lee2015numerical, tuval2005bacterial} have shown that the system \eqref{fluid-chemo} can reproduce the key features of boundary layer formation observed in the experiment of \cite{tuval2005bacterial} in two and three dimensions under the physical boundary conditions \eqref{bc}. Therefore justifying that \eqref{fluid-chemo}-\eqref{bc} admits boundary-layer solutions becomes an imperative question, which has remained open for a long time without good progresses made as we know. Indeed, boundary layer problem has been a fundamental topic arising in the fluid mechanics due to the distortion of non-viscous flow by the surrounding viscous forces observed by Prandtl in 1904 \cite{P} and attracted extensive studies (cf. \cite{F, Frid99, Frid20, JZ09, WX, XY, Varet-Dormy, Yang-JAMS}, just to mention a few). Though the model \eqref{fluid-chemo} contains the fluid dynamics, the boundary layer was formed due to the aggregation of bacteria attracted by the oxygen near the air-water interface (cf. \cite{Dombrowski, tuval2005bacterial}) and thus the fluid dynamics will play minor roles as can be glimpsed from the boundary conditions \eqref{bc}. Since the Dirichlet boundary condition for $v$ can not directly contribute to the estimate of $\nabla v$ required by the first equation of \eqref{ks1} for the estimate of $u$, many basic questions on \eqref{fluid-chemo}-\eqref{bc} like the global well-posedness still remains poorly understood so far apart from the boundary layer solutions. To the best of our knowledge, the only analytical result for problem \eqref{fluid-chemo}-\eqref{bc} was the local existence of weak solutions obtained in \cite{lorz2010coupled}.
If the domain $\Omega$ is radially symmetric (say a ball), then the incompressibility condition  $\nabla \cdot {\bf w}=0$ on $\Omega$ and no slip boundary condition ${\bf w}|_{\partial \Omega}=0$ implies that ${\bf w}=0$, and as a result  \eqref{fluid-chemo}-\eqref{bc} is simplified as
\begin{gather}\label{ks2}
 \displaystyle \begin{cases}
 \displaystyle  u _{t}=\Delta u - \nabla \cdot  \left(u\nabla v \right) &\mbox{in}\ \ \Omega,\\
\displaystyle v _{t}=\varepsilon \Delta v - uv &\mbox{in}\ \ \Omega,
 \end{cases}
\end{gather}
with boundary conditions
\begin{equation}\label{bcn}
\left( \nabla u-u \nabla v \right)\cdot \nu =0, \ \ v =v _{\ast} \ \ \mbox{on}\ \ \partial \Omega.
\end{equation}

Regarding the boundary layer solutions, it was first shown in \cite{lee2019boundary} that the problem \eqref{ks2}-\eqref{bcn} has a unique stationary solution in all dimensions, which possesses a boundary layer profile with thickness of order $\varepsilon^{{1}/{2}}$ as $\varepsilon \to 0$. Subsequently the nonlinear local time-asymptotic stability of stationary solutions of \eqref{ks2}-\eqref{bcn} in one-dimension was established recently in \cite{HW2}. However, whether the time-dependent problem \eqref{ks2}-\eqref{bcn} can develop boundary layer profiles as $\varepsilon \to 0$ remains unknown.
To see the possibility, we integrate the second equation of \eqref{ks2} with $\varepsilon=0$ and get $v(x,t)=v_0(x)\mathop{\mathrm{e}}^{-\int_0^t u(x,\tau)\mathrm{d}\tau}$, which gives rise to
\begin{equation}\label{bv}
v \vert_{\partial \Omega}=v_0 \vert_{ \partial \Omega} \ {\mathop{\mathrm{e}}}^{-\int_0^t u \vert_{\partial \Omega}\mathrm{d}\tau}.
 \end{equation}
This implies that the boundary value of $v$ as $\varepsilon=0$ is intrinsically determined by \eqref{bv}, which may mismatch the prescribed boundary value of $v$ for $\varepsilon>0$. If this occurs, boundary layers will arise as $\varepsilon \to 0$ and the zero-diffusion limit of \eqref{ks2}-\eqref{bcn} as $\varepsilon \to 0$ becomes a singular problem.
However how to justify the convergence of solutions of the singular problem \eqref{ks2}-\eqref{bcn} as $\varepsilon \to 0$ still remains an outstanding open question as far as we know. The goal of this paper is to investigate  the zero-diffusion limit of the problem \eqref{ks2}-\eqref{bcn} in a one-dimensional domain $\mathcal{I}=(0,1)$ as $\varepsilon \to 0$, reading as
\begin{align}\label{eq-orignal}
\displaystyle \begin{cases}
\displaystyle  u _{t}=u _{xx}- \left( u v _{x} \right)_{x}, &\ x\in \mathcal{I},\ t>0,\\
  \displaystyle v _{t}=\varepsilon v _{xx}- uv, &\ x\in \mathcal{I},\ t>0,  \\
  (u,v)(x,0)=(u _{0}, v _{0})(x), & \ x\in \overline{\mathcal{I}},
\end{cases}
\end{align}
with boundary conditions
\begin{eqnarray}\label{intial-bdary}
\begin{cases}
(u _{x}-u v _{x})\vert _{\partial \mathcal{I}}=0,\ \ v \vert_{ \partial \mathcal{I}}=v _{\ast}, & \text{if}\ \varepsilon>0,\\
(u _{x}-u v _{x})\vert _{\partial \mathcal{I}}=0, & \text{if}\ \varepsilon=0,
\end{cases}
\end{eqnarray}
where $\overline{\mathcal{I}}=[0,1]$ and $\partial \mathcal{I}=\{0,1\}$.

The zero-diffusion limit of problem \eqref{eq-orignal}-\eqref{intial-bdary} as $ \varepsilon \rightarrow 0 $ is a multi-scale problem involving sophisticated formal and rigorous analysis with complex compatibility conditions. In this paper, we shall prove that the solution of \eqref{eq-orignal}-\eqref{intial-bdary} is not uniformly convergent in $L^\infty$ with respect to $\varepsilon>0$ but stabilizes to the outer layer profile (solution with $\varepsilon = 0$) plus an inner (boundary) layer profile as $\varepsilon  \to 0$ where governing equations for both outer and inner layer profiles can be precisely derived. There are two major difficulties encountered in our analysis: (1) how to employ the Dirichlet boundary condition of $v$ to obtain the estimates of $v_x$ in order to gain requisite regularity of solutions for the global well-posedenss due to the cross-diffusion structure in the first equation of \eqref{eq-orignal}; (2) how to derive the uniform-in-$\varepsilon$ estimates in order to pass the limit $\varepsilon \to 0$. To overcome the former one, with the mass conservation of $u$ resulting from the zero-flux boundary condition, we make a change of variable (see \eqref{anti-derivatives-transf}) based on the technique of taking anti-derivative as used in our previous works \cite{Carrillo-Li-Wang-PLMS, HW2} to reformulate \eqref{eq-orignal}-\eqref{intial-bdary} into a new Dirichlet problem \eqref{refor-eq}-\eqref{BD-POSITIVE-VE} without cross-diffusion structure, for which the Dirichlet boundary condition on $v$ can contribute to derive desired estimates. In doing so, we pay a price  by requiring $ \inf_{x\in \overline{\mathcal{I}}} u_0(x)=0$ (i.e. the initial value is degenerate) in the compatibility conditions for the reformulated problem, which leads to the failure of cancellation technique used in the existing work \cite{HW2} dealing with the reformulated problem. In this paper, we shall develop a new idea with the help of the Hardy inequality to derive requisite uniform-in-$\varepsilon$ estimates and finally prove our main results. However, our results can not cover the case $ \inf _{x \in \overline{\mathcal{I}}}u _{0}>0$ for which initial layers will be present (see Remark \ref{remark-main}) and new ideas are needed to overcome this barrier. This case will be investigated in a separate work. We stress that the zero-flux boundary condition of $u$ given in \eqref{intial-bdary} can not extrapolate the boundary profile of $u$. While showing that the solution component $v$ has boundary layer profiles as expected, we also prove that $u$ has boundary layer profiles as $\varepsilon  \to 0$ (see Theorem \ref{thm-original}). As far as we know, this is the first result showing that the time-dependent chemotaxis models with physical boundary conditions in \eqref{bcn} have boundary layer profiles for both cell density and oxygen concentration. Our results hence assert that the chemotaxis-fluid model \eqref{fluid-chemo} is capable of generating  boundary layer profiles in one dimension though the higher dimensional case is yet to be proved. Since the technique of taking anti-derivative is not directly applicable in multi-dimensions, the boundary layer problem of \eqref{fluid-chemo}-\eqref{bc} or \eqref{ks2}-\eqref{bcn} in multi-dimensions has to be left out for future efforts with new ideas and techniques.

Apart from the boundary layer problem, when $\Omega$ is a radially symmetric domain in $\mathbb{R}^n(n\geq 2)$, the  existence of global classical solutions of \eqref{ks2}-\eqref{bcn} with $\varepsilon>0$ in two dimension ($n=2$) and global weak solutions in higher dimensions ($n=3,4,5$) were established in \cite{lankeit2021radial}. If $u$ and $v$ satisfy zero-flux and Robin boundary conditions, respectively, the global classical solutions of \eqref{ks2} was obtained in \cite{Lankit-M3AS} for any $n\geq 1$ and the existence of boundary layer solutions as $\varepsilon \to 0$ was established recently in \cite{hou2022boundary}. With homogeneous Neumann boundary conditions,  the global dynamics of \eqref{ks2}  have been well understood (cf. \cite{fan2017global,tao2011boundedness,tao2012eventual}) by employing a clever cancelling idea which is unfortunately inherently restricted to Neumann boundary conditions. For the time-dependent problem \eqref{ks2}-\eqref{bcn}, aside from the local stability of stationary solutions shown in \cite{HW2}, a slightly modified model of \eqref{ks2} subject to \eqref{bc} was recently considered in \cite{WWX-CPDE} where the global generalized (weak) solution was obtained in three dimensional domain ($n=3$). If homogeneous Neumann boundary conditions for $u$ and $v$ and Dirichlet boundary condition for ${\bf w}$ are imposed or the domain is the whole space $\mathbb{R}^n (n\geq 1)$, the chemotaxis-fluid model \eqref{fluid-chemo} and its variants have been widely studied in the literature \cite{Duan, duan2017global, Liu-Lorz, winkler2012global, chae2014global}, just to mention a few due to the limit of spaces.

The rest of the paper is organized as follows: In Section \ref{sec:main_result}, we first reformulate our problem by taking the anti-derivative of a perturbed function against the cell mass and derive the equations for the outer and boundary (or inner) layer profiles. Then we state our main results on the convergence of boundary layer solutions. In Section \ref{sec:study_on_the_inner_outer_layers}, we are devoted to deriving the regularity of outer and boundary layer profiles. Finally, in Section \ref{sec:stability_of_boundary_layers}, we prove our main results.

\vspace{5mm}

\section{Statement of Main results}
\label{sec:main_result}
In this section, we shall first derive the equations that outer- and boundary-layer profiles satisfy by the WKB method~(cf. \cite{Grenier-1998-JDE,Holems-1995-book,Rousset-2005-JDE}), and then state our main results on the convergence of boundary layers as $ \varepsilon \rightarrow 0 $. For clarity, we first introduce some notations used throughout the paper.\\

{\bf Notation.}
\begin{itemize}
	\item Denote $\mathbb{R}_{+}:=(0,\infty)$ and $\mathbb{R}_{-}:=(-\infty,0)$. $ \mathbb{N} $ represents the set of non-negative integers. Let $ L ^{p} $ with $ 1 \leq p \leq \infty $ denote the Lebesgue space $ L ^{p}(\mathcal{I}) $ in which functions are defined with respect to (w.r.t) the variable $ x \in (0,1) $. $ L _{z}^{p} $ denotes the space $L ^{p}(0, \infty)$ for functions defined w.r.t $ z \in (0,\infty) $ and $ L _{\xi}^{p}$ denotes $ L ^{p}(-\infty,0)$ for functions defined w.r.t $ \xi \in (- \infty,0) $, respectively. Accordingly, we denote by $  H ^{k} $, $ H _{z}^{k} $ and $ H _{\xi}^{k} $ the standard Sobolev spaces $ W ^{k,2} $ for functions defined w.r.t $ x \in \mathcal{I} $, $ z \in (0,\infty) $ and $ \xi \in(-\infty,0) $, respectively. We also write $ L _{T}^{p} Y:=L ^{p}(0,T;Y) $~(e.g., $ L _{T}^{\infty}L _{z}^{\infty}:=L ^{\infty}(0,T;L _{z}^{\infty}) $) for convenience when no confusion is caused.

	\item Denote $ \langle z \rangle=\sqrt{1+z ^{2}} $ for $ z \in [0,\infty) $, and $ \langle \xi \rangle=\sqrt{1+\xi ^{2}} $ for $ \xi \in(-\infty,0] $.

    \item $ C(T) >0$ represents a generic constant depending on $ T $ but independent of $ v _{\ast} $ such that $ C(T) \rightarrow 0 $ as $ T \rightarrow 0^+ $. $ C(v _{\ast},T) $ denotes a generic positive constant depending on $ v _{\ast} $ and $ T $ such that $ C(v _{\ast},T) \rightarrow 0 $ as $ (v _{\ast},T) \rightarrow (0,0) $ and $ C(v _{\ast},T) \rightarrow +\infty $ as $ v _{\ast}\rightarrow +\infty $ or $ T  \rightarrow +\infty $. 
        Moreover we denote  $ c(T):= c _{0}+C(T)$ and $c(v_*,T):= c _{0}+C(v _{\ast},T) $, where $ c _{0}>0 $ denotes a generic constant independent of $ v _{\ast} $ and $ T $.

    \item We often use $(\ast)_i$ to denote the $i$-th equation of the system $(\ast)$ for brevity.

\end{itemize}

\subsection{Construction of outer- and boundary(inner)-layer profiles} % (fold)
\label{sub:boundary_layer_profiel}
In this subsection, we shall first reformulate our target system \eqref{eq-orignal}-\eqref{intial-bdary} and then derive the equations for the outer- and boundary-layer profiles of the reformulated problem with small $ \varepsilon>0 $ based on the WKB method. Notice that the zero-flux boundary condition for $ u $ gives rise to the conservation of mass:
$$\int _{\mathcal{I}}u(x,t)\mathrm{d}x=\int _{\mathcal{I}}u _{0}\mathrm{d}x=: M,$$
where the constant $M>0$ denotes the cell total mass. By defining
\begin{gather}\label{anti-derivatives-transf}
\displaystyle \varphi(x,t)= \int _{0}^{x}(u(y,t)-M)\mathrm{d}y\
\ \text{with} \ \
\displaystyle \varphi(x,0)= \int _{0}^{x}(u _{0}(y)-M)\mathrm{d}y=:\varphi _{0}(x),
\end{gather}
we reformulate the problem \eqref{eq-orignal}-\eqref{intial-bdary} as
\begin{gather}\label{refor-eq}
 \displaystyle \begin{cases}
   \displaystyle \varphi _{t}=\varphi _{xx}-(\varphi _{x}+M)v _{x}, & x\in \mathcal{I},\\
   \displaystyle v _{t}= \varepsilon v _{xx}-(\varphi _{x}+M)v, & x\in \mathcal{I}, \\
     \displaystyle (\varphi,v)(x,0)=(\varphi _{0},v _{0}),
 \end{cases}
 \end{gather}
 subject to boundary conditions
 \begin{eqnarray}\label{BD-POSITIVE-VE}
 \begin{cases}
\varphi(0,t)=\varphi(1,t)=0, \ \ v(0,t)=v(1,t)=v _{\ast}, & \text{if} \ \varepsilon>0,\\
\varphi(0,t)=\varphi(1,t)=0, & \text{if} \ \varepsilon=0.
\end{cases}
 \end{eqnarray}
We proceed to derive the equations for outer- and boundary-layer profiles of the problem \eqref{refor-eq}-\eqref{BD-POSITIVE-VE} with small $ \varepsilon>0 $. As we will see later, once these profiles are determined, one can easily recover outer- and boundary-layer profiles of the original problem \eqref{eq-orignal}-\eqref{intial-bdary}. To this end, we define the so-called boundary-layer coordinates
\begin{gather}\label{bd-layer-variable}
\displaystyle z= \frac{x}{\sqrt{\varepsilon}},\ \ \ \xi= \frac{x-1}{\sqrt{\varepsilon}},\ \ x \in [0,1],
\end{gather}
 where $ \sqrt{\varepsilon} $ is the thickness of boundary layers which can be determined by the asymptotic matching method (cf. \cite{Holems-1995-book,Hou-wang-2016-JDE}). Clearly $z \in [0, \infty)$ and $\xi \in (-\infty, 0]$. The equations governing outer- and boundary-layer profiles of \eqref{refor-eq} can be derived in four successive steps.

{\bf Step 1.} {\sc Asymptotic expansions.} By the method of perturbation (cf. \cite{Grenier-1998-JDE, Holems-1995-book, Rousset-2005-JDE}), we assume that the solution of problem \eqref{refor-eq}-\eqref{BD-POSITIVE-VE} with $ \varepsilon>0 $ has the following expansions for $ j \in \mathbb{N} $:
\begin{eqnarray}\label{formal-expan}
\begin{cases}
\displaystyle   \varphi ^{\varepsilon}(x,t)= \sum _{j=0}^{\infty}\varepsilon ^{\frac{j}{2}}\left(  \varphi ^{I,j}(x,t)+ \varphi ^{B,j}\left(z,t \right)+ \varphi ^{b,j}\left(\xi,t \right) \right),\\[2mm]
\displaystyle    v ^{\varepsilon}(x,t)= \sum _{j=0}^{\infty}\varepsilon ^{\frac{j}{2}}\left( v ^{I,j}(x,t)+v ^{B,j}\left(z,t \right)+v ^{b,j}\left(\xi,t \right) \right),
\end{cases}
\end{eqnarray}
where the boundary layer profiles $ (\varphi ^{B,j}, v ^{B,j}) $ and $ (\varphi ^{b,j}, v ^{b,j}) $ are smooth and satisfy the following asymptotic behavior for $ j \geq 0 $:
\begin{gather}\label{profile-decay}
\begin{cases}
   \mbox{$ \varphi ^{B,j} $ and $ v ^{ B,j} $ decay to zero exponentially as $ z \rightarrow \infty $,}\\
  \mbox{$ \varphi ^{b,j} $ and $ v ^{b,j} $ decay to zero exponentially as $ \xi \rightarrow - \infty$.} \end{cases}
\end{gather}

{\bf Step 2.} {\sc Initial and boundary conditions.} For initial conditions, setting $t=0$ in \eqref{formal-expan} and noticing that the initial value $(\varphi_0, v_0)$ is independent of $\varepsilon>0$, we immediately get
\begin{gather*}
\displaystyle \begin{cases}
	\displaystyle  \varphi ^{I,0}(x,0)=\varphi _{0}(x),\ \ \varphi ^{B,0}(z,0)=\varphi ^{b,0}(\xi,0)=0,\\
\displaystyle  v ^{I,0}(x,0)=v _{0}(x),\ \ v ^{B,0}(z,0)=v ^{b,0}(\xi,0)=0,
\end{cases}
\end{gather*}
and
\begin{gather*}
\begin{cases}
	\displaystyle \varphi ^{I,j}(x,0) =\varphi ^{B,j}(z,0)=\varphi ^{b,j}(\xi,0)=0,\ \ j \geq 1,\\
\displaystyle v ^{I,j}(x,0) =v ^{B,j}(z,0)=v ^{b,j}(\xi,0)=0,\ \ j \geq 1.
\end{cases}
\end{gather*}

To match boundary conditions, we substitute \eqref{formal-expan} into \eqref{BD-POSITIVE-VE} and use the asymptotic matching method to get
\begin{gather*}
\displaystyle  \begin{cases}
   \displaystyle \varphi ^{I,j}(0,t)+\varphi ^{B,j}(0,t)=0,& \  \varphi ^{I,j}(1,t)+\varphi ^{b,j}(0,t)=0,\ \ \ j \geq 0,\\
   \displaystyle v ^{I,0}(0,t)+v ^{B,0}(0,t)=v _{\ast},& \  v ^{I,0}(1,t)+v ^{b,0}(0,t)=v _{\ast},\\
  \displaystyle v ^{I,j}(0,t)+v ^{B,j}(0,t)=0,& \  v ^{I,j}(1,t)+v ^{b,j}(0,t)=0,\ \ j \geq 1,
\end{cases}
\end{gather*}
where we have neglected $(\varphi ^{b,j}(-\frac{1}{\varepsilon ^{1/2}},t),v ^{b,j}(- \frac{1}{\varepsilon ^{1/2}},t)) $ at $ x=0 $ and $(\varphi ^{B,j}(\frac{1}{\varepsilon ^{1/2}},t),v ^{B,j}(\frac{1}{\varepsilon ^{1/2}},t)) $ at $ x=1 $ based on the decay properties in \eqref{profile-decay} since $\varepsilon>0$ is small.

{\bf Step 3.} {\sc Equations for outer-layer profiles $ (\varphi ^{I,j}, v ^{I,j}) $.} Substituting \eqref{formal-expan} without boundary layer profiles into the equations in \eqref{refor-eq}, we get equations for the outer-layer profiles $ \varphi ^{I,j} $:
\begin{gather}\label{vfi-outer}
\displaystyle  \varphi _{t}^{I,j}=\varphi _{xx}^{I,j}- M v _{x}^{I,j}- \sum _{k=0}^{j}\varphi _{x}^{I,k}v _{x}^{I,j-k},\ \ j \geq 0,
\end{gather}
and equations for the outer-layer profiles $ v ^{I,j} $:
\begin{gather*}
\begin{cases}
	\displaystyle v _{t}^{I,0}=- (\varphi _{x}^{I,0}+M)v ^{I,0},\ &j=0,\\[1mm]
\displaystyle v _{t}^{I,1}=-(\varphi _{x}^{I,0}+M)v ^{I,1}-\varphi _{x}^{I,1}v ^{I,0},\ &j=1,\\[1mm]
\displaystyle v _{t} ^{I,j}= v _{xx} ^{I,j-2}-M v ^{I,j}- \sum _{k=0}^{j}\varphi _{x}^{I,k}v ^{I,j-k},\ &j \geq 2.
\end{cases}
\end{gather*}

{\bf Step 4.} {\sc Equations for boundary layer profiles $(\varphi ^{B,j}, \varphi ^{b,j}, v ^{B,j}, v ^{b,j}) $.} Using \eqref{vfi-outer}, we neglect the right boundary layer profiles $ \varphi ^{b,j} $ and $ v ^{b,j} $, and then insert the remaining terms of \eqref{formal-expan} into the first equation in \eqref{refor-eq} to derive the equations for the left boundary layer profiles $ \varphi ^{B,j} $:
\begin{gather}\label{G-sum}
\displaystyle \sum _{i \geq -2}\varepsilon ^{\frac{i}{2}}G _{i}=0\ \  \mbox{for}\ \ i \geq -2,
\end{gather}
where
\begin{gather}
\begin{cases}
	\displaystyle G _{-2}:= \varphi _{zz}^{B,0}-\varphi _{z}^{B,0}v _{z}^{B,0},\\[1mm]
\displaystyle G _{-1}:=\varphi _{zz}^{B,1}-(\partial _{x}\varphi ^{I,0}(0,t)+M)v _{z}^{B,0}-\varphi _{z}^{B,1}v _{z}^{B,0}
-\varphi _{z}^{B,0}(v _{x} ^{I,0}(0,t)+v _{z}^{B,1}),\\[1mm]
\displaystyle G _{0}:=\varphi _{t}^{B,0}-\varphi _{zz}^{B,2}+v _{z}^{B,0}(\partial _{x}^{2}\varphi ^{I,0}(0,t)z+ \partial _{x}\varphi ^{I,1}(0,t)+\varphi _{z}^{B,2}) + v _{z}^{B,1}(\partial _{x}\varphi ^{I,0}(0,t)+M+\varphi _{z}^{B,1})
    \nonumber \\[1mm]
    \displaystyle \quad \qquad+\varphi _{z}^{B,1}\partial _{x}v ^{I,0}(0,t)+(\partial _{x}^{2}v ^{I,0}(0,t)z+ \partial _{x}v ^{I,1}(0,t)+v _{z}^{B,2}) \varphi _{z}^{B,0},
     \nonumber \\[1mm]
      G _{1}:=\cdots
       \nonumber \\
       \cdots \ \cdots.
\end{cases}
\end{gather}
Similarly, the right boundary layer profiles $  \varphi ^{b,j}  $ satisfy
\begin{gather}\label{tild-G-sum}
\displaystyle \sum _{i \geq -2}\varepsilon ^{\frac{i}{2}}\tilde{G} _{i}=0\ \  \mbox{for}\ \ i \geq -2,
\end{gather}
where, for each $i \geq -2 $, $ \tilde{G}_{i} $ is given by $ G _{i} $ with $ (\partial _{x}^{\ell+1}\varphi ^{I,k}(0,t),\partial _{x}^{\ell} v ^{I,k} (0,t))~(\ell \geq 0) $ and $ (\partial _{z}^{l}\varphi ^{B,k},\partial _{z}^{\lambda} v ^{B,k}) $ replaced by $ (\partial _{x}^{\ell+1}\varphi ^{I,k}(1,t),\partial _{x}^{\ell} v ^{I,k} (1,t))~(\ell \geq 0) $ and $  (\partial _{\xi}^{l}\varphi ^{b,k},\partial _{\xi}^{\lambda} v ^{b,k}) (l,\lambda\geq 0) $, respectively.

By the same procedure as deriving the equations for $ \varphi ^{B,j}$ and $ \varphi ^{b,j} $ above, we obtain the equations for the left boundary layer profiles $  v ^{B,j} $ as
\begin{gather}
\displaystyle \begin{cases}
\varphi _{z}^{B,0}(v ^{B,0}+v ^{I,0}(0,t))=0,\\[1mm]
v _{t}^{B,0}-v _{zz}^{B,0}+\varphi _{z}^{B,0}(v _{x}^{I,0}(0,t)z+v ^{I,1}(0,t)+v ^{B,1})
 \nonumber \\[2mm]
\displaystyle \qquad +(\varphi _{x}^{I,0}(0,t)+M)v ^{B,0}+\varphi _{z}^{B,1}(v ^{B,0}+v ^{I,0}(0,t))=0,\\[1mm]
 \displaystyle v _{t}^{B,1}-v _{zz}^{B,1}+ (\varphi _{x}^{I,0}(0,t)+M)v ^{B,1}
+\varphi _{z}^{B,1}(v _{x}^{I,0}(0,t)z+v ^{I,1}(0,t)+v ^{B,1})     \nonumber \\[2mm]
     \qquad+(\varphi _{xx}^{I,0}(0,t) z+\varphi _{x}^{I,1}(0,t))v ^{B,0}+ \varphi _{z}^{B,2}(v ^{I,0}(0,t)+v ^{B,0})\nonumber \\[2mm]
     \qquad+\varphi_z^{B,0}(\frac{z^2}{2}v_{xx}^{I,0}(0,t)+ v_x^{I,1}(0,t)z+v^{B,2}+v^{I,2}(0,t))=0,\\
\cdots\ \cdots,
\end{cases}
\end{gather}
and the equations for the right boundary layer profiles $ v ^{b,j}$ as
\begin{gather}
\displaystyle \begin{cases}
\varphi _{\xi}^{b,0}(v ^{b,0}+v ^{I,0}(1,t))=0,\\[1mm]
v _{t}^{b,0}-v _{\xi \xi}^{b,0}+\varphi _{\xi}^{b,0}(v _{x}^{I,0}(1,t)\xi+v ^{I,1}(1,t)+v ^{b,1})
 \nonumber \\
  \displaystyle\qquad+(\varphi _{x}^{I,0}(1,t)+M)v ^{b,0}+\varphi _{\xi}^{b,1}(v ^{b,0}+v ^{I,0}(1,t))=0,\\[1mm]
 \displaystyle v _{t}^{b,1}-v _{\xi \xi}^{b,1}+ (\varphi _{x}^{I,0}(1,t)+M)v ^{b,1}
+\varphi _{\xi}^{b,1}(v _{x}^{I,0}(1,t)\xi+v ^{I,1}(1,t)+v ^{b,1})
     \nonumber \\[1mm]
     \qquad+(\varphi _{xx}^{I,0}(1,t) \xi+\varphi _{x}^{I,1}(1,t))v ^{b,0}+ \varphi _{\xi}^{b,2}(v ^{I,0}(1,t)+v ^{b,0})\nonumber \\[2mm]
     \qquad+\varphi_z^{b,0}(\frac{z^2}{2}v_{xx}^{I,0}(1,t)+ v_x^{I,1}(1,t)z+v^{b,2}+v^{I,2}(1,t))=0,\\
    \cdots\ \cdots.
\end{cases}
\end{gather}

Finally, from the above Step 1 to Step 4, we derive initial-boundary value problems satisfied by the profiles $ (\varphi ^{I,j},\varphi ^{B,j},\varphi ^{b,j})\, (0 \leq j \leq 2)$ and $  (v ^{I,j},v ^{B,j},v ^{b,j})\,(0 \leq j \leq 1)$ for later use. First the leading-order outer-layer profile $ (\varphi ^{I,0}, v ^{I,0}) $ satisfies the problem
\begin{align}\label{eq-outer-0}
\displaystyle \begin{cases}
	\displaystyle\varphi _{t}^{I,0}=\varphi _{xx}^{I,0}-(\varphi _{x}^{I,0}+M)v _{x}^{I,0},& x\in \mathcal{I}, t>0\\[1mm]
	\displaystyle v _{t}^{I,0}=-(\varphi _{x}^{I,0}+M)v ^{I,0},& x\in \mathcal{I}, t>0\\[1mm]
	\varphi ^{I,0}(0,t)=\varphi ^{I,0}(1,t)=0,\\[1mm]
	\displaystyle (\varphi ^{I,0}, v ^{I,0})(x,0)=(\varphi _{0}, v _{0}),
	\end{cases}
\end{align}
which is nothing but the zero-diffusion problem of \eqref{refor-eq}. We note that the stability of the unique non-constant steady state to the problem \eqref{eq-outer-0} has been established in our previous work \cite{HW2}. We further remark that, as will be stated in Section \ref{sec:study_on_the_inner_outer_layers}, if the initial value is compatible with boundary conditions and smooth enough, one can prove the global existence of unique classical solutions to \eqref{eq-outer-0} with large initial data due to the dissipation effect. The first-order outer-layer profile $ (\varphi^{I,1},v ^{I,1}) $ satisfies the following problem:
\begin{align}\label{first-outer-problem}
\displaystyle \begin{cases}
	\displaystyle\varphi _{t}^{I,1}= \varphi _{xx}^{I,1}-(\varphi _{x}^{I,0}+M)v _{x}^{I,1}-\varphi _{x}^{I,1}v _{x}^{I,0}, & x\in \mathcal{I}, t>0\\[1mm]
	\displaystyle v _{t}^{I,1}=-(\varphi _{x}^{I,0}+M)v ^{I,1}-\varphi _{x}^{I,1}v ^{I,0},& x\in \mathcal{I}, t>0\\[1mm]
	\displaystyle \varphi ^{I,1}(0,t) =- \varphi ^{B,1}(0,t), \ \ \varphi ^{I,1}(1,t)=-\varphi ^{b,1}(1,t),\\[1mm]
	\displaystyle (\varphi ^{I,1}, v ^{I,1})(x,0)=(0,0).
\end{cases}
\end{align}
The leading-order boundary layer profile $ \varphi ^{B,0} $ near the left boundary solves
\begin{gather*}
\begin{cases}	
\displaystyle  \varphi _{zz}^{B,0}-\varphi _{z}^{B,0}v _{z}^{B,0}=0, &z\in \mathbb{R}_+,\\[1mm]
\varphi ^{B,0}(0,t)=0,\ \ \varphi ^{B,0}(+\infty,t)=0,\\
\displaystyle \varphi ^{B,0}(z,0)=0,
\end{cases}
\end{gather*}
and thus $ \varphi ^{B,0}\equiv 0 $. The boundary layer profile $ v ^{B,0} $ near the left boundary solves
\begin{gather}\label{first-bd-layer-pro}
\displaystyle \begin{cases}
	\displaystyle v _{t}^{B,0}= v ^{B,0}_{zz}- (\varphi _{x} ^{I,0}(0,t)+M)v ^{I,0}(0,t)    ({\mathop{\mathrm{e}}}^{v ^{B,0}}-1)-(\varphi _{x} ^{I,0}(0,t)+M)    {\mathop{\mathrm{e}}}^{v ^{B,0}}v ^{B,0}, &z\in \mathbb{R}_+,\\
	\displaystyle v ^{B,0}(0,t)= v _{\ast}-v ^{I,0}(0,t),\ \ v ^{B,0}(+\infty,t)=0,\\
		\displaystyle v ^{B,0}(z,0)=0,
\end{cases}
\end{gather}
and $ \varphi ^{B,1} $ is determined by $ v ^{B,0} $ through
\begin{align}\label{vfi-bd-1ord-lt}
   \displaystyle \varphi ^{B,1}=- \int _{z}^{\infty}(\varphi _{x} ^{I,0}(0,t)+M)\left({\mathop{\mathrm{e}}}^{v ^{B,0}(y,t)}-1 \right) \mathrm{d}y.
   \end{align}
The boundary layer profile $ v ^{b,0} $ near the right boundary satisfies
\begin{gather}\label{first-bd-pro-rt}
\displaystyle \begin{cases}
	\displaystyle v _{t}^{b,0}= v ^{b,0}_{\xi \xi}- (\varphi _{x} ^{I,0}(1,t)+M)v ^{I,0}(1,t)    ({\mathop{\mathrm{e}}}^{v ^{b,0}}-1)-(\varphi _{x} ^{I,0}(1,t)+M)    {\mathop{\mathrm{e}}}^{v ^{b,0}}v ^{b,0}, & \xi\in \mathbb{R}_-,\\
	\displaystyle v ^{b,0}(0,t)= v _{\ast}-v ^{I,0}(1,t),\ \ v ^{b,0}(-\infty,t)=0,\\
		\displaystyle v ^{b,0}(\xi,0)=0.
\end{cases}
\end{gather}
Furthermore, we have $ \varphi ^{b,0}\equiv 0 $, and $ \varphi ^{b,1} $ is given by
\begin{gather}\label{firs-bd-1-rt}
\displaystyle  \varphi ^{b,1}=\int _{- \infty}^{\xi}(\varphi _{x} ^{I,0}(1,t)+M)\left( 		{\mathop{\mathrm{e}}}^{v ^{b,0}(y,t)}-1 \right)\mathrm{d}y.
\end{gather}
Although we focus only on the convergence result for leading-order approximation, some estimates of the higher-order outer- and boundary layer profiles are also needed in our analysis. The problem formed by equations for $ \varphi ^{B,2} $ and $ v ^{B,1} $ reads
\begin{align}\label{second-bd-eq}
\displaystyle \begin{cases}
\displaystyle	 -\varphi _{zz}^{B,2}+v _{z}^{B,0}(\varphi _{xx} ^{I,0}(0,t)z+\varphi _{x} ^{I,1}(0,t)+\varphi _{z}^{B,2})\\[1mm]
\displaystyle \qquad + v _{z}^{B,1}(\varphi _{x} ^{I,0}(0,t)+M+\varphi _{z}^{B,1})+\varphi _{z}^{B,1}v _{x} ^{I,0}(0,t)=0,& z\in \mathbb{R}_+,\\[1mm]
 \displaystyle  v _{t}^{B,1}-v _{zz}^{B,1}+ (\varphi _{x} ^{I,0}(0,t)+M)v ^{B,1}
+\varphi _{z}^{B,1}(v _{x} ^{I,0}(0,t)z+v ^{I,1}(0,t)+v ^{B,1}) \\[1mm]
     \qquad+(\varphi _{xx}^{I,0}(0,t) z+\varphi _{x} ^{I,1}(0,t))v ^{B,0}+ \varphi _{z}^{B,2}(v ^{I,0}(0,t)+v ^{B,0}) =0 , & z\in \mathbb{R}_+,\\
     \displaystyle v ^{B,1}(0,t)=-v ^{I,1}(0,t),\ \  \varphi ^{B,2}(+\infty, t)=v ^{B,1}(+\infty,t)=0,\\
     \displaystyle (\varphi ^{B,2},v ^{B,1})(z,0)=(0,0),
\end{cases}
\end{align}
and the problem for $ (\varphi ^{b,2},v ^{b,1}) $ can be stated as
\begin{align}\label{sec-bd-eq-rt}
\displaystyle \begin{cases}
\displaystyle	 -\varphi _{\xi \xi}^{b,2}+v _{\xi}^{b,0}(\varphi _{xx} ^{I,0}(1,t)\xi+ \varphi _{x}^{I,1}(1,t)+\varphi _{\xi}^{b,2})
     \\[1mm]
    \displaystyle \quad + v _{\xi}^{b,1}(\varphi _{x}^{I,0}(1,t)+M+\varphi _{\xi}^{b,1})+\varphi _{\xi}^{b,1}v _{x} ^{I,0}(1,t)=0, &\xi\in \mathbb{R}_-,\\[1mm]
    \displaystyle  v _{t}^{b,1}-v _{\xi \xi}^{b,1}+ (\varphi _{x}^{I,0}(1,t)+M)v ^{b,1}
+\varphi _{\xi}^{b,1}(v _{x} ^{I,0}(1,t)\xi+v ^{I,1}(1,t)+v ^{b,1})
    \\[1mm]
     \qquad+(\varphi _{xx}^{I,0}(1,t) \xi+\varphi _{x}^{I,1}(1,t))v ^{b,0}+ \varphi _{\xi}^{b,2} (v ^{I,0}(1,t)+v ^{b,0}) =0 , &\xi\in \mathbb{R}_-,\\
     \displaystyle v ^{b,1}(0,t)=-v ^{I,1}(1,t),\ \  \varphi ^{b,2}(-\infty, t)=v ^{b,1}(-\infty,t)=0,\\
     \displaystyle (\varphi ^{b,2},v ^{b,1})(\xi,0)=(0,0).
\end{cases}
\end{align}
Finally, we remark that the global existence and regularity of solutions to problems \eqref{first-outer-problem}, \eqref{first-bd-layer-pro}, \eqref{first-bd-pro-rt}, \eqref{second-bd-eq} and \eqref{sec-bd-eq-rt} will be detailed in Section \ref{sec:study_on_the_inner_outer_layers}.

% subsection boundary_layer_profiel (end)

 \subsection{Statement of main results} % (fold)
 \label{sec:main_results}
 To prove the convergence of boundary -layer profiles deduced in the preceding subsection, we require that the initial data $(\varphi _{0}, v _{0})$ satisfy compatibility conditions at the boundary as follows
\begin{subequations}\label{compatibility-simple}
\begin{align}
&\partial _{t}^{i}\varphi ^{I,0}\vert _{t=0}=0, \ i=1,2,3, & \text{on}\ \partial \mathcal{I}, \label{compatibility-simple-1}\\
&v _{0}=v _{\ast}, \ \partial _{t}^{j}v ^{I,0}\vert _{t=0}=0, \ j=1,2, & \text{on}\ \partial \mathcal{I},\label{compatibility-simple-2}
 \end{align}
\end{subequations}
where $\partial _{t}^{i}\varphi ^{I,0}\vert _{t=0}$ and $\partial _{t}^{i}v ^{I,0}\vert _{t=0}$ can be inductively determined from the equations in \eqref{eq-outer-0} as
 \begin{gather}\label{compatibility-vfi0}
\begin{cases}
\partial _{t}\varphi ^{I,0}\vert _{t=0}:=\varphi _{0xx}-(\varphi _{0x}+M)v _{0x}, \\[1mm]
\partial _{t}^2\varphi ^{I,0}\vert _{t=0}:=(\partial _{t}\varphi^{I,0}\vert _{t=0})_{xx}+ (\varphi _{0x}+M)((\varphi _{0x}+M)v _{0}) _{x}-(\partial _{t}\varphi^{I,0}\vert _{t=0})_{x}v _{0x}, \\[1mm]
\partial _{t}^3\varphi ^{I,0}\vert _{t=0}:=(\partial _{t}^{2}\varphi ^{I,0}\vert _{t=0})_{xx}-(\partial _{t}^{2}\varphi ^{I,0} \vert _{t=0}) _{x}v _{0x}+2(\partial _{t}\varphi ^{I,0}\vert _{t=0}) _{x}((\varphi _{0x}+M )v _{0})_{x}
     \\
    \qquad \qquad \  \ \ \ \ +(\varphi _{0x}+M)((\partial _{t}\varphi ^{I,0}\vert_{t=0})_{x}v _{0}) _{x}-(\varphi _{0x}+M)((\varphi _{0x}+M)^{2}v _{0}) _{x}\makebox[-2pt]{~}=0,\\
   \displaystyle   \partial _{t}v ^{I,0}\vert _{t=0}= (\varphi _{0x}+M)v_0,\\[1mm]
   \displaystyle \partial _{t}^2v ^{I,0}\vert _{t=0}=[-\varphi _{0xxx}+((\varphi _{0x}+M)v _{0x}) _{x}+ (\varphi _{0x}+M)^{2}]v_0.
 \end{cases}
 \end{gather}
We say that the initial value $\varphi ^{I,0}|_{t=0}$ of the problem \eqref{eq-outer-0} is compatible with boundary conditions up to order three if it fulfills \eqref{compatibility-simple-1}, while the initial values of problem \eqref{first-bd-layer-pro} and \eqref{first-bd-pro-rt} are compatible with boundary conditions up to order two if the conditions in \eqref{compatibility-simple-2} hold. The compatibility conditions for other initial-boundary value problem mentioned in the sequel are defined similarly.
In terms of the initial data $(\varphi_0, v_0)$, we can write the compatibility conditions given by \eqref{compatibility-simple}-\eqref{compatibility-vfi0} more explicitly as
 \begin{gather}\label{compatibility-vfi}
 \displaystyle \begin{cases}
        \displaystyle v _{0}=v _{\ast},\ \varphi _{0x}+M=0, \ \varphi_{0xx}v _{0x}- \varphi_{0xxx}=0, & \text{on}\ \partial \mathcal{I},\\[1mm]
   \displaystyle (\partial _{t}\varphi^{I,0}\vert _{t=0})_{xx}-(\partial _{t}\varphi^{I,0}\vert _{t=0})_{x}v _{0x} =0, & \text{on}\ \partial \mathcal{I},\\[1mm]
   \displaystyle  (\partial _{t}^{2}\varphi ^{I,0}\vert _{t=0})_{xx}-(\partial _{t}^{2}\varphi ^{I,0} \vert _{t=0}) _{x}v _{0x}+2(\partial _{t}\varphi ^{I,0}\vert _{t=0}) _{x}(\varphi _{0x}+M )v _{0x}=0, & \text{on}\ \partial \mathcal{I},
 \end{cases}
 \end{gather}
 where for brevity we have not explicitly expressed $\partial _{t}\varphi^{I,0}\vert _{t=0}$ and $\partial _{t}^2\varphi^{I,0}\vert _{t=0}$ that are  given in \eqref{compatibility-vfi0}.

 We underline that the condition $ (\varphi _{0x}+M) \vert _{\partial \mathcal{I}}=0 $ in \eqref{compatibility-vfi} implies that $ \inf _{x\in \mathcal{I}} u_0=0$ (i.e. the initial value $u_0$ is degenerate on $\mathcal{\bar{I}}$) and hence $ \inf _{(x,t)\in \mathcal{I}\times (0,T]} u ^{I,0}(x,t)=0$, where $ u ^{I,0} $ is the leading outer-layer profile of $ u $ satisfying $u ^{I,0}(x,0)=u_0(x)$, see \eqref{original-zero-diffu}.

\vspace{2mm}

The main results of this paper concerning the convergence of boundary layers for the reformulated problem \eqref{refor-eq}--\eqref{BD-POSITIVE-VE} as $\varepsilon \to 0$ are stated in the following.
\begin{theorem}\label{thm-stabi-refor}
Assume that $ (\varphi  _{0},v _{0})\in H ^{7} \times  H ^{7} $ and $ (\sqrt{v _{0}}) _{x} \in L ^{2}$ with $ \varphi _{0x}+M \geq, \not\equiv 0 $ satisfying \eqref{compatibility-vfi}. Then for any $ v _{\ast}>0 $, there exists constants $ T _{0}(v _{\ast})>0 $ and $ \varepsilon _{0}>0 $, where $ T _{0}(v _{\ast}) \rightarrow \infty$ as $ v _{\ast}\rightarrow 0 $, such that for any $ \varepsilon \in (0,\varepsilon _{0}) $, the problem \eqref{refor-eq}--\eqref{BD-POSITIVE-VE} admits a unique solution $ (\varphi ^{\varepsilon},v ^{\varepsilon}) \in  L ^{\infty}(0,T _{0} ;H ^{2}\times H ^{2}) $ satisfying the following asymptotic expansions for any $x\in [0,1]$
\begin{subequations}\label{conver-thm1}
\begin{align}
\displaystyle   \displaystyle  \varphi ^{\varepsilon}(x,t)&=\varphi ^{I,0}(x,t)+ \varepsilon ^{1/2}\left[ \varphi ^{I,1}(x,t) +\varphi ^{B,1} \left( z,t\right) +\varphi ^{b,1}\left(\xi ,t\right) \right] +O(\varepsilon ^{5/8}),\\
  \displaystyle \varphi _{x}^{\varepsilon}(x,t)&=\varphi _{x}^{I,0}(x,t)+ \left[ \varphi _{z}^{B,1}(z,t)+\varphi _{\xi}^{b,1}(\xi,t) \right]+O(\varepsilon ^{1/4}),
   \\
     \displaystyle v ^{\varepsilon}(x,t)&=v ^{I,0}(x,t)+v ^{B,0} \left( z,t \right) +v ^{ b,0}\left( \xi,t \right)+ O(\varepsilon ^{1/2}),
\end{align}
\end{subequations}
with $z:=\frac{x}{\varepsilon ^{1/2}}$ and $\xi:= \frac{x-1}{\varepsilon ^{1/2}} $,
where $ (\varphi^{I,0}, v ^{I,0}) $, $ v ^{B,0} $ and $ v ^{b,0} $ are solutions of problems \eqref{eq-outer-0}, \eqref{first-bd-layer-pro} and \eqref{first-bd-pro-rt}, respectively, $\varphi ^{I,1}  $ is determined by \eqref{first-outer-problem}, $ \varphi ^{B,1} $ and $ \varphi ^{b,1} $ are given by \eqref{vfi-bd-1ord-lt} and \eqref{firs-bd-1-rt}, respectively.
\end{theorem}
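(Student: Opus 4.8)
The plan is to prove Theorem~\ref{thm-stabi-refor} by the classical matched-asymptotics scheme: build a sufficiently accurate approximate solution out of the profiles assembled in Section~\ref{sub:boundary_layer_profiel} (whose global existence and regularity are established in Section~\ref{sec:study_on_the_inner_outer_layers}), show the genuine solution of \eqref{refor-eq}--\eqref{BD-POSITIVE-VE} stays close to it via a \emph{uniform-in-$\varepsilon$ weighted energy estimate}, and then read off \eqref{conver-thm1} through Sobolev embedding. Concretely, using a fixed smooth cut-off $\chi$ with $\chi\equiv1$ near $0$ and $\operatorname{supp}\chi\subset[0,\tfrac14)$, and $z,\xi$ as in \eqref{bd-layer-variable}, I set
\begin{align*}
\varphi^{a}&:=\varphi^{I,0}+\sqrt{\varepsilon}\big(\varphi^{I,1}+\chi(x)\varphi^{B,1}(z,t)+\chi(1-x)\varphi^{b,1}(\xi,t)\big)+\varepsilon\big(\chi(x)\varphi^{B,2}(z,t)+\chi(1-x)\varphi^{b,2}(\xi,t)\big),\\
v^{a}&:=v^{I,0}+\chi(x)v^{B,0}(z,t)+\chi(1-x)v^{b,0}(\xi,t)+\sqrt{\varepsilon}\big(v^{I,1}+\chi(x)v^{B,1}(z,t)+\chi(1-x)v^{b,1}(\xi,t)\big).
\end{align*}
By the construction in Steps~1--4 of Section~\ref{sub:boundary_layer_profiel}, $(\varphi^{a},v^{a})$ has the correct initial data, matches the boundary data in \eqref{BD-POSITIVE-VE} up to terms exponentially small in $\varepsilon^{-1/2}$ (from the cut-offs), and solves \eqref{refor-eq} modulo remainders $(\mathcal{R}^{\varepsilon}_{1},\mathcal{R}^{\varepsilon}_{2})$; collecting the leftover powers of $\varepsilon^{1/2}$ in the expansion, using that each boundary-layer factor gains a factor $\varepsilon^{1/4}$ in $L^{2}(\mathcal{I})$ under $z=x/\sqrt\varepsilon$ while each $x$-derivative falling on a boundary-layer profile costs $\varepsilon^{-1/2}$, and invoking the bounds of Section~\ref{sec:study_on_the_inner_outer_layers}, one gets $\mathcal{R}^{\varepsilon}_{i}$ small in $L^{\infty}(0,T_{0};L^{2})$ and in the few higher norms needed, with (generally fractional) positive order in $\varepsilon$ that degrades at higher derivative levels.

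\textbf{Error system and bootstrap.} Put $\Phi:=\varphi^{\varepsilon}-\varphi^{a}$, $V:=v^{\varepsilon}-v^{a}$. Then $(\Phi,V)$ vanishes at $t=0$, is (essentially) homogeneous Dirichlet on $\partial\mathcal{I}$, and satisfies
\begin{align*}
\Phi_{t}&=\Phi_{xx}-(\varphi^{a}_{x}+M)V_{x}-v^{a}_{x}\Phi_{x}-\Phi_{x}V_{x}+\mathcal{R}^{\varepsilon}_{1},\\
V_{t}&=\varepsilon V_{xx}-(\varphi^{a}_{x}+M)V-v^{a}\Phi_{x}-\Phi_{x}V+\mathcal{R}^{\varepsilon}_{2}.
\end{align*}
Local solvability of $(\varphi^{\varepsilon},v^{\varepsilon})$ for each fixed $\varepsilon>0$ follows from the well-posedness of the reformulated problem discussed in Section~\ref{sec:study_on_the_inner_outer_layers}. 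I then run a continuity argument on the maximal interval $[0,T^{\ast}]$ on which an a priori bound $\mathcal{E}(t)\le\delta_{0}$ holds, where $\mathcal{E}(t)$ combines $\|\Phi\|_{H^{2}}^{2}$, $\|V\|_{H^{1}}^{2}$ and the $\varepsilon$-weighted top-order $V$-dissipation, and aim to improve it to $\mathcal{E}(t)\le C(v_{\ast},T_{0})\varepsilon^{2\theta_{0}}$ for a positive $\theta_{0}$.

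\textbf{Uniform-in-$\varepsilon$ weighted energy estimates (the crux).} Testing the $\Phi$-equation by $\Phi$ and by $-\Phi_{xx}$, the $V$-equation by $V$ and by $-V_{xx}$, and summing, the genuinely delicate terms are: (i) the degenerate damping $-\int_{\mathcal{I}}(\varphi^{a}_{x}+M)V^{2}\,dx$ in the $V$-estimate, which is non-negative but, because the imposed compatibility forces $\varphi_{0x}+M=0$ on $\partial\mathcal{I}$ (equivalently $\inf u_{0}=0$), vanishes like $d(x):=\mathrm{dist}(x,\partial\mathcal{I})$ near the boundary, so the cancellation device of \cite{HW2} is unavailable; (ii) the cross terms $-\int_{\mathcal{I}}(\varphi^{a}_{x}+M)V_{x}\Phi\,dx$ and $-\int_{\mathcal{I}}v^{a}\Phi_{x}V\,dx$, which lose one derivative since the $v$-equation carries no $O(1)$ dissipation. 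For (i), and for the analogous degenerate terms produced at the $H^{1}$ and $H^{2}$ levels, I would use $\varphi^{a}_{x}+M=u^{I,0}+(\text{boundary-layer corrections})\ge0$ together with the regularity of $u^{I,0}$ to bound $V$ (and its derivatives) on the boundary region against $\int_{\mathcal{I}}d(x)^{-2}(\cdot)^{2}\,dx$, which the Hardy inequality controls by the corresponding dissipation; the hypothesis $(\sqrt{v_{0}})_{x}\in L^{2}$ enters precisely here, since $v^{I,0}=v_{0}\mathrm{e}^{-\int_{0}^{t}(\varphi^{I,0}_{x}+M)\,d\tau}$ makes $(\sqrt{v^{I,0}})_{x}\in L^{\infty}_{T}L^{2}$, the weight needed to tame the boundary-degenerate $v$-contributions in the reformulated system. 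For (ii) I would not estimate $\Phi$ and $V$ separately but close them together: the anti-derivative structure puts $V_{x}$ (not $V_{xx}$) in the $\Phi$-equation and $\Phi_{x}$ (not $\Phi_{xx}$) in the $V$-equation, so after integration by parts the two cross terms cancel in the combined energy modulo lower-order pieces and a small multiple of the $\Phi_{xx}$-dissipation. This yields a Gronwall inequality of the form $\tfrac{d}{dt}\mathcal{E}(t)+\kappa\big(\|\Phi_{x}\|_{H^{1}}^{2}+\varepsilon\|V_{x}\|_{H^{1}}^{2}\big)\le C(v_{\ast},T_{0})\big(\mathcal{E}(t)+\varepsilon^{2\theta_{0}}\big)$ on $[0,T^{\ast}]$, hence $\mathcal{E}(t)\le C(v_{\ast},T_{0})\varepsilon^{2\theta_{0}}$ there.

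\textbf{Closing the argument and extracting the rates.} For $\varepsilon<\varepsilon_{0}$ the bound $C(v_{\ast},T_{0})\varepsilon^{2\theta_{0}}<\delta_{0}$ strictly improves the bootstrap hypothesis, so $T^{\ast}=T_{0}$; the admissible $T_{0}(v_{\ast})$ with $T_{0}(v_{\ast})\to\infty$ as $v_{\ast}\to0$ is dictated by keeping the factors $C(v_{\ast},T)$ small enough to absorb the nonlinearities, in line with the constant conventions of Section~\ref{sec:main_result}. Finally $\|\Phi\|_{L^{\infty}(\mathcal{I})}\lesssim\|\Phi\|_{H^{1}}$, $\|\Phi_{x}\|_{L^{\infty}(\mathcal{I})}\lesssim\|\Phi_{x}\|_{L^{2}}+\|\Phi_{xx}\|_{L^{2}}$ and $\|V\|_{L^{\infty}(\mathcal{I})}\lesssim\|V\|_{H^{1}}$, together with the different effective powers of $\varepsilon$ the weighted estimate delivers at the $L^{2}$, $H^{1}$ and $H^{2}$ levels of $\Phi$ and at the $H^{1}$ level of $V$ (the $H^{2}$-level of $\Phi$ being the lossiest because of the extra $\varepsilon^{-1/2}$ from derivatives hitting boundary layers), reproduce exactly the error orders $O(\varepsilon^{5/8})$, $O(\varepsilon^{1/4})$ and $O(\varepsilon^{1/2})$ in \eqref{conver-thm1}, after discarding the $\sqrt\varepsilon\,\varphi^{I,1}$, the $\varepsilon$-order correctors and the exponentially small cut-off contributions, all of which are of the same size or smaller. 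I expect \textbf{Step~3} to be the main obstacle: reconciling the boundary degeneracy of the damping coefficient $\varphi^{a}_{x}+M$ (a structural consequence of allowing $\inf u_{0}=0$) with the derivative loss intrinsic to the non-diffusive $\varepsilon\to0$ limit of the $v$-equation is precisely what forces the Hardy-based weighted energy method and the $(\sqrt{v_{0}})_{x}\in L^{2}$ hypothesis, replacing the cancellation argument of \cite{HW2} which requires $\inf u_{0}>0$.
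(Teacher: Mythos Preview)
Your overall architecture (build approximate solution from the profiles of Section~\ref{sub:boundary_layer_profiel}, write an error system, close by bootstrap with uniform-in-$\varepsilon$ energy estimates, read off \eqref{conver-thm1} via Sobolev) matches the paper. The differences in packaging---cut-offs versus the paper's linear correctors $b_{\varphi}^{\varepsilon},b_{v}^{\varepsilon}$ in \eqref{corre-b}, and your unscaled $(\Phi,V)$ versus the paper's $\varepsilon^{1/2}$-rescaled $(\Phi^{\varepsilon},V^{\varepsilon})$ in \eqref{solution-recover-esti}---are cosmetic. But you have misidentified the core difficulty and, as a result, the mechanism by which the Hardy inequality enters.

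The dangerous term is \emph{not} the degenerate damping $-\int_{\mathcal{I}}(\varphi^{a}_{x}+M)V^{2}\,dx$. In the paper this term is simply bounded by $c(v_{\ast},T)\|V^{\varepsilon}\|_{L^{2}}^{2}$ (see the treatment of $\mathcal{N}_{3}$ after \eqref{V-L2-differ}) and absorbed by Gronwall; it is never used as a good sign-definite term. The genuine obstruction is the coefficient $V_{x}^{A}$ in the $\Phi$-equation: since $V^{A}$ contains $v^{B,0}(x/\sqrt{\varepsilon},t)$, one has $V_{x}^{A}\supset\varepsilon^{-1/2}v_{z}^{B,0}$, so $-\int_{\mathcal{I}}\Phi^{\varepsilon}\Phi_{x}^{\varepsilon}V_{x}^{A}\,dx$ is a priori of size $\varepsilon^{-1/2}\|\Phi^{\varepsilon}\|\|\Phi_{x}^{\varepsilon}\|$ and cannot be closed by Gronwall. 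The paper's resolution (see \eqref{five-l2}, \eqref{H-6}) is to apply Hardy to $\Phi^{\varepsilon}$ (which has Dirichlet data), writing
\[
\Big|\int_{\mathcal{I}}\Phi^{\varepsilon}\Phi_{x}^{\varepsilon}\,\varepsilon^{-1/2}v_{z}^{B,0}\,dx\Big|\le \Big\|\tfrac{\Phi^{\varepsilon}}{x(1-x)}\Big\|_{L^{2}}\|\Phi_{x}^{\varepsilon}\|_{L^{2}}\Big\|\tfrac{x(1-x)}{\sqrt{\varepsilon}}\,v_{z}^{B,0}\Big\|_{L^{\infty}}\le c_{0}\|\langle z\rangle v_{z}^{B,0}\|_{L_{z}^{\infty}}\|\Phi_{x}^{\varepsilon}\|_{L^{2}}^{2},
\]
and then requiring $\|\langle z\rangle v_{z}^{B,0}\|_{L_{z}^{\infty}}\le K_{0}(T,v_{\ast})v_{\ast}$ to be small (this is the origin of the condition $K_{1}(T,v_{\ast})v_{\ast}\le 1/16$ and of $T_{0}(v_{\ast})\to\infty$ as $v_{\ast}\to0$). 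The analogous singular piece $\varepsilon^{-1/2}\varphi_{zz}^{B,1}$ inside $\Phi_{xx}^{A}$ is handled the same way after integrating $-\int(\Phi_{x}^{A}+M)V_{x}^{\varepsilon}\Phi^{\varepsilon}\,dx$ by parts. Your proposed use of Hardy on $V$ to compensate a degenerate damping would not address this $\varepsilon^{-1/2}$ loss at all, and the ``cross-term cancellation'' you describe does not occur: after integration by parts the coefficients $(\varphi_{x}^{a}+M)$ and $v^{a}$ do not match.

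Two further misattributions: the hypothesis $(\sqrt{v_{0}})_{x}\in L^{2}$ is used only in Lemma~\ref{lem-orig-zero-dif-problem} (the entropy estimate \eqref{v-i-0-zero} for the outer profile), not in the perturbation estimates of Section~\ref{sec:stability_of_boundary_layers}; and the compatibility condition $(\varphi_{0x}+M)|_{\partial\mathcal{I}}=0$ matters because it makes the boundary-layer initial data compatible to high order (so that $v^{B,0}$ and its time derivatives start from zero and the weighted bounds \eqref{some-l-infty-layer-for-v-b-0-stab} hold), not because it creates a degenerate damping that one must fight.
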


\vspace{2mm}

With the transformation \eqref{anti-derivatives-transf}, we can transfer the results of \eqref{refor-eq}--\eqref{BD-POSITIVE-VE} stated in Theorem \ref{thm-stabi-refor} to the original problem \eqref{eq-orignal}-\eqref{intial-bdary}. Indeed from \eqref{anti-derivatives-transf}, we have
\begin{equation}\label{UI0}
u ^{\varepsilon} = \varphi _{x}^{\varepsilon}+M,\ \ u ^{I,0}=\varphi _{x}^{I, 0}+M
\end{equation}
with $ \varphi ^{\varepsilon} $ and $ \varphi ^{I,0} $ being the solutions to the problem \eqref{refor-eq}--\eqref{BD-POSITIVE-VE} and the problem \eqref{eq-outer-0}, respectively. Then $ (u ^{\varepsilon}, v ^{\varepsilon}) $ and $ (u ^{I, 0}, v ^{I, 0}) $ solve the problem \eqref{eq-orignal}-\eqref{intial-bdary} for $ \varepsilon>0 $ and $ \varepsilon=0 $, respectively. With \eqref{vfi-bd-1ord-lt} and \eqref{firs-bd-1-rt}, we have
\begin{eqnarray}\label{BV}
\begin{aligned}
\displaystyle  &u ^{B,0}\left(z,t \right)=\varphi _{z}^{B,1}(z,t)=(\varphi _{x}^{I,0}(0,t)+M)(		{\mathop{\mathrm{e}}}^{v ^{B,0}(z,t)}-1) ,\\
\displaystyle &u ^{b,0}\left(\xi,t \right)=\varphi _{\xi}^{b,1}(\xi,t)=
(\varphi _{x}^{I,0}(1,t)+M)(		{\mathop{\mathrm{e}}}^{v ^{b,0}(\xi,t)}-1).
\end{aligned}
\end{eqnarray}
Then the convergence of boundary layer solutions of the original problem \eqref{eq-orignal}-\eqref{intial-bdary} is stated in the following theorem.

\begin{theorem}\label{thm-original}
Assume that $ (u _{0},v _{0})\in H ^{6} \times  H ^{7} $ with $ u _{0}\geq,\not\equiv 0, v_0\geq 0$ and $ (\sqrt{v _{0}}) _{x} \in L ^{2}$ satisfying the compatibility conditions \eqref{compatibility-vfi} with $\varphi_{0x}=u_0-M$. Then for any $ v _{\ast}>0 $, there exists constants $ T _{0}(v _{\ast})>0 $ and $ \varepsilon _{0}>0 $, where $ T _{0}(v _{\ast}) \rightarrow \infty$ as $ v _{\ast}\rightarrow 0 $, such that for any $ \varepsilon \in (0,\varepsilon _{0})$, the problem \eqref{eq-orignal}-\eqref{intial-bdary} admits a unique solution $ (u ^{\varepsilon},v ^{\varepsilon}) \in L ^{\infty}(0,T _{0} ;H ^{1}\times H ^{2})  $ which satisfies for any $x\in [0,1]$
\begin{eqnarray}\label{u-converg}
\begin{aligned}
&u ^{\varepsilon}(x,t)=u ^{I,0}(x,t)+   u^{B,0}\Big(\frac{x}{\sqrt{\varepsilon}},t\Big)+u ^{b,0}\Big(\frac{1-x}{\sqrt{\varepsilon}},t\Big) +O(\varepsilon ^{1/4}), \\[1mm]
&\displaystyle v ^{\varepsilon}(x,t)=v ^{I,0}(x,t)+v ^{B,0}\Big(\frac{x}{\sqrt{\varepsilon}},t\Big) +v ^{ b,0}\Big(\frac{1-x}{\sqrt{\varepsilon}},t)+ O(\varepsilon ^{1/2}),
\end{aligned}
\end{eqnarray}
where $u^{I,0}$ and $(u^{B,0}, b^{b,0})$ are given in \eqref{UI0} and \eqref{BV}, respectively, while $(\varphi^{I,0}, v^{I,0})$, $v ^{B,0}$ and $ v ^{b,0}$ are solutions of problems \eqref{eq-outer-0}, \eqref{first-bd-layer-pro} and \eqref{first-bd-pro-rt}, respectively.
\end{theorem}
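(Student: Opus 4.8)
The plan is to deduce Theorem \ref{thm-original} from Theorem \ref{thm-stabi-refor} by exploiting the one-to-one correspondence between solutions of the original problem \eqref{eq-orignal}--\eqref{intial-bdary} and solutions of the reformulated problem \eqref{refor-eq}--\eqref{BD-POSITIVE-VE} furnished by the antiderivative transformation \eqref{anti-derivatives-transf}. First I would check that the hypotheses translate verbatim: setting $\varphi_0(x):=\int_0^x(u_0(y)-M)\,\mathrm{d}y$ we have $\varphi_{0x}=u_0-M$, so $u_0\in H^6$ gives $\varphi_0\in H^7$, the condition $\varphi_{0x}+M=u_0\geq,\not\equiv 0$ holds, the remaining requirements $v_0\in H^7$, $(\sqrt{v_0})_x\in L^2$ and the compatibility conditions \eqref{compatibility-vfi} are assumed, and $\varphi_0(0)=\varphi_0(1)=0$ because $\int_{\mathcal I}(u_0-M)\,\mathrm{d}x=0$ by the definition of $M$. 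Thus Theorem \ref{thm-stabi-refor} applies and yields constants $T_0(v_*),\varepsilon_0$ and, for each $\varepsilon\in(0,\varepsilon_0)$, a unique $(\varphi^\varepsilon,v^\varepsilon)\in L^\infty(0,T_0;H^2\times H^2)$ solving \eqref{refor-eq}--\eqref{BD-POSITIVE-VE} with the expansions \eqref{conver-thm1}.

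Next I would verify that $u^\varepsilon:=\varphi^\varepsilon_x+M$ together with $v^\varepsilon$ solves \eqref{eq-orignal}--\eqref{intial-bdary}. Differentiating the first equation of \eqref{refor-eq} in $x$ gives $u^\varepsilon_t=u^\varepsilon_{xx}-(u^\varepsilon v^\varepsilon_x)_x$, while the second equation of \eqref{refor-eq} is exactly $v^\varepsilon_t=\varepsilon v^\varepsilon_{xx}-u^\varepsilon v^\varepsilon$; the initial datum is $u^\varepsilon(\cdot,0)=\varphi_{0x}+M=u_0$. For the boundary conditions, since $\varphi^\varepsilon(0,t)=\varphi^\varepsilon(1,t)=0$ for all $t$ we have $\varphi^\varepsilon_t=0$ on $\partial\mathcal I$, and then the first equation of \eqref{refor-eq} forces $\varphi^\varepsilon_{xx}-(\varphi^\varepsilon_x+M)v^\varepsilon_x=0$ on $\partial\mathcal I$, i.e. $(u^\varepsilon_x-u^\varepsilon v^\varepsilon_x)\vert_{\partial\mathcal I}=0$, whereas $v^\varepsilon\vert_{\partial\mathcal I}=v_*$ is inherited directly. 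The identical computation applied to $(\varphi^{I,0},v^{I,0})$ solving \eqref{eq-outer-0} shows that $(u^{I,0},v^{I,0})$ with $u^{I,0}=\varphi^{I,0}_x+M$ solves \eqref{eq-orignal}--\eqref{intial-bdary} with $\varepsilon=0$, consistent with \eqref{UI0}. Finally $\varphi^\varepsilon\in L^\infty(0,T_0;H^2)$ and $v^\varepsilon\in L^\infty(0,T_0;H^2)$ immediately give $(u^\varepsilon,v^\varepsilon)\in L^\infty(0,T_0;H^1\times H^2)$.

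To obtain uniqueness in the original formulation I would run the transformation in reverse: if $(\tilde u^\varepsilon,\tilde v^\varepsilon)\in L^\infty(0,T_0;H^1\times H^2)$ is any solution of \eqref{eq-orignal}--\eqref{intial-bdary}, then the zero-flux condition yields $\int_{\mathcal I}\tilde u^\varepsilon\,\mathrm{d}x=M$, so $\tilde\varphi^\varepsilon(x,t):=\int_0^x(\tilde u^\varepsilon(y,t)-M)\,\mathrm{d}y$ satisfies $\tilde\varphi^\varepsilon(0,t)=\tilde\varphi^\varepsilon(1,t)=0$ and $\tilde\varphi^\varepsilon(\cdot,0)=\varphi_0$; integrating the first equation of \eqref{eq-orignal} in $x$ from $0$ and using the zero-flux condition at $x=0$ gives $\tilde\varphi^\varepsilon_t=\tilde\varphi^\varepsilon_{xx}-(\tilde\varphi^\varepsilon_x+M)\tilde v^\varepsilon_x$, so $(\tilde\varphi^\varepsilon,\tilde v^\varepsilon)\in L^\infty(0,T_0;H^2\times H^2)$ solves \eqref{refor-eq}--\eqref{BD-POSITIVE-VE}. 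The uniqueness part of Theorem \ref{thm-stabi-refor} then forces $\tilde\varphi^\varepsilon=\varphi^\varepsilon$, hence $\tilde u^\varepsilon=u^\varepsilon$ and $\tilde v^\varepsilon=v^\varepsilon$. For the asymptotic expansion \eqref{u-converg}, I would take $\partial_x$ of the first identity in \eqref{conver-thm1} (which is already recorded there as the second identity), add $M$, and insert \eqref{UI0} together with \eqref{BV}, namely $u^{B,0}=\varphi_z^{B,1}$ and $u^{b,0}=\varphi_\xi^{b,1}$, to arrive at the $u^\varepsilon$-statement; the $v^\varepsilon$-statement in \eqref{u-converg} is simply the third identity in \eqref{conver-thm1} written out with the boundary-layer coordinates $z=x/\sqrt\varepsilon$ and $\xi=(x-1)/\sqrt\varepsilon$.

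The argument is entirely bookkeeping: the only points that require attention are checking that the hypotheses and all compatibility conditions pass back and forth between the two formulations (done above) and that the $O(\varepsilon^{1/4})$ remainder in the $u^\varepsilon$-expansion is legitimate. The latter is exactly the reason the $\varphi_x^\varepsilon$-expansion is stated separately in \eqref{conver-thm1}, so no new estimate is needed. Consequently I do not anticipate a genuine obstacle here; all the analytic difficulty has been absorbed into Theorem \ref{thm-stabi-refor}.
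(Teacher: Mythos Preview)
Your proposal is correct and follows essentially the same route as the paper: deduce everything from Theorem~\ref{thm-stabi-refor} via the antiderivative transformation \eqref{anti-derivatives-transf}, and read off the $u^\varepsilon$-expansion from the already-recorded $\varphi_x^\varepsilon$-expansion in \eqref{conver-thm1} together with \eqref{UI0}--\eqref{BV}. The paper's proof is terser (it simply asserts that Theorem~\ref{thm-original} ``follows directly'' and then writes out the $u^\varepsilon$-expansion from $\varphi_x^\varepsilon=\varphi_x^{I,0}+\varphi_z^{B,1}+\varphi_\xi^{b,1}+\varepsilon^{1/2}\varphi_x^{I,1}+\partial_x\mathcal E_1^\varepsilon$), but your more explicit verification of the hypothesis translation, the recovery of the boundary conditions, and the uniqueness via the inverse transformation is exactly the bookkeeping the paper has in mind.
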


\begin{remark}\label{remark-main}
\em{ We give several remarks to enhance the understanding of our results.
\begin{itemize}
\item The $O(\varepsilon^r)$, for some $r>0$, notation used in the main results is a shortcut whose exact meaning is that the difference of the two sides of the identities \eqref{conver-thm1} and \eqref{u-converg} in $L _{T}^{\infty}L _{x}^{\infty}$, for any $0<T<T_0$, is bounded by $\varepsilon^r$ modulo a constant depending only on the initial data and $v_\ast$.

\item The conditions of $ (u _{0}, v _{0}) $ assumed in Theorem \ref{thm-original} can be fulfilled by many functions, for instance $ u _{0}=x^{8}(1-x)^{8} $ and $ v _{0}=v _{\ast}+x ^{6}(1-x)^{6} $. Furthermore, if $ (u _{0}, v _{0}) $ satisfies some higher-order compatibility conditions, by the standard energy method (cf. \cite[Chap. 7]{evans-book}), one can prove that the solutions $ (\varphi ^{\varepsilon},v ^{\varepsilon}) $ and $ (u ^{\varepsilon},v ^{\varepsilon}) $ obtained in Theorem \ref{thm-stabi-refor} and Theorem \ref{thm-original} are indeed classical. We skip the details here since this is not the main goal of this paper.

\item From the refined solution structure given in \eqref{u-converg}, without difficulty we can show for any $\delta=O(\varepsilon ^{\alpha})>0~(0< \alpha<1/2)$, it holds that
\begin{eqnarray*}
\begin{aligned}
& \lim _{\varepsilon \rightarrow 0}\left\|u^{\varepsilon}-u^{I,0}\right\|_{L^{\infty}([\delta, 1-\delta] \times[0, T_0])}=0,\ \liminf _{\varepsilon \rightarrow 0}\left\|u^{\varepsilon}-u^{I,0}\right\|_{L^{\infty}([0,1] \times[0, T_0])}>0,\\
& \lim _{\varepsilon \rightarrow 0}\left\|v^{\varepsilon}-v^{I,0}\right\|_{L^{\infty}([\delta, 1-\delta] \times[0, T_0])}=0,\ \liminf _{\varepsilon \rightarrow 0}\left\|v^{\varepsilon}-v^{I,0}\right\|_{L^{\infty}([0,1] \times[0, T_0])}>0,
\end{aligned}
\end{eqnarray*}
which indicates that the solution $(u^\varepsilon, v^\varepsilon)$ of \eqref{eq-orignal}-\eqref{intial-bdary} will develop a boundary layer profile with thickness of order $ \varepsilon ^{1/2} $ as $\varepsilon \to 0$, which consists of out-layer profile $(u^{I,0}, v^{I,0})$ (i.e. the solution of \eqref{eq-orignal}-\eqref{intial-bdary} with $\varepsilon=0$) and boundary (inner) layer profiles $(u^{B,0}, v^{B,0})$ at the left boundary $x=0$ and $(u^{b,0}, v^{b,0})$ at the right boundary $x=1$, with an error at the order of $\varepsilon ^{1/4}$ for $u^\varepsilon$ and of $\varepsilon ^{1/2}$ for $v^\varepsilon$ as $\varepsilon \to 0$.

\item Though the boundary values of $u ^{\varepsilon}$ are elusive in the zero-flux boundary condition of $ u $ prescribed for $u$ in \eqref{intial-bdary}, the expansion \eqref{u-converg} not only indicates that $u ^{\varepsilon}(x,t)$ has boundary layer profiles $u^{B,0}(z,t)$ near $x=0$ and $u^{b,0}(z,t)$ near $x=1$, but also gives the approximate boundary value of $u$ for $0<\varepsilon \ll 1$
\begin{eqnarray*}
\begin{aligned}
&u ^{\varepsilon}(0,t)=u^{I,0}(0,t)\exp\big(v_*-v^{I,0}(0,t)\big)+O(\varepsilon ^{1/4}), \\
&u ^{\varepsilon}(1,t)=u^{I,0}(1,t)\exp\big(v_*-v^{I,0}(1,t)\big)+O(\varepsilon ^{1/2}),
\end{aligned}
\end{eqnarray*}
where $u^{I,0}(x,t)=\varphi _{x}^{I, 0}+M$, see \eqref{BV}.
\item When $ v _{\ast}=0 $, according to our analysis, the boundary layer profiles in \eqref{u-converg} will vanish, which leads to $ (u ^{\varepsilon},v ^{\varepsilon}) \rightarrow (u ^{I,0}, v ^{I,0}) $ in $ L ^{\infty} $ as $ \varepsilon \rightarrow 0 $, where $(u ^{I,0}, v ^{I,0})  $ is the solution of the problem \eqref{eq-orignal}--\eqref{intial-bdary} with $ \varepsilon=0 $.
\item The compatibility condition $(\varphi _{0x}+M)\vert _{\partial \mathcal{I}}=0$ implies $\min \limits_{x \in \bar{\mathcal{I}}} u _{0}=0$. If we assume $\min \limits_{x \in \bar{\mathcal{I}}} u _{0}>0$, by the maximum principle  we can find some constant $c>0 $ which may depend on $ T_0$ such that $ 0<c^{-1} \leq u ^{I,0}(x,t) \leq c $ for any $ t \in [0,T_0] $ and $x\in (0,1)$. In this case the condition $ (\varphi _{0x}+M) \vert _{\partial \mathcal{I}}=0 $ in \eqref{compatibility-vfi} will fail, and consequently the initial values of \eqref{first-bd-layer-pro} and \eqref{first-bd-pro-rt} only satisfy the zero-order compatibility conditions, for which initial layers will be present and the key analyses in this paper are inapplicable. We shall investigate this case in a separate paper using different approaches.
\end{itemize}
}
\end{remark}

\vspace{4mm}

\section{Regularity of the outer/boundary layer profiles} % (fold)
\label{sec:study_on_the_inner_outer_layers}
In this section, we shall derive the regularity of solutions to  problems \eqref{eq-outer-0}, \eqref{first-outer-problem}, \eqref{first-bd-layer-pro}, \eqref{first-bd-pro-rt}, \eqref{second-bd-eq} and \eqref{sec-bd-eq-rt}, respectively. Let us begin with the problem \eqref{eq-outer-0} for the leading-order outer-layer profile $ (\varphi ^{I,0}, v ^{I,0})$. As mentioned before, this problem is exactly the zero-diffusion problem of \eqref{refor-eq} which, in the sense of classical solutions, is equivalent to the zero-diffusion problem of \eqref{eq-orignal}-\eqref{intial-bdary}. Denote by $ (u ^{I,0},v ^{I,0}) $ the solution to the zero-diffusion problem of \eqref{eq-orignal}-\eqref{intial-bdary}. Then we have
\begin{gather}\label{original-zero-diffu}
\displaystyle \begin{cases}
	\displaystyle u _{t}^{I,0}=\left( u _{x}^{I,0}-u ^{I,0}v _{x}^{I,0} \right)_{x}, & x \in \mathcal{I}, \\
	\displaystyle v _{t}^{I,0}=-u ^{I,0}v ^{I,0}, & x \in \mathcal{I},\\
	\displaystyle  (u _{x} ^{I,0}-u ^{I,0} v _{x} ^{I,0})\vert _{\partial \mathcal{I}}=0, \\
	\displaystyle (u ^{I,0}, v ^{I,0})(x,0)=(u _{0}, v _{0})(x).
\end{cases}
\end{gather}
We will first establish the global existence of solutions to the problem \eqref{original-zero-diffu}, and then transfer the result to problem \eqref{eq-outer-0}.
\begin{lemma}\label{lem-orig-zero-dif-problem}
Assume that $ (u _{0},v _{0})\in H ^{6} \times  H ^{7} $ with $ u _{0}\geq,\not\equiv 0, v_0\geq 0$ and $ (\sqrt{v _{0}}) _{x} \in L ^{2}$ subject to compatibility conditions in  \eqref{compatibility-vfi} with $ \varphi _{0}= \int _{0}^{x}(u _{0}-M)\mathrm{d}y$ and $ M= \int _{\mathcal{I}}u _{0}\mathrm{d}x $. Then for any $ T>0 $, the problem \eqref{original-zero-diffu} admits a unique classical solution on $ [0,T] $ such that
\begin{subequations}\label{con-u-I-0-regula}
\begin{gather}
u ^{I,0} \geq 0,\ \
\|\partial _{t}^{k}u ^{I,0}\|_{L _{T}^{2}H ^{7-2k}}\leq c(T), \ \  k=0,1,2,3,4,\label{u-I-0a} \\
\displaystyle \|v ^{I,0}\|_{L _{T} ^{\infty}H ^{7}} + \|\partial _{t}^{k}v ^{I,0}\|_{L _{T}^{2}H ^{9-2k}} \leq c(T), \ \ k=1,2,3,4.
\end{gather}
\end{subequations}
\end{lemma}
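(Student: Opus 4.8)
The plan is the usual one for a quasilinear parabolic system: local existence, global a priori estimates, a continuation argument, and finally a transfer of the result to \eqref{eq-outer-0}. Because the second equation of \eqref{original-zero-diffu} is an ODE in $t$ at each fixed $x$, one has $v^{I,0}(x,t)=v_0(x)\exp(-\int_0^t u^{I,0}(x,s)\,ds)$, so $v^{I,0}$ is slaved to $u^{I,0}$ and the problem reduces to a single uniformly parabolic equation $u_t=u_{xx}-(uv_x)_x$ for $u^{I,0}$ with a memory coefficient and the zero-flux boundary condition. Local existence on some interval $[0,T_\ast]$ in the regularity class of the lemma then follows by a contraction mapping argument (or a Galerkin scheme) built on the maximal $L^2$-regularity of $\partial_t-\partial_x^2$ with Neumann-type data; the compatibility conditions \eqref{compatibility-vfi} are precisely those that make the solution regular up to $t=0$ at the corners of the space-time cylinder. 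At the end, one reads off the statement for \eqref{eq-outer-0} by setting $\varphi^{I,0}:=\int_0^x(u^{I,0}-M)\,dy$: since $\int_{\mathcal I}u^{I,0}\,dx=M$ is conserved by the zero-flux condition, $\varphi^{I,0}$ vanishes on $\partial\mathcal I$, and $(\varphi^{I,0},v^{I,0})$ with $u^{I,0}=\varphi^{I,0}_x+M$ solves \eqref{eq-outer-0} in the classical sense, so the estimates transfer verbatim.

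The first a priori estimates are the sign/sup-norm bounds and a Lyapunov identity. Treating the $u$-equation as a linear parabolic equation for $u^{I,0}$ with smooth coefficients and zero-flux data, the maximum principle and $u_0\ge,\not\equiv0$ give $u^{I,0}\ge0$ (indeed $>0$ for $t>0$ in $\mathcal I$); the representation formula then gives $0\le v^{I,0}\le v_0$, so $v^{I,0}$ is non-increasing in $t$ and $\|v^{I,0}(\cdot,t)\|_{L^\infty}\le\|v_0\|_{L^\infty}$. The crucial structural fact — and the reason for the hypothesis $(\sqrt{v_0})_x\in L^2$ — is that, using $(\sqrt{v^{I,0}})_t=-\tfrac12 u^{I,0}\sqrt{v^{I,0}}$ together with $\sqrt{v^{I,0}}(\sqrt{v^{I,0}})_x=\tfrac12 v^{I,0}_x$ to cancel the cross term $\int u^{I,0}_x v^{I,0}_x$,
\[
\mathcal E(t):=\int_{\mathcal I}u^{I,0}\ln u^{I,0}\,dx+2\int_{\mathcal I}\big((\sqrt{v^{I,0}})_x\big)^2\,dx
\]
satisfies
\[
\frac{d}{dt}\mathcal E(t)+\int_{\mathcal I}\frac{(u^{I,0}_x)^2}{u^{I,0}}\,dx+2\int_{\mathcal I}u^{I,0}\big((\sqrt{v^{I,0}})_x\big)^2\,dx=0 .
\]
Hence $\mathcal E$ is non-increasing, $\mathcal E(0)=\int u_0\ln u_0\,dx+2\|(\sqrt{v_0})_x\|_{L^2}^2<\infty$, and (since $\int u^{I,0}\ln u^{I,0}$ is bounded below by the mass bound) one gets $u^{I,0}\ln u^{I,0}\in L_T^\infty L^1$, $(\sqrt{v^{I,0}})_x\in L_T^\infty L^2$ — hence $v^{I,0}\in L_T^\infty H^1$ — and $\sqrt{u^{I,0}}\in L_T^2 H^1$, hence in one dimension $u^{I,0}\in L_T^2 L^\infty$; a plain $L^2$-energy estimate for the $u$-equation (whose boundary term vanishes by zero-flux) then yields $u^{I,0}\in L_T^\infty L^2\cap L_T^2 H^1$.

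The main work is the higher-order bootstrap: by induction on $k=1,\dots,6$ I would run coupled energy estimates propagating $\|u^{I,0}\|_{L_T^\infty H^{k}}+\|u^{I,0}\|_{L_T^2 H^{k+1}}+\|v^{I,0}\|_{L_T^\infty H^{k+1}}\le c(T)$. At level $k$ one tests the $u$-equation against $(-1)^k\partial_x^{2k}u^{I,0}$, differentiates the $v$-equation $(k+1)$ times in $x$ and tests against $\partial_x^{k+1}v^{I,0}$, adds the two with a small relative weight, and closes by: (i) using the parabolic dissipation $\|\partial_x^{k+1}u^{I,0}\|_{L^2}^2$ to absorb, after one integration by parts in $x$, the top-order $u$-contribution of the cross term $\partial_x^{k+1}(uv_x)$; (ii) using $u^{I,0}\ge0$ to discard the term $-\int u^{I,0}|\partial_x^{k+1}v^{I,0}|^2\le0$ in the $v$-estimate; and (iii) using $\|v^{I,0}\|_{L^\infty}\le\|v_0\|_{L^\infty}$, the mass bound, the lower-level bounds, and one-dimensional Gagliardo--Nirenberg/algebra inequalities to control the remaining products, so that a Gronwall inequality with an $L_T^1$ integrating factor built from the already-controlled quantities closes the step; the appearance of a power of the time interval in every nonlinear term is what produces the form $c(T)=c_0+C(T)$ with $C(T)\to0$ as $T\to0^+$. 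Boundary terms generated by the zero-flux condition at each integration by parts are rewritten, via the equation and its $x$- and $t$-derivatives, as lower-order traces controlled by interpolation, which is where the compatibility conditions \eqref{compatibility-vfi} enter. I expect this bookkeeping to be the main obstacle: the cross-diffusion term couples the $H^k$-regularity of $u^{I,0}$ to the $H^{k+1}$-regularity of $v^{I,0}$, while the only source of regularity for $v^{I,0}$ is the $t$-integral of $u^{I,0}$, so the coupled choice of test functions is essential for the single available dissipation $\|\partial_x^{k+1}u^{I,0}\|^2$ to control the top-order terms of both equations with constants that stay finite on every bounded interval.

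Finally, the temporal estimates $\|\partial_t^k u^{I,0}\|_{L_T^2H^{7-2k}}$ and $\|\partial_t^k v^{I,0}\|_{L_T^2H^{9-2k}}$ for $k=1,\dots,4$ follow mechanically by differentiating $u^{I,0}_t=u^{I,0}_{xx}-(u^{I,0}v^{I,0}_x)_x$ and $v^{I,0}_t=-u^{I,0}v^{I,0}$ in $t$, using that $H^s(\mathcal I)$ is a Banach algebra for $s\ge1$ and $H^{-1}$-duality for the endpoint $k=4$ of the $u$-bounds. The a priori estimates rule out finite-time blow-up, so the local solution continues to every $[0,T]$; uniqueness is a routine energy estimate on the difference of two solutions using $u^{I,0}\ge0$ and $0\le v^{I,0}\le\|v_0\|_{L^\infty}$; and the transfer to \eqref{eq-outer-0} described above completes the proof.
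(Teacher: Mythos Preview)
Your low-order argument matches the paper's exactly: the identity you write for $\mathcal E(t)=\int u\ln u+\tfrac12\int v_x^2/v$ is precisely what the paper obtains by testing the $u$-equation against $\ln u$ and the $x$-differentiated $v$-equation against $v_x/v$ (the cross terms $\pm\int u_x v_x$ cancel), and the sign bounds $u^{I,0}\ge0$, $0\le v^{I,0}\le v_0$ together with the ensuing $L_T^2H^1$ control of $u^{I,0}$ follow as you describe.

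The higher-order bootstrap is where your plan diverges from the paper, and here there is a genuine obstacle. You propose testing against $(-1)^k\partial_x^{2k}u^{I,0}$, but the boundary condition is the zero-\emph{flux} condition $(u_x-uv_x)|_{\partial\mathcal I}=0$, not a Neumann condition. The repeated integrations by parts needed to convert $\int u_t\,\partial_x^{2k}u$ into $\tfrac{d}{dt}\|\partial_x^k u\|_{L^2}^2$ generate boundary traces of the form $[\partial_x^j u_t\,\partial_x^{2k-1-j}u]_{\partial\mathcal I}$ that do \emph{not} vanish and require control of quantities at the very level you are trying to establish. Your appeal to the compatibility conditions here is misplaced: those conditions ensure corner regularity at $t=0$; they do not make boundary traces vanish for $t>0$. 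The paper sidesteps all of this by testing against $u_t^{I,0}$ (and, at the next level, $\partial_t u_t^{I,0}$): since the flux $u_x-uv_x$ vanishes on $\partial\mathcal I$ for every $t$, so do all its time derivatives, and integrating the divergence-form equation by parts once produces \emph{no} boundary term whatsoever. This yields $\tfrac12\tfrac{d}{dt}\int u_x^2+\int u_t^2=\int uv_x\,u_{tx}$ cleanly; the paper then couples this with the estimate obtained by testing $\partial_x^2(v_t=-uv)$ against $v_{xx}$ and closes via Gronwall. The spatial regularity in \eqref{u-I-0a} is afterwards \emph{recovered} from the time-derivative bounds through the equation --- the paper explicitly invokes this device (citing Evans, pp.~387--388). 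So your outline is sound through the Lyapunov step, but for the bootstrap you should switch from spatial to temporal test functions; otherwise the boundary bookkeeping you correctly flag as ``the main obstacle'' is not merely a nuisance but an actual gap.
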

\begin{proof}
  The proof of local existence and uniqueness of classical solutions to the problem \eqref{original-zero-diffu} is standard based on the fixed point theorem, so is the property $ u ^{I,0} \geq 0 $ in its lifespan if $  u _{0} \geq 0 $~(cf. \cite{fan2017global}). In the following, we are devoted to deriving the \emph{a priori} estimates of solutions by which the local solutions can be extended to global ones. To begin with, for any $ T>0 $, we assume that $ (u ^{I,0}, v ^{I,0}) $ is a classical solution to the problem \eqref{original-zero-diffu} on $ [0,T] $ satisfying the following \emph{a priori} assumption
  \begin{gather}\label{u-i-0-apprio}
  \displaystyle \int _{0}^{t}\|v _{x}^{I,0}\|_{L ^{\infty}}^{2}\mathrm{d}\tau \leq C _{1},\ \ t \in [0,T]
  \end{gather}
  for some constant $ C _{1}>0 $ to be determined later. Testing the equation $ \eqref{original-zero-diffu} _{1} $ against $ u _{-}^{I,0}:=-\max \{-u ^{I,0},0 \} $, we get
  \begin{align*}
  &\displaystyle  \frac{1}{2}\frac{\mathrm{d}}{\mathrm{d}t} \int _{\mathcal{I}}\vert u _{-}^{I,0}\vert ^{2}\mathrm{d}x + \int _{\mathcal{I}}\vert( u _{-} ^{I,0})_{x}\vert ^{2}\mathrm{d}x = \int _{ \{ u ^{I,0}<0 \} }u ^{I,0}v _{x}^{I,0}u _{x}^{I,0}\mathrm{d}x
   \nonumber \\
   &~\displaystyle \leq \frac{1}{2}\int _{\mathcal{I}}\vert( u _{-} ^{I,0})_{x}\vert ^{2}\mathrm{d}x +c _{0} \|v _{x}^{I,0}\|_{L ^{\infty}}^{2}\int _{\mathcal{I}}\vert u _{-}^{I,0}\vert ^{2}\mathrm{d}x,
  \end{align*}
  where the Cauchy-Schwarz inequality has been used, and the constant $ c _{0}>0 $ is independent of $C_{1}$. This along with \eqref{u-i-0-apprio} and the Gronwall inequality gives
  \begin{align*}
  \displaystyle \int _{\mathcal{I}}\vert u _{-}^{I,0}\vert ^{2}\mathrm{d}x \leq 		{\mathop{\mathrm{e}}}^{C _{1}t} \int _{\{ u _{0}<0 \}}\vert u _{0}\vert ^{2}\mathrm{d}x =0
  \end{align*}
  for any $ t \in (0,T] $, where $ u _{0}\geq 0 $ has been used. Therefore it holds that
  \begin{align}\label{u-I-0-positive}
  \displaystyle u ^{I,0}(x,t) \geq 0,\ \ \ t \in (0,T].
  \end{align}
  With \eqref{u-I-0-positive}, we have from $ \eqref{original-zero-diffu}_{2} $ that $ v ^{I,0} \leq v _{0} $. Testing $ \eqref{original-zero-diffu}_{1} $ against $ \ln u ^{I,0} $, one has
  \begin{align}\label{u-i-0-entropy}
   \displaystyle \frac{\mathrm{d}}{\mathrm{d}t} \int _{\mathcal{I}} u ^{I,0} \ln u^{I,0} \mathrm{d}x+ \int _{\mathcal{I}} \frac{\vert u _{x}^{I,0}\vert^{2}}{u ^{I,0}} \mathrm{d}x= \int _{\mathcal{I}} u _{x}^{I,0} v  _{x}^{I,0} \mathrm{d}x,
   \end{align}
    where $ \int _{\mathcal{I}}u ^{I,0} \mathrm{d}x=\int _{\mathcal{I}}u _{0}\mathrm{d}x=M $ due to the zero-flux boundary condition. Differentiating $ \eqref{original-zero-diffu}_{2} $ with respect to $ x $, and testing the resulting equation against ${v _{x}^{I,0}}/{v ^{I,0}} $, we get
    \begin{align}\label{v-i-0-zero}
    \displaystyle \frac{1}{2}\frac{\mathrm{d}}{\mathrm{d}t}\int _{\mathcal{I}} \frac{\vert v _{x}^{I,0}\vert ^{2}}{v ^{I,0}} \mathrm{d}x + \frac{1}{2}\int _{\mathcal{I}} \frac{u ^{I,0}\vert v _{x}^{I,0}\vert ^{2}}{v ^{I,0}} \mathrm{d}x=-\int _{\mathcal{I}} u _{x}^{I,0} v  _{x}^{I,0} \mathrm{d}x.
    \end{align}
    Combining \eqref{u-i-0-entropy} with \eqref{v-i-0-zero}, and integrating the resulting identity over $ [0,t] $ for any $ t \in (0,T] $, we have
    \begin{align*}
    \displaystyle   \int _{\mathcal{I}} u ^{I,0} \ln u^{I,0} \mathrm{d}x+\frac{1}{2}\int _{\mathcal{I}} \frac{\vert v _{x}^{I,0}\vert ^{2}}{v ^{I,0}} \mathrm{d}x+\int _{0}^{t} \int _{\mathcal{I}} \left( \frac{\vert u _{x}^{I,0}\vert^{2}}{u ^{I,0}}+\frac{1}{2}\frac{u ^{I,0}\vert v _{x}^{I,0}\vert ^{2}}{v ^{I,0}} \right)  \mathrm{d}x \mathrm{d}\tau \leq c _{0},
    \end{align*}
    which, along with the basic inequality $ -x \ln x \leq \mathop{\mathrm{e}} ^{-1}$ for $ x \geq 0 $, and $ v ^{I,0} \leq v _{0} $, gives
    \begin{gather}\label{v-i-0-esti-con}
    \displaystyle \frac{1}{2} \int _{\mathcal{I}}\vert v _{x}^{I,0}\vert ^{2} \mathrm{d}x+\int _{0}^{t} \int _{\mathcal{I}} \left( \frac{\vert u _{x}^{I,0}\vert^{2}}{u ^{I,0}}+\frac{1}{2}\frac{u ^{I,0}\vert v _{x}^{I,0}\vert ^{2}}{v ^{I,0}} \right)  \mathrm{d}x \mathrm{d}\tau  \leq c _{0}
    \end{gather}
    for any $ t \in [0,T] $, where the constant $ c _{0}>0 $ is independent of $C_{1}$. Furthermore, it holds from \eqref{u-I-0-positive}, \eqref{u-i0-inf-multi}, the basic inequality $ \|f\|_{L ^{\infty}} \leq c _{0} \|f\|_{W ^{1,1}} $ and the H\"older inequality that
\begin{align}\label{u-i0-inf-multi}
\displaystyle \int _{0}^{T} \|u ^{I,0}\|_{L ^{\infty}} \mathrm{d}\tau &\leq c _{0}\int _{0}^{T}\|u _{x}^{I,0}\|_{L ^{1}}\mathrm{d}\tau+ c _{0}\int _{0}^{T}\|u ^{I,0}\|_{L ^{1}}\mathrm{d}\tau
 \nonumber \\
 &\displaystyle \leq   \int _{0}^{T}\left( \int _{\mathcal{I}}\frac{\vert u _{x}^{I,0}\vert^{2}}{u ^{I,0}} \mathrm{d}x\right)^{1/2}\left( \int _{\mathcal{I}}u ^{I,0} \mathrm{d}x \right)^{1/2}\mathrm{d}\tau +c(T) \leq  c(T),
\end{align}
where $ c(T)> 0$ is as stated in Section \ref{sec:main_result}, and it is independent of $C_{1}$. To proceed, multiplying $ \eqref{original-zero-diffu}_{1} $ by $ u _{t}^{I,0}$ followed by an integration over $ \mathcal{I} $, we have
\begin{align}\label{u-i-0-two}
&\displaystyle \frac{1}{2}\frac{\mathrm{d}}{\mathrm{d}t}\int _{\mathcal{I}} \vert u _{x}^{I,0}\vert^{2} \mathrm{d}x + \int _{\mathcal{I}}\vert u _{t}^{I,0}\vert^{2}\mathrm{d}x
 \nonumber \\
 & \displaystyle= \int _{\mathcal{I}} u ^{I,0} v _{x}^{I,0} u _{tx}^{I,0} \mathrm{d}x= \frac{\mathrm{d}}{\mathrm{d}t}\int _{\mathcal{I}} u ^{I,0} v _{x}^{I,0} u _{x}^{I,0} \mathrm{d}x
 - \int _{\mathcal{I}} u _{t}^{I,0} v _{x}^{I,0}u _{x}^{I,0} \mathrm{d}x- \int _{\mathcal{I}} u^{I,0} v _{xt}^{I,0}u _{x}^{I,0} \mathrm{d}x.
\end{align}
By \eqref{v-i-0-esti-con}, Sobolev inequality \eqref{Sobolev-infty} and equation $ \eqref{original-zero-diffu} _{1} $, we deduce that
\begin{align*}
\displaystyle \int _{\mathcal{I}} |u _{x}^{I,0}|^{2}|v _{x}^{I,0}|^{2} \mathrm{d}x &
  \leq \|u _{x}^{I,0}\|_{L ^{\infty}}^{2}\|v _{x}^{I,0}\|_{L ^{2}}^{2}
   \nonumber \\
   & \displaystyle \leq c_{0}\|u _{x}^{I,0}\|_{L ^{2}}\|u _{xx}^{I,0}\|_{L ^{2}}+c _{0} \|u _{x}^{I,0}\|_{L ^{2}}^{2}
    \nonumber \\
    & \displaystyle \leq c _{0}\|u _{x}^{I,0}\|_{L ^{2}}(\|u _{t}^{I,0}\|_{L ^{2}}+\|u _{x}^{I,0} v _{x}^{I,0}\|_{L ^{2}}+\|u ^{I,0} v _{xx} ^{I,0}\|_{L ^{2}})+c _{0}\|u _{x}^{I,0}\|_{L ^{2}}^{2}
     \nonumber \\
     & \displaystyle \leq \frac{1}{2}\|u _{x}^{I,0} v _{x}^{I,0}\|_{L ^{2}}^{2}+ \frac{1}{32}\|u _{t}^{I,0}\|_{L ^{2}}^{2}+c _{0} \|u ^{I,0}\|_{L ^{\infty}}\|v _{xx}^{I,0}\| _{L ^{2}}\|u _{x}^{I,0}\| _{L ^{2}}+c_{0} \|u _{x}^{I,0}\|_{L ^{2}}^{2}.
\end{align*}
That is,
\begin{align}\label{nonliear-esti}
\displaystyle  \int _{\mathcal{I}} \vert u _{x}^{I,0}\vert^{2}\vert v _{x}^{I,0}\vert^{2} \mathrm{d}x  \leq \frac{1}{16}\|u _{t}^{I,0}\|_{L ^{2}}^{2}+c _{0}\|u ^{I,0}\|_{L ^{\infty}}\|v _{xx}^{I,0}\| _{L ^{2}}\|u _{x}^{I,0}\| _{L ^{2}}+c _{0}\|u _{x}^{I,0}\|_{L ^{2}}^{2}
\end{align}
for some constant $ c _{0}>0 $ independent of $C_{1}$.
This along with the Cauchy-Schwarz inequality gives
\begin{align*}
\displaystyle  - \int _{\mathcal{I}} u _{t}^{I,0}v _{x}^{I,0}u _{x}^{I,0} \mathrm{d}x &\leq \frac{1}{4}\int _{\mathcal{I}} \vert u _{t}^{I,0}\vert ^{2}\mathrm{d}x+4 \int _{\mathcal{I}}  |u _{x}^{I,0}|^{2}|v _{x}^{I,0}|^{2}\mathrm{d}x
 \nonumber \\
 &\displaystyle\leq \frac{1}{2}\|u _{t}^{I,0}\|_{L ^{2}}^{2}+c _{0}\|u ^{I,0}\|_{L ^{\infty}}\left( \|v _{xx}^{I,0}\| _{L ^{2}}^{2}+ \|u _{x}^{I,0}\| _{L ^{2}}^{2} \right) +c _{0}\|u _{x}^{I,0}\|_{L ^{2}}^{2}.
\end{align*}
Noticing from $ \eqref{original-zero-diffu} _{2} $ that $ v _{tx}^{I,0}=-u _{x}^{I,0}v^{I,0} -u^{I,0} v _{x}^{I,0} $, we estimate the last term on the right-hand side of \eqref{u-i-0-two} as follows
\begin{align*}
\displaystyle - \int _{\mathcal{I}} u^{I,0} v _{xt}^{I,0}u _{x}^{I,0} \mathrm{d}x &= \int _{\mathcal{I}} u^{I,0}(u _{x}^{I,0}v^{I,0}+v _{x}^{I,0}u^{I,0}) u _{x}^{I,0}\mathrm{d}x
 \nonumber \\
 & \displaystyle\leq  \|v^{I,0}\|_{L ^{\infty}}\|u^{I,0}\|_{L ^{\infty}}\|u _{x}^{I,0}\|_{L ^{2}}^{2}+ \|u^{I,0}\|_{L ^{\infty}} \left( \|u^{I,0}\|_{L ^{\infty}}^{2}\|v _{x}^{I,0}\| _{L ^{2}}^{2}+\|u _{x}^{I,0}\|_{L ^{2}}^{2} \right)
 \nonumber \\
 & \displaystyle \leq c_{0} \|u^{I,0}\|_{L ^{\infty}} \|u _{x}^{I,0}\|_{L ^{2}}^{2}+c_{0} \|u ^{I,0}\|_{L ^{\infty}}\left[ ( \|u _{x}^{I,0}\|_{L ^{2}}+\|u ^{I,0}\|_{L ^{1}} )^{2}+\|u _{x}^{I,0}\|_{L ^{2}}^{2} \right]
  \nonumber \\
   & \displaystyle \leq c_{0} \|u^{I,0}\|_{L ^{\infty}} \left( \|u _{x}^{I,0}\|_{L ^{2}}^{2}+1 \right),
\end{align*}
where we have used \eqref{v-i-0-esti-con}, $ v ^{I,0} \leq v _{0} $, $ \|u ^{I,0}\|_{L ^{1}}=M $, \eqref{l-infty-Sobolev} and the Cauchy-Schwarz inequality. Therefore we have from \eqref{u-i-0-two} that
\begin{align}\label{esti-con-u-I-0-X}
&\displaystyle  \frac{1}{2}\int _{\mathcal{I}} \vert u _{x}^{I,0}\vert^{2} \mathrm{d}x -\int _{\mathcal{I}} u ^{I,0} v _{x}^{I,0} u _{x}^{I,0} \mathrm{d}x+ \frac{1}{2}\int _{0}^{t}\int _{\mathcal{I}} \vert u _{\tau}^{I,0}\vert^{2} \mathrm{d}x \mathrm{d}\tau
 \nonumber \\
 &~\displaystyle \leq c (T)+c _{0} \int _{0}^{t}\|u ^{I,0}\|_{L ^{\infty}}\left( \|v _{xx}^{I,0}\| _{L ^{2}}^{2}+ \|u _{x}^{I,0}\| _{L ^{2}}^{2} \right)\mathrm{d}\tau + c_{0}\int _{0}^{t}\|u _{x}^{I,0}\|_{L ^{2}}^{2}\mathrm{d}\tau,
\end{align}
where \eqref{u-i0-inf-multi} has been used, and the constant $  c _{0}>0$ is independent of $C_{1}$. Noting that
\begin{align*}
\displaystyle  \int _{\mathcal{I}} u ^{I,0}v _{x}^{I,0}u _{x}^{I,0} \mathrm{d}x &\leq \frac{1}{8}\int _{\mathcal{I}} \vert u _{x}^{I,0}\vert^{2} \mathrm{d}x+ \int _{\mathcal{I}} \vert u^{I,0}\vert ^{2}\vert v _{x}^{I,0}\vert^{2} \mathrm{d}x
 \nonumber \\
 & \displaystyle \leq \frac{1}{8}\int _{\mathcal{I}} \vert u _{x}^{I,0}\vert^{2} \mathrm{d}x+  c _{0}\|u ^{I,0}\|_{L ^{\infty}}^{2}\leq \frac{1}{4}\int _{\mathcal{I}} \vert u _{x}^{I,0}\vert^{2} \mathrm{d}x+ c _{0}
\end{align*}
due to \eqref{v-i-0-esti-con}, \eqref{l-infty-Sobolev} and $ \|u ^{I,0}\|_{L ^{1}}=M $,
we further update \eqref{esti-con-u-I-0-X} as
\begin{align}\label{u-x-I-0-inte-con}
&\displaystyle  \int _{\mathcal{I}} \vert u _{x}^{I,0}\vert^{2} \mathrm{d}x +\int _{0}^{t}\int _{\mathcal{I}} \vert u _{\tau}^{I,0}\vert^{2} \mathrm{d}x \mathrm{d}\tau
 \nonumber \\
 &\displaystyle~\leq
 c (T)+ c _{0} \int _{0}^{t}\|u ^{I,0}\|_{L ^{\infty}}\left( \|v _{xx}^{I,0}\| _{L ^{2}}^{2}+ \|u _{x}^{I,0}\| _{L ^{2}}^{2} \right)\mathrm{d}\tau +  c _{0}\int _{0}^{t}\|u _{x}^{I,0}\|_{L ^{2}}^{2}\mathrm{d}\tau.
\end{align}
On the other hand, differentiating the equation $ \eqref{original-zero-diffu}_{2} $ with respect to $ x $ twice gives
\begin{align*}
\displaystyle v _{txx}^{I,0}=-u _{xx}^{I,0}v^{I,0}-2u _{x}^{I,0}v _{x}^{I,0}-u^{I,0} v _{xx}^{I,0}.
\end{align*}
Testing the above equation against $ v _{xx}^{I,0} $,  thanks to $ \eqref{original-zero-diffu}_{1} $, \eqref{nonliear-esti}, the fact $ v ^{I,0}\leq v _{0} $ and the Cauchy-Schwarz inequality, it follows that
\begin{align}\label{v-xx-I-0-diff}
&\displaystyle \frac{1}{2}\frac{\mathrm{d}}{\mathrm{d}t}\int _{\mathcal{I}} \vert v _{xx}^{I,0}\vert ^{2} \mathrm{d}x+ \int _{\mathcal{I}} u ^{I,0}\vert v _{xx}^{I,0}\vert ^{2} \mathrm{d}x=- \int _{\mathcal{I}} u _{xx}^{I,0} v^{I,0}v _{xx}^{I,0}\mathrm{d}x - \int _{\mathcal{I}}2u _{x}^{I,0}v _{x}^{I,0}v _{xx}^{I,0}  \mathrm{d}x
 \nonumber \\
 &~ \displaystyle=- \int _{\mathcal{I}} (u _{t}^{I,0}+u _{x}^{I,0}v _{x}^{I,0}+u ^{I,0}v _{xx}^{I,0})v _{xx}^{I,0} v^{I,0} \mathrm{d}x + \|v _{xx}^{I,0}\|_{L ^{2}}\|u _{x}^{I,0}v _{x}^{I,0}\|_{L ^{2}}
  \nonumber \\
  &~ \displaystyle \leq \frac{1}{8}\int _{\mathcal{I}} \vert u _{t}^{I,0}\vert ^{2} \mathrm{d}x+ c _{0} \|u ^{I,0}\|_{L ^{\infty}}(\|u _{x}^{I,0}\|_{L ^{2}}^{2}+\|v _{xx}^{I,0}\|_{L ^{2}}^{2})+ c _{0}\|u _{x}^{I,0}\|_{L ^{2}}^{2}+ c _{0} \|v _{xx}^{I,0}\|_{L ^{2}}^{2},
\end{align}
where $  c _{0}>0 $ is independent of $C_{1}$. Integrating \eqref{v-xx-I-0-diff} over $ (0,t) $ for any $ t \in(0,T ]$ yields that
\begin{align*}
&\displaystyle  \int _{\mathcal{I}} \vert v _{xx}^{I,0}\vert ^{2} \mathrm{d}x+\int _{0}^{t}\int _{\mathcal{I}} u ^{I,0}\vert v _{xx}^{I,0}\vert ^{2} \mathrm{d}x \mathrm{d}\tau
 \nonumber \\
 &~\displaystyle \leq \frac{1}{8}\int _{0}^{t}\int _{\mathcal{I}} \vert u _{\tau}^{I,0}\vert ^{2} \mathrm{d}x \mathrm{d}\tau+ c _{0} \int _{0}^{t}(\|u ^{I,0}\|_{L ^{\infty}}+1)(\|u _{x}^{I,0}\|_{L ^{2}}^{2}+\|v _{xx}^{I,0}\|_{L ^{2}}^{2})\mathrm{d}\tau.
\end{align*}
This, combined with \eqref{u-I-0-positive} and \eqref{u-x-I-0-inte-con}, implies that
\begin{align}\label{esti-v-I0-xx}
&\displaystyle  \int _{\mathcal{I}} \left( \vert u _{x}^{I,0}\vert^{2} + \vert v _{xx}^{I,0}\vert^{2} \right) \mathrm{d}x +\int _{0}^{t}\int _{\mathcal{I}} \vert u _{\tau}^{I,0}\vert^{2} \mathrm{d}x \mathrm{d}\tau
 \nonumber \\
 &~\displaystyle \leq   c _{0}\int _{0}^{t}(\|u ^{I,0}\|_{L ^{\infty}}+1)(\|u _{x}^{I,0}\|_{L ^{2}}^{2}+\|v _{xx}^{I,0}\|_{L ^{2}}^{2})\mathrm{d}\tau+c(T).
\end{align}
Therefore an application of the Gronwall inequality along with \eqref{u-i0-inf-multi} gives
\begin{align}\label{u-I-0-x-final-con}
\displaystyle  \int _{\mathcal{I}} \left( \vert u _{x}^{I,0}\vert^{2} +|v _{xx}^{I,0}|_{L ^{2}}^{2} \right) \mathrm{d}x +\int _{0}^{t}\int _{\mathcal{I}} \vert u _{\tau}^{I,0}\vert^{2} \mathrm{d}x \mathrm{d}\tau \leq c(T)
\end{align}
for any $ t \in (0,T] $, where the constant $  c(T)> 0 $ is independent of $C_{1}$. Furthermore, in virtue of \eqref{v-i-0-esti-con}, \eqref{u-i0-inf-multi}, \eqref{u-I-0-x-final-con}, \eqref{Sobolev-infty} and the equations in \eqref{original-zero-diffu}, we have
\begin{align}\label{u-I-0-xx}
\displaystyle \int _{0}^{T}\left(\|u _{xx}^{I,0}\|_{L ^{2}}^{2}+\|v _{t}^{I,0}\|_{H ^{2}}^{2} \right)  \mathrm{d}t \leq c(T) .
\end{align}
Using \eqref{v-i-0-esti-con}, \eqref{esti-v-I0-xx} and the Sobolev inequality $ \|f\|_{L ^{\infty}} \leq c _{0} \|f\|_{W ^{1,2}} $, we get
\begin{align*}
\displaystyle  \int _{0}^{t}\|v _{x}^{I,0}\|_{L ^{\infty}}^{2}\mathrm{d}t  \leq c(T)
\end{align*}
where the constant $c(T)>0 $ depends on the initial data and $ T $ but independent of $C_{1}$. Therefore the \emph{a priori} assumption \eqref{u-i-0-apprio} is closed provided that $ C _{1}>0 $ is chosen to be large such that $C_1>c(T)$, and thus the estimates \eqref{u-I-0-positive}, \eqref{v-i-0-esti-con}, \eqref{u-i0-inf-multi}, \eqref{u-I-0-x-final-con} and \eqref{u-I-0-xx} subsequently follow. Next we shall derive some higher-order estimates for the solution. The proof is based on the standard energy method (cf. \cite[pp. 387-388]{evans-book}), namely, recovering the estimates on spatial derivatives from those on time derivatives. For brevity, we will establish the estimates on the second-order time derivatives of the solution only and their implications in the estimates of spatial derivatives, while estimates on the higher-order time derivatives can be obtained in the same spirit. To this end, we differentiate the equations in \eqref{original-zero-diffu} with respect to $ t $ and get
\begin{align}\label{eQ-U-I-0-t}
\displaystyle \begin{cases}
\displaystyle	u _{tt}^{I,0}=\left( u _{tx}^{I,0}-u ^{I,0}v _{tx}^{I,0}-u _{t}^{I,0}v _{x}^{I,0} \right)_{x},\\[2mm]
	\displaystyle v _{tt}^{I,0}=-u _{t}^{I,0}v ^{I,0}-u ^{I,0}v _{t}^{I,0}.
\end{cases}
\end{align}
Multiplying $ \eqref{eQ-U-I-0-t}_{1} $ by $ u _{t}^{I,0} $, and integrating the resulting equation over $ \mathcal{I} $, we have
\begin{align*}
&\displaystyle \frac{1}{2}\frac{\mathrm{d}}{\mathrm{d}t}\int _{\mathcal{I}} \vert u _{t}^{I,0}\vert^{2} \mathrm{d}x +\int _{\mathcal{I}} \vert u _{xt}^{I,0}\vert ^{2}\mathrm{d}x= \int _{\mathcal{I}} u ^{I,0}v _{tx}^{I,0}u _{xt}^{I,0} \mathrm{d}x+\int _{\mathcal{I}}u _{t}^{I,0}v _{x}^{I,0} u _{xt}^{I,0}   \mathrm{d}x
 \nonumber \\
 &~ \leq \frac{1}{4}\int _{\mathcal{I}} \vert u _{xt}^{I,0}\vert ^{2}\mathrm{d}x+c _{0}\int _{\mathcal{I}}\vert  u ^{I,0}v _{tx}^{I,0} \vert ^{2}\mathrm{d}x+c _{0} \int _{\mathcal{I}}\vert u _{t}^{I,0}v _{x}^{I,0}\vert ^{2} \mathrm{d}x
  \nonumber \\
  &~\displaystyle \leq \frac{1}{4}\int _{\mathcal{I}} \vert u _{xt}^{I,0}\vert ^{2}\mathrm{d}x+c _{0} \|u ^{I,0}\|_{L ^{\infty}}^{2}\|v _{tx}^{I,0}\|_{L ^{2}}^{2}+c _{0} \|v _{x}^{I,0}\|_{L ^{\infty}}^{2}\|u _{t}^{I,0}\|_{L ^{2}}^{2}
   \nonumber \\
   &~\displaystyle \leq \frac{1}{4}\int _{\mathcal{I}} \vert u _{xt}^{I,0}\vert ^{2}\mathrm{d}x+c(T)\left(  \|v _{tx}^{I,0}\|_{L ^{2}}^{2}+\|u_{t}^{I,0}\|_{L ^{2}}^{2}\right),
\end{align*}
where we have used the Cauchy-Schwarz inequality and $ \|u ^{I,0}\|_{L _{T} ^{\infty}L ^{\infty}}+ \|v _{x}^{I,0}\|_{L _{T} ^{\infty}L ^{\infty}} \leq c(T)$ ensured by \eqref{v-i-0-esti-con}, \eqref{u-I-0-x-final-con} and \eqref{Sobolev-infty}. Therefore we get, thanks to \eqref{u-I-0-x-final-con} and \eqref{u-I-0-xx},
\begin{align}\label{u-I-0-TX}
\displaystyle  \int _{\mathcal{I}}\vert  u _{t}^{I,0}\vert^{2}(\cdot,t) \mathrm{d}x +\int _{0}^{t}\int _{\mathcal{I}} \vert u _{x \tau}^{I,0}\vert ^{2}\mathrm{d}x \mathrm{d}\tau \leq c(T)
\end{align}
for any $ t \in [0,T] $. This along with \eqref{u-I-0-x-final-con}, \eqref{u-I-0-xx} and the equations in \eqref{original-zero-diffu} further implies that
\begin{align}\label{v-tt-x0}
\displaystyle  \|u _{xx}^{I,0}\|_{L _{T}^{\infty}L ^{2}}^{2}+\|v _{t}^{I,0}\|_{L _{T}^{\infty}H ^{2}} +\|v _{tt}^{I,0}\|_{L _{T}^{\infty}L^{2}} +\|v _{ttx}^{I,0}\|_{L _{T}^{2}L ^{2}} \leq c(T).
\end{align}
Next testing $ \eqref{eQ-U-I-0-t}_{1} $ against $ u _{tt}^{I,0} $, we have
\begin{align}\label{u-t-x-esti0}
&\displaystyle \frac{1}{2}\frac{\mathrm{d}}{\mathrm{d}t}\int _{\mathcal{I}} \vert u _{tx}^{I,0}\vert ^{2} \mathrm{d}x+ \int _{\mathcal{I}} \vert u _{tt}^{I,0}\vert  ^{2}\mathrm{d}x = \int _{\mathcal{I}} \left( u ^{I,0}v _{tx}^{I,0}+u _{t}^{I,0}v _{x}^{I,0}  \right)u _{ttx}^{I,0}  \mathrm{d}x
 \nonumber \\
 &~\displaystyle = \frac{\mathrm{d}}{\mathrm{d}t} \int _{\mathcal{I}} \left( u ^{I,0}v _{tx}^{I,0}+u _{t}^{I,0}v _{x}^{I,0}  \right)u _{tx}^{I,0} \mathrm{d}x- \int _{\mathcal{I}}  \left( u _{t}^{I,0}v _{tx}^{I,0}+u ^{I,0}v _{ttx}^{I,0}+u _{tt}^{I,0}v _{x}^{I,0}+u _{t}^{I,0}v _{xt}^{I,0}  \right)u _{tx}^{I,0} \mathrm{d}x
  \nonumber \\
  &\displaystyle~ \leq \frac{\mathrm{d}}{\mathrm{d}t} \int _{\mathcal{I}} \left( u ^{I,0}v _{tx}^{I,0}+u _{t}^{I,0}v _{x}^{I,0}  \right)u _{tx}^{I,0} \mathrm{d}x + \left( \|u _{t}^{I,0}\|_{L ^{\infty}}\|v _{tx}^{I,0}\|_{L ^{2}}+\|u ^{I,0}\|_{L ^{\infty}}\|v _{ttx}^{I,0}\|_{L ^{2}} \right) \|u _{tx}^{I,0}\|_{L ^{2}}
   \nonumber \\
   &~\displaystyle \quad +c _{0} \left(\|u _{tt}^{I,0}\|_{L ^{2}}\|v _{x}^{I,0}\|_{L ^{\infty}}+ \|u _{t}^{I,0}\|_{L ^{\infty}}\|v _{tx}^{I,0}\|_{L ^{2}} \right) \|u _{tx}^{I,0}\|_{L ^{2}}
    \nonumber \\
    &~\leq \frac{\mathrm{d}}{\mathrm{d}t} \int _{\mathcal{I}} \left( u ^{I,0}v _{tx}^{I,0}+u _{t}^{I,0}v _{x}^{I,0}  \right)u _{tx}^{I,0} \mathrm{d}x + \frac{1}{8}\|u _{tt}^{I,0}\|_{L ^{2}}^{2}+c(T) \|u _{tx}^{I,0}\|_{L ^{2}}^{2}+c(T)\|v _{ttx}^{I,0}\|_{L ^{2}}^{2} ,
\end{align}
where we have used \eqref{u-I-0-x-final-con}, \eqref{u-I-0-xx}, \eqref{u-I-0-TX}, \eqref{v-tt-x0}, the fact $ \|u ^{I,0}\|_{L _{T} ^{\infty}L ^{\infty}}+\|v _{x}^{I,0}\|_{L _{T} ^{\infty}L ^{\infty}} \leq c(T) $ and the Sobolev inequality \eqref{Sobolev-infty}. Noting that
\begin{align*}
&\displaystyle  \int _{\mathcal{I}} \left( u ^{I,0}v _{tx}^{I,0}-u _{t}^{I,0}v _{x}^{I,0}  \right)u _{tx}^{I,0} \mathrm{d}x
 \nonumber \\
 &~\displaystyle\leq \frac{1}{4}\|u _{tx}^{I,0}\|_{L ^{2}}^{2}+c _{0} \left( \|u ^{I,0}\|_{L ^{\infty}}^{2}\|v _{tx}^{I,0}\|_{L ^{2}}^{2}+\|v _{x}^{I,0}\|_{L ^{\infty}}^{2}\|u _{t}^{I,0}\|_{L ^{2}}^{2} \right)  \leq \frac{1}{4}\|u _{tx}^{I,0}\|_{L ^{2}}^{2}+c(T)
\end{align*}
due to \eqref{u-i0-inf-multi}, \eqref{u-I-0-x-final-con}, \eqref{u-I-0-TX} and \eqref{v-tt-x0}, we get after integrating \eqref{u-t-x-esti0} over $ [0,t] $ for any $ t \in (0,T] $
\begin{align*}
\displaystyle  \int _{\mathcal{I}} \vert u _{tx}^{I,0}\vert ^{2}(\cdot,t) \mathrm{d}x+ \int _{0}^{t} \int _{\mathcal{I}} \vert u _{\tau \tau}^{I,0}\vert  ^{2}\mathrm{d}x \mathrm{d}\tau \leq c(T) ,
\end{align*}
where \eqref{u-I-0-TX} has been used. This combined with \eqref{u-I-0-xx}, $ \eqref{eQ-U-I-0-t}_{1} $, \eqref{u-I-0-TX} and \eqref{v-tt-x0} entails that
\begin{align}\label{u-t-I0-multiH2}
 \displaystyle \|u _{t}^{I,0}\|_{L _{T}^{\infty}L ^{\infty}}+ \|u _{t}^{I,0}\|_{L _{T}^{2}H ^{2}} \leq c(T) .
 \end{align}
 Applying $ \partial _{x}^{3} $ to the equation $ \eqref{eQ-U-I-0-t}_{2} $, we get
 \begin{align*}
 \displaystyle \partial _{x}^{3}v _{t}^{I,0}=- \sum _{k=0}^{3}\partial _{x}^{k}u ^{I,0}\partial _{x}^{3-k}v ^{I,0}.
 \end{align*}
 Multiplying this equation by $ \partial _{x}^{3}v ^{I,0} $ followed by an integration over $ \mathcal{I} $, we have
 \begin{align}\label{v-xxx-esti}
 &\displaystyle \frac{1}{2}\frac{\mathrm{d}}{\mathrm{d}t}\int _{\mathcal{I}} \vert \partial _{x}^{3}v ^{I,0}\vert ^{2} \mathrm{d}x + \int _{\mathcal{I}} u ^{I,0}\vert  \partial _{x}^{3}v ^{I,0}\vert ^{2} \mathrm{d}x
  \nonumber \\
  &~\displaystyle \leq  \sum  _{k=0}^{1}\|\partial _{x}^{k}u ^{I,0}\|_{L ^{\infty}}\|\partial _{x}^{3-k}v ^{I,0}\|_{L ^{2}}\|\partial _{x}^{3}v ^{I,0}\|_{L ^{2}}+c _{0} \sum _{k=2}^{3} \|\partial _{x}^{k}u ^{I,0}\|_{L ^{2}}\|\partial _{x}^{3-k}v ^{I,0}\|_{L ^{\infty}}\|\partial _{x}^{3}v ^{I,0}\|_{L ^{2}}
  \nonumber \\
  &~\displaystyle \leq c(T) \|\partial _{x}^{3}u ^{I,0}\|_{L ^{2}} ^{2}+\|\partial _{x}^{3}v ^{I,0}\| _{L ^{2}}^{2}+c(T),
\end{align}
where we have used \eqref{v-i-0-esti-con}, \eqref{u-I-0-x-final-con}, \eqref{v-tt-x0} and the Cauchy-Schwarz inequality. On the other hand, by $ \eqref{original-zero-diffu} _{1} $, \eqref{v-i-0-esti-con}, \eqref{u-I-0-x-final-con} and \eqref{u-t-I0-multiH2}, we get
\begin{align}\label{u-xxx-I-0}
\displaystyle \|\partial _{x}^{3}u ^{I,0}\|_{L ^{2}} ^{2} &\leq \|u _{tx}^{I,0}\|_{L ^{2}}^{2}+ \sum _{k=0}^{1}\|\partial _{x}^{k} u ^{I,0}\|_{L ^{\infty}}^{2}\| \partial _{x}^{3-k}v ^{I,0}\|_{L ^{2}}^{2}+\| (u _{x}^{I,0}v _{x}^{I,0})_{x}\|_{L ^{2}}^{2}
 \nonumber \\
 &\displaystyle \leq c(T) \|\partial _{x}^{3}v ^{I,0}\|_{L ^{2}}^{2}+c(T).
\end{align}
Therefore we update \eqref{v-xxx-esti} as
\begin{align}
\displaystyle  \frac{1}{2}\frac{\mathrm{d}}{\mathrm{d}t}\int _{\mathcal{I}} \vert \partial _{x}^{3}v ^{I,0}\vert ^{2} \mathrm{d}x + \int _{\mathcal{I}} u ^{I,0}\vert  \partial _{x}^{3}v ^{I,0}\vert ^{2} \mathrm{d}x  \leq c(T)\|\partial _{x}^{3}v ^{I,0}\| _{L ^{2}}^{2}+c(T) ,
\end{align}
which along with the Gronwall inequality, \eqref{u-xxx-I-0} and the fact $ u ^{I,0} \geq 0 $ entails that for any $ t \in [0,T] $,
\begin{align}\label{v-I-0-H3}
\displaystyle \|\partial _{x}^{3} v ^{I,0}(\cdot,t)\|_{L ^{2}}^{2}+  \|\partial _{x}^{3}u ^{I,0}(\cdot,t)\|_{L ^{2}}^{2}  \leq c(T).
\end{align}
By the analogous arguments, one can also get
\begin{align}\label{v-I-0-H4}
\displaystyle  \|\partial _{x}^{4} v ^{I,0}(\cdot,t)\|_{L ^{2}}^{2}+ \int _{0}^{t} \|\partial _{x}^{4}u ^{I,0}\|_{L ^{2}}^{2}  \mathrm{d}\tau \leq c(T)
\end{align}
for any $ t \in [0,T] $. Now combining $ \eqref{original-zero-diffu}_{2} $, \eqref{v-i-0-esti-con}, \eqref{u-I-0-x-final-con}, \eqref{u-I-0-xx}, \eqref{v-tt-x0}, \eqref{u-t-I0-multiH2}, \eqref{v-I-0-H3} and \eqref{v-I-0-H4} yields
\begin{align*}
\displaystyle  \|v _{t}^{I,0}\|_{L _{T}^{2}H ^{4}}+\|v _{tt}^{I,0}\|_{L _{T}^{2}H ^{2}} \leq c(T).
\end{align*}
The rest of the estimates in \eqref{con-u-I-0-regula} can be proved in a similar manner by applying $ \partial _{t} $ and $ \partial _{t}^{2} $ to the equations in \eqref{eQ-U-I-0-t}, and the details are omitted here for brevity.
\end{proof}

With the solution obtained in Lemma \ref{lem-orig-zero-dif-problem} for the problem \eqref{original-zero-diffu}, recalling the transformation \eqref{anti-derivatives-transf}, one can easily show the existence of unique classical solutions to \eqref{eq-outer-0}. Precisely, we have
% section study_on_the_inner_outer_layers (end)
\begin{lemma}\label{lem-regul-outer-layer-0}
Assume that $ (\varphi _{0}, v _{0}) \in H ^{7}\times H ^{7} $ and $ (\sqrt{v _{0}}) _{x} \in L ^{2}	 $ satisfying \eqref{compatibility-vfi} and $\varphi _{0x}+M\geq  0 $. Then for any $ T>0 $, there exists a unique solution $ (\varphi ^{I,0}, v ^{I,0}) $ to the problem \eqref{eq-outer-0} on $ [0,T] $ satisfying
\begin{subequations}\label{con-vfi-v-I-0-regula}
\begin{gather}
\displaystyle \displaystyle \varphi _{x}^{I,0}+M  \geq 0, \ \ \|\partial _{t}^{k}\varphi ^{I,0}\|_{L _{T}^{2}H ^{8-2k}} \leq c(T)  \ \ \mbox{for } k=0,1,2,3,4, \label{con-vfi-I-0-only}\\
\displaystyle \|v ^{I,0}\| _{L _{T}^{\infty}H ^{7}}+  \|\partial _{t}^{k}v ^{I,0}\| _{L _{T}^{2}H ^{9-2k}} \leq c(T)   \ \ \mbox{for }k=1,2,3,4. \label{con-v-I-0}
\end{gather}
\end{subequations}
\end{lemma}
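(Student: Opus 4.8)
Since the problem \eqref{eq-outer-0} is precisely the anti-derivative reformulation of the zero-diffusion problem \eqref{original-zero-diffu}, whose global classical solvability and regularity were just established in Lemma \ref{lem-orig-zero-dif-problem}, the plan is to obtain a solution of \eqref{eq-outer-0} by pulling back that solution through the change of variables \eqref{anti-derivatives-transf}, and then to transfer the estimates \eqref{con-u-I-0-regula} accordingly. Note first that the hypotheses here match those of Lemma \ref{lem-orig-zero-dif-problem}: with $\varphi_{0x}=u_0-M$, the requirement $\varphi_0\in H^7$ is equivalent to $u_0\in H^6$, the condition $\varphi_{0x}+M\ge 0$ is $u_0\ge 0$, and the compatibility conditions \eqref{compatibility-vfi} are stated in the same form.

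I would first let $(u^{I,0},v^{I,0})$ be the classical solution of \eqref{original-zero-diffu} furnished by Lemma \ref{lem-orig-zero-dif-problem} and set $\varphi^{I,0}(x,t):=\int_0^x (u^{I,0}(y,t)-M)\,\mathrm{d}y$. Then $\varphi^{I,0}_x+M=u^{I,0}\ge 0$; the boundary value $\varphi^{I,0}(0,t)=0$ is immediate, and $\varphi^{I,0}(1,t)=\int_{\mathcal{I}}u^{I,0}\,\mathrm{d}x-M=0$ by conservation of mass, while $\varphi^{I,0}(x,0)=\varphi_0(x)$ by the definition of $\varphi_0$. Integrating $\eqref{original-zero-diffu}_1$ in $x$ over $(0,x)$ and using the zero-flux condition $(u^{I,0}_x-u^{I,0}v^{I,0}_x)|_{x=0}=0$ gives $\varphi^{I,0}_t=u^{I,0}_x-u^{I,0}v^{I,0}_x=\varphi^{I,0}_{xx}-(\varphi^{I,0}_x+M)v^{I,0}_x$, and $\eqref{original-zero-diffu}_2$ becomes $v^{I,0}_t=-(\varphi^{I,0}_x+M)v^{I,0}$; hence $(\varphi^{I,0},v^{I,0})$ solves \eqref{eq-outer-0} on $[0,T]$ for every $T>0$, so existence is global.

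For the estimates \eqref{con-vfi-I-0-only}, I would use $\partial_t^k\varphi^{I,0}_x=\partial_t^k u^{I,0}$ for $k\ge 1$ (and $\varphi^{I,0}_x=u^{I,0}-M$ when $k=0$), so that each bound in \eqref{con-u-I-0-regula} gives $\|\partial_t^k\varphi^{I,0}_x\|_{L^2_T H^{7-2k}}\le c(T)$, the additive constant $M$ being harmless on the bounded interval $\mathcal{I}$; integrating in $x$ and invoking $\partial_t^k\varphi^{I,0}(0,t)=0$ (which holds for all $k$ since $\varphi^{I,0}(0,t)\equiv 0$) then recovers one extra spatial derivative, yielding $\|\partial_t^k\varphi^{I,0}\|_{L^2_T H^{8-2k}}\le c(T)$ for $k=0,\dots,4$. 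The bounds \eqref{con-v-I-0} on $v^{I,0}$ require nothing new, since the $v^{I,0}$-component of \eqref{eq-outer-0} is literally the same equation as in \eqref{original-zero-diffu}, so the corresponding estimates in \eqref{con-u-I-0-regula} transfer verbatim.

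For uniqueness, given two solutions $(\varphi^{I,0}_i,v^{I,0}_i)$, $i=1,2$, of \eqref{eq-outer-0} in the stated class, I would set $u_i:=\partial_x\varphi^{I,0}_i+M$; evaluating $\eqref{eq-outer-0}_1$ at $x=0$ and $x=1$ and using $\varphi^{I,0}_i(0,t)=\varphi^{I,0}_i(1,t)=0$ (hence $\partial_t\varphi^{I,0}_i=0$ there) shows $(u_{i,x}-u_iv^{I,0}_{i,x})|_{\partial\mathcal{I}}=0$, so $(u_i,v^{I,0}_i)$ solves \eqref{original-zero-diffu}; the uniqueness part of Lemma \ref{lem-orig-zero-dif-problem} then forces $(u_1,v^{I,0}_1)=(u_2,v^{I,0}_2)$, and since $\partial_x\varphi^{I,0}_1=\partial_x\varphi^{I,0}_2$ with $\varphi^{I,0}_i(0,t)=0$, we conclude $\varphi^{I,0}_1=\varphi^{I,0}_2$. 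The only mildly delicate point in the whole argument is the two-way bookkeeping of boundary conditions under \eqref{anti-derivatives-transf} — in particular that, modulo the evolution equation, the zero-flux condition for $u^{I,0}$ is equivalent to the homogeneous Dirichlet condition for $\varphi^{I,0}$ — but this is routine rather than a genuine obstacle.
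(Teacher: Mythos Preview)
Your proposal is correct and follows exactly the route the paper indicates: the paper gives no detailed proof of this lemma, merely remarking that ``with the solution obtained in Lemma \ref{lem-orig-zero-dif-problem} for the problem \eqref{original-zero-diffu}, recalling the transformation \eqref{anti-derivatives-transf}, one can easily show the existence of unique classical solutions to \eqref{eq-outer-0}.'' Your write-up supplies precisely those details --- constructing $\varphi^{I,0}$ as the antiderivative of $u^{I,0}-M$, verifying the boundary/initial data and the equations, and lifting the regularity in \eqref{con-u-I-0-regula} by one spatial order via $\partial_t^k\varphi^{I,0}_x=\partial_t^k u^{I,0}$ --- so there is nothing to add.
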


The next lemma gives the regularity of boundary layer profiles $ v ^{B,0} $ and $ \varphi ^{B,1} $.
\begin{lemma}\label{lem-v-B-0}
Let $ (\varphi ^{I,0}, v ^{I,0}) $ be the solution of \eqref{eq-outer-0} obtained in Lemma \ref{lem-regul-outer-layer-0}. Then for any $ T>0 $, the problem \eqref{first-bd-layer-pro}--\eqref{vfi-bd-1ord-lt} admits a unique solution $ v ^{B,0} $ on $[0,T]$ such that for any $ l \in \mathbb{N} $,
\begin{gather}\label{v-B-0-regularity}
\displaystyle 0 \leq v ^{B,0} \leq v _{\ast},\ \ \ \langle z \rangle ^{l}\partial _{t}^{k} v ^{B,0} \in L _{T}^{2} H _{z}^{6-2k},\ \langle z \rangle ^{l}\partial _{t}^{k} \varphi ^{B,1} \in L _{T}^{2} H _{z}^{7-2k} \ \mbox{for\ } k=0,1,2,3.
\end{gather}
Furthermore, it holds that
\begin{align}\label{con-vfi-B-1}
\displaystyle   \| \langle z \rangle ^{l}\partial _{t}^{k} v ^{B,0}\|_{L _{T}^{2} H _{z}^{6-2k}}\leq K _{0}(T,v _{\ast}) v _{\ast},\ \  \|\langle z \rangle ^{l}\partial _{t}^{k} \varphi ^{B,1}\|_{L _{T}^{2} H _{z}^{7-2k}}\leq c(v _{\ast}, T) v _{\ast},\ \ k=0,1,2,3,\\
\displaystyle \sum _{k=0}^{2}\|\langle z \rangle ^{l} \partial _{t}^{k} v ^{B,0}\|_{L _{T}^{\infty}H _{z}^{4-2k}}+\sum _{\lambda=0}^{1}\sum _{\ell=0}^{3- 2\lambda}\|\langle z \rangle ^{l}\partial _{t}^{\lambda}\partial _{z}^{\ell} v ^{B,0}\|_{L _{T}^{\infty}L _{z}^{\infty}} \leq K _{0}(T, v _{\ast}) v _{\ast},
\end{align}
where the constant $ K_{0}(T, v _{\ast}):=C(T)		{\mathop{\mathrm{e}}}^{c(v _{\ast}, T)}>0 $ with $ c(v _{\ast}, T) $ and $ C(T) $ being as stated in Section  \ref{sec:main_result}. Clearly, $ K _{0}(T, v _{\ast}) $ is increasing in $ T $ and $ v _{\ast} $ with $ \lim \limits_{T \rightarrow 0}K  _{0}(T,v _{\ast})=0 $ and $ \lim\limits _{T \rightarrow +\infty}K _{0}(T,v _{\ast})=+\infty$.
\end{lemma}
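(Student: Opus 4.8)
\emph{Plan of proof.} Write $a(t):=\varphi_x^{I,0}(0,t)+M$ and $b(t):=v^{I,0}(0,t)$, so that \eqref{first-bd-layer-pro} reads $v_t^{B,0}=v_{zz}^{B,0}+f(v^{B,0},t)$ with $f(v,t):=-a(t)\big[b(t)(e^{v}-1)+e^{v}v\big]$. By Lemma~\ref{lem-regul-outer-layer-0} and the one–dimensional trace/Sobolev embedding, $a$ and $b$ are smooth in $t$ with all time derivatives up to the needed order bounded by $c(T)$; moreover $a(t)\ge 0$ by \eqref{con-vfi-I-0-only}, and $0\le b(t)=v^{I,0}(0,t)\le v^{I,0}(0,0)=v_\ast$ since $v_t^{I,0}=-(\varphi_x^{I,0}+M)v^{I,0}\le 0$. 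Note that the compatibility condition $(\varphi_{0x}+M)|_{\partial\mathcal I}=0$ forces $a(0)=0$, so there is \emph{no} uniform zeroth–order damping. Local existence and uniqueness of $v^{B,0}$ on the half–line follow from standard semilinear parabolic theory: the boundary datum $v_\ast-v^{I,0}(0,t)$ is smooth and, since $v_0=v_\ast$ on $\partial\mathcal I$, vanishes at $t=0$ together with the initial datum, so the zeroth–order compatibility holds. The bound $0\le v^{B,0}\le v_\ast$ is obtained from the comparison principle, since $\underline v\equiv 0$ solves the PDE and $\overline v\equiv v_\ast$ is a supersolution (using $a,b\ge0$, $v_\ast\ge 0$ and $v_\ast\ge v_\ast-v^{I,0}(0,t)$); this a priori $L^\infty$ bound rules out finite–time blow–up, so the solution extends to $[0,T]$ for every $T>0$.

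To obtain the weighted estimates I would first homogenise the boundary condition. Set $h(t):=v_\ast-v^{I,0}(0,t)=v^{B,0}(0,t)$; from $v^{I,0}(0,t)=v_\ast\exp\big(-\int_0^t u^{I,0}(0,\tau)\,d\tau\big)$ and the bound $\int_0^T\|u^{I,0}\|_{L^\infty}\,d\tau\le C(T)$ of Lemma~\ref{lem-orig-zero-dif-problem} one gets $0\le h(t)\le v_\ast C(T)$, and $|h'(t)|=|u^{I,0}v^{I,0}|(0,t)\le v_\ast\|u^{I,0}\|_{L^\infty}$. Let $g(z,t):=h(t)e^{-z}$ and $w:=v^{B,0}-g$, so $w(0,t)=w(+\infty,t)=w(z,0)=0$. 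Writing $f(v,t)=\tilde f(v,t)\,v$ with $\tilde f(v,t):=\int_0^1 f_v(sv,t)\,ds=-a(t)\int_0^1 e^{sv}(b(t)+1+sv)\,ds\le 0$ for $v\in[0,v_\ast]$, one finds $w_t=w_{zz}+\tilde f(v^{B,0},t)\,w+S$ with source $S:=\tilde f(v^{B,0},t)\,g+(h(t)-h'(t))e^{-z}$, which satisfies $\|\langle z\rangle^{l}S\|_{L^2_z}\le C_l\big(a(t)(1+v_\ast)h(t)+h(t)+|h'(t)|\big)$ for every $l$, whence $\int_0^T\|\langle z\rangle^{l}S\|_{L^2_z}^2\,d\tau\le v_\ast^2\,C_l\,C(T)$ after using $\int_0^T a^2\,dt\le C(T)$ (since $a(0)=0$). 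Multiplying the $w$–equation by $\langle z\rangle^{2l}w$ and integrating over $(0,\infty)$: the boundary term at $z=0$ vanishes because $w(0,t)=0$, the commutator of $\partial_{zz}$ with the weight is absorbed into $\tfrac12\|\langle z\rangle^{l}w_z\|_{L^2}^2+C_l\|\langle z\rangle^{l}w\|_{L^2}^2$, the term $\int\langle z\rangle^{2l}\tilde f(v^{B,0},t)w^2\le 0$ is discarded, and Cauchy--Schwarz on $\int\langle z\rangle^{2l}Sw$ together with Gronwall's inequality (recall $w|_{t=0}=0$) yield $\|\langle z\rangle^{l}w\|_{L^\infty_TL^2_z}^2+\|\langle z\rangle^{l}w_z\|_{L^2_TL^2_z}^2\le e^{C_lT}\int_0^T\|\langle z\rangle^{l}S\|_{L^2_z}^2\,d\tau\le \big(v_\ast K_0(T,v_\ast)\big)^2$; adding the elementary bound for $g$ gives the $k=0$ part of \eqref{v-B-0-regularity}--\eqref{con-vfi-B-1}.

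The higher–order weighted bounds are obtained by the energy method of recovering spatial derivatives from time derivatives, exactly as in the proof of Lemma~\ref{lem-orig-zero-dif-problem}. Differentiating the $w$–equation $k$ times in $t$ produces an equation of the same type for $\partial_t^k w$, with the good zeroth–order term $\tilde f(v^{B,0},t)\partial_t^k w\le 0$ and a source made of $\partial_t^{j}$ $(j\le k)$ of $a,b,h$ — all controlled by $c(T)$ via Lemma~\ref{lem-regul-outer-layer-0} and trace inequalities — times weighted products of lower–order $\partial_t^{j}w$ and of the exponentially decaying $g$ and its $t$–derivatives; since the compatibility conditions \eqref{compatibility-vfi} (which in particular give $u_0(0)=0$, hence $h(0)=h'(0)=0$) force $\partial_t^{j}w|_{t=0}=0$ for $j=0,1,2$, no initial–layer terms appear, and the same weighted energy argument gives $\|\langle z\rangle^{l}\partial_t^k w\|_{L^\infty_T L^2_z}+\|\langle z\rangle^{l}\partial_t^k w_z\|_{L^2_T L^2_z}\le v_\ast K_0(T,v_\ast)$ for $k=0,1,2,3$. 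One then reads the remaining $z$–derivatives off algebraically from the equation, $\partial_z^2\partial_t^j w=\partial_t^{j+1}w-\partial_t^{j}(\tilde f(v^{B,0},t)w)-\partial_t^{j}S$, each $z$–differentiation producing a gain matched by the weight being arbitrary; this upgrades the bounds to $\langle z\rangle^{l}\partial_t^k v^{B,0}\in L^2_T H_z^{6-2k}$ $(k=0,1,2,3)$ and $\langle z\rangle^{l}\partial_t^k v^{B,0}\in L^\infty_T H_z^{4-2k}$ $(k=0,1,2)$, the pointwise bounds following from $H_z^{1}\hookrightarrow L^\infty_z$.

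Finally, $\varphi^{B,1}=-\int_z^\infty a(t)\big(e^{v^{B,0}(y,t)}-1\big)\,dy$ is estimated directly from $v^{B,0}$: for $m\ge1$ one has $\partial_z^m\varphi^{B,1}=-a(t)\partial_z^{m-1}\big(e^{v^{B,0}}-1\big)$, which carries the weighted regularity of $v^{B,0}$ with one extra $z$–derivative (composition with a smooth function, $v^{B,0}$ bounded), while for $\varphi^{B,1}$ itself a weighted one–dimensional (Hardy–type) estimate $\big\|\langle z\rangle^{l}\int_z^\infty\psi\,dy\big\|_{L^2_z}\le C_l\|\langle\cdot\rangle^{l+1}\psi\|_{L^2_z}$ converts the weighted $L^2_z$ bound on $e^{v^{B,0}}-1$ into the claimed $L^2_T H_z^{7-2k}$ bound, with the constant of the same $C(v_\ast,T)v_\ast$ type. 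The monotonicity and limit behaviour of $K_0(T,v_\ast)=C(T)e^{c(v_\ast,T)}$ then follow from those of $C(T)$ and $c(v_\ast,T)$. The main obstacle throughout is the degeneracy $a(0)=0$: there is no spectral gap to exploit, so the Gronwall factors $e^{c(v_\ast,T)}$ are unavoidable, and one must check that all the source terms together with all their $t$– and $z$–derivatives decay fast enough in $z$ (this is why the exponential lift $h e^{-z}$ and the Hardy–type inequality are used) and vanish sufficiently at $t=0$ (this is where the full strength of \eqref{compatibility-vfi} enters).
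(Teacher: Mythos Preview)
Your proposal is correct and follows essentially the same strategy as the paper's proof: both homogenise the boundary datum (the paper lifts with a compactly supported cutoff $\eta(z)(v_\ast-v^{I,0}(0,t))$ rather than your exponential $h(t)e^{-z}$), then run weighted $\langle z\rangle^{2l}$ energy estimates exploiting $a(t)\ge 0$ to discard the sign-definite zeroth-order term, differentiate in time, bootstrap spatial regularity from the equation, and finally estimate $\varphi^{B,1}$ from $v^{B,0}$ via a weighted integral inequality. The only cosmetic differences are your use of the comparison principle for $0\le v^{B,0}\le v_\ast$ (the paper tests against $(v^{B,0})_-$ and $(v^{B,0}-v_\ast)_+$) and your writing $f=\tilde f\cdot v$ to make the sign condition explicit.
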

\begin{proof}
  The local existence and uniqueness of solutions to the problem \eqref{first-bd-layer-pro} with regularity given in \eqref{v-B-0-regularity} can be proved by routine procedures: first, we study the linearized problem by the reflection method; second, we derive suitable estimates for solutions of the linearized problem and then prove the existence of solutions for the original nonlinear problem by the fixed point theorem. For completeness, we detail the proof in Appendix A. Below we are devoted to deriving the \emph{a priori} estimates of solutions, which are used not only for the global existence of solutions but also for the convergence of boundary  layers. We first prove that the solution of \eqref{first-bd-layer-pro} is bounded and satisfies
\begin{gather}\label{v-B-0-bds}
\displaystyle  0 \leq v ^{B,0} \leq v _{\ast}.
\end{gather}
To this end, we test the equation in \eqref{first-bd-layer-pro} against $ v ^{-}:=-\max\{0,-v ^{B,0}\} $ to derive that
\begin{align*}
&\displaystyle \frac{1}{2}\frac{\mathrm{d}}{\mathrm{d}t}\int _{\mathbb{R}_{+}}\vert v ^{-}\vert ^{2} \mathrm{d}z+\int _{\mathbb{R}_{+}}\vert \partial _{z}v ^{-}\vert ^{2}\mathrm{d}z +\int _{\mathbb{R}_{+}}(\varphi _{x}^{I,0}(0,t)+M)    {\mathop{\mathrm{e}}}^{v ^{B,0}} \vert v ^{-}\vert ^{2} \mathrm{d}z
 \nonumber \\
 &~\displaystyle \quad +\int _{\{v ^{B,0}<0\}}(\varphi _{x}^{I,0}(0,t)+M)v ^{I,0}(0,t)(    {\mathop{\mathrm{e}}}^{v ^{B,0}}-1)v ^{B,0} \mathrm{d}z=0,
\end{align*}
where, to ensure the validity of integration by parts, we have used the fact $ v _{\ast} \geq v ^{I,0}(0,t)\geq 0$ due to $ \varphi _{x}^{I,0}+M \geq 0 $ and
\begin{gather}\label{v-I-0-positive}
\displaystyle v ^{I,0}(0,t)=v _{\ast} \mathop{\mathrm{exp}}\nolimits \left( - \int _{0}^{t}(\varphi _{x}^{I,0}(0,t)+M)\mathrm{d}\tau \right) .
\end{gather}
This entails that
\begin{gather*}
\displaystyle \int _{\mathbb{R}_{+}} \vert v ^{-}\vert ^{2} \mathrm{d}z \leq 0,
\end{gather*}
which implies $ v ^{-}=0 $ and $ v ^{B,0} \geq 0 $. Similarly, testing the equation \eqref{first-bd-layer-pro} against $ v ^{+}:=\max\{v ^{B,0}-v _{\ast},0\} $, we can show that $ v ^{B,0} \leq v _{\ast}  $. Therefore \eqref{v-B-0-bds} is proved.

Next we shall derive some weighted estimates for $ v ^{B,0} $. Let $ \eta(z) \in C ^{\infty}([0,\infty)) $ such that
\begin{gather}\label{eta-defi}
\displaystyle \eta (0)=1,\ \ \eta(z)=0\ \ \mbox{for}\ z \geq 1,
\end{gather}
and denote by $ \overline{u ^{I,0}}:=\varphi _{x}^{I,0}(0,t)+M $. Then if we take $ \vartheta= v ^{B,0}- \eta(z)(v _{\ast} - v ^{I,0}(0,t))=:v ^{B,0}-\phi(z,t)  $, it follows that $ \vartheta $ solves
\begin{align}\label{eq-for-esti-v-B-0}
 \displaystyle \begin{cases}
  \displaystyle \vartheta_{t}=\vartheta_{zz} -\overline{u ^{I,0}}    {\mathop{\mathrm{e}}}^{\vartheta+ \phi}(\vartheta+ \phi)- \overline{u ^{I,0}}\,v ^{I,0}(0,t)(    {\mathop{\mathrm{e}}}^{\vartheta+ \phi}-   1) + \varrho,\\
\displaystyle \vartheta(0,t)=0,\ \ \vartheta(+\infty,t)=0,\\
\displaystyle \vartheta(z,0)=0,
 \end{cases}
  \end{align}
where
\begin{gather*}
\displaystyle \varrho =  \eta _{zz}(z)(v _{\ast}- v ^{I,0}(0,t)) - \eta(z)(v _{\ast}- v ^{I,0}(0,t)) _{t}.
\end{gather*}
By \eqref{con-vfi-v-I-0-regula} and \eqref{Sobolev-infty}, we get that
  \begin{align}\label{vfi-bd-I-0}
 \begin{cases}
 	\displaystyle  \|\partial _{t}^{k}\varphi _{x}^{I,0}(0,t)\|_{L ^{2}(0,T)} \leq \|\partial _{t}^{k}\varphi _{x}^{I,0}\|_{L _{T}^{2}H ^{1}} \leq c(T) \ \ \mbox{for}\ \ 0 \leq k \leq 3,\\[1mm]
  \displaystyle \|\partial _{t}^{k}v ^{I,0}(0,t)\|_{L ^{2}(0,T)} \leq \|\partial _{t}^{k}v ^{I,0}\|_{L _{T}^{2}H ^{1}} \leq c(T)\ \ \mbox{for}\ \ 0 \leq k \leq 4,
 \end{cases}
  \end{align}
  which gives rise to
\begin{gather}\label{vfi-I-0-x-bd-infty}
\displaystyle   \|\partial _{t}^{k} \varphi _{x}^{I,0}(0,t)\|_{L ^{\infty}(0,T)} \leq c(T)\ \mbox{for} \ 0 \leq k \leq 2 \ \ \mbox{and}\ \ \|\partial _{t}^{k} v^{I,0}(0,t)\|_{L ^{\infty}(0,T)} \leq c(T) \ \mbox{for}  \ 0 \leq k \leq 3.
\end{gather}
Thanks to \eqref{compatibility-vfi}, \eqref{v-I-0-positive} and \eqref{vfi-I-0-x-bd-infty}, it holds for $ l \in \mathbb{N} $ that $\langle z \rangle ^{l} \partial _{t}^{k} \varrho \in L _{T}^{2}H _{z}^{4-2k}\,(k=0,1,2)  $ with
\begin{align}\label{tild-rho-esti}
\displaystyle \|\langle z \rangle ^{l} \partial _{t}^{k} \varrho\|_{L _{T}^{2}H _{z}^{4-2k}} \leq C(T) v _{\ast},\ \  k=0,1,2,
\end{align}
where the constant $  C(T) >0$ is as stated in Section  \ref{sec:main_result}. Similarly, we get for $ l \in \mathbb{N} $ that $\langle z \rangle ^{l} \partial _{t}^{k} \phi \in L _{T}^{2}H _{z}^{4-2k}\,(k=0,1,2)  $ with
\begin{gather}\label{fip-esti}
\displaystyle  \|\langle z \rangle ^{l} \partial _{t}^{k+\lambda} \phi\|_{L _{T}^{2}H _{z}^{4-2k}} \leq C(T) v _{\ast},\ \  k=0,1,2,\ \ \lambda=0,1.
\end{gather}

Multiplying the equation $ \eqref{eq-for-esti-v-B-0}_{1} $ by $ \langle z \rangle ^{2l}\vartheta$ followed by an integration over $ \mathbb{R}_{+} $, we have
\begin{align}\label{v-B-0-esti-0}
   \displaystyle & \frac{1}{2}\frac{\mathrm{d}}{\mathrm{d}t}\int _{\mathbb{R}_{+}}\langle z \rangle ^{2l}\vartheta^{2} \mathrm{d}z+\int _{\mathbb{R}_{+}}\langle z \rangle ^{2l}\vartheta_{z}^{2}\mathrm{d}z +\int _{\mathbb{R}_{+}}\langle z \rangle ^{2l}\overline{u ^{I,0}} {\mathop{\mathrm{e}}}^{\vartheta+ \phi}\vartheta^{2}\mathrm{d}z
    \nonumber \\
    & \displaystyle  = \int _{\mathbb{R}_{+}}\langle z \rangle ^{2l}\vartheta  \varrho \mathrm{d}z- 2l \int _{\mathbb{R}_{+}}\langle z \rangle ^{2l-2}z \vartheta _{z} \vartheta\mathrm{d}z-\int _{\mathbb{R}_{+}}\langle z \rangle ^{2l}\overline{u ^{I,0}} {\mathop{\mathrm{e}}}^{\vartheta+ \phi}\vartheta  \phi\mathrm{d}z
     \nonumber \\
     &~\displaystyle \quad -\int _{\mathbb{R}_{+}}\langle z \rangle ^{2l} \overline{u ^{I,0}}\,{v ^{I,0}}(0,t)\left(     {\mathop{\mathrm{e}}}^{\vartheta+ \phi}-   1 \right) \vartheta\mathrm{d}z=:\mathcal{A},
   \end{align}
   where, due to $ \overline{u ^{I,0}}=\varphi _{x}^{I,0}(0,t)+M  \geq 0 $ and $  0 \leq v ^{B,0} \leq v _{\ast} $, it holds that
   \begin{align}\label{v-b-0-good}
   \displaystyle  \int _{\mathbb{R}_{+}}\langle z \rangle ^{2l}\overline{u ^{I,0}} {\mathop{\mathrm{e}}}^{\vartheta+ \phi}\vartheta^{2}\mathrm{d}z \geq 0.
   \end{align}
   We now turn to estimate the terms on the right hand side of \eqref{v-B-0-esti-0}. By \eqref{v-I-0-positive}, \eqref{vfi-I-0-x-bd-infty}, $  0 \leq v ^{B,0} \leq v _{\ast} $ and the Cauchy-Schwarz inequality, we get
   \begin{align}\label{cal-A-esti}
   \mathcal{A}
    &\displaystyle  \leq \|\langle z \rangle ^{l}\vartheta\|_{L  _{z}^{2}}\|\langle z \rangle ^{l}\varrho\|_{L _{z} ^{2}}+c _{0}  \|\langle z \rangle ^{l}\vartheta\|_{L  _{z}^{2}}\|\langle z \rangle ^{l}\vartheta _{z}\|_{L _{z} ^{2}}+ c _{0}\|\langle z \rangle ^{l}\vartheta\|_{L  _{z}^{2}}\|\langle z \rangle ^{l}\phi\|_{L _{z} ^{2}}
     \nonumber \\
     & \displaystyle \quad+c(v _{\ast}, T)  \int _{\mathbb{R}_{+}} \langle z \rangle ^{2l} \overline{u ^{I,0}}v ^{I,0}(0,t) \left(\left\vert \vartheta\right\vert+ \left\vert \phi\right\vert  \right)\vartheta \mathrm{d}z
     \nonumber \\
          & \displaystyle \leq  \frac{1}{4}\int _{\mathbb{R}_{+}}\langle z \rangle ^{2l}\vartheta _{z}^{2} \mathrm{d}z+c(v _{\ast}, T)  \int _{\mathbb{R}_{+}}\langle z \rangle ^{2l} \vartheta ^{2} \mathrm{d}z + c _{0} \|\langle z \rangle ^{l} \varrho\|_{L _{z}^{2}}^{2}+c _{0} \|\langle z \rangle ^{l}\phi\|_{L _{z}^{2}}^{2} ,
   \end{align}
   where the constant $ c(v _{\ast}, T)>0 $ is as stated in Section  \ref{sec:main_result}. Inserting \eqref{v-b-0-good}-\eqref{cal-A-esti} into \eqref{v-B-0-esti-0}, and integrating the result for any $ t \in (0,T] $, we get
  \begin{align}\label{con-vfi-B-0-0-pre}
   \displaystyle  \int _{\mathbb{R}_{+}}\langle z \rangle ^{2l}\vartheta ^{2} (\cdot,t)\mathrm{d}z+\int _{0}^{t} \int _{\mathbb{R}_{+}}\langle z \rangle ^{2l}\vartheta _{z} ^{2}   \mathrm{d}z \mathrm{d}\tau \leq C(T) v _{\ast}^{2}+ c(v _{\ast}, T) \int _{0}^{t}\int _{\mathbb{R}_{+}}\langle z \rangle ^{2l} \vartheta ^{2} \mathrm{d}z \mathrm{d}\tau,
   \end{align}
   where $ C(T) $ and $ c(v _{\ast}, T) $ are constants as stated in Section  \ref{sec:main_result}. Applying the Gronwall inequality to \eqref{con-vfi-B-0-0-pre}, we get
   \begin{align}\label{con-vfi-B-0-0}
   \displaystyle  \int _{\mathbb{R}_{+}}\langle z \rangle ^{2l}\vartheta ^{2} (\cdot,t)\mathrm{d}z+\int _{0}^{t} \int _{\mathbb{R}_{+}}\langle z \rangle ^{2l} \vartheta _{z} ^{2}   \mathrm{d}z \mathrm{d}\tau \leq C(T){\mathop{\mathrm{e}}}^{c(v _{\ast}, T)}v _{\ast}^{2}.
   \end{align}
   Multiplying  $ \eqref{eq-for-esti-v-B-0}_{1} $ by $ \langle z \rangle ^{2l}\vartheta _{t} $ and integrating the resulting equation over $ \mathbb{R}_{+} $, we have
\begin{align}\label{vte-t-esti}
&\displaystyle \frac{1}{2}\frac{\mathrm{d}}{\mathrm{d}t}\int _{\mathbb{R}_{+}}\langle z \rangle ^{2l}\left( \vartheta_{z} ^{2}+\overline{u ^{I,0}} \vartheta ^{2}     {\mathop{\mathrm{e}}}^{\vartheta+ \phi}\right)  \mathrm{d}z + \int _{\mathbb{R}_{+}}\langle z \rangle ^{2l}\vartheta_{t} ^{2}\mathrm{d}z
 \nonumber \\
 &~\displaystyle =\frac{1}{2}\int _{\mathbb{R}_{+}} \langle z \rangle ^{2l} \partial _{t}\overline{u ^{I,0}} \vartheta^{2}    {\mathop{\mathrm{e}}}^{\vartheta+ \phi}\mathrm{d}z+ \frac{1}{2}\int _{\mathbb{R}_{+}}\langle z \rangle ^{2l} \overline{u ^{I,0}}\vartheta^{2}\left( \vartheta_{t}+ \phi _{t} \right)     {\mathop{\mathrm{e}}}^{\vartheta+ \phi} \mathrm{d}z  -2 l \int _{\mathbb{R}_{+}}\langle z \rangle ^{2l-2}z \vartheta _{t}\vartheta _{z} \mathrm{d}z
  \nonumber \\
  &~\displaystyle \quad- \int _{\mathbb{R}_{+}}\langle z \rangle ^{2l}\overline{u ^{I,0}}    {\mathop{\mathrm{e}}}^{\vartheta+ \phi}  \phi \vartheta_{t} \mathrm{d}z-\overline{u ^{I,0}}v ^{I,0}(0,t)\int _{\mathbb{R}_{+}}\langle z \rangle ^{2l}(    {\mathop{\mathrm{e}}}^{\vartheta+ \phi}-    1)\vartheta_{t} \mathrm{d}z- \int _{\mathcal{I} }\langle z \rangle ^{2l} \varrho\vartheta_{t} \mathrm{d}z
   \nonumber \\
   &~\displaystyle \leq  c(v _{\ast}, T)  \vert  \overline{\partial _{t}u ^{I,0}}\vert  \int _{\mathbb{R}_{+}}\langle z \rangle ^{2l}\vartheta^{2} \mathrm{d}z+c(v _{\ast}, T)  \overline{u ^{I,0}} \int _{\mathbb{R}_{+}}\langle z \rangle ^{2l} \vartheta ^{2}(\left\vert \vartheta _{t}\right\vert+\left\vert \phi _{t}\right\vert)  \mathrm{d}z + c _{0} \int _{\mathbb{R}_{+}}\langle z \rangle ^{2l-1}\left\vert \vartheta _{t}\right\vert \left\vert \vartheta _{z}\right\vert \mathrm{d}z
    \nonumber \\
    &\displaystyle~ \quad +c(v _{\ast}, T)  \int _{\mathbb{R}_{+}}\langle z \rangle ^{2l}\left\vert \phi\right\vert \left\vert \vartheta _{t}\right\vert\mathrm{d}z+c(v _{\ast}, T)   \overline{u ^{I,0}} \vert v ^{I,0} (0,t)\vert\int _{\mathbb{R}_{+}}\langle z \rangle ^{2l}\left( \left\vert \vartheta\right\vert+\left\vert \phi\right\vert \right) \left\vert \vartheta_{t}\right\vert  \mathrm{d}z - \int _{\mathbb{R}_{+} }\langle z \rangle ^{2l} \varrho\vartheta_{t} \mathrm{d}z
      \nonumber \\
        &~\displaystyle \leq \frac{1}{8}\int _{\mathbb{R}_{+}}\langle z \rangle ^{2l}\vartheta_{t} ^{2}\mathrm{d}z+c(v _{\ast}, T)  \int _{\mathbb{R}_{+}}\langle z \rangle ^{2l}\vartheta _{z}^{2}  \mathrm{d}z+ c(v _{\ast}, T)  \int _{\mathbb{R}_{+}}\langle z \rangle ^{2l} (\phi ^{2}+\phi _{t}^{2})\mathrm{d}z + c_{0}\int _{\mathbb{R}_{+}}\langle z \rangle ^{2l} \varrho^{2}\mathrm{d}z
         \nonumber \\
         & ~\displaystyle \quad + c(v _{\ast}, T)  (\vert  \overline{\partial _{t}u ^{I,0}}\vert+ \vert \overline{u ^{I,0}}\vert ^{2}+ \vert \overline{u ^{I,0}}\vert^{2}\vert v ^{I,0} (0,t)\vert^{2})\int _{\mathbb{R}_{+}}\langle z \rangle ^{2l}\vartheta^{2} \mathrm{d}z,
\end{align}
where we have used \eqref{con-vfi-v-I-0-regula}, \eqref{con-vfi-B-0-0}, $  0 \leq v ^{B,0} \leq v _{\ast} $ and the Cauchy-Schwarz inequality. By \eqref{vfi-I-0-x-bd-infty}, we further update \eqref{vte-t-esti} as
\begin{align*}
\displaystyle  &\displaystyle \frac{1}{2}\frac{\mathrm{d}}{\mathrm{d}t}\int _{\mathbb{R}_{+}}\langle z \rangle ^{2l}\left( \vartheta_{z} ^{2}+\overline{u ^{I,0}} \vartheta ^{2}     {\mathop{\mathrm{e}}}^{\vartheta+ \phi}\right)  \mathrm{d}z + \int _{\mathbb{R}_{+}}\langle z \rangle ^{2l}\vartheta_{t} ^{2}\mathrm{d}z
 \nonumber \\
 &~\displaystyle \leq c(v _{\ast}, T) \int _{\mathbb{R}_{+}}\langle z \rangle ^{2l}\left( \vartheta^{2} +\vartheta _{z}^{2} \right)  \mathrm{d}z+ c(v _{\ast}, T) \int _{\mathbb{R}_{+}}\langle z \rangle ^{2l} (\phi ^{2}+\phi _{t}^{2})\mathrm{d}z +c _{0}\int _{\mathbb{R}_{+}}\langle z \rangle ^{2l} \varrho^{2}\mathrm{d}z .
\end{align*}
This along with \eqref{con-vfi-v-I-0-regula}, \eqref{tild-rho-esti}, \eqref{fip-esti}, \eqref{con-vfi-B-0-0} and the Gronwall inequality yields for any $ t \in (0,T] $ that
\begin{align}\label{con-vfi-B-0-1}
\displaystyle  \int _{\mathbb{R}_{+}}\langle z \rangle ^{2l}\vartheta _{z}^{2} (\cdot,t) \mathrm{d}z+ \int _{0}^{t}\int _{\mathbb{R}_{+}}\langle  z\rangle ^{2l}\vartheta _{\tau}^{2} \mathrm{d}z \mathrm{d}\tau \leq C (T)		{\mathop{\mathrm{e}}}^{c(v _{\ast}, T) } v _{\ast}^{2}.
\end{align}
With \eqref{vfi-I-0-x-bd-infty}, \eqref{tild-rho-esti} and \eqref{con-vfi-B-0-1}, we get from  $ \eqref{eq-for-esti-v-B-0}_{1} $ that
\begin{gather}\label{vte-xx}
\displaystyle \int _{0}^{T}\int _{\mathbb{R}_{+}}\langle z \rangle ^{2l}\vartheta _{zz}^{2} \mathrm{d}z \mathrm{d}t \leq C(T){\mathop{\mathrm{e}}}^{c(v _{\ast}, T) } v _{\ast}^{2}.
\end{gather}
Denote $ \tilde{\vartheta}=\vartheta_{t} $. Then by \eqref{eq-for-esti-v-B-0} and the compatibility condition \eqref{compatibility-vfi}, we find that $ \tilde{\vartheta} $ satisfies
\begin{align}\label{tild-vte-eq}
  \displaystyle \begin{cases}
    \displaystyle  \tilde{\vartheta}_{t}= \tilde{\vartheta}_{zz} -\overline{u ^{I,0}}    {\mathop{\mathrm{e}}}^{\vartheta+ \phi}\tilde{\vartheta}-\overline{u ^{I,0}}    {\mathop{\mathrm{e}}}^{\vartheta+ \phi}(\vartheta + \phi) \tilde{\vartheta}- \overline{u ^{I,0}}\,v ^{I,0}(0,t){\mathop{\mathrm{e}}}^{\vartheta+ \phi} \tilde{\vartheta}+\tilde{\varrho},\\
\displaystyle \tilde{\vartheta}(0,t)=0,\ \ \tilde{\vartheta}(+\infty,t)=0,\\
\displaystyle \tilde{\vartheta}(z,0)=0,
  \end{cases}
  \end{align}
  where $ \tilde{\varrho} $ is given by
  \begin{align*}
  \displaystyle  \tilde{\varrho}&=- \partial _{t}\overline{u ^{I,0}}    {\mathop{\mathrm{e}}}^{\vartheta+ \phi}(\vartheta + \phi)-\overline{u ^{I,0}}    {\mathop{\mathrm{e}}}^{\vartheta+ \phi} \phi _{t}(1+\vartheta+ \phi)-\overline{u ^{I,0}}\,v ^{I,0}(0,t){\mathop{\mathrm{e}}}^{\vartheta+ \phi}\phi _{t}
   \nonumber \\
   & \displaystyle \quad -\partial _{t} \left( \overline{u ^{I,0}}\,v ^{I,0}(0,t) \right) (    {\mathop{\mathrm{e}}}^{\vartheta+ \phi}-   1)+  \partial _{t}\varrho.
  \end{align*}
From \eqref{con-vfi-v-I-0-regula}, \eqref{tild-rho-esti}, \eqref{con-vfi-B-0-0},
  \eqref{vfi-I-0-x-bd-infty}--\eqref{vte-xx}, it follows for $ l \in \mathbb{N} $ that $ \langle z \rangle ^{l} \partial _{t}^{k}  \tilde{\varrho} \in L _{T}^{2}H _{z}^{2-2k}\,(k=0,1) $ with
  \begin{align}\label{tild-varrho-esti-1}
  \displaystyle \|\langle z \rangle ^{l} \partial _{t}^{k}  \tilde{\varrho}\|_{L _{T}^{2}H _{z}^{2-2k}} \leq C(T) {\mathop{\mathrm{e}}}^{c(v _{\ast}, T) }v _{\ast},\ \ \ k=0,1.
  \end{align}
  With \eqref{tild-varrho-esti-1}, by repeating the procedures in the proof of \eqref{con-vfi-B-0-0}, \eqref{con-vfi-B-0-1} and \eqref{vte-xx}, we have
  \begin{align*}
 \displaystyle  \int _{\mathbb{R}_{+}}\langle z \rangle ^{2l}\left(\tilde{ \vartheta}^{2} +\tilde{\vartheta} _{z}^{2} \right) (\cdot,t) \mathrm{d}z+ \int _{0}^{t}\int _{\mathbb{R}_{+}}\langle  z\rangle ^{2l}\left( \tilde{\vartheta} _{\tau}^{2} +\tilde{\vartheta}_{z}^{2} +\tilde{\vartheta}_{zz}^{2}\right) \mathrm{d}z \mathrm{d}\tau \leq C(T) {\mathop{\mathrm{e}}}^{c(v _{\ast}, T) }v _{\ast}^{2}
  \end{align*}
  for any $ t \in (0,T] $. This along with \eqref{con-vfi-v-I-0-regula}, $ \eqref{eq-for-esti-v-B-0}_{1} $, $ \eqref{tild-vte-eq} _{1} $ and the fact $ \tilde{v}=v _{t} $ implies that
  \begin{align*}
  \displaystyle  \displaystyle \int _{0}^{T}\int _{\mathbb{R}_{+}} \langle z \rangle ^{2l} \left( \vartheta_{zzz}^{2}+ \vartheta_{zzzz}^{2}+ \vartheta_{t zz}^{2} +\vartheta_{tt}^{2}\right) \mathrm{d}z \mathrm{d}t \leq C(T){\mathop{\mathrm{e}}}^{c(v _{\ast}, T) } v _{\ast}^{2},
  \end{align*}
  where we have used $ \|\langle z \rangle ^{l} \partial _{t}^{k} \varrho\|_{L _{T}^{2}H _{z}^{2-2k}}\leq C(T) v _{\ast}~(k=0,1) $ from \eqref{tild-rho-esti} and the estimate $ \|\langle z \rangle ^{l}\tilde{\varrho}\|_{L _{T}^{2}L _{z}^{2}} \leq C(T){\mathop{\mathrm{e}}}^{c(v _{\ast}, T) } v _{\ast}  $ from \eqref{tild-varrho-esti-1}. Thus we conclude for the problem \eqref{eq-for-esti-v-B-0} that
  \begin{gather}\label{vete-fisrt-conclu}
  \displaystyle  \|\langle z \rangle ^{l}\partial _{t}^{k} \vartheta\|_{L _{T}^{2}H _{z}^{4-2k}} \leq C(T){\mathop{\mathrm{e}}}^{c(v _{\ast}, T) } v _{\ast} ,\ \ k=0,1,2,
  \end{gather}
  provided $ \| \langle z \rangle ^{l} \partial _{t}^{k} \varrho\| _{L _{T}^{2}H _{z}^{2-2k}} \leq C(T) v _{\ast}$ with $ k=0,1 $. Notice that the initial value for the problem \eqref{tild-vte-eq} is compatible up to order one, and that $ \|\langle z \rangle ^{l}\partial _{t}^{k}\tilde{\varrho}\| _{L _{T}^{2}H _{z}^{2-2k}} \leq C(T){\mathop{\mathrm{e}}}^{c(v _{\ast}, T) } v _{\ast} $ with $ k=0,1$. Therefore, by the same arguments as proving \eqref{vete-fisrt-conclu}, we have for the problem \eqref{tild-vte-eq} that
\begin{align}\label{vete-sec-conclud}
  \displaystyle  \|\langle z \rangle ^{l}\partial _{t} ^{k} \tilde{\vartheta}\| _{L _{T}^{2}H _{z}^{4-2k}} \leq C(T){\mathop{\mathrm{e}}}^{c(v _{\ast}, T) } v _{\ast} ,\ \ k=0,1,2.
  \end{align}
  This along with \eqref{tild-rho-esti} and \eqref{vete-fisrt-conclu} further gives
  \begin{gather}\label{vete-highest}
  \displaystyle \int _{0}^{T} \|\partial _{z}^{5}\vartheta(\cdot,t)\|_{H ^{1}}^{2}\mathrm{d}t \leq C(T){\mathop{\mathrm{e}}}^{c(v _{\ast}, T) } v _{\ast}^{2}.
  \end{gather}
  Collecting \eqref{vete-fisrt-conclu}--\eqref{vete-highest}, we have
  \begin{align}\label{v-B-0-esti}
  \displaystyle  \langle z \rangle ^{l}\partial _{t}^{k} v ^{B,0} \in L _{T}^{2} H _{z}^{6-2k},\ \ k=0,1,2,3.
  \end{align}
 By \eqref{v-B-0-esti} and Proposition \ref{prop-embeding-spacetime}, we get for $ k=0,1,2,\ \ell=0,1,\cdots, 4-2k $ that $ \langle z \rangle ^{l} \partial _{t}^{k} v ^{B,0} \in C([0,T];H _{z}^{5-2k}) $ and $  \langle z \rangle ^{l}\partial _{t}^{k}\partial _{z}^{\ell} v ^{B,0} \in L _{T} ^{\infty}L _{z}^{\infty}$ with
  \begin{align}\label{V-B-0-l-infty-bd}
  \displaystyle \|\langle z \rangle ^{l} \partial _{t}^{k} v ^{B,0}\|_{L _{T}^{\infty}H _{z}^{4-2k}}+\sum _{\lambda=0}^{3}\|\langle z \rangle ^{l}\partial _{z}^{\lambda} v ^{B,0}\|_{L _{T}^{\infty}L _{z}^{\infty}} +\sum _{\lambda=0}^{1}\|\langle z \rangle ^{l}\partial _{z}^{\lambda} \partial _{t} v ^{B,0}\|_{L _{T}^{\infty}L _{z}^{\infty}}\leq C(T){\mathop{\mathrm{e}}}^{c(v _{\ast}, T) } v _{\ast}.
  \end{align}
  Now let us derive estimates for $ \varphi ^{B,1} $. Since
  \begin{gather*}
  \displaystyle  \partial _{z}^{\ell}      {\mathop{\mathrm{e}}}^{ v ^{B,0}}= \sum _{\substack{\ell _{1}+\cdots +\ell _{r}=\ell \\
  1 \leq \ell _{1} \leq \cdots \leq\ell _{r} ,\,
  1 \leq r \leq l}} C _{r}  {\mathop{\mathrm{e}}}^{v ^{B,0}} \partial _{z}^{\ell _{1}}v ^{B,0}\cdots \partial _{z}^{\ell _{r}}v ^{B,0},\ \  \ell \geq 1
  \end{gather*}
  for some constant $ C _{r} $ independent of $ v _{\ast} $ and $ T $, we get, thanks to \eqref{v-B-0-regularity} and \eqref{V-B-0-l-infty-bd},
  \begin{align}\label{V-B-0-exp}
  &\displaystyle \left\|   \langle z \rangle ^{l} ({\mathop{\mathrm{e}}}^{ v ^{B,0}}-1)\right\|_{L _{T}^{2}H _{z}^{6}} ^{2}
   \nonumber \\
   &\displaystyle~ \leq  \sum _{\substack{  \ell _{1}+\cdots+ \ell _{r} \leq 6 \\
  1 \leq \ell _{1} \leq \cdots \leq\ell _{r} ,\,
  1 \leq r \leq 6}}C _{r}\|\langle z \rangle ^{l} \partial _{z}^{\ell _{1}}v ^{B,0}\cdots \partial _{z}^{\ell _{r}}v ^{B,0}\|_{L _{T}^{2}L _{z}^{2}}^{2}+c _{0}\|\langle z \rangle ^{l} ({\mathop{\mathrm{e}}}^{ v ^{B,0}}-1)\|_{L _{T}^{2}L _{z}^{2}}^{2}
   \nonumber \\
   & \displaystyle~ \leq  \sum _{\substack{  \ell _{1}+\cdots+ \ell _{r} \leq 6 \\
  1 \leq \ell _{1} \leq \cdots \leq\ell _{r} ,\,
  1 \leq r \leq 6}} \makebox[-4pt]{~}\int _{0}^{T}
  \|\partial _{z} ^{\ell _{1}}v ^{B,0}\|_{L _{z}^{\infty}}^{2} \cdots \|\partial _{z} ^{\ell _{r-1}}v ^{B,0}\|_{L _{z}^{\infty}}^{2}\|\langle z \rangle ^{l}\partial _{z} ^{\ell _{r}}v ^{B,0}\|_{L _{z}^{2}}^{2}\mathrm{d}t +c _{0}\|\langle z \rangle ^{l} v ^{B,0}\|_{L _{T}^{2}L _{z}^{2}}^{2}
   \nonumber \\
   &~\displaystyle \leq c(v _{\ast}, T) v _{\ast} \sum _{\ell=1}^{6}\| \langle z \rangle ^{l}\partial _{z}^{\ell}v ^{B,0}\|_{L _{T}^{2}L _{z}^{2}  }^{2}+\|\langle z \rangle ^{l} v ^{B,0}\|_{L _{T}^{2}L _{z}^{2}}^{2}  \leq c(v _{\ast}, T) \|\langle z \rangle ^{l} v ^{B,0}\|_{L _{T}^{2}H_{z}^{6}} ^{2} \leq c(v _{\ast}, T) v _{\ast}^{2}.
  \end{align}
  Similarly, we have for any $ l \in \mathbb{N} $ that $  \langle z \rangle ^{l} ({\mathop{\mathrm{e}}}^{ v ^{B,0}}-1) \in L _{T}^{\infty}H ^{5}$ with
  \begin{gather}\label{exp-v-B-0-linfty}
  \displaystyle  \|\langle z \rangle ^{l} ({\mathop{\mathrm{e}}}^{ v ^{B,0}}-1)\|_{L _{T}^{\infty}H _{z}^{4}} \leq c(v _{\ast}, T) v _{\ast}.
  \end{gather}
  Noting that
  \begin{gather*}
  \displaystyle \partial _{t}^{k}      {\mathop{\mathrm{e}}}^{ v ^{B,0}}= \sum _{\substack{\ell _{1}+\cdots \ell _{r}=k \\ 1 \leq \ell _{1} \leq \cdots \leq\ell _{r} ,\,
  1 \leq r \leq k}} C _{r}  {\mathop{\mathrm{e}}}^{v ^{B,0}} \partial _{t}^{\ell _{1}}v ^{B,0}\cdots \partial _{t}^{\ell _{r}}v ^{B,0}\ \ \mbox{for } k \geq 1,
  \end{gather*}
  with $ C _{r} $ being a constant independent of $ v _{\ast} $ and $ \varepsilon $, similar to \eqref{V-B-0-exp}, we get for $ k = 1,2,3 $ that
  \begin{align*}
  \displaystyle  \|\langle z \rangle ^{l}\partial _{t}^{k}      {\mathop{\mathrm{e}}}^{ v ^{B,0}}\|_{L _{T}^{2}H ^{6-2k}} ^{2}&  \leq \sum _{\substack{\ell _{1}+\cdots \ell _{r}=k \\ 1 \leq \ell _{1} \leq \cdots \leq\ell _{r} ,\,
  1 \leq r \leq k}} c(v _{\ast}, T)\|\langle z \rangle ^{l}\partial _{z}(     {\mathop{\mathrm{e}}}^{v ^{B,0}} )\partial _{t}^{\ell _{1}}v ^{B,0}\cdots \partial _{t}^{\ell _{r}}v ^{B,0}\|_{L _{T}^{2}H _{z}^{5-2k}}^{2}
   \nonumber \\
   & \displaystyle  \quad
   +  \sum _{\substack{\ell _{1}+\cdots \ell _{r}=k \\ 1 \leq \ell _{1} \leq \cdots \leq\ell _{r} ,\,
  1 \leq r \leq k}} c(v _{\ast}, T)\|\langle z \rangle ^{l}\partial _{t}^{\ell _{1}}v ^{B,0}\cdots \partial _{t}^{\ell _{r}}v ^{B,0}\|_{L _{T}^{2}H _{z}^{6-2k}}^{2}
   \nonumber \\
   & \displaystyle \leq  \sum _{j=1}^{k}c(v _{\ast}, T)\|\langle z \rangle ^{l}\partial _{t}^{j}v ^{B,0}\|_{L _{T}^{2}H _{z}^{5-2k}}^{2} \|\langle z \rangle ^{l}\partial _{z} (\mathrm{e}^{v ^{B,0}})\|_{L _{T}^{\infty}H _{z}^{5-2k}}^{2}
    \nonumber \\
    & \displaystyle \quad +\sum _{j=1}^{k} c(v _{\ast}, T) \|\langle z \rangle ^{l}\partial _{t}^{j}v ^{B,0}\|_{L _{T}^{2}H _{z}^{6-2k}}^{2}  \leq \sum _{j=1}^{k}c(v _{\ast}, T)\|\langle z \rangle ^{l}\partial _{t}^{j}v ^{B,0}\|_{L _{T}^{2}H _{z}^{6-2j}}^{2}
     \nonumber \\
     & \displaystyle
     \leq c(v _{\ast}, T) v _{\ast}^{2},
  \end{align*}
  where we have used \eqref{v-B-0-regularity}, \eqref{V-B-0-l-infty-bd}, \eqref{V-B-0-exp}, \eqref{exp-v-B-0-linfty} and the fact
\begin{gather}\label{product-law}
\displaystyle  \|\langle x \rangle ^{l} fg\|_{H ^{k}(\mathbb{R}_{+})} \leq c _{0}\|\langle x \rangle ^{l} f\|_{H ^{k}(\mathbb{R}_{+})} \|\langle x \rangle ^{l} g\|_{H ^{k}(\mathbb{R}_{+})}
\end{gather}
for any $ l \in \mathbb{N} $ and any integer $ k \geq 1 $, provided $ \langle x \rangle ^{l}f ,\, \langle x \rangle ^{l}g \in H ^{k}(\mathbb{R}_{+})$. Therefore we now have for $ l \in \mathbb{N} $ that
  \begin{gather}\label{con-exp-v-B-0-final}
  \displaystyle \|   \langle z \rangle ^{l}\partial _{t}^{k}( {\mathop{\mathrm{e}}}^{v ^{B,0}}-1)\| _{L _{T}^{2}H _{z}^{6-2k}} ^{2} \leq c(v _{\ast}, T) \sum _{j=0}^{k}\|\langle z \rangle ^{l}\partial _{t}^{j}v ^{B,0}\|_{L _{T}^{2}H _{z}^{6-2k}}^{2} \leq c(v _{\ast}, T) v _{\ast}^{2},\ \ \ 0 \leq k \leq 3,
  \end{gather}
  where $ c(v _{\ast}, T) >0$ is a constant as stated in Section  \ref{sec:main_result}. With \eqref{vfi-bd-1ord-lt}, \eqref{vfi-I-0-x-bd-infty}, \eqref{con-exp-v-B-0-final} and the H\"older inequality, we derive for $ k=0,1,2 $ that
\begin{align}\label{vfi-B-1-est-1}
&\displaystyle    \| \langle z \rangle ^{l} \partial _{t}^{k}\varphi ^{B,1} \|_{L _{T}^{2}H _{z} ^{7-2k}}^{2}\leq c _{0} \sum _{i=0}^{k}\left\|\langle z \rangle ^{l}\int _{z}^{\infty}\partial _{t}^{k-i}(\varphi _{x}^{I,0}(0,t)+M) \partial _{t}^{i}\left(     {\mathop{\mathrm{e}}}^{v ^{B,0}}-1 \right) \mathrm{d}y \right\|_{L _{T}^{2}H _{z} ^{7-2k}}^{2}
 \nonumber \\
 &~\displaystyle \leq c _{0}\sum _{i=0}^{k} \|\partial _{t}^{k-i}\left( \varphi _{x}^{I,0}(0,t)+M\right) \|_{L ^{\infty}(0,T)}^{2} \left\|\int _{z}^{\infty}\partial _{t}^{i}\left(     {\mathop{\mathrm{e}}}^{v ^{B,0}}-1 \right) \mathrm{d}y \right\|_{L _{T}^{2}H _{z} ^{7-2k}}^{2}
  \nonumber \\
  &~\displaystyle \leq c(v _{\ast}, T) \sum _{i=0}^{k} \left( 1+\int _{\mathbb{R}_{+}}\int _{z}^{\infty}\langle y \rangle ^{-4}\mathrm{d}y \mathrm{d}z  \right) \|\langle z \rangle ^{l+2}\partial _{t}^{i} ({\mathop{\mathrm{e}}}^{v ^{B,0}}-1)\|_{L _{T}^{2}H _{z} ^{6-2k}}^{2} \leq c(v _{\ast}, T) v _{\ast} ^{2}.
\end{align}
For $ k=3 $, we get
\begin{align*}
&\displaystyle  \| \langle z \rangle ^{l} \partial _{t}^{3}\varphi ^{B,1} \|_{L _{T}^{2}H _{z}^{1}}^{2} \leq c _{0}\sum _{i=0}^{3}\left\|\langle z \rangle ^{l}\int _{z}^{\infty}\partial _{t}^{3-i}(\varphi _{x}^{I,0}(0,t)+M) \partial _{t}^{i}\left(     {\mathop{\mathrm{e}}}^{v ^{B,0}}-1 \right)\mathrm{d}y  \right\|_{L _{T} ^{2}H _{z}^{1}}^{2}
 \nonumber \\
 &~\displaystyle \leq c _{0} \sum _{i=1}^{3} \|\partial _{t}^{3-i}\left( \varphi _{x}^{I,0}(0,t)+M\right) \|_{L ^{\infty}(0,T)}^{2} \left\|\int _{z}^{\infty}\partial _{t}^{i}\left(     {\mathop{\mathrm{e}}}^{v ^{B,0}}-1 \right) \mathrm{d}y \right\|_{L _{T} ^{2}H _{z} ^{1}}^{2}
  \nonumber \\
  & ~ \displaystyle \quad+c _{0}\left\|\langle z \rangle ^{l}\int _{z}^{\infty}\partial _{t}^{3}(\varphi _{x}^{I,0}(0,t)+M) \left(     {\mathop{\mathrm{e}}}^{v ^{B,0}}-1 \right) \mathrm{d}y \right\|_{L _{T}^{2}H _{z}^{1}}^{2}=:A _{1}+A _{2},
\end{align*}
where $ A _{1} $ can be estimated by the similar arguments as proving \eqref{vfi-B-1-est-1}:
\begin{align*}
\displaystyle A_{1} \leq  \sum _{i=1}^{3} c(v _{\ast}, T)\left( 1+\int _{\mathbb{R}_{+}}\int _{z}^{\infty}\langle y \rangle ^{-4}\mathrm{d}y \mathrm{d}z  \right) \|\langle z \rangle ^{l+2}\partial _{t}^{i} ({\mathop{\mathrm{e}}}^{v ^{B,0}}-1)\|_{L _{T}^{2}L_{z} ^{2}}^{2} \leq c(v _{\ast}, T) v _{\ast}^{2},
\end{align*}
where \eqref{vfi-I-0-x-bd-infty} and \eqref{con-exp-v-B-0-final} have been used. We proceed to estimate $ A _{2} $. It follows from \eqref{vfi-bd-I-0}, \eqref{exp-v-B-0-linfty} and the H\"older inequality that
\begin{align*}
\displaystyle A _{2}& \leq   c(v _{\ast}, T)\left( 1+\int _{\mathbb{R}_{+}}\int _{z}^{\infty}\langle y \rangle ^{-4}\mathrm{d}y \mathrm{d}z  \right) \|\langle z \rangle ^{l+2}({\mathop{\mathrm{e}}}^{v ^{B,0}}-1)\|_{L _{T}^{\infty}H _{z}^{1}}^{2} \|\partial _{t}^{3}\varphi _{x}^{I,0}(0,t)\|_{L ^{2}(0,T)}^{2} \leq c(v _{\ast}, T)v _{\ast}^{2} .
\end{align*}
    Therefore we get for any $ l \in \mathbb{N} $ that
  \begin{gather*}
  \displaystyle \|\langle z \rangle ^{l}\partial _{t}^{k}\varphi ^{B,1}\|_{L _{T}^{2}H _{z}^{7-2k}}\leq c(v _{\ast}, T) v _{\ast},\ \ \ k=0,1,2,3.
  \end{gather*}
  The proof is complete.
\end{proof}
The following lemma gives the regularity of $ (\varphi ^{b,1}, v ^{b,1}) $ which can be proved by similar arguments as proving Lemma \ref{lem-v-B-0}.
\begin{lemma}\label{lem-first-bd-rt}
Assume the conditions in Lemma \ref{lem-regul-outer-layer-0} hold. Then for any $ T>0 $, the problem \eqref{first-bd-pro-rt}, \eqref{firs-bd-1-rt} admits a unique solution $ (v ^{b,0},\varphi ^{b,1})$ on $[0,T]$ such that $  0 \leq v ^{b,0} \leq v _{\ast} $,
\begin{gather}\label{con-b-0}
\displaystyle \ \
\displaystyle   \| \langle z \rangle ^{l}\partial _{t}^{k} v ^{b,0}\|_{L _{T}^{2} H _{\xi}^{6-2k}}\leq K _{0}(T,v _{\ast}) v _{\ast},\ \  \|\langle z \rangle ^{l}\partial _{t}^{k} \varphi ^{b,1}\|_{L _{T}^{2} H _{\xi}^{7-2k}}\leq c(v _{\ast}, T) v _{\ast},\ \ k=0,1,2,3,\\
\displaystyle \|\langle \xi \rangle ^{l} \partial _{t}^{k}  v ^{b,0}\|_{L _{T}^{\infty}H _{\xi}^{4-2k}}+\sum _{\lambda=0}^{1}\sum _{\ell=0}^{3- 2\lambda}\|\langle \xi \rangle ^{l}\partial _{t}^{\lambda}\partial _{\xi}^{\ell} v ^{b,0}\|_{L _{T}^{\infty}L _{\xi}^{\infty}} \leq K _{0}(v _{\ast}, T) v _{\ast},
\end{gather}
where $ K _{0}(T,v _{\ast})>0 $ is as in Lemma \ref{lem-v-B-0}, $ c(v _{\ast}, T) $ is as stated in Section  \ref{sec:main_result}.
\end{lemma}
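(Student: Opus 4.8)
The plan is to transplant the proof of Lemma~\ref{lem-v-B-0} to the right boundary $x=1$, making throughout the substitutions $z\mapsto\xi$, $\mathbb{R}_+\mapsto\mathbb{R}_-$, the boundary traces $\partial_t^k\varphi_x^{I,0}(0,t),\partial_t^kv^{I,0}(0,t)\mapsto\partial_t^k\varphi_x^{I,0}(1,t),\partial_t^kv^{I,0}(1,t)$, and the integral representation \eqref{vfi-bd-1ord-lt} by \eqref{firs-bd-1-rt}. Local existence and uniqueness of $v^{b,0}$ with the regularity claimed follows verbatim from the reflection-plus-fixed-point scheme indicated in the proof of Lemma~\ref{lem-v-B-0}; below I only list the \emph{a priori} bounds needed for global existence and for the convergence analysis of Section~\ref{sec:stability_of_boundary_layers}.

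First I would prove $0\le v^{b,0}\le v_\ast$ by testing \eqref{first-bd-pro-rt} against $v^-:=-\max\{0,-v^{b,0}\}$ and $v^+:=\max\{v^{b,0}-v_\ast,0\}$, invoking the analogue of \eqref{v-I-0-positive} at $x=1$, namely $v^{I,0}(1,t)=v_\ast\exp\bigl(-\int_0^t(\varphi_x^{I,0}(1,\tau)+M)\,\mathrm d\tau\bigr)$, which together with $\varphi_x^{I,0}+M\ge0$ gives $0\le v^{I,0}(1,t)\le v_\ast$ and furnishes the sign structure that lets one drop the reaction terms. Next I would pick a cut-off $\eta\in C^\infty$ with $\eta(0)=1$ and $\eta(\xi)=0$ for $\xi\le-1$, set $\phi(\xi,t)=\eta(\xi)\bigl(v_\ast-v^{I,0}(1,t)\bigr)$ and $\vartheta=v^{b,0}-\phi$, so that $\vartheta$ solves a homogeneous-Dirichlet problem on $\mathbb{R}_-$ of exactly the form \eqref{eq-for-esti-v-B-0}, with forcing $\varrho$ built from $\eta_{\xi\xi}$, $\eta$ and $\partial_t^jv^{I,0}(1,t)$. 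By Lemma~\ref{lem-regul-outer-layer-0} the trace bounds corresponding to \eqref{vfi-bd-I-0}--\eqref{vfi-I-0-x-bd-infty} hold equally at $x=1$, and the compatibility conditions \eqref{compatibility-vfi} on $\partial\mathcal I$ give $\|\langle\xi\rangle^l\partial_t^k\varrho\|_{L^2_TH^{4-2k}_\xi}+\|\langle\xi\rangle^l\partial_t^{k+\lambda}\phi\|_{L^2_TH^{4-2k}_\xi}\le C(T)v_\ast$. Then the same chain of weighted energy estimates as in Lemma~\ref{lem-v-B-0} — multiplying the $\vartheta$-equation by $\langle\xi\rangle^{2l}\vartheta$, then by $\langle\xi\rangle^{2l}\vartheta_t$, and repeating for $\tilde\vartheta:=\vartheta_t$ using first-order compatibility, followed by Grönwall's inequality and the space-time embedding of Proposition~\ref{prop-embeding-spacetime} — reproduces \eqref{con-vfi-B-0-0}--\eqref{V-B-0-l-infty-bd} on $\mathbb{R}_-$, giving the asserted bounds on $\langle\xi\rangle^l\partial_t^kv^{b,0}$ with $K_0(T,v_\ast)=C(T)\mathrm e^{c(v_\ast,T)}$.

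For $\varphi^{b,1}$ I would then use \eqref{firs-bd-1-rt}: the Fa\`a di Bruno identities for $\partial_\xi^\ell\mathrm e^{v^{b,0}}$ and $\partial_t^k\mathrm e^{v^{b,0}}$, together with the product law \eqref{product-law} and the $v^{b,0}$-bounds just obtained, yield $\|\langle\xi\rangle^l\partial_t^k(\mathrm e^{v^{b,0}}-1)\|_{L^2_TH^{6-2k}_\xi}\le c(v_\ast,T)v_\ast$ and the corresponding $L^\infty_T$ bound, exactly as in \eqref{V-B-0-exp}--\eqref{con-exp-v-B-0-final}; H\"older's inequality in the $y$-integral against the integrable weight $\langle y\rangle^{-4}$ then gives $\|\langle\xi\rangle^l\partial_t^k\varphi^{b,1}\|_{L^2_TH^{7-2k}_\xi}\le c(v_\ast,T)v_\ast$ for $k=0,1,2$, with $k=3$ split into the two pieces $A_1+A_2$ just as before. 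The only genuine point of care, and the step I expect to be the mild obstacle here, is bookkeeping the reversed orientation of the half-line: now $\eta$ is supported near $\xi=0$ and vanishes for $\xi\le-1$, and \eqref{firs-bd-1-rt} integrates from $-\infty$ up to $\xi$, so the weight $\langle\xi\rangle^l$ must be controlled against the tail at $\xi\to-\infty$ rather than $+\infty$ — this is exactly where the exponential decay in \eqref{profile-decay} and the compatibility of the data at $x=1$ enter to keep every weighted norm finite. Apart from this sign-chasing the computations coincide with those in Lemma~\ref{lem-v-B-0}, so they may be omitted.
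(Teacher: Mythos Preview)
Your proposal is correct and matches the paper's approach exactly: the paper itself gives no detailed proof of this lemma, stating only that it ``can be proved by similar arguments as proving Lemma~\ref{lem-v-B-0}.'' Your outline of the transplanted argument (boundedness via testing against $v^{\pm}$, the cut-off $\eta$ supported near $\xi=0$, the weighted energy chain for $\vartheta$ and $\tilde\vartheta$, and the Fa\`a di Bruno/H\"older treatment of $\varphi^{b,1}$ via \eqref{firs-bd-1-rt}) is precisely the expected mirror of the left-boundary proof, and your remark about the reversed orientation of the half-line is the only point requiring care.
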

% subsection v_i_1 (end)
We next turn to the existence and regularity of the outer-layer profile $ (\varphi ^{I,1}, v ^{I,1}) $.
\begin{lemma}\label{lemf-vfi-V-I-1}
Assume the conditions in Lemma \ref{lem-regul-outer-layer-0} hold, and let $ (v ^{B,0}, \varphi ^{B,1}) $ and $ (v ^{b,0},\varphi ^{b,1}) $ be the solutions obtained in Lemmas \ref{lem-v-B-0} and \ref{lem-first-bd-rt}, respectively. Then for any $ T>0 $, the problem \eqref{first-outer-problem} admits a unique classical solution $ (\varphi ^{I,1}, v ^{I,1}) $ on $ [0,T]$ satisfying
\begin{subequations}\label{vfi-I-1-V-i-1}
\begin{gather}
\displaystyle  \|\partial _{t}^{k}\varphi^{I,1}\|_{L _{T} ^{2}H ^{6-2k}} \leq c(v _{\ast}, T) \ \ \ \mbox{for } k=0,1,2,3,\label{con-vfi-I-1}\\
\displaystyle \|v ^{I,1}\| _{L _{T}^{\infty}H ^{5}}+ \|\partial _{t}^{k}v ^{I,1}\| _{L _{T}^{2}H ^{7-2k}}\leq c(v _{\ast}, T)  \ \ \mbox{for}\ k=1,2,3\label{con-v -I-1}.
\end{gather}
\end{subequations}
\end{lemma}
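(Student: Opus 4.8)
The plan is to exploit the fact that \eqref{first-outer-problem} is a \emph{linear} initial--boundary value problem in the unknowns $(\varphi^{I,1},v^{I,1})$: its coefficients involve only the leading-order profile $(\varphi^{I,0},v^{I,0})$, already controlled by Lemma~\ref{lem-regul-outer-layer-0}, while its inhomogeneous Dirichlet data $-\varphi^{B,1}(0,t)$ and $-\varphi^{b,1}(1,t)$ are controlled by Lemmas~\ref{lem-v-B-0} and \ref{lem-first-bd-rt}. First I would homogenize the boundary conditions. Choosing cut-offs $\chi_{0},\chi_{1}\in C^{\infty}(\overline{\mathcal{I}})$ with $\chi_{0}(0)=\chi_{1}(1)=1$ and $\chi_{0}(1)=\chi_{1}(0)=0$, I set
\begin{gather*}
\Phi(x,t):=-\chi_{0}(x)\,\varphi^{B,1}(0,t)-\chi_{1}(x)\,\varphi^{b,1}(1,t),\qquad \psi:=\varphi^{I,1}-\Phi,
\end{gather*}
so that $\psi|_{\partial\mathcal{I}}=0$ and $(\psi,v^{I,1})$ solves a problem of the same structure with an added forcing assembled from $\Phi,\Phi_{x},\Phi_{t}$ and the known coefficients. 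Taking the $z=0$, $\xi=0$ traces in \eqref{con-vfi-B-1} and \eqref{con-b-0} gives $\|\partial_{t}^{k}\Phi\|_{L_{T}^{2}H^{7-2k}}\le c(v_{\ast},T)$ for $k=0,1,2,3$; moreover, since $v^{B,0}(\cdot,0)=v^{b,0}(\cdot,0)=0$, the representations \eqref{vfi-bd-1ord-lt} and \eqref{firs-bd-1-rt} give $\varphi^{B,1}(\cdot,0)=\varphi^{b,1}(\cdot,0)=0$, so $\Phi(\cdot,0)=0$ and $(\psi,v^{I,1})$ still starts from zero data.

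For existence I would decouple the system: for each fixed $x$ the $v^{I,1}$-equation is a scalar linear ODE in $t$, with explicit solution
\begin{gather*}
v^{I,1}(x,t)=-\int_{0}^{t}\exp\!\Big(\!-\!\int_{\tau}^{t}\big(\varphi_{x}^{I,0}(x,s)+M\big)\,\mathrm{d}s\Big)\,\varphi_{x}^{I,1}(x,\tau)\,v^{I,0}(x,\tau)\,\mathrm{d}\tau,
\end{gather*}
so $v^{I,1}$ carries, pointwise in $t$, one fewer spatial derivative than $\varphi^{I,1}$ but no loss in time regularity. Substituting this back into the $\psi$-equation produces a single linear parabolic equation for $\psi$ with homogeneous Dirichlet data (nonlocal in time through the integral above); local existence and uniqueness then follow from a routine Galerkin or contraction argument in an $L^{\infty}H^{1}\cap L^{2}H^{2}$-type space, and the a priori bounds below extend the solution to $[0,T]$ and make it classical.

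For the estimates I would follow the energy scheme already used in the proof of Lemma~\ref{lem-orig-zero-dif-problem}, recovering spatial derivatives from time derivatives. Testing the $\psi$-equation with $\psi$ and using $\|\varphi_{x}^{I,0}+M\|_{L_{T}^{\infty}L^{\infty}}+\|v_{x}^{I,0}\|_{L_{T}^{\infty}L^{\infty}}+\|v^{I,0}\|_{L_{T}^{\infty}L^{\infty}}\le c(T)$ (from \eqref{con-vfi-v-I-0-regula} and the one-dimensional Sobolev inequality), together with the ODE representation for $v^{I,1}$, the bound on $\Phi$ and Gronwall's inequality, controls $\psi$ in $L_{T}^{\infty}L^{2}\cap L_{T}^{2}H^{1}$ and $v^{I,1}$ in $L_{T}^{\infty}L^{2}$. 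Differentiating the system $k$ times in $t$, $k=1,2,3$, and repeating the estimate — now with product inequalities and the higher-order coefficient bounds from \eqref{con-vfi-v-I-0-regula} — yields the same control for $\partial_{t}^{k}\psi$ and $\partial_{t}^{k}v^{I,1}$. Finally, reading $\psi_{xx}$ off the equation, $\psi_{xx}=\psi_{t}+(\varphi_{x}^{I,0}+M)v_{x}^{I,1}+(\psi_{x}+\Phi_{x})v_{x}^{I,0}-F$ with $F:=\Phi_{xx}-\Phi_{t}$, and bootstrapping together with the $x$-differentiated equations (each step trading one time derivative for two spatial ones), I expect to reach $\|\partial_{t}^{k}\varphi^{I,1}\|_{L_{T}^{2}H^{6-2k}}\le c(v_{\ast},T)$ for $k=0,1,2,3$; inserting this into the explicit formula for $v^{I,1}$ then gives $\|v^{I,1}\|_{L_{T}^{\infty}H^{5}}+\|\partial_{t}^{k}v^{I,1}\|_{L_{T}^{2}H^{7-2k}}\le c(v_{\ast},T)$, which is exactly \eqref{vfi-I-1-V-i-1}.

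Since the problem is linear there is no genuine nonlinear obstruction, so I expect the difficulties to be two bookkeeping issues. First, the $v^{I,1}$-equation carries no diffusion and only regularizes in time, so the bootstrap must be arranged so that the unavoidable one-derivative-in-$x$ loss through the term $v_{x}^{I,1}$ is always absorbed at the right regularity level — this is precisely why $\varphi^{I,1}$ is asked in $H^{6}$ while $v^{I,1}$ is only in $H^{5}$, and why the time-derivative norms read $H^{6-2k}$ versus $H^{7-2k}$; getting every index to close against the regularity actually available in Lemmas~\ref{lem-regul-outer-layer-0}--\ref{lem-first-bd-rt} is the main care needed. Second, running the time-differentiated estimates requires verifying the boundary compatibility of the problems for $\partial_{t}^{k}\psi$; this reduces to the vanishing at $t=0$ of $\varphi^{B,1}$, $\varphi^{b,1}$ and their time derivatives up to the relevant order, which in turn follows from the compatibility conditions \eqref{compatibility-vfi} together with $v^{B,0}(\cdot,0)=v^{b,0}(\cdot,0)=0$ via \eqref{vfi-bd-1ord-lt} and \eqref{firs-bd-1-rt}.
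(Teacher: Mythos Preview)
Your plan is correct and will close; the linear structure of \eqref{first-outer-problem}, the homogenization of the Dirichlet data via a lift built from the traces $\varphi^{B,1}(0,t)$, $\varphi^{b,1}(0,t)$, and the bootstrap ``time derivatives $\Rightarrow$ spatial derivatives'' are exactly what the paper does. The one genuine organizational difference is your decoupling step: you write $v^{I,1}$ explicitly via the Duhamel formula for the scalar ODE $v_{t}^{I,1}+(\varphi_{x}^{I,0}+M)v^{I,1}=-\varphi_{x}^{I,1}v^{I,0}$ and feed it back into the $\psi$-equation, obtaining a single parabolic equation that is nonlocal in time. The paper never decouples; it keeps the system and runs paired energy estimates --- testing the $\tilde\varphi$-equation with $\tilde\varphi$ and $\tilde\varphi_{t}$, and the $v^{I,1}$-equation with $v^{I,1}$ and $v_{t}^{I,1}$, then combining (the cross term $(\varphi_{x}^{I,0}+M)v_{x}^{I,1}\tilde\varphi$ is integrated by parts in $x$ and closed against $\|v^{I,1}\|_{L^{2}}$). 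Your formula makes the one-derivative loss in $v^{I,1}$ completely transparent and gives the $L_{T}^{\infty}H^{5}$ bound on $v^{I,1}$ almost for free once $\varphi^{I,1}\in L_{T}^{2}H^{6}$ is known; the price is that the $\psi$-equation acquires a memory term $\int_{0}^{t}(\cdots)\psi_{xx}\,d\tau$, so the existence/contraction argument and the Gronwall step need an extra layer (bounding the double time integral by $T\int_{0}^{t}\|\psi_{x}\|^{2}$, or an integral Gronwall). The paper's coupled estimates avoid this nonlocality but require tracking both unknowns simultaneously at every stage. Either route reaches \eqref{vfi-I-1-V-i-1}; your version is a legitimate and slightly more conceptual alternative.
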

\begin{proof}
 The local existence and uniqueness of solutions to the problem \eqref{first-outer-problem} on $ (\varphi ^{I,1}, v ^{I,1}) $ can be proved by the classical PDE theory for linear parabolic equations (cf. \cite[Section  7.1]{evans-book}) along with the fixed point theorem. In the following, we will devote ourselves to establishing some \emph{a priori} estimates from which the global existence and the desired regularity of the solution follow.

 Denote $ b(x,t):=x \varphi ^{b,1}(1,t)+(1-x)\varphi ^{B,1}(0,t) $ and $ \tilde{\varphi}:=\varphi ^{I,1}+b(x,t) $ with
\begin{gather}\label{bdry-value-vfi-B-b-1}
\begin{cases}
  \displaystyle  \varphi ^{B,1}(0,t)= -\int _{0}^{\infty}(\varphi _{x} ^{I,0}(0,t)+M)\left(    {\mathop{\mathrm{e}}}^{v ^{B,0}(y,t)}-1 \right) \mathrm{d}y,
   \nonumber \\[3mm]
   \displaystyle \varphi ^{b,1}(0,t)=\int _{- \infty}^{0}(\varphi _{x} ^{I,0}(1,t)+M)\left(    {\mathop{\mathrm{e}}}^{v ^{b,0}(y,t)}-1 \right)\mathrm{d}y.
\end{cases}
\end{gather}
Then we deduce from \eqref{first-outer-problem} that
  \begin{gather}\label{eq-vfi-I-1-refor}
  \displaystyle  \begin{cases}
    \displaystyle \tilde{\varphi}_{t}=\tilde{\varphi}_{xx}-(\varphi _{x}^{I,0}+M)v _{x}^{I,1}-\tilde{\varphi}_{x}v _{x}^{I,0}+f _{1}(x,t),\\
    \displaystyle   v_{t} ^{I,1}=-\left( \varphi _{x}^{I,0}+M \right)v ^{I,1}-\tilde{\varphi}_{x}v ^{I,0}+f _{2}(x,t),\\
    \displaystyle \tilde{\varphi}(0,t)=\tilde{\varphi}(1,t)=0,\\
    \displaystyle (\tilde{\varphi}, v ^{I,1})(x,0)=(0,0),\\
  \end{cases}
  \end{gather}
  where the fact $ v ^{B,0}(z,0)=v ^{b,0}(\xi,0)=0 $ has been used, and $ f _{i}(x,t)~(i=1,2) $ are given by
  \begin{gather}\label{f-1-2-defi}
  \displaystyle f _{1}(x,t):=b _{t}+b _{x}v _{x}^{I,0},\ \  f _{2}(x,t):= b _{x}v ^{I,0},\ \ k=0,1.
  \end{gather}
 To ensure the desired regularity of the solution, it is necessary to derive some estimates for the source terms involved. By \eqref{con-vfi-v-I-0-regula}, \eqref{v-B-0-regularity}, \eqref{vfi-bd-I-0}, \eqref{vfi-I-0-x-bd-infty} and \eqref{con-exp-v-B-0-final}, we deduce for $ k=0,1,2 $ that
  \begin{align*}
  \displaystyle \|\partial _{t}^{k}\varphi ^{B,1}(0,t)\|_{L ^{2}(0,T)}^{2}& \leq c _{0}\sum _{j=0}^{k}\int _{0}^{T}\left\vert \int _{0}^{\infty} \partial _{t}^{k-j}( \varphi_{x} ^{I,0}(0,t)+M)\partial _{t}^{j}\left(    {\mathop{\mathrm{e}}}^{v ^{B,0}(y,t)}-1 \right) \mathrm{d}y\right\vert ^{2}\mathrm{d}t
   \nonumber \\
   &\displaystyle \leq c _{0}\sum _{j=0}^{k}\|\partial _{t}^{k-j}(\varphi _{x}^{I,0}(0,t)+M)\|_{L ^{\infty}(0,T)} \int _{0}^{T}\left\vert \int _{0}^{\infty} \partial _{t}^{j}\left(    {\mathop{\mathrm{e}}}^{v ^{B,0}(y,t)}-1 \right) \mathrm{d}y\right\vert ^{2}\mathrm{d}t
    \nonumber \\
       & \displaystyle\leq c(v _{\ast}, T)\sum _{j=0}^{k}\int _{0}^{\infty}\langle y \rangle ^{-2}\mathrm{d}y \|\langle z \rangle \partial _{t}^{j}( {\mathop{\mathrm{e}}}^{v ^{B,0}}-1)\|_{L _{T}^{2}L _{z}^{2}}^{2}
        \nonumber \\
        &\leq c(v _{\ast}, T),
  \end{align*}
  and for $ k=3 $ that
  \begin{align*}
  \displaystyle  \|\partial _{t}^{3}\varphi ^{B,1}(0,t)\|_{L ^{2}(0,T)}^{2}&
      \leq c _{0}\sum _{j=1}^{3}\int _{0}^{T}\left\vert \int _{0}^{\infty} \partial _{t}^{3-j}(\varphi _{x}^{I,0}(0,t)+M)\partial _{t}^{j}\left(    {\mathop{\mathrm{e}}}^{v ^{B,0}(y,t)}-1 \right) \mathrm{d}y\right\vert ^{2}\mathrm{d}t
   \nonumber \\
   & \displaystyle \quad + c _{0}\int _{0}^{T}\left\vert \int _{0}^{\infty}\partial _{t}^{3}\varphi _{x}^{I,0}(0,t)\left(     {\mathop{\mathrm{e}}}^{v ^{B,0}(y,t)}-1 \right)\mathrm{d}y  \right\vert ^{2}\mathrm{d}t
   \nonumber \\
   & \leq c(v _{\ast}, T) \int _{\mathbb{R}_{+}}\langle y \rangle ^{-2}\mathrm{d}y\|\partial _{t}^{3}\varphi _{x}^{I,0}(0,t)\|_{L ^{2}(0,T)}^{2}\|\langle z \rangle( {\mathop{\mathrm{e}}}^{v ^{B,0}}-1)\|_{L _{T}^{\infty}L _{z}^{2}}^{2}
    \nonumber \\
    & \displaystyle \quad+
   c(v _{\ast}, T)\sum _{j=1}^{3}\int _{\mathbb{R}_{+}}\langle y \rangle ^{-2}\mathrm{d}y \|\langle z \rangle\partial _{t}^{j}( {\mathop{\mathrm{e}}}^{v ^{B,0}}-1)\|_{L _{T}^{2}L _{z}^{2}}^{2}
    \nonumber \\
    & \displaystyle\leq c(v _{\ast}, T).
  \end{align*}
Thus it holds for $ k=0,1,2 ,3$ that
  \begin{gather}\label{esti-vfi-B-0-BDY}
  \displaystyle  \|\partial _{t}^{k}\varphi ^{B,1}(0,t)\|_{L ^{2}(0,T)}^{2} \leq c(v _{\ast}, T).
  \end{gather}
Similarly, by \eqref{con-vfi-v-I-0-regula}, \eqref{vfi-bd-I-0} and \eqref{con-b-0}, we have for $ \varphi ^{b,0}(0,t) $ that
  \begin{gather}\label{esti-vfi-b-0-bd}
  \displaystyle   \|\partial _{t}^{k}\varphi ^{b,1}(0,t)\|_{L ^{2}(0,T)}^{2} \leq c(v _{\ast}, T)\ \ \ \mbox{for}\ k=0,1,2,3.
  \end{gather}
  With \eqref{con-vfi-v-I-0-regula}, \eqref{esti-vfi-B-0-BDY} and \eqref{esti-vfi-b-0-bd}, recalling the definitions of $ f _{1} $ and $ f _{2} $ in \eqref{f-1-2-defi}, we get for $ k=0,1,2 $ that
  \begin{subequations}\label{f-1-f2-esti}
  \begin{align}
  \displaystyle \|\partial _{t}^{k}f _{1}\|_{L _{T}^{2}H ^{4-2k}}^{2}
     &\displaystyle \leq c(v _{\ast}, T) \left(  \|\partial _{t}^{k+1}\varphi ^{B,1}(0,t)\|_{L ^{2}(0,T)}^{2}+ \|\partial _{t}^{k+1}\varphi ^{b,1}(0,t)\|_{L ^{2}(0,T)}^{2} \right)
   \nonumber \\
   & \displaystyle \quad + c _{0}\sum _{j=0}^{k}\left(  \|\partial _{t}^{j}\varphi ^{B,1}(0,t)\|_{L ^{2}(0,T)}^{2}+ \|\partial _{t}^{j}\varphi ^{b,1}(0,t)\|_{L ^{2}(0,T)}^{2}  \right)
    \nonumber \\
    & \displaystyle \quad\quad \quad~\quad \quad\times\|\partial _{t}^{k-j}v ^{I,0}\| _{L _{T}^{\infty}H ^{5-2k}}^{2} \leq c(v _{\ast}, T), \label{f-1-esti}\\
    \displaystyle  \|\partial _{t}^{k}f _{2}\|_{L ^{2}(0,T;H ^{5-2k})}^{2} & \leq c _{0}\sum _{j=0}^{k}\left(  \|\partial _{t}^{j}\varphi ^{B,1}(0,t)\|_{L ^{2}(0,T)}^{2}+ \|\partial _{t}^{j}\varphi ^{b,1}(0,t)\|_{L ^{2}(0,T)}^{2}  \right)
   \nonumber \\
   & \displaystyle \quad\quad \quad~ \quad\times\|\partial _{t}^{k-j}v ^{I,0}\| _{L _{T}^{\infty}H ^{5-2k}}^{2} \leq c(v _{\ast}, T). \label{f-2-esti}
  \end{align}
  \end{subequations}
  Now we are ready to establish estimates for the solution. Multiplying the first equation in \eqref{eq-vfi-I-1-refor} by $ \tilde{\varphi} $ and integrating the resulting equation over $ \mathcal{I} $, we have
  \begin{align}\label{vfi-outer-1-esti0}
  &\displaystyle \frac{1}{2}\frac{\mathrm{d}}{\mathrm{d}t}\int _{\mathcal{I} } \tilde{\varphi}^{2}\mathrm{d}x +\int _{\mathcal{I} }\tilde{\varphi}_{x}^{2} \mathrm{d}x
   \nonumber \\
   &~\displaystyle=-\int _{\mathcal{I} }(\varphi _{x}^{I,0}+M)v _{x}^{I,1} \tilde{\varphi} \mathrm{d}x- \int _{\mathcal{I} }\tilde{\varphi}_{x}v _{x}^{I,0} \tilde{\varphi} \mathrm{d}x+ \int _{\mathcal{I} }f _{1} \tilde{\varphi}\mathrm{d}x
   \nonumber \\
   &~ \displaystyle = \int _{\mathcal{I} }\varphi _{xx}^{I,0}v ^{I,1}\tilde{\varphi} \mathrm{d}x+\int _{\mathcal{I} }(\varphi _{x}^{I,0}+M)v ^{I,1} \tilde{\varphi}_{x} \mathrm{d}x +  \|v _{x}^{I,0}\|_{L ^{\infty}}\|\tilde{\varphi}_{x}\|_{L ^{2}}\|\tilde{\varphi}\|_{L ^{2}}+\|f _{1}\|_{L ^{2}}\|\tilde{\varphi}\|_{L ^{2}}      \nonumber \\
     &~\displaystyle \leq  \|\varphi _{xx}^{I,0}\|_{L ^{\infty}} \|v ^{I,1}\|_{L ^{2}}\|\tilde{\varphi}\|_{L ^{2}}+\frac{1}{16}\|\tilde{\varphi}_{x}\|_{L ^{2}}^{2}+ c(v _{\ast}, T) \left(  \|\tilde{\varphi}_{x}\|_{L ^{2}}\|v ^{I,1}\|_{L ^{2}}+ \|f _{1}\|_{L ^{2}}^{2}+\|\tilde{\varphi}\|_{L ^{2}}^{2}\right)
       \nonumber \\
       &~  \displaystyle \leq \frac{1}{8}\|\tilde{\varphi}_{x}\|_{L ^{2}}^{2} + c(v _{\ast}, T) \left( \|\tilde{\varphi}\|_{L ^{2}}^{2}+\|v ^{I,1}\|_{L ^{2}} ^{2} \right)+c(v _{\ast}, T) \|f _{1}\|_{L ^{2}}^{2},
  \end{align}
  where \eqref{con-vfi-v-I-0-regula}, integration by parts and the Cauchy-Schwarz inequality have been used. On the other hand, testing the second equation in \eqref{eq-vfi-I-1-refor} against $ v ^{I,1} $, we have
  \begin{align}\label{v-outer-1-esti0}
  &\displaystyle \frac{1}{2}\frac{\mathrm{d}}{\mathrm{d}t}\int _{\mathcal{I} }\vert v ^{I,1}\vert ^{2} \mathrm{d}x + \int _{\mathcal{I} } \left( \varphi _{x}^{I,0}+M \right)\vert v ^{I,1}\vert ^{2}\mathrm{d}x
  = \int _{\mathcal{I} }\left( -\tilde{\varphi}_{x}v ^{I,0}+f _{2} \right)v ^{I,1} \mathrm{d}x
   \nonumber \\
   &~ \displaystyle \leq c _{0}\int _{\mathcal{I} }\vert v ^{I,1}\vert ^{2} \mathrm{d}x+c _{0} \|v ^{I,0}\|_{L ^{\infty}}^{2} \|\tilde{\varphi}_{x}\|_{L ^{2}}^{2} +c _{0} \|f _{2}\|_{L ^{2}}^{2}
    \nonumber \\
    &~ \displaystyle \leq c  _{0}\int _{\mathcal{I} }\vert v ^{I,1}\vert ^{2} \mathrm{d}x+ c(v _{\ast}, T)\|\tilde{\varphi}_{x}\|_{L ^{2}}^{2} +c _{0} \|f _{2}\|_{L ^{2}}^{2},
  \end{align}
  where we have used \eqref{con-vfi-v-I-0-regula} and the Cauchy-Schwarz inequality. Combining \eqref{vfi-outer-1-esti0} with \eqref{v-outer-1-esti0} implies that
  \begin{align*}
  \displaystyle \frac{\mathrm{d}}{\mathrm{d}t}\int _{\mathcal{I} }\left( \tilde{\varphi}^{2}+\vert v ^{I,1}\vert ^{2} \right)  \mathrm{d}x+\int _{\mathcal{I} } \tilde{\varphi}_{x}^{2} \mathrm{d}x \leq c(v _{\ast}, T)\left( \|\tilde{\varphi}\|_{L ^{2}}^{2}+\|v ^{I,1}\|_{L ^{2}} ^{2} \right)+ c(v _{\ast}, T)\left(\|f _{1}\|_{L ^{2}}^{2}+ \|f _{2}\|_{L ^{2}}^{2}\right),
  \end{align*}
  where we have used the fact $ \varphi _{x}^{I,0} +M \geq 0 $ from \eqref{con-vfi-v-I-0-regula}.
  This along with \eqref{f-1-f2-esti} and the Gronwall inequality immediately yields for any $ t \in (0,T] $ that
  \begin{align}\label{CON-vfi-I-1-esti0}
    \displaystyle  \int _{\mathcal{I} }\left( \tilde{\varphi}^{2}+\vert v ^{I,1}\vert ^{2} \right) (\cdot,t) \mathrm{d}x+ \int _{0}^{t}\int _{\mathcal{I} }  \tilde{\varphi}_{x}^{2} \mathrm{d}x  \mathrm{d}\tau \leq c(v _{\ast}, T).
    \end{align}
    Multiplying the first equation in \eqref{eq-vfi-I-1-refor} by $ \tilde{ \varphi}_{t} $
    followed by an integration over $ \mathcal{I} $, it holds that
    \begin{align}\label{vfi-OUT-1-T}
   &\displaystyle \frac{1}{2}\frac{\mathrm{d}}{\mathrm{d}t}\int _{\mathcal{I} }\tilde{\varphi}_{x}^{2}  \mathrm{d}x+ \int _{\mathcal{I} }\tilde{\varphi}_{t}^{2} \mathrm{d}x
     =
    - \int _{\mathcal{I} }\tilde{\varphi}_{x}v _{x}^{I,0} \tilde{\varphi}_{t} \mathrm{d}x
    + \int _{\mathcal{I} }f _{1} \tilde{\varphi}_{t}\mathrm{d}x
    -\int _{\mathcal{I} }(\varphi _{x}^{I,0}+M)v _{x}^{I,1} \tilde{\varphi}_{t} \mathrm{d}x.
    \end{align}
    By \eqref{con-vfi-v-I-0-regula} and the Cauchy-Schwarz inequality, we have
      \begin{align}\label{vfi-out-t-etsi}
      \displaystyle
    - \int _{\mathcal{I} }\tilde{\varphi}_{x}v _{x}^{I,0} \tilde{\varphi}_{t} \mathrm{d}x
    + \int _{\mathcal{I} }f _{1} \tilde{\varphi}_{t}\mathrm{d}x
     &\leq \frac{1}{4}\int _{\mathcal{I} }\tilde{ \varphi}_{t}^{2} \mathrm{d}x
     +c _{0} \|v _{x}^{I,0}\|_{L ^{\infty}}^{2}\|\tilde{\varphi}_{x}\|_{L ^{2}}^{2}+c  _{0}\|f _{1}\|_{L ^{2}}^{2}
       \nonumber \\
       & \displaystyle \leq  \frac{1}{4} \|\tilde{ \varphi}_{t}\|_{L ^{2}}^{2}
       +c(v _{\ast}, T)\|\tilde{\varphi }_{x}\|_{L ^{2}}^{2}
       + c_{0} \|f _{1}\|_{L ^{2}}^{2},
      \end{align}
      where we have used the fact $ \|v _{x}^{I,0}\|_{L _{T} ^{\infty}L ^{\infty}} \leq c(v _{\ast}, T) $
      due to \eqref{con-vfi-v-I-0-regula}. For the last term on the right hand
      side of \eqref{vfi-OUT-1-T}, we get by virtue of integration by parts
      and the Cauchy-Schwarz inequality that
      \begin{align}\label{vfi-out-t-etsi-1}
      &\displaystyle  -\int _{\mathcal{I} }(\varphi _{x}^{I,0}+M)v _{x}^{I,1} \tilde{\varphi}_{t} \mathrm{d}x
      = \int _{\mathcal{I} }(\varphi _{x}^{I,0}+M)v ^{I,1}\tilde{\varphi}_{tx} \mathrm{d}x
      +\int _{\mathcal{I} }\varphi _{xx}^{I,0}v ^{I,1}\tilde{\varphi}_{t} \mathrm{d}x
       \nonumber \\
       &~ \displaystyle =\frac{\mathrm{d}}{\mathrm{d}t}\int _{\mathcal{I} }(\varphi _{x}^{I,0}+M)v ^{I,1}\tilde{\varphi}_{x}  \mathrm{d}x
       - \int _{\mathcal{I} }\varphi _{xt}^{I,0}v ^{I,1}\tilde{\varphi }_{x} \mathrm{d}x
       - \int _{\mathcal{I} } (\varphi _{x}^{I,0}+M)v _{t}^{I,1}\tilde{\varphi}_{x} \mathrm{d}x
       +\int _{\mathcal{I} }\varphi _{xx}^{I,0}v ^{I,1}\tilde{\varphi}_{t} \mathrm{d}x
        \nonumber \\
        &~ \displaystyle \leq \frac{\mathrm{d}}{\mathrm{d}t}\int _{\mathcal{I} }(\varphi _{x}^{I,0}+M)v ^{I,1}\tilde{\varphi}_{x}  \mathrm{d}x
        + \|\varphi _{xt}^{I,0}\|_{L ^{\infty}}\|v ^{I,1}\|_{L ^{2}}\|\tilde{\varphi}_{x}\|_{L ^{2}}
        +c(v _{\ast}, T) \|v _{t}^{I,1}\|_{L ^{2}}\|\tilde{\varphi}_{x}\|_{L ^{2}}
         \nonumber \\
         &~\displaystyle \quad+\|\varphi _{xx}^{I,0}\|_{L ^{\infty}}\|v ^{I,1}\|_{L ^{2} }\|\tilde{\varphi}_{t}\|_{L ^{2}}
          \nonumber \\
          &~\displaystyle \leq \frac{\mathrm{d}}{\mathrm{d}t}\int _{\mathcal{I} }(\varphi _{x}^{I,0}+M)v ^{I,1}\tilde{\varphi}_{x}  \mathrm{d}x+\frac{1}{8}\|\tilde{\varphi}_{t}\|_{L ^{2}}^{2}+c(v _{\ast}, T) \|\tilde{\varphi}_{x}\|_{L ^{2}}^{2}+c(v _{\ast}, T) \left( \|v _{t}^{I,1}\|_{L ^{2}}^{2}+ \|v^{I,1}\|_{L ^{2}}^{2}\right),
      \end{align}
            where we have used $ \|\varphi _{xx}^{I,0}\|_{L _{T}^{\infty}L ^{\infty}}+\|\varphi _{xt}^{I,0}\|_{L _{T}^{\infty}L ^{\infty}}\leq c(v _{\ast}, T)$ due to \eqref{con-vfi-v-I-0-regula}
            and Proposition \ref{prop-embeding-spacetime}.
            Collecting \eqref{vfi-out-t-etsi} and \eqref{vfi-out-t-etsi-1},
            we thus have from \eqref{vfi-OUT-1-T} that
      \begin{align}\label{pre-con-vfi-1-out}
      &\displaystyle  \frac{1}{2}\frac{\mathrm{d}}{\mathrm{d}t}\int _{\mathcal{I} }\tilde{\varphi}_{x}^{2}  \mathrm{d}x
      -\frac{\mathrm{d}}{\mathrm{d}t}\int _{\mathcal{I} }
      (\varphi _{x}^{I,0}+M)v ^{I,1}\tilde{\varphi}_{x}  \mathrm{d}x
      + \frac{1}{2}\int _{\mathcal{I} }\tilde{\varphi}_{t}^{2} \mathrm{d}x
       \nonumber \\
       &~\displaystyle \leq c(v _{\ast}, T)(\|\tilde{\varphi }_{x}\|_{L ^{2}}^{2}+\|v^{I,1}\|_{L ^{2}}^{2})
       + c(v _{\ast}, T) \|f _{1}\|_{L ^{2}}^{2}+c(v _{\ast}, T)\|v _{t}^{I,1}\|_{L ^{2}}^{2}.
      \end{align}
      To control the term on $v _{t}^{I,1} $ on the right hand side of \eqref{pre-con-vfi-1-out}, we test the second equation in \eqref{eq-vfi-I-1-refor} against $ v _{t}^{I,1} $ and deduce that
      \begin{align}\label{multi-v-t-I-1}
      &\displaystyle \frac{1}{2}\frac{\mathrm{d}}{\mathrm{d}t}\int _{\mathcal{I} }\left( \varphi _{x}^{I,0}+M \right) \vert v ^{I,1}\vert ^{2} \mathrm{d}x+\int _{\mathcal{I} } \vert   v _{t}^{I,1}\vert ^{2}\mathrm{d}x
       \nonumber \\
       &~\displaystyle= \frac{1}{2}\int _{\mathcal{I} }\varphi _{xt}^{I,0}\vert v ^{I,1}\vert ^{2} \mathrm{d}x+\int _{\mathcal{I} }\left( -\tilde{\varphi}_{x}v ^{I,0}+ f _{2} \right)v _{t}^{I,1}  \mathrm{d}x
       \nonumber \\
       & ~\displaystyle \leq \frac{1}{2} \int _{\mathcal{I} }\vert v _{t}^{I,1}\vert ^{2} \mathrm{d}x+c _{0} \|\varphi _{xt}^{I,0}\|_{L ^{\infty}}\|v ^{I,1}\|_{L ^{2}}^{2}+c_{0}\|v ^{I,0}\|_{L ^{\infty}} ^{2} \|\tilde{\varphi}_{x}\|_{L ^{2}}^{2}+ c_{0}\|f _{2}\|_{L ^{2}}^{2}
        \nonumber \\
        & ~\displaystyle \leq  \frac{1}{2} \int _{\mathcal{I} }\vert v_{t} ^{I,1}\vert ^{2} \mathrm{d}x+c(v _{\ast}, T) \|v ^{I,1}\|_{L ^{2}}^{2}+c(v _{\ast}, T) \|\tilde{\varphi}_{x}\|_{L ^{2}}^{2}+c_{0}\|f _{2}\|_{L ^{2}} ^{2},
      \end{align}
      where we have used \eqref{vfi-I-0-x-bd-infty}. Therefore we get from \eqref{pre-con-vfi-1-out} and \eqref{multi-v-t-I-1} that
      \begin{align*}
     & \displaystyle    \frac{\mathrm{d}}{\mathrm{d}t}\int _{\mathcal{I} }\left( \tilde{\varphi}_{x}^{2}+( \varphi _{x}^{I,0}+M) \vert v ^{I,1}\vert ^{2} \right) (\cdot,t)  \mathrm{d}x+ \int _{\mathcal{I} }\left( \tilde{\varphi}_{t}^{2}+\vert v _{t}^{I,1}\vert ^{2} \right)  \mathrm{d}x
      \nonumber \\
      &~ \displaystyle \leq c(v _{\ast}, T)\left( \|\tilde{\varphi }_{x}\|_{L ^{2}}^{2}+\|v ^{I,1}\|_{L ^{2}}^{2} \right)
       + c(v _{\ast}, T) \left( \|f _{1}\|_{L ^{2}} ^{2}+\|f _{2}\|_{L ^{2}} ^{2} \right)+c(v _{\ast}, T)\frac{\mathrm{d}}{\mathrm{d}t}\int _{\mathcal{I} }(\varphi _{x}^{I,0}+M)v ^{I,1}\tilde{\varphi}_{x}  \mathrm{d}x.
      \end{align*}
      Integrating the above inequality over $ (0,t) $ for any $ t \in (0,T] $ yields that
      \begin{align*}
      &\displaystyle  \int _{\mathcal{I} }\left( \tilde{\varphi}_{x}^{2}+( \varphi _{x}^{I,0}+M) \vert v ^{I,1}\vert ^{2} \right)  (\cdot,t) \mathrm{d}x+\int _{0}^{t}\int _{\mathcal{I} }\left( \tilde{\varphi}_{\tau}^{2}+\vert  v _{\tau}^{I,1}\vert ^{2} \right) \mathrm{d}x \mathrm{d}\tau
       \nonumber \\
       &~ \displaystyle \leq c(v _{\ast}, T)+c(v _{\ast}, T)\int _{\mathcal{I} }(\varphi _{x}^{I,0}+M)v ^{I,1}\tilde{\varphi}_{x}  \mathrm{d}x+ c(v _{\ast}, T) \int _{0}^{t}  \left( \|f _{1}\|_{L ^{2}} ^{2}+\|f _{2}\|_{L ^{2}} ^{2} \right) \mathrm{d}\tau \nonumber \\
        &~\displaystyle \quad +c(v _{\ast}, T)\int _{0}^{t}\left( \|\tilde{\varphi }_{x}\|_{L ^{2}}^{2}+\|v ^{I,1}\|_{L ^{2}}^{2} \right) \mathrm{d}\tau
         \nonumber \\
         &~ \displaystyle \leq  \frac{1}{2}\int _{\mathcal{I} }\tilde{\varphi}_{x}^{2} \mathrm{d}x+c(v _{\ast}, T)+c(v _{\ast}, T)\int _{\mathcal{I} }\vert v ^{I,1}\vert ^{2} \mathrm{d}x+c(v _{\ast}, T)\int _{0}^{t}\left( \|\tilde{\varphi }_{x}\|_{L ^{2}}^{2}+\|v ^{I,1}\|_{L ^{2}}^{2} \right) \mathrm{d}\tau
          \nonumber \\
          &~ \displaystyle \leq  \frac{1}{2}\int _{\mathcal{I} }\tilde{\varphi}_{x}^{2} \mathrm{d}x+c(v _{\ast}, T) +c(v _{\ast}, T)\int _{0}^{t}\|\tilde{\varphi }_{x}\|_{L ^{2}}^{2} \mathrm{d}\tau,
      \end{align*}
      where we have used \eqref{f-1-f2-esti}, \eqref{CON-vfi-I-1-esti0} and the Cauchy-Schwarz inequality. We thus have
     \begin{align}\label{con-vfi-I-1-ESTI1}
     & \displaystyle  \int _{\mathcal{I} } \tilde{\varphi}_{x}^{2}(\cdot,t)   \mathrm{d}x+\int _{0}^{t}\int _{\mathcal{I} }\left( \tilde{\varphi}_{\tau}^{2}+\vert  v_{\tau} ^{I,1}\vert ^{2} \right) \mathrm{d}x \mathrm{d}\tau \leq c(v _{\ast}, T)+c(v _{\ast}, T)\int _{0}^{t}\|\tilde{\varphi }_{x}\|_{L ^{2}}^{2} \mathrm{d}\tau \leq c(v _{\ast}, T)  ,
      \end{align}
      where we have used \eqref{con-vfi-v-I-0-regula} and \eqref{CON-vfi-I-1-esti0}. We proceed to derive estimate for $ v _{x}^{I,1} $. Differentiating the second equation in \eqref{eq-vfi-I-1-refor} with respect to $ x $ leads to
\begin{align}\label{v-I-X-eq}
\displaystyle v _{tx}^{I,1}=-(\varphi _{x}^{I,0}+M)v _{x}^{I,1}-\varphi _{xx}^{I,0}v  ^{I,1} -\tilde{\varphi}_{xx}v ^{I,0}-\tilde{\varphi}_{x}v _{x}^{I,0}+ \partial _{x} f _{2}.
\end{align}
Multiplying \eqref{v-I-X-eq} by $ v _{x}^{I,1} $ followed by an integration over $ \mathcal{I} $, we have
\begin{align}\label{diff-ineq-vif-I-1}
&\displaystyle  \frac{1}{2}\frac{\mathrm{d}}{\mathrm{d}t}\int _{\mathcal{I} }\vert v _{x}^{I,1}\vert ^{2} \mathrm{d}x
+\int _{\mathcal{I} }(\varphi _{x}^{I,0}+M)\vert v _{x}^{I,1}\vert ^{2} \mathrm{d}x
 \nonumber \\
 &~\displaystyle = -\int _{\mathcal{I} }\varphi _{xx}^{I,0}v  ^{I,1} v _{x}^{I,1} \mathrm{d}x
 - \int _{\mathcal{I} }\tilde{\varphi}_{xx}v ^{I,0} v _{x}^{I,1} \mathrm{d}x
 -\int _{\mathcal{I} }\tilde{\varphi}_{x}v _{x}^{I,0} v _{x}^{I,1} \mathrm{d}x
 + \int _{\mathcal{I} } \partial _{x}f _{2} v _{x}^{I,1} \mathrm{d}x
  \nonumber \\
  & ~\displaystyle \leq \|\varphi _{xx}^{I,0}\|_{L ^{\infty}}\|v ^{I,1}\|_{L ^{2}}\|v _{x}^{I,1}\|_{L ^{2}}
  + \|v ^{I,0}\|_{L ^{\infty}}\|\tilde{\varphi}_{xx}\|_{L ^{2}}\|v _{x}^{I,1}\|_{L ^{2}}
   \nonumber \\
   &~\displaystyle \quad+\|v _{x}^{I,0}\|_{L ^{\infty}}\|\tilde{\varphi}_{x}\|_{L ^{2}}\|v _{x}^{I,1}\|_{L ^{2}}
   + \|\partial _{x}f _{2}\|_{L ^{2}} \|v _{x}^{I,1}\|_{L ^{2}}
   \nonumber \\
   &~ \leq \frac{1}{8}\|\tilde{\varphi}_{xx}\|_{L ^{2}}^{2}
   +c _{0} \left( \|\varphi _{xx}^{I,0}\|_{L ^{\infty}}^{2}+\|v ^{I,0}\|_{L ^{\infty}} ^{2}
   +\|v _{x}^{I,0}\|_{L ^{\infty}} ^{2}+\|v ^{I,0}\|_{L ^{2}}^{2} \right) \|v _{x}^{I,1}\|_{L ^{2}} ^{2}
    \nonumber \\
    &~\displaystyle \quad+c_{0}\left( \|v ^{I,1}\|_{L ^{2}}^{2}+\|\tilde{\varphi}_{x}\|_{L ^{2}}^{2}
    + \|\partial _{x}f _{2}\|_{L ^{2}}^{2} \right)
     \nonumber \\
     & ~\displaystyle \leq \frac{1}{8}\|\tilde{\varphi}_{xx}\|_{L ^{2}}^{2}
     +c(v _{\ast}, T) \|v _{x}^{I,1}\|_{L ^{2}}^{2}+c(v _{\ast}, T)
     +  c(v _{\ast}, T)\|\partial _{x}f _{2}\|_{L ^{2}}^{2},
\end{align}
where we have used \eqref{con-vfi-v-I-0-regula}, \eqref{con-vfi-I-1-ESTI1}, Proposition \ref{prop-embeding-spacetime} and the Cauchy-Schwarz inequality. On the other hand, with \eqref{con-vfi-v-I-0-regula}, \eqref{CON-vfi-I-1-esti0} and \eqref{con-vfi-I-1-ESTI1}, we deduce from $ \eqref{eq-vfi-I-1-refor} _{1} $ that
\begin{align*}
\displaystyle  \|\tilde{\varphi}_{xx}\|_{L ^{2}}^{2} &\leq c _{0}\|\tilde{\varphi}_{t}\|_{L ^{2}}^{2}+ c_{0}\|(\varphi _{x}^{I,0}+M)v _{x}^{I,1}\|_{L ^{2}}^{2}+ c_{0}\|v _{x}^{I,0}\|_{L ^{\infty}}^{2}\|\tilde{\varphi}_{x}\|_{L ^{2}}^{2}+  c_{0}\|f _{1}\|_{L ^{2}}^{2}
 \nonumber \\
   & \displaystyle \leq c(v _{\ast}, T) \left( 1+\|\tilde{\varphi}_{t}\|_{L ^{2}}^{2}+\|v _{x}^{I,1}\| _{L ^{2}}^{2}+\|f _{1}\|_{L ^{2}}^{2} \right),
\end{align*}
which together with \eqref{con-vfi-v-I-0-regula} and \eqref{diff-ineq-vif-I-1} yields that
\begin{align}\label{v-i-1-x-final}
\displaystyle  \frac{1}{2}\frac{\mathrm{d}}{\mathrm{d}t}\int _{\mathcal{I} }\vert v _{x}^{I,1}\vert ^{2} \mathrm{d}x+ \| \tilde{\varphi}_{xx}\|_{L ^{2}}^{2}
  \leq c(v _{\ast}, T)\left(1+ \|\tilde{\varphi}_{t}\|_{L ^{2}}^{2}+\|v _{x}^{I,1}\| _{L ^{2}}^{2}+ \|f _{1}\|_{L ^{2}}^{2} +\|\partial _{x}f _{2}\|_{L ^{2}}^{2}\right).
\end{align}
Applying the Gronwall inequality to \eqref{v-i-1-x-final}, by virtue of \eqref{f-1-f2-esti} and \eqref{con-vfi-I-1-ESTI1}, we then arrive at
\begin{align}\label{con-v-i-1-x}
\displaystyle  \int _{\mathcal{I} }\vert v _{x}^{I,1}\vert ^{2}(\cdot,t) \mathrm{d}x + \int _{0}^{t}\| \tilde{\varphi}_{xx}\|_{L ^{2}}^{2} \mathrm{d}\tau \leq c(v _{\ast}, T)
\end{align}
for any $ t \in (0,T] $. This along with \eqref{con-vfi-v-I-0-regula}, \eqref{f-2-esti} and \eqref{v-I-X-eq} further gives that $  \|\partial _{t}v _{x}^{I,1}\|_{L _{T}^{2}L ^{2}}\leq c(v _{\ast}, T) $. Denote by $ \psi: =\tilde{\varphi}_{t} $ and $ w:=v _{t}^{I,1} $. Then in view of \eqref{eq-vfi-I-1-refor} and the compatibility conditions of initial data, we have
\begin{align}\label{v-I-1-AUX-EQ}
\displaystyle  \begin{cases}
    \displaystyle \psi_{t}=\psi_{xx}-(\varphi _{x}^{I,0}+M)w _{x}^{I,1}- \psi_{x}v _{x}^{I,0}+\tilde{f} _{1}(x,t),\\[1mm]
    \displaystyle w _{t}=-\left( \varphi _{x}^{I,0}+M \right)w ^{I,1}- \psi_{x}v ^{I,0}+\tilde{f} _{2}(x,t),\\[1mm]
    \displaystyle \psi(0,t)= \psi(1,t)=0,\\[1mm]
    \displaystyle (\psi, w)(x,0)=(\tilde{\varphi}_{t} , v _{t}) \vert _{t=0}=(0,0),
  \end{cases}
\end{align}
where $ \tilde{f}_{i}(x,t)~(i=1,2) $ are given by
\begin{gather*}
\displaystyle  \tilde{f} _{1}(x,t)=- \varphi _{xt}^{I,0}v _{x}^{I,1} - \tilde{\varphi}_{x}v _{xt}^{I,0}+\partial _{t}f _{1}(x,t),\ \ \ \tilde{f} _{2}(x,t)=-\varphi _{xt}^{I,0} v ^{I,1}-\tilde{\varphi}_{x}v _{t}^{I,0}+ \partial _{t}f _{2}(x,t).
\end{gather*}
Thanks to \eqref{con-vfi-v-I-0-regula}, \eqref{f-1-f2-esti}, \eqref{con-vfi-I-1-ESTI1} and \eqref{con-v-i-1-x} and Proposition \ref{prop-embeding-spacetime}, we deduce that
\begin{align}\label{hti-f-esti-first-1}
\displaystyle \|\tilde{f}_{1}\|_{L _{T}^{2}L ^{2}}^{2}&  \leq \int _{0}^{T} \|\varphi _{xt}^{I,0}\|_{L ^{\infty}} ^{2} \|v _{x}^{I,1}\|_{L ^{2}}^{2}\mathrm{d}t+ \int _{0}^{T} \|v _{xt}^{I,0}\|_{L ^{\infty}}^{2}\|\tilde{\varphi}_{x}\|_{L ^{2}}^{2}\mathrm{d}t + \int _{0}^{T}\|\partial _{t}f _{1}(x,t)\|_{L ^{2}}^{2}\mathrm{d}t
 \nonumber \\
 & \displaystyle \leq c(v _{\ast}, T) \int _{0}^{T}\|\varphi _{xt}^{I,0}\|_{L ^{\infty}} ^{2} \mathrm{d}t+c(v _{\ast}, T) \int _{0}^{T}\|v _{xt}^{I,0}\| _{L ^{\infty}}^{2}\mathrm{d}t+ \int _{0}^{T}\|\partial _{t}f _{1}(x,t)\|_{L ^{2}}^{2}\mathrm{d}t  \nonumber \\
  &\displaystyle\leq c(v _{\ast}, T),
\end{align}
\begin{align}\label{hti-f-esti-first-2}
\displaystyle \|\tilde{f}_{2}\|_{L _{T}^{2}L ^{2}}^{2}&  \leq \int _{0}^{T} \|\varphi _{xt}^{I,0}\|_{L ^{2}}^{2} \|v^{I,1}\|_{L ^{\infty}} ^{2}\mathrm{d}t+ \int _{0}^{T} \|v _{t}^{I,0}\|_{L ^{\infty}}^{2}\|\tilde{\varphi}_{x}\|_{L ^{2}}^{2}\mathrm{d}t + \int _{0}^{T}\|\partial _{t} f _{2}(x,t)\|_{L ^{2}}^{2}\mathrm{d}t
 \nonumber \\
 & \displaystyle \leq c(v _{\ast}, T) \int _{0}^{T}\|\varphi _{xt}^{I,0}\|_{L ^{2}} ^{2} \mathrm{d}t+c(v _{\ast}, T) \int _{0}^{T}\|v _{t}^{I,0}\| _{L ^{\infty}}^{2}\mathrm{d}t+c(v _{\ast}, T) \leq c(v _{\ast}, T)
\end{align}
and
\begin{align*}
\displaystyle \|\partial _{x} \tilde{f}_{2}\| _{L _{T}^{2}L ^{2}}^{2} &\leq \int _{0}^{T}\left( \|\varphi _{xxt}^{I,0}\| _{L ^{2}}^{2}\|v ^{I,1}\|_{L ^{\infty}}^{2}+\|\varphi _{xt}^{I,0}\|_{L ^{\infty}}^{2}\|v _{x}^{I,1}\| _{L ^{2}}^{2}\right)\mathrm{d}t
 \nonumber \\
 & \displaystyle \quad+\int _{0}^{T} \left( \|\tilde{\varphi}_{xx}\|_{L ^{2}}^{2}\|v _{t}^{I,0}\|_{L ^{\infty}}^{2}+ |\tilde{\varphi}_{x}\|_{L ^{\infty}}^{2}\|v _{xt}^{I,0}\|_{L ^{2}}^{2}+\|\partial _{t}\partial _{x}f _{2}\|_{L ^{2}}^{2}\right)  \mathrm{d}t
  \nonumber \\
  & \leq c(v _{\ast}, T) \int _{0}^{T}\left( \|\varphi _{xxt}^{I,0}\|_{L ^{2}}^{2}+\|\varphi _{x}^{I,0}\|_{L ^{\infty}}^{2}+\|\tilde{\varphi}_{x}\|_{H ^{1}}^{2}+\|\partial _{t}\partial _{x}f _{2}\|_{L ^{2}}^{2} \right) \mathrm{d}t \leq c(v _{\ast}, T).
\end{align*}
Therefore by the above procedure for estimates on $ (\tilde{\varphi},v ^{I,1}) $, we conclude for any $ t \in (0,T] $ that
\begin{align*}
\displaystyle \left( \|\psi(\cdot,t)\|_{H ^{1}}^{2}+\|w(\cdot,t)\|_{H ^{1}}^{2} \right)+ \int _{0}^{t}\left( \left\|\psi _{x}\right\|_{H ^{1}}^{2}+\left\|\psi _{\tau}\right\|_{L ^{2}}^{2}+\|w _{\tau}\| _{H ^{1}}^{2}\right)\mathrm{d}\tau  \leq c(v _{\ast}, T).
\end{align*}
That is,
\begin{align}\label{tld-v-t-all}
\displaystyle  \Big( \|\tilde{\varphi}_{t}(\cdot,t)\|_{H ^{1}}^{2}+\|v _{t} ^{I,1}(\cdot,t)\|_{H ^{1}}^{2} \Big)+ \int _{0}^{t}\Big( \|\tilde{\varphi} _{x \tau}\|_{H ^{1}}^{2}+\|\tilde{\varphi}_{\tau \tau} \|_{L ^{2}}^{2}+\|v _{\tau \tau}^{I,1}\| _{H^{1}}^{2}\Big)\mathrm{d}\tau  \leq c(v _{\ast}, T) .
\end{align}
With \eqref{con-vfi-v-I-0-regula}, \eqref{f-1-esti}, \eqref{CON-vfi-I-1-esti0}, \eqref{con-vfi-I-1-ESTI1}, \eqref{con-v-i-1-x} and \eqref{tld-v-t-all}, we deduce from \eqref{eq-vfi-I-1-refor} that
\begin{align}\label{esti-hti-v-xxx}
 \displaystyle \int _{0}^{t}\|\partial _{x}^{3}\tilde{\varphi}\|_{L ^{2}}^{2}\mathrm{d}\tau  &\leq  c _{0}\int _{0}^{t}\|\tilde{\varphi}_{x \tau}\|_{L ^{2}}^{2}\mathrm{d}\tau + c(v _{\ast}, T)\int _{0}^{t}\|v _{xx}^{I,1}\|_{L ^{2}}^{2}\mathrm{d}\tau+c _{0}\int _{0}^{t}\|\varphi _{xx}^{I,0}\|_{L ^{\infty}}^{2}\|v _{x}^{I,1}\|_{L ^{2}}^{2}\mathrm{d}\tau
  \nonumber \\
  & \displaystyle \quad+ c _{0}\int _{0}^{t} \|v _{x}^{I,0}\|_{L ^{\infty}}^{2}\|\tilde{\varphi}_{xx}\|_{L ^{2}}^{2}\mathrm{d}\tau +  c_{0}\int _{0}^{t}\|\tilde{\varphi}_{x}\|_{L ^{2}}^{2}\|v _{x}^{I,0}\|_{L ^{\infty}}^{2}\mathrm{d}\tau+c_{0}
  \int _{0}^{t}\| \partial _{x} f _{1}\|_{L ^{2}}^{2}\mathrm{d}\tau
   \nonumber \\
   & \displaystyle \leq c(v _{\ast}, T)+c(v _{\ast}, T)  \int _{0}^{t} \|v_{xx}^{I,1}\|_{L ^{2}}^{2}\mathrm{d}\tau
 \end{align}
 for any $ t \in (0,T] $. Differentiating \eqref{v-I-X-eq} with respect to $ x $ gives
 \begin{gather}\label{v-xx-I-1}
 \displaystyle  \displaystyle v _{txx}^{I,1}=-(\varphi _{x}^{I,0}+M)v _{xx}^{I,1}-2\varphi _{xx}^{I,0}v  _{x}^{I,1}- \partial _{x}^{3}\varphi^{I,0}v ^{I,1} - \partial _{x}^{3}\tilde{\varphi}v ^{I,0}-2\tilde{\varphi}_{xx}v _{x}^{I,0}-\tilde{\varphi}_{x}v _{xx}^{I,0}+ \partial _{x}^{2} f _{2}.
 \end{gather}
 Testing \eqref{v-xx-I-1} against $ v _{xx}^{I,1} $, we have
 \begin{align}\label{v-xx-I-1-diff}
 &\displaystyle \frac{1}{2}\frac{\mathrm{d}}{\mathrm{d}t}\int _{\mathcal{I} }\vert v _{xx} ^{I,1}\vert ^{2} \mathrm{d}x+\int _{\mathcal{I} }(\varphi _{x}^{I,0}+M) \vert v _{xx} ^{I,1}\vert ^{2}  \mathrm{d}x
  \nonumber \\
  &~\displaystyle =- \int _{\mathcal{I} } 2\varphi _{xx}^{I,0}v  _{x}^{I,1} v _{xx}^{I,1}\mathrm{d}x- \int _{\mathcal{I} } \varphi _{xxx}^{I,0}v ^{I,1}  v _{xx}^{I,1} \mathrm{d}x- \int _{\mathcal{I} }\tilde{\varphi}_{xxx}v ^{I,0} v _{xx}^{I,1} \mathrm{d}x
   \nonumber \\
   &~\displaystyle \quad - \int _{\mathcal{I} }2\tilde{\varphi}_{xx}v _{x}^{I,0} v _{xx}^{I,1} \mathrm{d}x- \int _{\mathcal{I} }\tilde{\varphi}_{x}v _{xx}^{I,0}  v _{xx}^{I,1}\mathrm{d}x+ \int _{\mathcal{I} }\partial _{x}^{2} f _{2} v _{xx}^{I,1} \mathrm{d}x
    \nonumber \\
    &~\displaystyle \leq 2 \|\varphi _{xx}^{I,0}\|_{L ^{\infty}}\|v _{x}^{I,1}\|_{L ^{2}}\|v _{xx}^{I,1}\|_{L ^{2}}+\|v ^{I,1}\|_{L ^{\infty}}\| \partial _{x}^{3}\varphi ^{I,0}\|_{L ^{2}}^{2}\|v _{xx}^{I,1}\|_{L ^{2}}+\|\partial _{x}^{3}\tilde{\varphi}\|_{L ^{2}}\|v ^{I,0}\|_{L ^{\infty}}\|v _{xx}^{I,1}\|_{L ^{2}}
     \nonumber \\
     &~\displaystyle \quad +2 \|v _{x}^{I,0}\|_{L ^{\infty}} \|\tilde{\varphi}_{xx}\|_{L ^{2}}\|v _{xx}^{I,1}\|_{L ^{2}}+
     \|v _{x}^{I,0}\|_{L ^{\infty}}\|\tilde{\varphi}_{x}\|_{L ^{2}}\|v _{x}^{I,1}\|_{L ^{2}}+\|\partial _{x}^{2}f _{2}\|_{L ^{2}}\|v _{xx}^{I,1}\|_{L ^{2}}
      \nonumber \\
      & ~ \displaystyle\leq c(v _{\ast}, T) \|v _{xx}^{I,1}\|_{L ^{2}}^{2}+c(v _{\ast}, T)\left( 1+\|\tilde{\varphi}_{xx}\|_{L ^{2}} ^{2}+ \|\partial _{x}^{3}\tilde{\varphi}\|_{L ^{2}}^{2}+\|\partial _{x}^{2}f _{2}\|_{L ^{2}} ^{2} \right),
 \end{align}
 where we have used \eqref{con-vfi-v-I-0-regula}, \eqref{con-vfi-I-1-ESTI1}, \eqref{tld-v-t-all}, Proposition \ref{prop-embeding-spacetime} and the Cauchy-Schwarz inequality. Integrating \eqref{v-xx-I-1-diff} over $ (0,t) $ gives
 \begin{align*}
 \displaystyle  \int _{\mathcal{I} }\vert v _{xx} ^{I,1}\vert ^{2} (\cdot,t)\mathrm{d}x+ \int _{0}^{t}\int _{\mathcal{I} }(\varphi _{x}^{I,0}+M)\vert v _{xx} ^{I,1}\vert ^{2}  \mathrm{d}x \mathrm{d}\tau  \leq c(v _{\ast}, T) \int _{0}^{t}\int _{\mathcal{I} }\left( \vert v _{xx} ^{I,1}\vert ^{2}+ \vert \partial _{x}^{3}\tilde{\varphi}\vert^{2} \right) \mathrm{d}x \mathrm{d}\tau +c(v _{\ast}, T),
 \end{align*}
 where we have used \eqref{f-2-esti} and \eqref{con-v-i-1-x}. This combined with \eqref{esti-hti-v-xxx} yields that
 \begin{align*}
 \displaystyle  \int _{\mathcal{I} }\vert v _{xx} ^{I,1}\vert ^{2} (\cdot,t)\mathrm{d}x+ \int _{0}^{t}\int _{\mathcal{I} }\left( (\varphi _{x}^{I,0}+M)\vert v _{xx} ^{I,1}\vert ^{2}+ \vert \partial _{x}^{3}\tilde{\varphi}\vert^{2} \right) \mathrm{d}x \mathrm{d}\tau \leq c(v _{\ast}, T)\int _{0}^{t}\int _{\mathcal{I} }\vert v _{xx} ^{I,1}\vert ^{2} \mathrm{d}x \mathrm{d}\tau +c(v _{\ast}, T).
 \end{align*}
 Applying the Gronwall inequality to the above inequality, we have
 \begin{align}\label{con-v-xx-I-1}
 \displaystyle  \int _{\mathcal{I} }\vert v _{xx} ^{I,1}\vert ^{2} (\cdot,t)\mathrm{d}x+ \int _{0}^{t}\int _{\mathcal{I} }\vert \partial _{x}^{3}\tilde{\varphi}\vert^{2}  \mathrm{d}x \mathrm{d}\tau  \leq c(v _{\ast}, T),
 \end{align}
 where $ \varphi _{x}^{I,0}+M \geq 0 $ from \eqref{con-vfi-v-I-0-regula} has been used. Similar to the proof of \eqref{con-v-xx-I-1}, we can derive that
 \begin{align}\label{con-v-xxx-I-1}
 \displaystyle  \int _{\mathcal{I} }\vert \partial _{x}^{3}  v  ^{I,1}\vert ^{2} (\cdot,t)\mathrm{d}x+ \int _{0}^{t}\int _{\mathcal{I} }\left\vert \partial _{x}^{4}\tilde{\varphi}\right\vert ^{2}\mathrm{d}x \mathrm{d}\tau  \leq c(v _{\ast}, T),
 \end{align}
 where we have used $ \|f _{2}\|_{L _{T}^{2}H ^{3}} \leq c(v _{\ast}, T) $ by \eqref{f-2-esti}. Furthermore, by \eqref{con-vfi-v-I-0-regula}, \eqref{f-2-esti}, \eqref{CON-vfi-I-1-esti0}, \eqref{con-vfi-I-1-ESTI1}, \eqref{con-v-i-1-x}, \eqref{tld-v-t-all}, \eqref{con-v-xx-I-1} and \eqref{con-v-xxx-I-1}, we deduce from $ \eqref{eq-vfi-I-1-refor}_{2} $ that $ \|\partial _{t}^{k} v ^{I,1}\|_{L _{T} ^{2}H ^{5-2k}}\leq c(v _{\ast}, T) $ for $ k=1,2 $. Hence, we conclude for the problem \eqref{eq-vfi-I-1-refor} that
\begin{gather}
\displaystyle  \|\partial _{t}^{k}\varphi^{I,1}\|_{L _{T}^{2}H ^{4-2k}}\leq c(v _{\ast}, T) \ \  \mbox{for } k=0,1,2,\label{summarize-vfi-I-1}\\
\displaystyle \| v ^{I,1}\|_{ L _{T}^{\infty}H ^{3}} \leq c(v _{\ast}, T) , \ \ \ \|\partial _{t}^{k}v ^{I,1}\|_{L _{T}^{2}H ^{5-2k}} \leq c(v _{\ast}, T) \ \ \mbox{for}\ k=1,2,\label{summarize-v-I1-1}
\end{gather}
provided $\| \partial _{t}^{k} f _{1} \|_{L _{T}^{2}H ^{2-2k}} \leq c(v _{\ast}, T) $, $ \|\partial _{t}^{k}f _{2} \|_{L _{T}^{2}H ^{3-2k}} \leq c(v _{\ast}, T) $ for $ k=0,1$. With \eqref{con-vfi-v-I-0-regula}, \eqref{f-1-f2-esti}, \eqref{summarize-vfi-I-1}, \eqref{summarize-v-I1-1} and Proposition \ref{prop-embeding-spacetime}, we can update the estimates in \eqref{hti-f-esti-first-1} and \eqref{hti-f-esti-first-2}, respectively, for $ \tilde{f}_{1} $ and $ \tilde{f}_{2} $ as
\begin{align*}
\displaystyle  \|\partial _{t}^{k} \tilde{f} _{1}\|_{L _{T}^{2}H ^{2-2k}} \leq c(v _{\ast}, T), \ \, \|\partial _{t}^{k}\tilde{f} _{2}\|_{L _{T}^{2}H ^{3-2k}} \leq c(v _{\ast}, T)
,\ \ \ k=0,1.
\end{align*}
Indeed, it holds for $ \tilde{f}_{1} $ that
\begin{align*}
 \displaystyle  \|\tilde{f}_{1}\|_{L _{T}^{2}H ^{2}}^{2}& \leq c_{0}\int _{0}^{T} \|\varphi _{xt}^{I,0}\|_{H ^{2}} ^{2} \|v _{x}^{I,1}\|_{H ^{2}}^{2}\mathrm{d}t
 + c_{0} \int _{0}^{T} \|v _{xt}^{I,0}\|_{H^{2}}^{2}\|\tilde{\varphi}_{x}\|_{H ^{2}}^{2}\mathrm{d}t
 + c_{0}\int _{0}^{T}\|\partial _{t}f _{1}(x,t)\|_{H^{2}}^{2}\mathrm{d}t
  \nonumber \\
  & \displaystyle \leq c_{0} \|\varphi _{t}^{I,0}\|_{L _{T} ^{\infty}H ^{3}}^{2} \int _{0}^{T}\|v ^{I,1}\|_{H ^{3}}^{2}\mathrm{d}t
  +c_{0}\|v _{t}^{I,0}\| _{L _{T} ^{\infty}H ^{3}}^{2}\int _{0}^{T} \|\tilde{\varphi}\|_{H ^{3}}^{2}\mathrm{d}t
   +c(v _{\ast}, T)
   \nonumber \\
   & \displaystyle \leq c(v _{\ast}, T),
 \end{align*}
 and
 \begin{align*}
 \displaystyle   \|\partial _{t}\tilde{f}_{1}\|_{L _{T}^{2}L ^{2}}^{2}
 & \displaystyle \leq c _{0}\int _{0}^{T} \left(
 \|\varphi _{ttx}^{I,0}\|_{L ^{\infty}}^{2}\|v _{x}^{I,1}\|_{L ^{2}}^{2}
 + \|\varphi _{xt}^{I,0}\|_{L ^{\infty}} ^{2} \|v _{xt}^{I,1}\|_{L ^{2}}^{2}
 \right) \mathrm{d}t
 + c_{0}\int _{0}^{T} \|\partial _{t}f _{1}\|_{L ^{2}}^{2}\mathrm{d}t
  \nonumber \\
  & \displaystyle \quad +c_{0}\int _{0}^{T}
  \left( \|v _{ttx}^{I,0}\|_{L ^{\infty}}^{2}\|\tilde{\varphi}_{x}\|_{L ^{2}}^{2}
  +\|v _{xt}^{I,0}\|_{L ^{\infty}}^{2}\|\tilde{\varphi}_{xt}\|_{L ^{2}}^{2}
  \right) \mathrm{d}t
   \nonumber \\
   & \displaystyle \leq c_{0}\int _{0}^{T}\left( \|v _{x}^{I,1}\|_{L ^{2}}^{2}
   +\|v _{xt}^{I,1}\|_{L ^{2}}^{2}+\|\tilde{\varphi}_{x}\|_{L ^{2}}^{2}
   +\|\tilde{\varphi}_{x t}\|_{L ^{2}}^{2}
   \right)\mathrm{d}t +c(v _{\ast}, T)
   \leq c(v _{\ast}, T),
 \end{align*}
For $ \tilde{f}_{2} $, it follows that
 \begin{align*}
 \displaystyle  \|\tilde{f}_{2}\|_{L _{T}^{2}H ^{3}}^{2}
 & \displaystyle \leq c_{0} \int _{0}^{T} \|\varphi _{x}^{I,0}\|_{H ^{3}}^{2}\|v ^{I,1}\|_{H ^{3}}^{2}\mathrm{d}t +c_{0} \int _{0}^{T}\|\tilde{\varphi}_{x}\|_{H ^{3}}^{2}\|v _{t}^{I,0}\|_{H ^{3}}^{2}\mathrm{d}t
  +c _{0} \int _{0}^{T}\|\partial _{t}f _{2}\|_{H^{3}}^{2}\mathrm{d}t
  \nonumber \\
  & \displaystyle \leq c_{0} \|\varphi ^{I,0}\| _{L _{T}^{\infty} H ^{4}}^{2}
   \int _{0}^{T}\|v ^{I,1}\|_{H ^{3}}^{2  }\mathrm{d}t
  +c_{0} \int _{0}^{T}\|\partial _{t}f _{2}\|_{H^{3}}^{2}\mathrm{d}t
   \nonumber \\
   & \displaystyle \quad+ c_{0} \|v _{t}^{I,0}\|_{L _{T}^{\infty}H ^{3}}^{2}
   \int _{0}^{T}\|\tilde{\varphi}\|_{H ^{4}}^{2}\mathrm{d}t
    \leq c(v _{\ast}, T),
 \end{align*}
 and
 \begin{align*}
 \displaystyle \|\partial _{t}\tilde{f}_{2}\|_{L _{T}^{2}H ^{1}}^{2}& \displaystyle \leq c_{0} \int _{0}^{T}\left( \|\varphi _{x tt}^{I,0}\|_{H ^{1}}^{2} \|v ^{I,1}\|_{H ^{1}}^{2}+ \|\varphi _{x t }^{I,0}\|_{H ^{1}}^{2} \|v _{t}^{I,1}\|_{H ^{1}}^{2} \right)\mathrm{d}t+c _{0} \int _{0}^{T}\|\partial _{t}^{2}f _{2}\|_{H ^{1}}^{2}\mathrm{d}t
  \nonumber \\
  & \displaystyle \quad + c_{0} \int _{0}^{T}\left(\left\|\tilde{\varphi}_{x t}\right\| _{ H ^{1}}^{2}\|v _{t}^{I,0}\|_{H ^{1}}^{2}+\|\tilde{\varphi}_{x}\|_{H ^{1}}^{2}\|v _{tt}^{I,0}\|_{H ^{1}}^{2}  \right) \mathrm{d}t
   \nonumber \\
   & \displaystyle \leq  c_{0}\int _{0}^{T}\left( \|v ^{I,1}\|_{H ^{1}}^{2}+ \|v _{t} ^{I,1}\|_{H ^{1}}^{2}
   +\left\|\tilde{\varphi}_{x t}\right\| _{ H ^{1}}^{2}+\|\tilde{\varphi}_{x}\|_{H ^{1}}^{2}
   \right)\mathrm{d}t+c(v _{\ast}, T) \leq c(v _{\ast}, T).
 \end{align*}
 Here we have used the Sobolev inequality  $ \|fg\|_{H ^{k}(\mathbb{R}_{+})} \leq C _{k}\|f\|_{H ^{k}(\mathbb{R}_{+})} \|g\|_{H ^{k}(\mathbb{R}_{+})}$ for any integer $ k \geq 1 $. On the other hand, it can be verified that the initial value of the problem \eqref{v-I-1-AUX-EQ} is compatible up to order one. Therefore, by the similar arguments as proving \eqref{summarize-vfi-I-1} and \eqref{summarize-v-I1-1}, we have for the problem \eqref{v-I-1-AUX-EQ} that
 \begin{gather}
\displaystyle  \|\partial _{t}^{k}\psi\|_{L _{T}^{2}H ^{4-2k}} \leq c(v _{\ast}, T) \  \mbox{ for } k=0,1,2,\label{summarize-vfi-I-2}\\
\displaystyle \|w\|_{L _{T}^{\infty}H ^{3}}+ \|\partial _{t}^{k}w \|_{L _{T}^{2}H ^{5-2k}}\leq c(v _{\ast}, T) \ \ \mbox{for}\ k=1,2.\label{summarize-v-I1-2}
\end{gather}
Collecting estimates \eqref{summarize-vfi-I-1}, \eqref{summarize-v-I1-1}, \eqref{summarize-vfi-I-2} and \eqref{summarize-v-I1-2}, making also use of \eqref{con-vfi-v-I-0-regula} and \eqref{f-1-f2-esti}, one can deduce
\begin{align*}
\displaystyle \|\varphi ^{I,1}\|_{L _{T}^{2} H ^{6}} + \|v ^{I,1}\|_{L  _{T}^{\infty} H ^{5}}+ \|\partial _{t}^{k}v ^{I,1}\| _{L _{T}^{2}H ^{7-2k}} \leq c(v _{\ast}, T)  \ \mbox{ for }\   k=1,2,3,
\end{align*}
and ultimately obtain \eqref{vfi-I-1-V-i-1}. The proof of Lemma \ref{lemf-vfi-V-I-1} is complete.
\end{proof}

% section v_b_1 (end)
With Lemmas \ref{lem-regul-outer-layer-0}--\ref{lemf-vfi-V-I-1} at hand, we proceed to study the problems \eqref{second-bd-eq} and \eqref{sec-bd-eq-rt}.
\begin{lemma}\label{lem-v-B-1}
Assume the conditions in Lemmas \ref{lem-regul-outer-layer-0}, \ref{lem-v-B-0} and \ref{lemf-vfi-V-I-1} hold. Then the problem \eqref{second-bd-eq} admits a unique solution $ (v ^{B,1}, \varphi ^{B,2}) $ on $ [0,T] $ for any $ T \in (0,\infty) $ which satisfies, for any $ l \in \mathbb{N} $,
\begin{gather}\label{con-vfi-v-B-2}
\|\langle z \rangle ^{l}\partial _{t}^{k}v ^{B,1}\|_{L _{T}^{2}H _{z}^{6-2k}} +  \|\langle z \rangle ^{l}\partial _{t}^{j}\varphi ^{B,2}\|_{L _{T}^{2}H _{z}^{6-2j}} \leq c(v _{\ast}, T),
\end{gather}
 where $ k=0,1,2,3$, and $ j=0,1,2 $.
\end{lemma}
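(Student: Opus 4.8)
The strategy is to follow the same blueprint used for Lemma \ref{lem-v-B-0}, with the new features being (i) the linear (not quasilinear) structure of the $v^{B,1}$-equation, whose coefficients and forcing terms are now built from the already-controlled profiles $(\varphi^{I,0},v^{I,0})$, $(\varphi^{I,1},v^{I,1})$, $v^{B,0}$, $\varphi^{B,1}$, and (ii) the fact that $\varphi^{B,2}$ is determined \emph{explicitly} from $v^{B,0}$ and $v^{B,1}$ by integrating the first equation of \eqref{second-bd-eq} in $z$ from $+\infty$, using the exponential decay \eqref{profile-decay}. So the plan is: first solve for $v^{B,1}$ by weighted energy estimates, then recover $\varphi^{B,2}$ by quadrature and propagate the weighted bounds.

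\emph{Step 1: reduction to a forced linear heat equation with homogeneous boundary data.} As in the proof of Lemma \ref{lem-v-B-0}, I introduce the corrector $\psi_1(z,t)=\eta(z)v^{B,1}(0,t)=-\eta(z)v^{I,1}(0,t)$ with $\eta$ as in \eqref{eta-defi}, and set $\vartheta_1:=v^{B,1}-\psi_1$ so that $\vartheta_1$ solves a problem of the form
\begin{gather*}
\vartheta_{1,t}=\vartheta_{1,zz}-(\varphi_x^{I,0}(0,t)+M)\vartheta_1+\varrho_1,\quad \vartheta_1(0,t)=0,\ \vartheta_1(+\infty,t)=0,\ \vartheta_1(z,0)=0,
\end{gather*}
where the forcing $\varrho_1$ collects the corrector terms $\eta_{zz}v^{I,1}(0,t)-\eta\,\partial_t v^{I,1}(0,t)$ together with all the remaining terms of $\eqref{second-bd-eq}_2$: namely $-\varphi_z^{B,1}(v_x^{I,0}(0,t)z+v^{I,1}(0,t)+v^{B,1})$, $-(\varphi_{xx}^{I,0}(0,t)z+\varphi_x^{I,1}(0,t))v^{B,0}$, and $-\varphi_z^{B,2}(v^{I,0}(0,t)+v^{B,0})$. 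The point is that each factor in these terms is, by Lemmas \ref{lem-regul-outer-layer-0}, \ref{lem-v-B-0}, \ref{lemf-vfi-V-I-1} and the trace bounds \eqref{vfi-I-0-x-bd-infty}, \eqref{esti-vfi-B-0-BDY}, already bounded in the weighted spaces $L_T^2 H_z^{m}$ with any polynomial weight $\langle z\rangle^l$; the linear-in-$z$ factors $v_x^{I,0}(0,t)z$ and $\varphi_{xx}^{I,0}(0,t)z$ are absorbed by simply raising the weight by one power. One subtlety: the term $\varphi_z^{B,2}(v^{I,0}(0,t)+v^{B,0})$ couples $v^{B,1}$ back to $\varphi^{B,2}$, so this cannot literally be treated as a known forcing in Step 1. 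I resolve this by substituting the representation $\varphi^{B,2}(z,t)=-\int_z^\infty \bigl[\text{(first line of \eqref{second-bd-eq})}\bigr]_{\text{rearranged}}\,dy$ — more precisely, integrating $\eqref{second-bd-eq}_1$ in $z$ — which expresses $\varphi_z^{B,2}$ linearly in $v^{B,1}, v_z^{B,1}$ plus known quantities; after this substitution the $v^{B,1}$-equation becomes a genuinely closed linear parabolic equation (still with lower-order, $z$-dependent but polynomially-bounded coefficients), and the standard weighted energy scheme applies.

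\emph{Step 2: weighted energy estimates and bootstrap in time derivatives.} Testing the $\vartheta_1$-equation against $\langle z\rangle^{2l}\vartheta_1$, then against $\langle z\rangle^{2l}\vartheta_{1,t}$, exactly as in \eqref{v-B-0-esti-0}–\eqref{con-vfi-B-0-1}, and using $\varphi_x^{I,0}(0,t)+M\ge 0$ to discard the good zeroth-order term, Gronwall gives $\|\langle z\rangle^l\partial_t^k\vartheta_1\|_{L_T^2H_z^{4-2k}}\le c(v_*,T)$ for $k=0,1,2$ after the usual commutator absorption of the weight derivatives $-2l\langle z\rangle^{2l-2}z\,\vartheta_{1,z}\vartheta_1$. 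Differentiating the equation once and twice in $t$ (the initial data being compatible up to the needed order, as recorded for \eqref{second-bd-eq}) and noting that $\partial_t^k\varrho_1$ retains $L_T^2H_z^{2-2k}$-bounds of size $c(v_*,T)$ by the profile regularity already established, the same scheme upgrades this to $k=0,1,2,3$ and, via the equation itself, to the $L_T^2H_z^{6-2k}$ regularity claimed in \eqref{con-vfi-v-B-2} for $v^{B,1}$. Throughout, the constant is of type $c(v_*,T)$ rather than the sharper $K_0(T,v_*)v_*$ of Lemma \ref{lem-v-B-0}, because the forcing here is not proportional to $v_*$ (it contains $O(1)$ outer data); this is why the statement only asks for $c(v_*,T)$.

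\emph{Step 3: recovering $\varphi^{B,2}$.} With $v^{B,0}, v^{B,1}$ in hand, integrate $\eqref{second-bd-eq}_1$ from $z$ to $\infty$, using $\varphi^{B,2}(+\infty,t)=0$ and the exponential decay of $v_z^{B,0}, v_z^{B,1}$, to write $\varphi_z^{B,2}$ algebraically in terms of $v^{B,0}, v^{B,1}$ and their $z$-derivatives and the bounded traces $\varphi_{xx}^{I,0}(0,t), \varphi_x^{I,1}(0,t)$; then $\varphi^{B,2}(z,t)=-\int_z^\infty \varphi_y^{B,2}(y,t)\,dy$. Each operation costs at most one power of the weight (the $\int_z^\infty$ is handled by $\int_{\mathbb R_+}\langle z\rangle^{2l}\bigl(\int_z^\infty\langle y\rangle^{-2}\,dy\bigr)\cdots$ as in \eqref{vfi-B-1-est-1}–\eqref{vfi-B-1-est-1}), so the product estimates \eqref{product-law} and the $v^{B,0}, v^{B,1}$ bounds give $\|\langle z\rangle^l\partial_t^j\varphi^{B,2}\|_{L_T^2H_z^{6-2j}}\le c(v_*,T)$ for $j=0,1,2$, completing \eqref{con-vfi-v-B-2}. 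Local existence/uniqueness for the coupled system $(v^{B,1},\varphi^{B,2})$ follows, as for Lemma \ref{lem-v-B-0}, by linearizing, solving the linear heat problem by reflection, recovering $\varphi^{B,2}$ by quadrature, and running a contraction; the a priori bounds just derived then globalize the solution.

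\emph{Main obstacle.} The delicate point is the feedback loop through $\varphi^{B,2}$ in $\eqref{second-bd-eq}_2$: one must eliminate $\varphi_z^{B,2}$ in favour of $v^{B,1}$ and $v_z^{B,1}$ (plus known data) \emph{before} closing the energy estimate, and then check that the resulting lower-order coefficients — which now involve $\int_z^\infty(\cdots)$ of $v^{B,0}$-type quantities and the linearly-growing traces $v_x^{I,0}(0,t)z$, $\varphi_{xx}^{I,0}(0,t)z$ — still lie in the right weighted spaces with $c(v_*,T)$-size, so that the Gronwall argument is not spoiled. Granting the weighted product/trace machinery already set up in the proof of Lemma \ref{lem-v-B-0}, this is bookkeeping rather than a genuinely new difficulty, which is why I will present it as a refinement of that proof.
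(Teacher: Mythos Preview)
Your proposal is correct and follows essentially the same route as the paper: the paper also eliminates $\varphi_z^{B,2}$ via the integrating-factor formula \eqref{vfi-B-2-formul-foruse} obtained from $\eqref{second-bd-eq}_1$, substitutes it into $\eqref{second-bd-eq}_2$ to get a closed linear equation for $v^{B,1}$, homogenizes with $\tilde v=v^{B,1}+\eta(z)v^{I,1}(0,t)$ (your $\vartheta_1$), runs the weighted energy scheme testing against $\langle z\rangle^{2l}\tilde v$ and $\langle z\rangle^{2l}\tilde v_t$ with a bootstrap in $\partial_t$, and finally recovers $\varphi^{B,2}$ from \eqref{vfi-B-2-formul-foruse}. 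The only point worth flagging is that after the substitution the equation for $\tilde v$ acquires, beyond the $-(\varphi_x^{I,0}(0,t)+M)\tilde v$ term you wrote, further bounded local coefficients and a genuinely nonlocal term $-\,{\mathop{\mathrm{e}}}^{v^{B,0}}\!\int_z^\infty \tilde v\,\partial_y[(\varphi_x^{I,0}(0,t)+M+\varphi_y^{B,1}){\mathop{\mathrm{e}}}^{-v^{B,0}}]\,dy\,(v^{I,0}(0,t)+v^{B,0})$; you anticipated this in your ``main obstacle'' paragraph, and the paper treats it exactly as you describe, by Cauchy--Schwarz in $y$ with an extra weight.
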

\begin{proof}
 From $  \eqref{second-bd-eq} _{1} $, we have
\begin{align}\label{vfi-B-2-formul-foruse}
\displaystyle \varphi _{z}^{B,2}&=  - {\mathop{\mathrm{e}}}^{v ^{B,0}}\int _{z}^{\infty}v _{y}^{B,1}(\varphi _{x} ^{I,0}(0,t)+M+\varphi _{y}^{B,1})   {\mathop{\mathrm{e}}}^{-v ^{B,0}} \mathrm{d}y
 \nonumber \\
 & \displaystyle \quad- {\mathop{\mathrm{e}}}^{v ^{B,0}}\int _{z}^{\infty}\left[  v _{y}^{B,0}(\varphi _{xx}^{I,0}(0,t)y+\varphi _{x}^{I,1}(0,t))+\varphi _{y}^{B,1}v _{x} ^{I,0}(0,t) \right]   {\mathop{\mathrm{e}}}^{-v ^{B,0}} \mathrm{d}y
  \nonumber \\
   & \displaystyle =- {\mathop{\mathrm{e}}}^{v ^{B,0}}\int _{z}^{\infty}\left[  v _{y}^{B,0}(\varphi _{xx}^{I,0}(0,t) y+\varphi _{x}^{I,1}(0,t))+\varphi _{y}^{B,1}v _{x}^{I,0}(0,t) \right]   {\mathop{\mathrm{e}}}^{-v ^{B,0}} \mathrm{d}y
    \nonumber \\
    & \displaystyle \quad+{\mathop{\mathrm{e}}}^{v ^{B,0}}\int _{z}^{\infty}v ^{B,1}\partial _{y}\left[(\varphi _{x}^{I,0}(0,t)+M+\varphi _{y}^{B,1}) {\mathop{\mathrm{e}}}^{-v ^{B,0}}\right] \mathrm{d}y
  \nonumber \\
  & \displaystyle \quad +v ^{B,1}(\varphi _{x}^{I,0}(0,t)+M+\varphi _{z}^{B,1}) .
\end{align}
This together with $ \eqref{second-bd-eq}_{2} $ gives
\begin{align}\label{v-B-1-final-eq}
\displaystyle
v _{t}^{B,1}&=v _{zz}^{B,1} - (\varphi _{x} ^{I,0}(0,t)+M)v ^{B,1}-v ^{B,1}(\varphi _{x} ^{I,0}(0,t)+M+\varphi _{z}^{B,1})(v ^{I,0}(0,t)+v ^{B,0})
 \nonumber \\
 & \displaystyle \quad- {\mathop{\mathrm{e}}}^{v ^{B,0}}\int _{z}^{\infty}v ^{B,1}\partial _{y}\left[(\varphi _{x}^{I,0}(0,t)+M+\varphi _{y}^{B,1}) {\mathop{\mathrm{e}}}^{-v ^{B,0}}\right] \mathrm{d}y (v ^{I,0}(0,t)+v ^{B,0})
  \nonumber \\
  &\displaystyle \quad+{\mathop{\mathrm{e}}}^{v ^{B,0}}\int _{z}^{\infty}\left[  v _{y}^{B,0}(\varphi _{xx} ^{I,0}(0,t)y+\varphi _{x}^{I,1}(0,t))+\varphi _{y}^{B,1}v _{x} ^{I,0}(0,t) \right]   {\mathop{\mathrm{e}}}^{-v ^{B,0}} \mathrm{d}y (v ^{I,0}(0,t)+v ^{B,0})
   \nonumber \\
   & \displaystyle \quad- \varphi _{z}^{B,1}(v _{x}^{I,0}(0,t)z+v ^{I,1}(0,t)+v ^{B,1})-(\varphi _{xx}^{I,0}(0,t)z+\varphi _{x}^{I,1}(0,t))v ^{B,0} .
\end{align}
Take
\begin{gather*}
\displaystyle  \tilde{v}= v ^{B,1}+ \eta(z)v ^{I,1}(0,t)
\end{gather*}
with $ \eta(z) $ as in \eqref{eta-defi}. Then we deduce from $ \eqref{second-bd-eq} _{3} $, $ \eqref{second-bd-eq} _{4} $ and \eqref{v-B-1-final-eq} that $ \tilde{v} $ solves
\begin{align}\label{pro-v-b-1-refor}
\begin{cases}
   \displaystyle \tilde{v} _{t} = \tilde{v}_{zz}- (\varphi _{x}^{I,0}(0,t)+M)		\tilde{v}-\tilde{v}(\varphi _{x}^{I,0}(0,t)+M+\varphi _{z}^{B,1})(v ^{I,0}(0,t)+v ^{B,0})- \tilde{v}\varphi _{z}^{B,1}
  \\[1mm]
  \displaystyle \quad \quad - {\mathop{\mathrm{e}}}^{v ^{B,0}}\int _{z}^{\infty}\tilde{v}\partial _{y}\left[(\varphi _{x}^{I,0}(0,t)+M+\varphi _{y}^{B,1}) {\mathop{\mathrm{e}}}^{-v ^{B,0}}\right] \mathrm{d}y (v ^{I,0}(0,t)+v ^{B,0}) +g,\\
 \displaystyle \tilde{v}(0,t)=0,\ \ \tilde{v}(+\infty,t)=0,\\
 \displaystyle \tilde{v}(z,0)=0,
 \end{cases}
\end{align}
where $ g $ is given by
\begin{align}\label{g-definition}
\displaystyle
  g
  &\displaystyle = {\mathop{\mathrm{e}}}^{v ^{B,0}}\int _{z}^{\infty} \eta v ^{I,1}(0,t)\partial _{y}\left[(\varphi _{x}^{I,0}(0,t)+M+\varphi _{y}^{B,1}) {\mathop{\mathrm{e}}}^{-v ^{B,0}}\right] \mathrm{d}y (v ^{I,0}(0,t)+v ^{B,0})
   \nonumber \\
  &\displaystyle \quad+{\mathop{\mathrm{e}}}^{v ^{B,0}}\int _{z}^{\infty}\left[  v _{y}^{B,0}(\varphi _{xx}^{I,0}(0,t)y+\varphi _{x}^{I,1}(0,t))+\varphi _{y}^{B,1}v _{x}^{I,0}(0,t) \right]   {\mathop{\mathrm{e}}}^{-v ^{B,0}} \mathrm{d}y (v ^{I,0}(0,t)+v ^{B,0})
   \nonumber \\
   & \displaystyle \quad
 + (\varphi _{x} ^{I,0}(0,t)+M) \eta(z)v ^{I,1}(0,t)+\eta(z)v _{t}^{I,1}(0,t)- \varphi _{z}^{B,1}(v _{x}^{I,0}(0,t)z+v ^{I,1}(0,t))
  \nonumber \\
  & \displaystyle \quad -(\varphi _{xx}^{I,0}(0,t)z+\varphi _{x}^{I,1}(0,t))v ^{B,0} + \eta(z)v ^{I,1}(0,t) (\varphi _{x}^{I,0}(0,t)+M+\varphi _{z}^{B,1})(v ^{I,0}(0,t)+v ^{B,0})
   \nonumber \\
   & \displaystyle \quad - \eta''(z)v ^{I,1}(0,t)+\eta(z)v ^{I,1}(0,t)\varphi _{z}^{B,1}.
\end{align}
The existence of solutions to the problem \eqref{pro-v-b-1-refor} can be proved by using the reflection method along with the fixed point theorem. Since the argument is similar to that in Appendix A for the linearized problem of \eqref{first-bd-layer-pro}, we omit the details here. In the following, we are devoted to deriving some weighted estimates for the solution. It can be verified that the initial datum for the problem \eqref{pro-v-b-1-refor} is compatible up to order two. That is, if we define $ \partial _{t}^{k}\tilde{v}\vert _{t=0}~(k=1,2) $ through the first equation in \eqref{pro-v-b-1-refor}, then $ \partial _{t}^{k}\tilde{v}\vert _{t=0}~(k=0,1,2) $ vanish at the boundary. Furthermore, we have for $ k=0,1,2 $ and $ l \in \mathbb{N} $ that
\begin{align}\label{g-esti}
\displaystyle  \|\langle z \rangle ^{l} \partial _{t}^{k}g\|_{L _{T} ^{2}H _{z}^{4-2k}} \leq c(v _{\ast}, T).
\end{align}
The proof of \eqref{g-esti} will be detailed in Appendix B. We proceed to prove for $ m=1,2,3 $ and $ l \in \mathbb{N} $ that
\begin{align}\label{state-htild-v}
\displaystyle  \|\langle z \rangle ^{l}\partial _{t}^{k}\tilde{v} \| _{ L _{T}^{2} H _{z}^{2m-2k}} \leq c(v _{\ast}, T)\ \ \text{for} \ k=0,1, \cdots,m.
\end{align}
Indeed, for the case $ m=1 $, multiplying the first equation in \eqref{pro-v-b-1-refor} by $ \langle z \rangle  ^{2l}\tilde{v} $ followed by an integration over $ \mathbb{R}_{+} $, we have
\begin{align}\label{tid-v-first-esti}
&\displaystyle \frac{1}{2}\frac{\mathrm{d}}{\mathrm{d}t}\int _{\mathbb{R}_{+}}\langle z \rangle^{2l} \tilde {v}^{2} \mathrm{d}z+\int _{\mathbb{R}_{+}}\langle z \rangle^{2l}\tilde{v}_{z}^{2} \mathrm{d}z + \int _{\mathbb{R}_{+}}\langle z \rangle^{2l}(\varphi _{x}^{I,0}(0,t)+M)(v ^{I,0}(0,t)+v ^{B,0})  \tilde{v} ^{2} \mathrm{d}z
 \nonumber \\
 & ~\displaystyle =-\int _{\mathbb{R}_{+}}\varphi _{z}^{B,1}(v ^{I,0}(0,t)+v ^{B,0}+1)\langle z \rangle^{2l}\tilde{v}^{2} \mathrm{d}z +\int _{\mathbb{R}_{+}}g \langle z \rangle^{2l}\tilde{v} \mathrm{d}z- 2l \int _{\mathbb{R}_{+}}\langle z \rangle^{2l-2}z \tilde{v} \tilde{v}_{z} \mathrm{d}z
  \nonumber \\
  &~\displaystyle \quad- \int _{\mathbb{R}_{+}} {\mathop{\mathrm{e}}}^{v ^{B,0}}\int _{z}^{\infty}\tilde{v}\partial _{y}\left[(\varphi _{x} ^{I,0}(0,t)+M+\varphi _{y}^{B,1}) {\mathop{\mathrm{e}}}^{-v ^{B,0}}\right] \mathrm{d}y (v ^{I,0}(0,t)+v ^{B,0})\langle z \rangle^{2l}\tilde{v}\mathrm{d}z,
\end{align}
where by \eqref{con-vfi-v-I-0-regula}--\eqref{con-vfi-B-1}, the Sobolev inequality $ \|f\|_{L _{z}^\infty} \leq c _{0} \|f\|_{H _{z}^{1}} $ and the Cauchy-Schwarz inequality, it holds that
\begin{align*}
\displaystyle \int _{\mathbb{R}_{+}}\varphi _{z}^{B,1}(v ^{I,0}(0,t)+v ^{B,0}+1)\langle z \rangle^{2l}\tilde{v}^{2} \mathrm{d}z
 &\leq c_{0}\left\|\varphi _{z}^{B,1}\right\|_{L _{z}^{\infty}}
 \left( \|v ^{I,0}\|_{L ^{\infty}}+\|v ^{B,0}\|_{L _{z}^{\infty}}+1
  \right) \int _{\mathbb{R}_{+} }\langle z \rangle ^{2 l}\tilde{v}^{2} \mathrm{d}z
 \nonumber \\
 & \displaystyle \leq c(v _{\ast}, T)  \left\|\varphi _{z}^{B,1}\right\|_{L _{z}^{\infty}}\int _{\mathbb{R}_{+} }\langle z \rangle ^{2 l}\tilde{v}^{2} \mathrm{d}z
 \leq c(v _{\ast}, T)\int _{\mathbb{R}_{+} }\langle z \rangle ^{2 l}\tilde{v}^{2} \mathrm{d}z,
 \\
\displaystyle  \int _{\mathbb{R}_{+}}g \langle z \rangle^{2l}\tilde{v} \mathrm{d}z- 2l \int _{\mathbb{R}_{+}}\langle z \rangle^{2l-2}z \tilde{v} \tilde{v}_{z} \mathrm{d}z
& \leq \frac{1}{8}\int _{\mathbb{R}_{+}}\langle z \rangle ^{2l}\tilde{v}_{z} ^{2} \mathrm{d}z
+ c_{0}\int _{\mathbb{R}_{+} } \langle z \rangle ^{2l} \tilde{v}^{2}\mathrm{d}z
+c_{0}\int _{\mathbb{R}_{+}}\langle z \rangle ^{2l}g ^{2} \mathrm{d}z
\end{align*}
and
\begin{align*}
&\displaystyle  - \int _{\mathbb{R}_{+}} {\mathop{\mathrm{e}}}^{v ^{B,0}}\int _{z}^{\infty}\tilde{v}\partial _{y}\left[(\varphi _{x}^{I,0}(0,t)+M+\varphi _{y}^{B,1}) {\mathop{\mathrm{e}}}^{-v ^{B,0}}\right] \mathrm{d}y (v ^{I,0}(0,t)+v ^{B,0})\langle z \rangle^{2l}\tilde{v}\mathrm{d}z
 \nonumber \\
 &~\displaystyle \leq
 c(v _{\ast}, T) \left( \vert v ^{I,0}(0,t)\vert+\|v ^{B,0}\|_{L _{z}^{\infty}} \right)
 \int _{\mathbb{R}_{+}}\langle z \rangle ^{2l}\left\vert \int _{z}^{\infty}\vert \tilde{v}\vert \Big( \vert v _{y}^{B,0}\vert + \vert \varphi _{yy}^{B,1}\vert+\vert \varphi _{y}^{B,1}\vert \vert v _{y}^{B,0}\vert \Big) \mathrm{d}y\right\vert ^{2} \mathrm{d}z
  \nonumber \\
  &~\displaystyle \quad+ c(v _{\ast}, T)\|\langle z \rangle ^{l}\tilde{v}\| _{L ^{2} }^{2}
   \nonumber \\
     & ~\displaystyle \leq c(v _{\ast}, T)\|\langle z \rangle ^{l}\tilde{v}\| _{L _{z}^{2} }^{2}+c(v _{\ast}, T) \|\langle z \rangle ^{l}\tilde{v}\| _{L _{z}^{2} }^{2} \int _{\mathbb{R}_{+}}\langle z \rangle ^{2l} \int _{z}^{\infty}\langle y \rangle ^{-2l} \left( \vert v _{y}^{B,0}\vert + \vert \varphi _{yy}^{B,1}\vert+\vert \varphi _{y}^{B,1}\vert\vert v _{y}^{B,0}\vert\right) ^{2}\mathrm{d}y  \mathrm{d}z
   \nonumber \\
   &~\displaystyle \leq c(v _{\ast}, T)\|\langle z \rangle ^{l}\tilde{v}\| _{L _{z}^{2} }^{2}+c(v _{\ast}, T) \|\langle z \rangle ^{l}\tilde{v}\| _{L _{z}^{2} }^{2}  \left( \|\langle z \rangle ^{l}  v _{z}^{B,0}\|_{L _{z}^{2}}^{2}+\|\langle z \rangle ^{l}\varphi _{zz}^{B,1}\|_{L _{z}^{2}}^{2}+\|\varphi _{z}^{B,1}\|_{L _{z}^{\infty}}^{2} \|\langle z \rangle ^{l}v _{z}^{B,0}\|_{L _{z}^{2}}^{2} \right)
    \nonumber \\
    &~\displaystyle \leq c(v _{\ast}, T) \|\langle z \rangle ^{l}\tilde{v}\| _{L _{z}^{2} }^{2}.
\end{align*}
Therefore we update \eqref{tid-v-first-esti} as
\begin{align*}
\displaystyle  &\displaystyle \frac{1}{2}\frac{\mathrm{d}}{\mathrm{d}t}\int _{\mathbb{R}_{+}}\langle z \rangle^{2l} \tilde {v}^{2} \mathrm{d}z+\frac{1}{2}\int _{\mathbb{R}_{+}}\langle z \rangle^{2l}\tilde{v}_{z}^{2} \mathrm{d}z   \leq c(v _{\ast}, T) \|\langle z \rangle ^{l}\tilde{v}\|_{L _{z}^{2}}^{2}+c(v _{\ast}, T) \int _{\mathbb{R}_{+}}\langle z \rangle ^{2l}g ^{2} \mathrm{d}z,
\end{align*}
where we have used \eqref{con-vfi-v-I-0-regula} and \eqref{v-I-0-positive}. This along with \eqref{g-esti} and the Gronwall inequality yields that
\begin{align}\label{con-tild-v-0}
\displaystyle  \|\langle z \rangle ^{l}\tilde{v}(\cdot,t)\|_{L _{z}^{2}}^{2} + \int _{0}^{t} \|\langle z \rangle ^{l}\tilde{v}_{z}\|_{L _{z}^{2}}^{2} \mathrm{d}\tau \leq c(v _{\ast}, T)
\end{align}
for any $ t \in (0,T] $. Multiplying the first equation in \eqref{pro-v-b-1-refor} by $ \langle z \rangle  ^{2l}\tilde{v}_{t} $ and then integrating the resulting equation over $ \mathbb{R}_{+} $, we get
\begin{align}\label{tild-v-z-esti}
&\displaystyle  \frac{1}{2}\frac{\mathrm{d}}{\mathrm{d}t}\left\{ \int _{\mathbb{R}_{+}}\langle z \rangle ^{2l}\tilde{v}_{z}^{2}\mathrm{d}z+ \int _{\mathbb{R}_{+}}\langle z \rangle^{2l}(\varphi _{x} ^{I,0}(0,t)+M)(v ^{I,0}(0,t)+v ^{B,0})  \tilde{v} ^{2} \mathrm{d}z \right\}+\int _{\mathbb{R}_{+}}\langle z \rangle ^{2l}\tilde{v}_{t}^{2}\mathrm{d}z
 \nonumber \\
 &~\displaystyle =\frac{1}{2}\int _{\mathbb{R}_{+}}\langle z \rangle^{2l}\left[ (\varphi _{x}^{I,0}(0,t)+M)(v ^{I,0}(0,t)+v ^{B,0}) \right]_{t}   \tilde{v} ^{2} \mathrm{d}z
  \nonumber \\
  &~\displaystyle \quad  -\int _{\mathbb{R}_{+}}\varphi _{z}^{B,1}(v ^{I,0}(0,t)+v ^{B,0}+1)\langle z \rangle^{2l}\tilde{v} \tilde{v}_{t} \mathrm{d}z +\int _{\mathbb{R}_{+}}g \langle z \rangle^{2l}\tilde{v} _{t}\mathrm{d}z- 2l \int _{\mathbb{R}_{+}}\langle z \rangle^{2l-2}z \tilde{v}_{t} \tilde{v}_{z} \mathrm{d}z
   \nonumber \\
     &~\displaystyle \quad- \int _{\mathbb{R}_{+}} {\mathop{\mathrm{e}}}^{v ^{B,0}}\int _{z}^{\infty}\tilde{v}\partial _{y}\left[(\varphi _{x}^{I,0}(0,t)+M+\varphi _{y}^{B,1}) {\mathop{\mathrm{e}}}^{-v ^{B,0}}\right] \mathrm{d}y (v ^{I,0}(0,t)+v ^{B,0})\langle z \rangle^{2l}\tilde{v} _{t}\mathrm{d}z
      \nonumber \\
      &~ \displaystyle= \sum _{i=1}^{5}\mathcal{I}_{i}.
     \end{align}
     We now estimate $ \mathcal{I}_{i}~(1 \leq i \leq 5) $ term by term. By \eqref{con-vfi-v-I-0-regula}, \eqref{v-B-0-regularity} and Proposition \ref{prop-embeding-spacetime}, we have
     \begin{align*}
     \displaystyle \mathcal{I}_{1} \leq  c(v _{\ast}, T) \left( \|\varphi _{xt}^{I,0}\|_{L ^{\infty}}+\|v _{t}^{I,0}\| _{L ^{\infty}}+ \|v _{t}^{B,0}\|_{L _{z}^{\infty}} \right) \|\langle z \rangle ^{l}\tilde{v}\|_{L _{z}^{2}}^{2} \leq  c(v _{\ast}, T)\|\langle z \rangle ^{l}\tilde{v}\|_{L _{z}^{2}}^{2}.
     \end{align*}
     Similarly, for $ \mathcal{I}_{2} $, we get
     \begin{align*}
     \displaystyle \mathcal{I}_{2} & \leq   \|\varphi _{z}^{B,1}\|_{L _{z}^{\infty}} \left( \|v ^{I,0}\|_{L ^{\infty}} +\|v ^{B,0}\| _{L _{z}^{\infty}}+1\right) \|\langle z \rangle ^{l}\tilde{v}\|_{L _{z}^{2}}\|\langle z \rangle ^{l}\tilde{v}_{t}\|_{L _{z}^{2}}
     \leq c(v _{\ast}, T)\|\langle z \rangle ^{l}\tilde{v}\|_{L _{z}^{2}}\|\langle z \rangle ^{l}\tilde{v}_{t}\|_{L _{z}^{2}}
      \nonumber \\
      & \displaystyle \leq \frac{1}{8}\|\langle z \rangle ^{l}\tilde{v}_{t}\|_{L _{z}^{2}} ^{2}
      +c(v _{\ast}, T)\|\langle z \rangle ^{l}\tilde{v}\|_{L _{z}^{2}}^{2}.
     \end{align*}
     By the Cauchy-Schwarz inequality, we have
     \begin{align*}
     \displaystyle \mathcal{I}_{3}+\mathcal{I}_{4}& \leq \frac{1}{8}\int _{\mathbb{R}_{+}}\langle z \rangle ^{2l}\tilde{v}_{t}^{2}\mathrm{d}z+c(v _{\ast}, T) \int _{\mathbb{R}_{+}} \langle z \rangle ^{2l} g ^{2}\mathrm{d}z + c(v _{\ast}, T) \int _{\mathbb{R}_{+}} \langle z \rangle ^{2l} \tilde{v}_{z} ^{2}\mathrm{d}z.
     \end{align*}
     Finally, in view of \eqref{con-vfi-v-I-0-regula}, \eqref{v-B-0-regularity}, \eqref{con-vfi-B-1} and the Cauchy-Schwarz inequality, we get for $ \mathcal{I}_{5} $ that
     \begin{align*}
     \displaystyle  \mathcal{I}_{5}& \leq \frac{1}{8}\int _{\mathbb{R}_{+}}\langle z \rangle ^{2l}\tilde{v}_{t}^{2} \mathrm{d}z+c(v _{\ast}, T) \int _{\mathbb{R}_{+}}\langle z \rangle ^{2l} \left\vert \int _{z}^{\infty}\tilde{v}\partial _{y}\left[(\varphi _{x}^{I,0}(0,t)+M+\varphi _{y}^{B,1}) {\mathop{\mathrm{e}}}^{-v ^{B,0}}\right] \mathrm{d}y\right\vert ^{2}\mathrm{d}z
      \nonumber \\
      & \displaystyle \leq \frac{1}{8}\int _{\mathbb{R}_{+}}\langle z \rangle ^{2l}\tilde{v}_{t}^{2} \mathrm{d}z+c(v _{\ast}, T) \|\langle z \rangle ^{l}\tilde{v}\|_{L _{z}^{2}}^{2} \left( \|\langle z \rangle ^{l+2}\varphi _{zz}^{B,1}\|_{L _{z}^{2}}^{2}+\|\langle z \rangle ^{l+2}v _{z}^{B,0}\|_{L _{z}^{2}}^{2} \right) \int _{\mathbb{R}_{+}}\langle z \rangle ^{-2} \mathrm{d}z
       \nonumber \\
       & \displaystyle \leq \frac{1}{8}\int _{\mathbb{R}_{+}}\langle z \rangle ^{2l}\tilde{v}_{t}^{2} \mathrm{d}z+c(v _{\ast}, T)  \|\langle z \rangle ^{l}\tilde{v}\|_{L _{z}^{2}}^{2}.
     \end{align*}
     Collecting estimates for $ \mathcal{I}_{i}~(1 \leq i \leq 5) $, we have from \eqref{tid-v-first-esti} that
     \begin{align*}
     \displaystyle  \int _{\mathbb{R}_{+}}\langle z \rangle ^{2l}  \tilde{v}_{z}^{2}(\cdot,t) \mathrm{d}z+ \int _{0}^{t} \int _{\mathbb{R}_{+}} \langle z \rangle ^{2l}\tilde{v}_{\tau}^{2}\mathrm{d}z \mathrm{d}\tau\leq c(v _{\ast}, T)\int _{0}^{t}\int _{\mathbb{R}_{+}}\langle z \rangle ^{2l} \left( \tilde{v}_{z}^{2}+ \tilde{v}^{2} \right) \mathrm{d}z \mathrm{d}\tau
     \end{align*}
     for any $ t \in (0,T] $, where we have used the facts $ \varphi _{x}^{I,0}(0,t)+M \geq 0 $ and $ v ^{B,0} \geq 0 $. Therefore we utilize \eqref{con-tild-v-0} and the Gronwall inequality to deduce that
\begin{align}\label{con-htild-v}
\displaystyle  \int _{\mathbb{R}_{+}}\langle z \rangle ^{2l}  \tilde{v}_{z}^{2}(\cdot,t) \mathrm{d}z+\int _{0}^{t} \int _{\mathbb{R}_{+}} \langle z \rangle ^{2l}\tilde{v}_{\tau}^{2}\mathrm{d}z \mathrm{d}\tau \leq c(v _{\ast}, T).
\end{align}
This along with  $ \eqref{pro-v-b-1-refor}_{1} $, \eqref{con-vfi-v-I-0-regula}, \eqref{v-B-0-regularity}, \eqref{con-vfi-B-1} and \eqref{g-esti} leads to
\begin{align*}
\displaystyle \int _{0}^{T}\|\langle z \rangle ^{l}\tilde{v}_{zz}\|_{L _{z}^{2}}^{2}\mathrm{d}t \leq c(v _{\ast}, T).
\end{align*}
Then we finish the proof of \eqref{state-htild-v} for $ m=1 $. To proceed, set $ \hat{v}=\tilde{v}_{t} $. Then $ \hat{v} $ satisfies
\begin{align}\label{hat-v-eq}
\displaystyle  \begin{cases}
\displaystyle   \displaystyle \hat{v} _{t} = \hat{v}_{zz}- (\varphi _{x} ^{I,0}(0,t)+M)		\hat{v}- \hat{v}(\varphi _{x}^{I,0}(0,t)+M+\varphi _{z}^{B,1})(v ^{I,0}(0,t)+v ^{B,0})-\hat{v}\varphi _{z}^{B,1} \\[2mm]
\displaystyle \quad \quad
- {\mathop{\mathrm{e}}}^{v ^{B,0}}\int _{z}^{\infty}\hat{v}\partial _{y}\left[(\varphi _{x} ^{I,0}(0,t)+M+\varphi _{y}^{B,1}) {\mathop{\mathrm{e}}}^{-v ^{B,0}}\right] \mathrm{d}y (v ^{I,0}(0,t)+v ^{B,0})+ \tilde{g},
\\[2mm]
      \displaystyle \hat{v}(0,t)=0,\ \ \hat{v}(+\infty,t)=0,\\
 \displaystyle \hat{v}(z,0)=\tilde{v}_{t}\vert _{t=0} ,
\end{cases}
\end{align}
where $ \tilde{v}_{t}\vert _{t=0} $ is defined through the equation $ \eqref{pro-v-b-1-refor}_{1} $, and $ \tilde{g} $ is given by
\begin{align*}
\displaystyle  \tilde{g}&= - \partial _{t}[(\varphi _{x} ^{I,0}(0,t)+M)		]\tilde{v}
 - \tilde{v}(\partial _{t}\varphi _{x} ^{I,0}(0,t)+\varphi _{zt}^{B,1})(v ^{I,0}(0,t)+v ^{B,0})
  \nonumber\\[2mm]
  & \displaystyle \quad  - \tilde{v}(\varphi _{x}^{I,0}(0,t)+M+\varphi _{z}^{B,1})(v _{t}^{I,0}(0,t)+v _{t}^{B,0})  -\tilde{v}\varphi _{zt}^{B,1}+g _{t}\nonumber\\[2mm]
 & \displaystyle \quad   - \int _{z}^{\infty}\tilde{v}\partial _{y}\left[(\varphi _{x}^{I,0}(0,t)+M+\varphi _{y}^{B,1}) {\mathop{\mathrm{e}}}^{-v ^{B,0}}\right] \mathrm{d}y \left[  {\mathop{\mathrm{e}}}^{v ^{B,0}}(v ^{I,0}(0,t)+v ^{B,0}) \right] _{t}\nonumber
\\[2mm]
   & \displaystyle  \quad-{\mathop{\mathrm{e}}}^{v ^{B,0}}(v ^{I,0}(0,t)+v ^{B,0})\int _{z}^{\infty}\tilde{v}\partial _{yt}\left[(\varphi _{x}^{I,0}(0,t)+M+\varphi _{y}^{B,1}) {\mathop{\mathrm{e}}}^{-v ^{B,0}}\right] \mathrm{d}y  .
   \end{align*}
In virtue of \eqref{con-vfi-v-I-0-regula}, \eqref{v-B-0-regularity}, \eqref{con-vfi-B-1}, \eqref{g-esti}, \eqref{con-tild-v-0}, \eqref{con-htild-v} and similar arguments to proving \eqref{g-esti}, it holds for $ k=0,1 $ that
\begin{align}\label{til-g-esti}
\displaystyle  \|\langle z \rangle ^{l}\partial _{t}^{k}\tilde{g}\|_{L _{T}^{2}H _{z}^{2-2k}} \leq c(v _{\ast}, T).
\end{align}
Repeating the argument in the proof of \eqref{con-tild-v-0} and \eqref{con-htild-v}, we then arrive at
\begin{align}\label{con-hat-v}
\displaystyle  \displaystyle  \int _{\mathbb{R}_{+}}\langle z \rangle ^{2l}  \hat{v}_{z}^{2}(\cdot,t) \mathrm{d}z+\int _{0}^{t} \int _{\mathbb{R}_{+}} \langle z \rangle ^{2l} \left( \hat{v}_{z}^{2}+\hat{v}_{\tau}^{2} \right) \mathrm{d}z \mathrm{d}\tau \leq c(v _{\ast}, T)
\end{align}
for any $ t \in(0,T] $. Furthermore, from $ \eqref{hat-v-eq} _{1} $ and \eqref{con-hat-v}, we get
\begin{align}\label{tild-v-tzz}
\displaystyle \int _{0}^{T}\int _{\mathbb{R}_{+}}\langle z \rangle ^{2l}\tilde{v}_{t zz}^{2} \mathrm{d}z \mathrm{d}t \leq c(v _{\ast}, T).
\end{align}
In view of $ \eqref{pro-v-b-1-refor}_{1} $, \eqref{con-tild-v-0}, \eqref{con-htild-v}, $ \eqref{hat-v-eq} _{1} $ and \eqref{tild-v-tzz}, we also have that
\begin{align*}
\displaystyle \int _{0}^{T}\int _{\mathbb{R}_{+}} \langle z \rangle ^{2l} \left( \vert \partial _{z}^{3}\tilde{v}\vert^{2}+ \vert \partial _{z}^{4}\tilde{v}\vert^{2} \right) \mathrm{d}z \mathrm{d}t \leq c(v _{\ast}, T).
\end{align*}
Thus we finish the proof of \eqref{state-htild-v} for $ m=2 $. Now let us turn to the proof of  \eqref{state-htild-v} for the case $ m=3 $. Based on \eqref{til-g-esti} and the fact that the initial datum of the problem \eqref{hat-v-eq} is compatible up to order one, we get apply the procedure in the proof of the cases $ m=1,2 $ to the problem \eqref{hat-v-eq} and get that $ \|\langle z \rangle ^{l}\partial _{t}^{k}\hat{v}\|_{L _{T}^{2} H _{z}^{4-2k}} \leq C  $ for any $ l \in \mathbb{N} $ and $ k=0,1,2 $. That is,
\begin{align*}
\displaystyle  \| \langle z \rangle ^{l}\partial _{t}^{k}\tilde{v}\|_{L _{T}^{2}H _{z}^{6-2k}}  \leq c(v _{\ast}, T) \ \ \mbox{for } k=1,2,3.
\end{align*}
This along with \eqref{con-vfi-v-I-0-regula}, \eqref{v-B-0-regularity}, \eqref{con-vfi-B-1}, $ \eqref{pro-v-b-1-refor}_{1} $ and \eqref{g-esti} further gives that
\begin{align*}
\displaystyle \int _{0}^{T}\left( \|\langle z \rangle ^{l}\partial _{z}^{5}\tilde{v}\|_{L _{z}^{2}}^{2}+\|\langle z \rangle ^{l}\partial _{z}^{6}\tilde{v}\|_{L _{z}^{2}}^{2} \right)\mathrm{d}t\leq  c(v _{\ast},T).
\end{align*}
Then \eqref{state-htild-v} is proved. With the definition of $ \tilde{v} $ and \eqref{state-htild-v}, one can immediately obtain the estimate for $ v ^{B,1} $ in \eqref{con-vfi-v-B-2}. The estimates for $ \varphi ^{B,2} $ follow from \eqref{con-vfi-v-I-0-regula}, \eqref{v-B-0-regularity}, \eqref{con-vfi-B-1}, \eqref{con-vfi-I-1}, \eqref{con-v -I-1}, $ \eqref{con-vfi-v-B-2} _{1} $ and \eqref{vfi-B-2-formul-foruse} along with similar arguments as proving \eqref{g-esti}. We thus finish the proof of Lemma \ref{lem-v-B-1}.
\end{proof}

By analogous arguments as proving Lemma \ref{lem-v-B-1}, we have the following existence and regularity result on $ (\varphi ^{b,2},v ^{b,1}) $.
\begin{lemma}\label{lem-v-b-1}
Assume the conditions in Lemmas \ref{lem-regul-outer-layer-0}, \ref{lem-v-B-0} and \ref{lemf-vfi-V-I-1} hold. Then there exists a unique solution $ (\varphi ^{b,2}, v ^{b,1}) $ to the problem \eqref{sec-bd-eq-rt}  on $ [0,T] $ for any $ T \in (0,\infty) $ such that for any $ l \in \mathbb{N} $,
\begin{gather}\label{con-vfi-b-2-v}
\|\langle \xi \rangle ^{l}\partial _{t}^{k}v ^{b,1}\|_{L _{T}^{2}H _{\xi}^{6-2k}}+ \|\langle \xi \rangle ^{l}\partial _{t}^{j}\varphi ^{b,2}\|_{L _{T}^{2}H _{\xi}^{6-2j}} \leq c(v _{\ast},T),
 \end{gather}
 where $  k=0,1,2,3$, and $ j=0,1,2 $.
\end{lemma}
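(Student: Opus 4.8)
The plan is to mirror, verbatim in structure, the proof of Lemma~\ref{lem-v-B-1}, under the obvious substitutions: the left boundary-layer variable $z\in\mathbb{R}_+$ is replaced by the right one $\xi\in\mathbb{R}_-$, the outer-layer traces at $x=0$ are replaced by traces at $x=1$, the superscripts $B$ are replaced by $b$ throughout, and — since the relevant decay is now $\xi\to-\infty$ rather than $z\to+\infty$ — every nonlocal integral $\int_z^\infty(\cdots)\,\mathrm{d}y$ occurring in the left-boundary argument is replaced here by $\int_{-\infty}^{\xi}(\cdots)\,\mathrm{d}y$. All the regularity of the lower-order profiles needed as input is already available: $(\varphi^{I,0},v^{I,0})$ from Lemma~\ref{lem-regul-outer-layer-0}, $(v^{b,0},\varphi^{b,1})$ from Lemma~\ref{lem-first-bd-rt}, and $(\varphi^{I,1},v^{I,1})$ from Lemma~\ref{lemf-vfi-V-I-1}, together with the space-time embeddings of Proposition~\ref{prop-embeding-spacetime}.

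Concretely, I would first integrate $\eqref{sec-bd-eq-rt}_1$ in $\xi$, using $\varphi^{b,2}(-\infty,t)=0$, to express $\varphi_\xi^{b,2}$ as an explicit functional of $v^{b,1}$, $\varphi^{b,1}$, $v^{b,0}$ and the traces $\partial_x^{\ell+1}\varphi^{I,k}(1,t)$, $\partial_x^{\ell}v^{I,k}(1,t)$ — the exact analogue of \eqref{vfi-B-2-formul-foruse}. Substituting this formula into $\eqref{sec-bd-eq-rt}_2$ produces a closed, nonlocal parabolic equation for $v^{b,1}$, the analogue of \eqref{v-B-1-final-eq}. To homogenize the inhomogeneous boundary condition $v^{b,1}(0,t)=-v^{I,1}(1,t)$, I would set $\tilde v:=v^{b,1}+\eta(-\xi)\,v^{I,1}(1,t)$ with $\eta$ as in \eqref{eta-defi}, so that $\tilde v$ solves a problem of the form \eqref{pro-v-b-1-refor} with zero boundary and initial data and a forcing term $g$ assembled from $(\varphi^{I,0},v^{I,0})$, $(\varphi^{I,1},v^{I,1})$, $v^{b,0}$ and $\varphi^{b,1}$.

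Existence and uniqueness of $\tilde v$, with the regularity claimed in \eqref{con-vfi-b-2-v}, then follow from the reflection method together with the fixed point theorem, exactly as carried out in Appendix~A for the linearized version of \eqref{first-bd-layer-pro}. Next I would check, using $\eqref{sec-bd-eq-rt}_3$--$\eqref{sec-bd-eq-rt}_4$ and the compatibility conditions \eqref{compatibility-vfi}, that the initial datum of this reduced problem is compatible up to order two, and I would establish the forcing estimate $\|\langle\xi\rangle^l\partial_t^k g\|_{L_T^2H_\xi^{4-2k}}\le c(v_\ast,T)$ for $k=0,1,2$ — the analogue of \eqref{g-esti} — by the same term-by-term bookkeeping of the nonlocal integrals, dominating them with the decay weights $\langle\xi\rangle^{-2}$ and the Hölder and product inequalities (the $\mathbb{R}_-$ analogue of \eqref{product-law}). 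With this in hand, the weighted energy hierarchy runs as before: multiplying the $\tilde v$-equation by $\langle\xi\rangle^{2l}\tilde v$ and then by $\langle\xi\rangle^{2l}\tilde v_t$, absorbing the nonlocal contributions into $\|\langle\xi\rangle^l\tilde v\|_{L_\xi^2}^2$ via Cauchy--Schwarz and the $L_\xi^\infty$/$L_\xi^2$ bounds on $v^{b,0},\varphi^{b,1}$ from Lemma~\ref{lem-first-bd-rt}, and applying Gronwall, yields the base ($m=1$) estimate analogous to \eqref{state-htild-v}; differentiating in $t$ and repeating the argument for $\hat v:=\tilde v_t$ (whose forcing $\tilde g$ again satisfies $\|\langle\xi\rangle^l\partial_t^k\tilde g\|_{L_T^2H_\xi^{2-2k}}\le c(v_\ast,T)$) gives $m=2$, and one further iteration gives $m=3$; spatial derivatives up to order six are recovered directly from the equation. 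Undoing the substitution via $v^{b,1}=\tilde v-\eta(-\xi)v^{I,1}(1,t)$ gives the stated bounds on $v^{b,1}$, and inserting these into the formula for $\varphi_\xi^{b,2}$ (together with the regularity of the outer profiles and of $v^{b,0},\varphi^{b,1}$) yields the bounds on $\varphi^{b,2}$, completing \eqref{con-vfi-b-2-v}.

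As for the main obstacle: there is no genuinely new analytic difficulty relative to Lemma~\ref{lem-v-B-1}. The one point demanding care is the same as there — propagating the polynomial decay weights $\langle\xi\rangle^l$ through the nonlocal operator $\int_{-\infty}^{\xi}(\cdots)\,\mathrm{d}y$ so that each resulting term is controlled by a weighted norm already bounded in Lemmas~\ref{lem-regul-outer-layer-0}--\ref{lemf-vfi-V-I-1}, and keeping the bounds uniform in $v_\ast$ in the regime $v_\ast\to0$ — and this is handled exactly as in the proof of \eqref{g-esti}.
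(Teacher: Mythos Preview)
Your proposal is correct and follows exactly the paper's approach: the paper states that Lemma~\ref{lem-v-b-1} follows ``by analogous arguments as proving Lemma~\ref{lem-v-B-1}'' and gives no further details, so your careful description of the mirroring (with $z\mapsto\xi$, $B\mapsto b$, traces at $x=0\mapsto x=1$, and $\int_z^\infty\mapsto\int_{-\infty}^\xi$) is in fact more explicit than what the paper provides.
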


\vspace{4mm}

\section{convergence of boundary  layers} % (fold)
\label{sec:stability_of_boundary_layers}
\subsection{Reformulation of the problem} % (fold)
\label{sub:reformulation_of_the_problem}
Denote by $ (\varphi ^{\varepsilon}, v ^{\varepsilon}) $ the solution to problem \eqref{refor-eq}--\eqref{BD-POSITIVE-VE}. To prove Theorem \ref{thm-stabi-refor}, normally we shall construct a perturbation as
\begin{gather}\label{ansaz}
\begin{cases}
  \displaystyle \varphi ^{\varepsilon}=\varphi ^{I,0}+\varepsilon ^{1/2}\left( \varphi ^{I,1}(x,t)+\varphi ^{B,1}(z,t)+\varphi ^{b,1}(\xi,t) \right)+\mathcal{E}_{1}^{\varepsilon} ,\\[2mm]
\displaystyle v ^{\varepsilon}= v ^{I,0}+v ^{B,0}+v ^{b,0}+ \mathcal{E}_{2}^{\varepsilon}
\end{cases}
\end{gather}
and estimate the remainder $ (\mathcal{E}_{1}^{\varepsilon},\mathcal{E}_{2}^{\varepsilon})$ to show that
\begin{align}\label{R-1-R-2}
 \displaystyle \| \mathcal{E} _{1}^{\varepsilon}\|_{L _{T}^{\infty}L ^{\infty}} = O(\varepsilon ^{5/8}),\ \  \|\mathcal{E} _{2}^{\varepsilon}\|_{L _{T}^{\infty}L ^{\infty}} = O(\varepsilon ^{1/2}),\ \
 \displaystyle \|\partial _{x} \mathcal{E}_{1}^{\varepsilon}\|_{L _{T}^{\infty}L ^{\infty}}  =O (\varepsilon ^{1/4}).
 \end{align}
for some $ T>0 $. However, if we substitute \eqref{ansaz} into \eqref{refor-eq}, we shall find that the equations for $ (\mathcal{E} _{1}^{\varepsilon},\mathcal{E} _{2}^{\varepsilon}) $ involves terms that converge to non-zero constants as $ \varepsilon \rightarrow 0 $, but we need estimates in \eqref{R-1-R-2}, where $ \mathcal{E} _{1}^{\varepsilon} $ behaves like $ o(\varepsilon ^{1/2}) $. This gap causes troubles to the analysis. To circumvent this difficulty, we resort to higher-order outer- and boundary layer profiles by introducing an approximate solution to the problem \eqref{refor-eq}-\eqref{BD-POSITIVE-VE} as follows
\begin{subequations}\label{approximate}
\begin{align}
\displaystyle \Phi ^{A}(x,t)&:=\varphi ^{I,0}+\varepsilon ^{1/2}\left( \varphi ^{I,1}(x,t)+\varphi ^{B,1}(z,t)+\varphi ^{b,1}(\xi,t) \right)
 \nonumber \\
 &\quad  +\varepsilon \left( \varphi ^{B,2}(z,t)+\varphi ^{b,2}(\xi,t) \right)
 +b _{\varphi}^{\varepsilon}(x,t), \label{approximate-fida}
  \\
  \displaystyle V ^{A}(x,t)&:=  v ^{I,0}+v ^{B,0}+v ^{b,0}+\varepsilon ^{1/2}\left( v ^{I,1}(x,t)+v ^{B,1}(z,t)+v ^{b,1}(\xi,t) \right)
 +b _{v}^{\varepsilon}(x,t),\label{approximate-va}
\end{align}
\end{subequations}
where the functions $ b _{\varphi}^{\varepsilon}(x,t) $ and $ b _{v}^{\varepsilon}(x,t) $ are constructed below to homogenize the boundary values of $ (\Phi^{A}, V^{A})  $:
\begin{subequations}\label{corre-b}
\begin{align}
\displaystyle \displaystyle\makebox[-0.5pt]{~} b _{\varphi}^{\varepsilon}(x,t)&=-(1-x)\left[ \varepsilon ^{1/2}\varphi ^{b,1}(- \frac{1}{\varepsilon ^{1/2}} ,t)+\varepsilon \varphi ^{b,2}(-  \frac{1}{\varepsilon ^{1/2}},t)+ \varepsilon \varphi^{B,2}(0,t) \right]
 \nonumber \\
  & \displaystyle \quad-x \left[  \varepsilon ^{1/2}\varphi ^{B,1}(\frac{1}{\varepsilon ^{1/2}},t)+\varepsilon \varphi ^{B,2}(\frac{1}{\varepsilon ^{1/2}},t)+ \varepsilon \varphi ^{b,2}(1,t)\right],   \label{b-vfi-ve}
 \\
 \displaystyle \displaystyle\makebox[-0.5pt]{~} b _{v}^{\varepsilon}(x,t)&=(x-1)\left[v ^{b,0}(- \frac{1}{\varepsilon ^{1/2}},t)+ \varepsilon ^{ 1/2}v^{b,1}(-\frac{1}{\varepsilon ^{ 1/2}},t) \right]
  -x \left[v ^{B,0}( \frac{1}{\varepsilon ^{ 1/2}},t)+  \varepsilon ^{1/2}v ^{B,1}( \frac{1}{\varepsilon ^{1/2}},t)\right]. \label{b-v-ve}
\end{align}
\end{subequations}
Then we can write $ ( \varphi ^{\varepsilon},v ^{\varepsilon}) $ as
\begin{align}\label{solution-recover-esti}
\displaystyle \varphi ^{\varepsilon}=\Phi ^{A}+\varepsilon ^{1/2}\Phi ^{\varepsilon},\ \ v ^{\varepsilon}= V ^{A}+\varepsilon ^{1/2}V ^{\varepsilon}
\end{align}
with $ (\Phi ^{\varepsilon},V ^{\varepsilon}) $ being the perturbation functions, which along with \eqref{ansaz} implies that
\begin{align}
\displaystyle \mathcal{E} _{1}^{\varepsilon}&= \varepsilon ^{1/2}\Phi ^{\varepsilon}+\varepsilon \left( \varphi ^{B,2}(z,t)+\varphi ^{b,2}(\xi,t) \right)
 +b _{\varphi}^{\varepsilon}(x,t),\label{R-1-precise}\\
 \displaystyle \mathcal{E} _{2}^{\varepsilon}&=\varepsilon ^{1/2}V ^{\varepsilon}++\varepsilon ^{1/2}\left( v ^{I,1}(x,t)+v ^{B,1}(z,t)+v ^{b,1}(\xi,t) \right)
 +b _{v}^{\varepsilon}(x,t). \label{R-2-pres}
\end{align}
We remark that we have omitted the term $\varepsilon \varphi^{I,2} $ in the above construction of $ \Phi ^{A}$. Indeed, this term is of order $ \varepsilon $, and is unnecessary for our analysis. On the other hand, if this term is included, then the upper bound on $ \|\partial _{t}\varphi _{x}^{I,2}\|_{L _{T}^{2}L ^{2}} $ is needed for the estimate of $ \mathcal{R}_{1}^{\varepsilon} $ in the subsequent analysis. This will require higher-order regularities on the initial data $ (\varphi _{0},v _{0})$. Substituting \eqref{solution-recover-esti} into \eqref{refor-eq}--\eqref{BD-POSITIVE-VE}, we see that the perturbation functions $ (\Phi ^{\varepsilon}, V ^{\varepsilon}) $ satisfy
\begin{gather}\label{eq-perturbation}
\displaystyle  \begin{cases}
\displaystyle \Phi _{t}^{\varepsilon}=\Phi _{xx}^{\varepsilon}-\varepsilon ^{1/2}\Phi _{x}^{\varepsilon}V _{x}^{\varepsilon}-\Phi _{x}^{\varepsilon}V_{x}^{A}- V _{x}^{\varepsilon}(\Phi_{x}^{A}+M)+\varepsilon ^{-1/2}\mathcal{R}_{1}^{\varepsilon},\\[2mm]
  \displaystyle V _{t}^{\varepsilon}=\varepsilon V _{xx}^{\varepsilon}-\varepsilon ^{1/2}\Phi _{x}^{\varepsilon}V ^{\varepsilon}-\Phi _{x}^{\varepsilon}V^{A}-(\Phi_{x}^{A}+M)V ^{\varepsilon}
  +\varepsilon ^{-1/2}\mathcal{R}_{2}^{\varepsilon},\\[2mm]
  \displaystyle (\Phi ^{\varepsilon},V ^{\varepsilon})(x,0)=(0,0),\\[2mm]
  \displaystyle (\Phi ^{\varepsilon},V ^{\varepsilon})(0,t)=(\Phi ^{\varepsilon},V ^{\varepsilon})(1,t)=(0,0),
  \end{cases}
\end{gather}
where
\begin{align}\label{E-rror-fomula}
\displaystyle \mathcal{R}_{1}^{\varepsilon}=\Phi_{xx}^{A} -(\Phi_{x}^{A}+M)V_{x}^{A} -\Phi_{t}^{A}, \ \ \ \mathcal{R}_{2}^{\varepsilon}=\varepsilon V_{xx}^{A}-(\Phi_{x}^{A}+M)V ^{A}-V_{t}^{A}.
\end{align}
Notice that the coefficients and source terms in \eqref{eq-perturbation} involve only the outer- and boundary layer profiles studied in the previous section. By standard arguments (e.g. \cite{nishida-1978,zhaokun-2015-JDE}), one can prove the local-in-time existence and uniqueness of solutions to the problem \eqref{eq-perturbation} with $ \varepsilon>0 $ in the time interval $ [0,T _{\varepsilon}] $ for some $ T _{\varepsilon}>0 $ which may be small. Now the key is to establish some uniform-in-$ \varepsilon $ estimates for $ (\Phi ^{\varepsilon}, V ^{\varepsilon}) $ so that the $ \varepsilon $-independent lifespan of the solution and the convergence of boundary layers can be extracted. To this end, we present the following results for the problem \eqref{eq-perturbation}, which will be proved in the next subsection.
\begin{proposition}\label{prop-pertur}
Assume the conditions in Theorem \ref{thm-stabi-refor} hold. Then for any $ v _{\ast}>0 $, there exists constants $T>0 $ and $ \varepsilon _{0}>0 $ such that for any $ \varepsilon \in (0,\varepsilon _{0}) $, the problem \eqref{eq-perturbation} admits a unique solution $ (\Phi ^{\varepsilon}, V ^{\varepsilon}) \in L ^{\infty}(0,T;H ^{2}\times H ^{2}) $ which satisfies for any $ t \in [0,T] $,
\begin{gather*}
\displaystyle  \|\Phi^{\varepsilon}(\cdot,t)\|_{L ^{2}}^{2}+ \varepsilon ^{1/2}\|\Phi _{x}^{\varepsilon}(\cdot,t)\|_{L ^{2}}^{2}+\varepsilon ^{3/2}\|\Phi _{xx}^{\varepsilon}\|_{L ^{2}}^{2}+\varepsilon ^{\ell}\|\partial _{x}^{\ell}V ^{\varepsilon}(\cdot,t)\|_{L ^{2}}^{2} \leq c(v _{\ast},T) \varepsilon ^{1/2}
\end{gather*}
and
\begin{gather*}
\displaystyle \int _{0}^{t}\left( \|\Phi _{x}^{\varepsilon}\|_{L ^{2}}^{2}+ \varepsilon ^{1/2}\|\Phi _{\tau} ^{\varepsilon}\|_{L ^{2}}^{2}+\varepsilon \|\Phi _{x \tau}^{\varepsilon}\|_{L ^{2}}^{2}+\varepsilon \|V _{x}^{2}\|_{L ^{2}}^{2}+\|V _{\tau}^{\varepsilon}\|_{L ^{2}}^{2}+\varepsilon ^{5/2}\|V _{x \tau}\|_{L ^{2}}^{2} \right) \mathrm{d}\tau \leq c(v _{\ast},T) \varepsilon ^{1/2},
\end{gather*}
where $ \ell=0,1,2 $, $ c(v _{\ast},T) >0$ is a constant depending on $ T $ but independent of $ \varepsilon $.
\end{proposition}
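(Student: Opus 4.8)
The plan is to prove Proposition~\ref{prop-pertur} by a continuity/bootstrap argument based on uniform-in-$\varepsilon$ weighted energy estimates for the perturbation system \eqref{eq-perturbation}. First I would record the basic structural facts that make the scheme work: by Lemmas~\ref{lem-regul-outer-layer-0}--\ref{lem-v-b-1} together with the decay/weighted bounds on the boundary layer profiles, the coefficients $\Phi^A_x+M$, $V^A_x$, $V^A$ and their first few $x$- and $t$-derivatives are bounded in $L^\infty_TL^\infty$ by $c(v_\ast,T)$, and crucially $\Phi^A_x+M\ge 0$ up to an error of size $O(\varepsilon^{1/2})$ (so that the ``good'' sign term $\int(\Phi^A_x+M)|V^\varepsilon|^2$ is essentially nonnegative, modulo an absorbable remainder). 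The second preliminary step is to estimate the consistency errors: I claim $\mathcal{R}_1^\varepsilon$ and $\mathcal{R}_2^\varepsilon$ from \eqref{E-rror-fomula} satisfy $\|\mathcal{R}_1^\varepsilon\|_{L^2_TL^2}\le c(v_\ast,T)\varepsilon^{9/8}$ (or at least $\varepsilon^{1+1/8}$) and $\|\mathcal{R}_2^\varepsilon\|_{L^2_TL^2}\le c(v_\ast,T)\varepsilon$, so that after the $\varepsilon^{-1/2}$ rescaling in \eqref{eq-perturbation} the forcing terms $\varepsilon^{-1/2}\mathcal{R}_i^\varepsilon$ are still small of order $\varepsilon^{5/8}$ and $\varepsilon^{1/2}$ respectively; these come from the WKB construction — the profiles were chosen precisely so that $G_i,\tilde G_i$ vanish — and from the exponential decay of the boundary correctors $b_\varphi^\varepsilon, b_v^\varepsilon$, which are $O(\varepsilon^N)$ for any $N$ since the tails $\varphi^{b,j}(-\varepsilon^{-1/2},t)$ etc.\ decay exponentially.

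The heart of the proof is a weighted energy estimate carried out at several orders, using the $\varepsilon$-weights dictated by the target norms. I would proceed as follows. (i) \emph{Zeroth order:} multiply \eqref{eq-perturbation}$_1$ by $\Phi^\varepsilon$ and \eqref{eq-perturbation}$_2$ by $V^\varepsilon$, add, integrate over $\mathcal I$, and use the boundary conditions to kill boundary terms. The dissipation produces $\|\Phi^\varepsilon_x\|_{L^2}^2+\varepsilon\|V^\varepsilon_x\|_{L^2}^2$; the coupling terms $\Phi^\varepsilon_x V^\varepsilon(\Phi^A_x+M)$ and $\Phi^\varepsilon_x V^A_x$ are handled by Cauchy--Schwarz absorbing the $\Phi^\varepsilon_x$ part into dissipation and keeping $\|\Phi^\varepsilon\|_{L^2}^2+\|V^\varepsilon\|_{L^2}^2$ on the right; the cubic term $\varepsilon^{1/2}\Phi^\varepsilon_x V^\varepsilon_x\cdot(\cdots)$ is where the a priori assumption $\|\Phi^\varepsilon_x\|_{L^\infty}$-type bound (obtained from the bootstrap hypothesis via Sobolev and the $\varepsilon$-weighted $H^2$ control) enters; the forcing contributes $\varepsilon^{-1/2}\|\mathcal R_i^\varepsilon\|_{L^2}\|(\Phi^\varepsilon,V^\varepsilon)\|_{L^2}$, i.e.\ $O(\varepsilon^{1/2})$ after Young. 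Gronwall then gives $\|\Phi^\varepsilon\|_{L^2}^2+\|V^\varepsilon\|_{L^2}^2+\int_0^t(\|\Phi^\varepsilon_x\|_{L^2}^2+\varepsilon\|V^\varepsilon_x\|_{L^2}^2)\,d\tau\le c(v_\ast,T)\varepsilon^{1/2}$ on the bootstrap interval. (ii) \emph{First order in $x$ (for $\Phi^\varepsilon$) and time estimates:} multiply \eqref{eq-perturbation}$_1$ by $\Phi^\varepsilon_t$ to get $\tfrac{d}{dt}\|\Phi^\varepsilon_x\|_{L^2}^2$ and $\|\Phi^\varepsilon_t\|_{L^2}^2$; the troublesome term $\int V^\varepsilon_x(\Phi^A_x+M)\Phi^\varepsilon_t$ I would integrate by parts in $x$ to move the derivative onto $(\Phi^A_x+M)\Phi^\varepsilon_t$, producing a $\tfrac{d}{dt}\int(\Phi^A_x+M)V^\varepsilon\Phi^\varepsilon_x$ term plus lower-order pieces, exactly as in the proof of Lemma~\ref{lemf-vfi-V-I-1}. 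Multiplying \eqref{eq-perturbation}$_2$ by $V^\varepsilon_t$ controls $\|V^\varepsilon_t\|_{L^2}^2$ and $\tfrac{d}{dt}\big[\varepsilon\|V^\varepsilon_x\|_{L^2}^2+\int(\Phi^A_x+M)|V^\varepsilon|^2\big]$; here the coefficient $\Phi^\varepsilon_x V^A$ term forces the weight $\varepsilon^{1/2}$ to appear on $\|\Phi^\varepsilon_x\|_{L^2}^2$, consistent with the statement. (iii) \emph{Second order:} differentiate \eqref{eq-perturbation}$_1$ in $x$, test with $\varepsilon^{3/2}\Phi^\varepsilon_{xx}$ (or test \eqref{eq-perturbation}$_1$ with $\Phi^\varepsilon_{xx}$ and weight appropriately) to close $\varepsilon^{3/2}\|\Phi^\varepsilon_{xx}\|_{L^2}^2+\varepsilon\int\|\Phi^\varepsilon_{x\tau}\|_{L^2}^2$; differentiate \eqref{eq-perturbation}$_2$ in $x$ and test with $\varepsilon^2 V^\varepsilon_{xx}$ to obtain $\varepsilon^2\|V^\varepsilon_{xx}\|_{L^2}^2+\varepsilon^{5/2}\int\|V^\varepsilon_{x\tau}\|_{L^2}^2$, using the parabolic term $\varepsilon V^\varepsilon_{xxx}$ which after integration by parts yields $\varepsilon\|V^\varepsilon_{xx}\|_{L^2}^2$-dissipation weighted by the extra $\varepsilon$'s. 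Throughout, every new term is either absorbed by dissipation, estimated by the $L^\infty$-bounds on the profiles, bounded by the forcing ($O(\varepsilon^{1/2})$ after the weights), or fed into Gronwall.

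The bootstrap is then closed in the standard way: assume on $[0,T']$ the bounds of the Proposition with $c(v_\ast,T)$ replaced by $2c(v_\ast,T)$ together with an a priori smallness $\|\Phi^\varepsilon_x\|_{L^\infty_{T'}L^\infty}\le 1$ (say), derive from the energy estimates above the same bounds with constant $c(v_\ast,T)$ for $\varepsilon<\varepsilon_0$ small, and conclude $T'$ can be taken equal to the fixed $T=T_0(v_\ast)$; the local existence and uniqueness in $L^\infty(0,T_\varepsilon;H^2\times H^2)$ is classical (cf.\ \cite{nishida-1978,zhaokun-2015-JDE}) and the uniform estimates extend the lifespan to $[0,T]$. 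The main obstacle, I expect, is twofold: first, keeping precise track of the $\varepsilon$-weights so that the cubic interaction $\varepsilon^{1/2}\Phi^\varepsilon_x V^\varepsilon_x$ and the ``bad'' coupling $V^\varepsilon_x(\Phi^A_x+M)$ in \eqref{eq-perturbation}$_1$ — for which the Dirichlet boundary condition on $V^\varepsilon$ but not a gradient bound on $\Phi^\varepsilon$ is available — are genuinely absorbable rather than merely formally small; this is where the anti-derivative reformulation pays off, since $\Phi^\varepsilon$ (not $u^\varepsilon$) is the unknown and it has a homogeneous Dirichlet condition, so integration by parts in the $\Phi^\varepsilon_t$- and $\Phi^\varepsilon_{xx}$-tests is legitimate. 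Second, verifying the claimed $O(\varepsilon^{9/8})$/$O(\varepsilon)$ bounds on $\mathcal R_1^\varepsilon,\mathcal R_2^\varepsilon$ requires carefully collecting the leftover $\varepsilon^{3/2}$-and-higher terms from inserting \eqref{approximate} into \eqref{refor-eq} — in particular the $v^{B,2},v^{b,2}$ contributions and the mismatch generated by truncating the expansion at order $\varepsilon$ — and using the weighted regularity of the profiles from Section~\ref{sec:study_on_the_inner_outer_layers} (especially Lemmas~\ref{lem-v-B-1} and \ref{lem-v-b-1}) to control $\|\partial_t\varphi^{B,2}\|,\|\partial_t v^{B,1}\|$ etc.; the degeneracy $\inf u_0=0$ is what forces the use of the Hardy inequality and the $\langle z\rangle^l$-weights to get these bounds in the first place, so the estimate of the consistency error cannot be decoupled from the analysis of Section~\ref{sec:study_on_the_inner_outer_layers}.
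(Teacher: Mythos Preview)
Your overall bootstrap/energy architecture is correct and matches the paper's, but there is a genuine gap at the heart of the zeroth-order estimate. You write that the coupling terms $\int \Phi^\varepsilon\Phi^\varepsilon_x V^A_x\,dx$ and $\int(\Phi^A_x+M)V^\varepsilon_x\Phi^\varepsilon\,dx$ ``are handled by Cauchy--Schwarz absorbing the $\Phi^\varepsilon_x$ part into dissipation.'' This does not work: $V^A_x$ contains the boundary-layer piece $\varepsilon^{-1/2}(v^{B,0}_z+v^{b,0}_\xi)$, which is $O(\varepsilon^{-1/2})$ in $L^\infty$, so plain Cauchy--Schwarz leaves an unabsorbable $\varepsilon^{-1}$ factor. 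The paper's fix (see \eqref{five-l2} and the treatment of $\mathcal H_6,\mathcal H_7,\hat{\mathcal Q}_1,\hat{\mathcal Q}_2$) is to exploit $\Phi^\varepsilon|_{\partial\mathcal I}=0$ via the Hardy inequality \eqref{hardy}: write
\[
\Big|\int \Phi^\varepsilon\Phi^\varepsilon_x\,\varepsilon^{-1/2}v^{B,0}_z\,dx\Big|
\le \Big\|\frac{\Phi^\varepsilon}{x(1-x)}\Big\|_{L^2}\|\Phi^\varepsilon_x\|_{L^2}\Big\|\frac{x(1-x)}{\varepsilon^{1/2}}v^{B,0}_z\Big\|_{L^\infty}
\le c_0\|\langle z\rangle v^{B,0}_z\|_{L^\infty_z}\,\|\Phi^\varepsilon_x\|_{L^2}^2,
\]
and then use the weighted bound $\|\langle z\rangle v^{B,0}_z\|_{L^\infty_z}\le K_0(T,v_\ast)v_\ast$ from Lemma~\ref{lem-v-B-0}. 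The same Hardy trick is needed after integrating the $(\Phi^A_x+M)V^\varepsilon_x\Phi^\varepsilon$ term by parts, since $\Phi^A_{xx}$ contains $\varepsilon^{-1/2}\varphi^{B,1}_{zz}$. This produces a coefficient $K_1(T,v_\ast)v_\ast$ in front of $\|\Phi^\varepsilon_x\|_{L^2}^2$ (and $\|\Phi^\varepsilon_{xt}\|_{L^2}^2$ at the next level), and the requirement $K_1(T,v_\ast)v_\ast\le 1/16$ is precisely what fixes $T_0(v_\ast)$ and explains why $T_0\to\infty$ as $v_\ast\to0$. You mention Hardy only at the end, in connection with the consistency errors; it is actually the key device \emph{inside} the energy estimates, and without it the scheme does not close.

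Two smaller corrections. First, the consistency errors are not as good as you claim: the paper obtains $\|\mathcal R_i^\varepsilon\|_{L^\infty_TL^2}+\|\partial_t\mathcal R_i^\varepsilon\|_{L^2_TL^2}\le c(v_\ast,T)\varepsilon^{3/4}$ (Lemmas~\ref{lem-R-1-VE}--\ref{lem-R-2-ESTI}), not $\varepsilon^{9/8}$ or $\varepsilon$; this is still sufficient since $(\varepsilon^{-1/2}\cdot\varepsilon^{3/4})^2=\varepsilon^{1/2}$. Second, the correctors $b^\varepsilon_\varphi,b^\varepsilon_v$ are not $O(\varepsilon^N)$: they include terms like $\varepsilon\,\varphi^{B,2}(0,t)$ evaluated at $z=0$, which are exactly $O(\varepsilon)$ (see \eqref{b-v-ve-esti-1}--\eqref{b-time-esti}). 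Finally, the paper's bootstrap hypothesis is $\sup_t\|\Phi^\varepsilon\|_{L^\infty}^2\le\delta$ (see \eqref{apriori-assumption}), closed via \eqref{FIDA-X-INFTY-INCOR}, rather than a bound on $\|\Phi^\varepsilon_x\|_{L^\infty}$; this is what is actually needed to control the cubic terms like \eqref{firs-l2-nonlinear} and \eqref{N-1}.
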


\subsection{\emph{A priori} estimates} % (fold)
\label{subsec:em}
\subsubsection{Preliminaries} % (fold)
\label{ssub:some_basic_facts}
We introduce some basic facts for later use. By \eqref{v-B-0-regularity}, \eqref{con-vfi-B-1}, \eqref{con-vfi-v-B-2}, \eqref{con-vfi-b-2-v} and Proposition \ref{prop-embeding-spacetime}, we have for $ l \in \mathbb{N} $ that
   \begin{subequations}\label{some-l-infty-layer-for-vfi-b-1-stab}
    \begin{numcases}
\displaystyle\makebox[-4pt]{~}
     \|\langle z \rangle ^{l} \partial _{t}^{k}  \varphi ^{B,1}\|_{L _{T}^{\infty}H _{z}^{5-2k}}+\| \langle z \rangle ^{l} \partial _{t}^{k} \partial _{z}^{j}  \varphi ^{B,1}\|_{L _{T}^{\infty}L _{z}^{\infty}}
      \leq K _{0}(T,v _{\ast}) v _{\ast},\\
\displaystyle\makebox[-4pt]{~} \|\langle \xi \rangle ^{l} \partial _{t}^{k}  \varphi ^{b,1}\|_{L _{T}^{\infty}H _{\xi}^{5-2k}}+\| \langle \xi \rangle ^{l} \partial _{t}^{k} \partial _{\xi}^{j}  \varphi ^{b,1}\|_{L _{T}^{\infty}L_{\xi}^{\infty}} \leq K _{0}(T,v _{\ast}) v _{\ast}
    \end{numcases}
   \end{subequations}
for $ k=0,1,2,\ j=0,1,\cdots, 4-2k $, and that
  \begin{subequations}\label{some-l-infty-layer-for-stab}
      \begin{numcases}
\displaystyle\makebox[-4pt]{~}
    \displaystyle \|\langle z \rangle ^{l} \partial _{t}^{k}  \varphi ^{B,2}\|_{L _{T}^{\infty}H _{z}^{4-2k}}+ \|\langle z \rangle ^{l} \partial _{t}^{k} \partial _{z}^{j}  \varphi ^{B,2}\|_{L _{T} ^{\infty}L _{z}^{\infty}} \leq c(v _{\ast},T),\\
  \displaystyle\makebox[-4pt]{~} \|\langle \xi \rangle ^{l} \partial _{t}^{k}  \varphi ^{b,2}\|_{L _{T}^{\infty}H _{\xi}^{4-2k}}+ \|\langle \xi \rangle ^{l} \partial _{t}^{k} \partial _{\xi}^{j}  \varphi ^{b,2}\|_{L _{T}^{\infty}L _{\xi}^{\infty}} \leq c(v _{\ast},T)
    \end{numcases}
   \end{subequations}
   for $ k=0,1 $, $ j=0,1,\cdots, 4-2k $. Hereafter the constant $ c(v _{\ast},T)>0 $ is as stated in Section  \ref{sec:main_result}, $ K_{0}(T,v _{\ast})>0 $ is as in Lemma \ref{lem-v-B-0}. Also, we collect some basic estimates on the boundary layer profiles of $ v $ as follows.
\begin{subequations}\label{l-infty-v-bd-stat}
\begin{numcases}
\displaystyle\makebox[-4pt]{~}
	\displaystyle\|\langle z \rangle ^{l} \partial _{t}^{k}  v ^{B,1}\|_{L _{T}^{\infty}H _{z}^{4-2k}}+\sum _{\lambda=0}^{1}\sum _{\ell=0}^{3- 2\lambda}\|\langle z \rangle ^{l}\partial _{t}^{\lambda}\partial _{z}^{\ell} v ^{B,1}\|_{L _{T}^{\infty}L _{z}^{\infty}} \leq c(v _{\ast},T),\label{l-infty-v-bd-stat-a}\\
	\displaystyle\makebox[-4pt]{~} \|\langle \xi \rangle ^{l} \partial _{t}^{k}  v ^{b,1}\|_{L _{T}^{\infty}H _{\xi}^{4-2k}}+\sum _{\lambda=0}^{1}\sum _{\ell=0}^{3- 2\lambda}\|\langle \xi\rangle ^{l}\partial _{t}^{\lambda}\partial _{z}^{\ell} v ^{b,1}\|_{L _{T}^{\infty}L _{\xi}^{\infty}} \leq c(v _{\ast},T) , \label{l-infty-v-bd-stat-b}
    \end{numcases}
   \end{subequations}
 and
    \begin{subequations}\label{some-l-infty-layer-for-v-b-0-stab}
     \begin{numcases}
     \displaystyle\makebox[-4pt]{~}
     \|\langle z \rangle ^{l} \partial _{t}^{k}  v ^{B,0}\|_{L _{T}^{\infty}H _{z}^{4-2k}}+\sum _{\lambda=0}^{1}\sum _{\ell=0}^{3- 2\lambda}\|\langle z \rangle ^{l}\partial _{t}^{\lambda}\partial _{z}^{\ell} v ^{B,0}\|_{L _{T}^{\infty}L _{z}^{\infty}}
      \leq K_{0}(T,v _{\ast}) v _{\ast},\\
    \displaystyle\makebox[-4pt]{~} \|\langle \xi \rangle ^{l} \partial _{t}^{k}  v ^{b,0}\|_{L _{T}^{\infty}H _{\xi}^{4-2k}}+\sum _{\lambda=0}^{1}\sum _{\ell=0}^{3- 2\lambda}\|\langle \xi \rangle ^{l}\partial _{t}^{\lambda}\partial _{\xi}^{\ell} v ^{b,0}\|_{L _{T}^{\infty}L _{\xi}^{\infty}} \leq K _{0}(T,v _{\ast}) v _{\ast}
    \end{numcases}
   \end{subequations}
for $ k=0,1,2,\ j=0,1,\cdots, 4-2k $, due to Lemmas \ref{lem-v-B-0}, \ref{lem-first-bd-rt}, \ref{lem-v-B-1}, \ref{lem-v-b-1} and Proposition \ref{prop-embeding-spacetime}. From \eqref{con-vfi-v-I-0-regula} and  \eqref{some-l-infty-layer-for-vfi-b-1-stab}--\eqref{l-infty-v-bd-stat}, one can deduce some estimates on the approximate solution $ (\Phi^{A}, V^{A}) $:
\begin{subequations}\label{esti-approx-fida-2}
\begin{gather}
\displaystyle  \|\partial _{x}^{l}\Phi^{A}\|_{L _{T}^{\infty}L ^{\infty}}+\|\partial _{t}\partial _{x}^{l}\Phi^{A}\|_{L _{T}^{\infty}L ^{\infty}} \leq c(v _{\ast},T), \ \ l=0,1, \label{esti-approx-fida-2a}\\
\displaystyle \|V^{A}\|_{L _{T}^{\infty}L ^{\infty}}+ \|V_{t}^{A}\|_{L _{T}^{\infty}L ^{\infty}}+\varepsilon ^{1/2}\|\partial _{t}^{l}\partial _{x} V ^{A}\|_{L _{T}^{\infty}L ^{\infty}} \leq c(v _{\ast},T), \ \ l=0,1.\label{esti-approx-fida-2b}
\end{gather}
\end{subequations}

\subsubsection{Estimates on the error terms} % (fold)
\label{sub:estimates_on_the_error_terms}

% subsection estimates_on_the_error_terms (end)
Now let us turn to estimates on the error terms $ \mathcal{R}_{1}^{\varepsilon} $ and $ \mathcal{R}_{2}^{\varepsilon} $.
\begin{lemma}\label{lem-R-1-VE}
Let $ 0< \varepsilon <1 $. It holds for any $ T>0 $ that
\begin{gather}\label{con-R-1-in-lem}
\displaystyle
 \displaystyle \| \mathcal{R}_{1}^{\varepsilon}\|_{L _{T}^{\infty}L ^{\infty}} \leq c(v _{\ast},T) \varepsilon ^{1/2},\ \ \ \|\mathcal{R}_{1}^{\varepsilon}\|_{L _{T}^{\infty}L ^{2}} +\|\partial _{t}\mathcal{R}_{1}^{\varepsilon}\|_{L _{T}^{2}L ^{2}}\leq c(v _{\ast},T) \varepsilon ^{3/4}.
\end{gather}
\end{lemma}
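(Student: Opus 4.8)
The plan is to substitute the approximate solution $(\Phi^{A},V^{A})$ defined in \eqref{approximate}--\eqref{corre-b} into $\mathcal{R}_{1}^{\varepsilon}=\Phi_{xx}^{A}-(\Phi_{x}^{A}+M)V_{x}^{A}-\Phi_{t}^{A}$, perform the chain-rule differentiation (with $\partial_{x}=\varepsilon^{-1/2}\partial_{z}$ on the $\varphi^{B,\cdot},v^{B,\cdot}$ pieces and $\partial_{x}=\varepsilon^{-1/2}\partial_{\xi}$ on the $\varphi^{b,\cdot},v^{b,\cdot}$ pieces), and regroup every resulting term into four families: (I) a purely interior residual in $\varphi^{I,0},\varphi^{I,1},v^{I,0},v^{I,1}$; (II) a left boundary-layer residual in $\varphi^{B,1},\varphi^{B,2},v^{B,0},v^{B,1}$ together with the Taylor coefficients of the interior profiles at $x=0$; (III) the symmetric right boundary-layer residual at $x=1$; and (IV) genuinely negligible terms arising from $b_{\varphi}^{\varepsilon},b_{v}^{\varepsilon}$ and from products of a left with a right profile. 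Throughout, for a boundary-layer function we will use $\|f(\cdot/\sqrt{\varepsilon},t)\|_{L^{2}(\mathcal{I})}\le\varepsilon^{1/4}\|f(\cdot,t)\|_{L^{2}_{z}}$, which is exactly the mechanism turning an $O(\varepsilon^{1/2})$ bound in $L^{\infty}$ into an $O(\varepsilon^{3/4})$ bound in $L^{2}$.

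For family (I), expanding $(\varphi^{I,0}+\varepsilon^{1/2}\varphi^{I,1})_{xx}-((\varphi^{I,0}+\varepsilon^{1/2}\varphi^{I,1})_{x}+M)(v^{I,0}+\varepsilon^{1/2}v^{I,1})_{x}-(\varphi^{I,0}+\varepsilon^{1/2}\varphi^{I,1})_{t}$ and using the outer equations for $(\varphi^{I,0},v^{I,0})$ and $(\varphi^{I,1},v^{I,1})$ in \eqref{eq-outer-0} and \eqref{first-outer-problem}, the terms of order $\varepsilon^{0}$ and $\varepsilon^{1/2}$ cancel identically and only $-\varepsilon\,\varphi^{I,1}_{x}v^{I,1}_{x}$ survives; by Lemma \ref{lemf-vfi-V-I-1} this is $O(\varepsilon)$ in $L^{\infty}_{T}H^{1}$, hence in $L^{\infty}_{T}L^{\infty}$ and in $L^{\infty}_{T}L^{2}$. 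For families (II)--(III) one inserts the finite Taylor expansions of $\varphi^{I,0}_{x},v^{I,0}_{x},v^{I,0}$, and so on, around $x=0$ (resp. $x=1$); by construction the boundary-layer problems \eqref{first-bd-layer-pro}, \eqref{vfi-bd-1ord-lt} and \eqref{second-bd-eq} are exactly the relations $G_{-2}=G_{-1}=G_{0}=0$ from \eqref{G-sum} (recall $\varphi^{B,0}\equiv\varphi^{b,0}\equiv0$), so the negative and zeroth powers of $\varepsilon$ drop out and what remains is a finite sum in which every summand carries an explicit $\varepsilon^{1/2}$ and has the form (a power of $z$) $\times$ (a $z$-derivative of $\varphi^{B,1},\varphi^{B,2},v^{B,0}$ or $v^{B,1}$), the powers of $z$ coming from $G_{1}$, from setting $\varphi^{B,3}\equiv0$, and from the Taylor remainders. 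The polynomial $z$-growth is absorbed by the weighted bounds $\|\langle z\rangle^{l}\partial_{t}^{k}(\cdot)\|_{L^{\infty}_{T}H^{m}_{z}}\le c(v_{\ast},T)$ of Lemmas \ref{lem-v-B-0}, \ref{lem-first-bd-rt}, \ref{lem-v-B-1} and \ref{lem-v-b-1}; taking $L^{\infty}_{x}$ norms yields $c(v_{\ast},T)\varepsilon^{1/2}$ and taking $L^{2}_{x}$ norms together with the change-of-variables estimate yields $c(v_{\ast},T)\varepsilon^{3/4}$. Family (IV) is handled via the exponential decay \eqref{profile-decay}: each of these terms carries a factor $\varphi^{B,j}(\varepsilon^{-1/2},t)$, $v^{b,j}(-\varepsilon^{-1/2},t)$, or a product supported near both endpoints, and is thus $O(\mathrm{e}^{-c\varepsilon^{-1/2}})$, negligible.

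For $\partial_{t}\mathcal{R}_{1}^{\varepsilon}$ one differentiates the same decomposition in $t$: the interior residual becomes $-\varepsilon\,\partial_{t}(\varphi^{I,1}_{x}v^{I,1}_{x})$, controlled in $L^{2}_{T}L^{2}$ by $c(v_{\ast},T)\varepsilon$ by Lemma \ref{lemf-vfi-V-I-1}; every boundary-layer summand acquires one extra $t$-derivative, remains of the form $\varepsilon^{1/2}\cdot(\text{power of }z)\cdot\partial_{t}(\text{profile derivative})$, and is estimated exactly as above using the $k=1$ (and, where needed, $k=2,3$) weighted bounds of Lemmas \ref{lem-v-B-0}--\ref{lem-v-b-1}, giving $c(v_{\ast},T)\varepsilon^{3/4}$ in $L^{2}_{T}L^{2}$; since $\varepsilon<1$ the term $\varepsilon$ is dominated by $\varepsilon^{3/4}$. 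The $L^{\infty}_{T}L^{2}$ bound on $\mathcal{R}_{1}^{\varepsilon}$ itself follows from the same computation with $k=0$. I expect the main obstacle to be the bookkeeping in families (II)--(III): one must expand each interior coefficient to precisely the order needed at the boundary, match the degree-$k$ Taylor polynomials against the terms appearing inside $G_{-2},G_{-1},G_{0}$ so that the singular powers of $\varepsilon$ genuinely cancel, and verify that every leftover is $\varepsilon^{1/2}$ times a quantity with at worst polynomial $z$-growth, so that the fast-decaying weighted norms of the boundary-layer profiles absorb it; the cross terms and correctors, though numerous, are all exponentially small and cause no analytic trouble.
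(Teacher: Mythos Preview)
Your proposal is correct and follows essentially the same route as the paper: expand $\mathcal{R}_{1}^{\varepsilon}$, cancel the leading orders via the profile equations \eqref{eq-outer-0}, \eqref{first-outer-problem}, \eqref{second-bd-eq} (i.e., $G_{-2}=G_{-1}=G_{0}=0$), Taylor-expand the interior coefficients at the endpoints, and estimate each remainder using the weighted bounds of Lemmas~\ref{lem-v-B-0}--\ref{lem-v-b-1} together with the change-of-variables gain $\|f(\cdot/\sqrt{\varepsilon})\|_{L^{2}(\mathcal{I})}\le\varepsilon^{1/4}\|f\|_{L^{2}_{z}}$. Two small corrections to your treatment of family~(IV): first, \eqref{profile-decay} is only the formal ansatz assumption, not a proven estimate---what is actually available is the polynomial weighted control $\|\langle z\rangle^{l}(\cdot)\|$ for arbitrary $l$, which the paper uses to get $O(\varepsilon^{m})$ for any $m$ on the left/right cross products (see \eqref{half-l-infty}--\eqref{vfi-B-2-half} and the estimates for $\mathcal{P}_{6},\mathcal{P}_{7}$); second, the correctors $b_{\varphi}^{\varepsilon},b_{v}^{\varepsilon}$ are \emph{not} exponentially small, because they contain the non-decaying pieces $\varepsilon\,\varphi^{B,2}(0,t)$ and $\varepsilon\,\varphi^{b,2}(0,t)$---they are only $O(\varepsilon)$ in $L^{\infty}_{T}H^{1}$ (see \eqref{b-v-ve-esti-1}--\eqref{b-time-esti}). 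Neither point affects the conclusion since $O(\varepsilon)\le O(\varepsilon^{3/4})$.
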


\begin{proof}
First recalling the definition of $ G _{i}~(i=-1,0) $ and $ \tilde{G} _{i}~(i=-1,0) $ in \eqref{G-sum} and \eqref{tild-G-sum}, respectively, using \eqref{E-rror-fomula} and the first equation in \eqref{eq-outer-0} and in \eqref{first-outer-problem}, we get from a direct computation that
\begin{align*}
\displaystyle \mathcal{R}_{1}^{\varepsilon}&=-   v _{x}^{B,0}\left[\varphi _{x}^{I,0}- \varphi _{x}^{I,0}(0,t)-x\varphi _{xx}^{I,0}(0,t)\right]+ v _{x}^{b,0}\left[\varphi _{x}^{I,0}-\varphi _{x}^{I,0}(1,t)-(x-1)\varphi _{xx} ^{I,0}(1,t)) \right]
   \nonumber \\
   &\displaystyle \quad- \left[\varepsilon ^{1/2}v _{x}^{B,1}(\varphi _{x}^{I,0}(x,t)-\varphi_{x} ^{I,0}(0,t))+  \varepsilon ^{1/2}v _{x}^{b,1}(\varphi _{x}^{I,0}(x,t)-\varphi _{x}^{I,0}(1,t))\right]
     \nonumber \\
   & \displaystyle \quad - \left[\varepsilon ^{1/2}v _{x}^{B,0}(\varphi_{x}^{I,1}(x,t)-\varphi _{x}^{I,1}(0,t))+  \varepsilon ^{1/2}v _{x}^{b,0}(\varphi_{x}^{I,1}(x,t)-\varphi_{x} ^{I,1}(1,t))\right]
    \nonumber \\
    & \displaystyle \quad  - \varepsilon ^{1/2}\left[ \varphi _{x}^{B,1} (v _{x}^{I,0}(x,t)-v _{x}^{I,0}(0,t))+\varphi _{x}^{b,1} (v _{x}^{I,0}(x,t)-v _{x}^{I,0}(1,t)) \right] - \varepsilon ^{1/2}( v _{x}^{B,0}\varphi _{x}^{b,1}+v _{x}^{b,0}\varphi _{x}^{B,1} )
     \nonumber \\
              &\displaystyle \quad  -\varepsilon(v _{x}^{B,1}\varphi _{x}^{b,1}+v _{x}^{b,1}\varphi _{x}^{B,1}+\varphi _{x} ^{B,2} v _{x}^{b,0}+\varphi _{x}^{b,2}v _{x}^{B,0})- \varepsilon \left[\varphi_{x}^{I,1}(v _{x}^{B,1}+v _{x}^{b,1})  +  v _{x}^{I,1}(\varphi _{x}^{B,1}+\varphi _{x}^{b,1})\right]
 \nonumber \\
               & \displaystyle \quad -\varepsilon\varphi _{x}^{I,1}v _{x}^{I,1}
            - \varepsilon \varphi _{x}^{B,2}\left[ v ^{I,0}
       +\varepsilon (v ^{I,1}+v ^{B,1}+v ^{b,1}) \right] _{x}- \varepsilon \varphi _{x}^{b,2}\left[ v ^{I,0}
       +\varepsilon (v ^{I,1}+v ^{B,1}+v ^{b,1}) \right] _{x}
      \nonumber \\
       & \displaystyle \quad
       - \left[ \varepsilon ^{1/2}( \varphi _{t}^{B,1}+\varphi _{t}^{b,1} )+ \varepsilon ( \varphi _{t}^{B,2}+\varphi _{t} ^{b,2} ) \right]     +F ^{\varepsilon}=: \sum _{i=1} ^{12}\mathcal{P}_{i}+F ^{\varepsilon},
\end{align*}
where
\begin{align}\label{F-defi}
\displaystyle F ^{\varepsilon}&= - \partial _{x} b _{v}^{\varepsilon}\left( \varphi _{x}^{I,0}+M+\varepsilon ^{1/2}(\varphi_{x} ^{I,1}+\varphi _{x}^{B,1}+\varphi_{x} ^{b,1}) +\varepsilon(\varphi_{x} ^{B,2}+\varphi_{x} ^{b,2})   \right)
 \nonumber \\
 & \displaystyle \quad  -\partial _{x} b _{\varphi}^{\varepsilon}\left( v ^{I,0}+v ^{B,0}+v ^{b,0}+\varepsilon ^{1/2}(v ^{I,1}+v ^{B,1}+v ^{b,1}) \right)_{x}   -(\partial _{x} b _{\varphi}^{\varepsilon}\partial _{x} b _{v}^{\varepsilon})-\partial _{t} b _{\varphi}^{\varepsilon}.
\end{align}
Now we are ready to estimate $ \|\mathcal{R}_{1} ^{\varepsilon}\|_{L _{T}^{2}L _{z}^{2}} $. By \eqref{con-vfi-v-I-0-regula}, \eqref{l-infty-v-bd-stat-a}, \eqref{z-transfer} and Taylor's formula, we have
\begin{align}\label{P-1-esti}
&\displaystyle \|\mathcal{P}_{1}\|_{L _{T}^{\infty}L ^{2}} =\left\|\frac{\varphi _{x}^{I,0}(x,t)-\varphi _{x}^{I,0}-x\varphi _{xx} ^{I,0}(0,t)}{x ^{2}}x ^{2}v _{x}^{B,0}\right\|_{L _{T}^{\infty}L ^{2}}
 \nonumber \\
 &~\displaystyle \leq  \frac{1}{2} \|\partial _{x}^{3}\varphi ^{I,0}\|_{L _{T}^{\infty}L ^{\infty}} \|x ^{2}v _{x}^{B,0}\|_{L _{T}^{\infty}L  ^{2}} \leq c _{0} \varepsilon \|\varphi ^{I,0}\|_{L _{T}^{\infty}H ^{4}}\|z ^{2}v _{x}^{B,0}\|_{L _{T}^{\infty}L  ^{2}}
  \nonumber \\
  & ~\displaystyle \leq c _{0} \varepsilon ^{3/4}\|\varphi ^{I,0}\|_{L _{T}^{\infty}H ^{4}}\|z ^{2}v _{z}^{B,0}\|_{L _{T}^{\infty}L  _{z}^{2}} \leq c(v _{\ast},T) \varepsilon ^{3/4}.
\end{align}
The same argument as above yields
\begin{gather*}
\displaystyle \|\mathcal{P}_{2}\|_{L _{T}^{\infty}L  ^{2}} \leq  \frac{1}{2} \|\partial _{x}^{3}\varphi ^{I,0}\|_{L _{T}^{\infty} L ^{\infty}} \|(x-1)^{2}v _{x}^{b,0}\|_{L _{T}^{\infty}L  ^{2}} \leq c(v _{\ast},T) \varepsilon ^{3/4}.
\end{gather*}
Furthermore, in the same manner, we get from \eqref{con-vfi-v-I-0-regula}, \eqref{some-l-infty-layer-for-vfi-b-1-stab}--\eqref{l-infty-v-bd-stat} that
\begin{gather*}
\displaystyle \|\mathcal{P}_{i}\| _{L _{T}^{\infty}L  ^{2}} \leq c(v _{\ast},T)\varepsilon ^{3/4},\ \ i=3,4,5.
\end{gather*}
Notice that $ \frac{1}{2 \varepsilon ^{1/2}} < z=\frac{x}{\varepsilon ^{1/2}}< \frac{1}{\varepsilon ^{1/2}} $ for $1/2 \leq x \leq 1 $, and that $ - \frac{1}{\varepsilon ^{1/2}} \leq \xi=\frac{x-1}{\varepsilon ^{1/2}} \leq -\frac{1}{2\varepsilon ^{1/2}} $ for $ 0 \leq x \leq 1/2$. This along with \eqref{con-b-0}, \eqref{con-vfi-v-B-2}, \eqref{con-vfi-b-2-v}, \eqref{l-infty-v-bd-stat} and \eqref{inte-transfer} implies for $ m \in \mathbb{N} $ and $  k=0,1,2 $ that
\begin{align}\label{half-l-infty}
&\displaystyle \varepsilon ^{-\frac{m}{2}}\|\partial _{t}^{k}\partial _{x}^{i} v ^{B,j}\|_{L ^{\infty}((1/2, 1)\times(0,T))}+\varepsilon ^{-\frac{m}{2}}\|\partial _{t}^{k}\partial _{x}^{i} v ^{b,j}\|_{L ^{\infty}((0,\frac{1}{2})\times(0,T))}
 \nonumber \\
    &~\displaystyle\leq c _{0} \|z ^{m+i} \partial _{t}^{k} \partial _{z}^{i}v ^{B,j} \|_{L ^{\infty}(0,T;L _{z}  ^{\infty}(0, \varepsilon ^{-1/2}))}+c _{0} \|\xi ^{m+i} \partial _{t}^{k} \partial _{\xi}^{i}v ^{b,j} \|_{L ^{\infty}(0,T;L _{\xi}  ^{\infty}(-\varepsilon ^{-1/2},0))}
     \nonumber \\
     &~ \displaystyle
    \leq c _{0} \|\langle z \rangle ^{m+i}\partial _{t}^{k}\partial _{z}^{i}v ^{B,j}\|_{L _{z}  ^{\infty}}
+ c _{0} \|\langle \xi \rangle ^{m+i}\partial _{t}^{k}\partial _{\xi}^{i}v ^{b,j}\|_{L _{\xi}  ^{\infty}} \leq c(v _{\ast},T),
\end{align}
where $ j=0,1 $, $ i=0,1,\cdots,4-2k $. Similarly, we have
\begin{gather}\label{vfi-B-1-HALF-INTY}
\displaystyle  \varepsilon ^{-m/2}\left( \|\partial _{t}^{k} \partial _{x}^{i}\varphi ^{B,1}\|_{L ^{\infty}((1/2, 1)\times(0,T))}+\|\partial _{t}^{k} \partial _{x}^{i}\varphi ^{b,1}\|_{L ^{\infty}((0,1/2)\times(0,T))}  \right) \leq c(v _{\ast},T)
\end{gather}
for $ k=0,1,2$, $i=0,1,\cdots,4-2k $, and
\begin{gather}\label{vfi-B-2-half}
\displaystyle  \displaystyle  \varepsilon ^{-m/2}\left( \|\partial _{t}^{k} \partial _{x}^{i}\varphi ^{B,2}\|_{L ^{\infty}((1/2, 1)\times(0,T))}+ \|\partial _{t}^{k} \partial _{x}^{i}\varphi ^{b,2}\|_{L ^{\infty}((0,1/2)\times(0,T))} \right)  \leq c(v _{\ast},T)
\end{gather}
if $  k=0,1$, $i=0,1,\cdots,2-2k $. Therefore, we deduce for $m \in \mathbb{N}_{+} $ that
\begin{align}\label{P-6-1}
\displaystyle  \varepsilon ^{1/2}\|v _{x}^{B,0}\varphi _{x}^{b,1}\|_{L _{T}^{\infty}L ^{2}}
 &\displaystyle \leq \varepsilon ^{1/2} \|v _{x}^{B,0}\varphi _{x}^{b,1}\|_{L ^{\infty}(0,T;L ^{2}(0,1/2))}+\varepsilon ^{1/2} \|v _{x}^{B,0}\varphi _{x}^{b,1}\|_{L ^{\infty}(0,T;L ^{2}(1/2,1))}
  \nonumber \\
  &\displaystyle \leq \varepsilon ^{\frac{m+1}{2}}\left( \|\varepsilon ^{-m/2}\varphi _{x}^{b,1}\|_{L ^{\infty}((0,1/2 )\times(0,T))}\|v _{x}^{B,0}\|_{L _{T}^{\infty}L ^{2}} \right.
   \nonumber \\
   &\displaystyle  \qquad \qquad \  \left.+\|\varepsilon ^{-m/2}v _{x}^{B,0}\|_{L ^{\infty}((1/2,1)\times (0,T))}\|\varphi _{x}^{b,1}\|_{L _{T}^{\infty}L ^{2}} \right)
    \nonumber \\
    &\displaystyle \leq c _{0} \varepsilon ^{\frac{2m+1}{4}} \left(  \|v _{z}^{B,0}\|_{L _{T}^{\infty}L _{z}^{2}}+\|v _{\xi}^{b,1}\|_{L _{T}^{\infty}L _{\xi}^{2}}\right)
     \nonumber \\
     &\displaystyle \leq c(v _{\ast},T)  \varepsilon ^{\frac{2m+1}{4}} ,
\end{align}
and
\begin{align}\label{p-6-2}
\displaystyle \varepsilon ^{1/2} \|v _{x}^{b,0}\varphi _{x}^{B,1}\|_{L _{T}^{\infty}L ^{2}}
 &\displaystyle \leq \varepsilon ^{1/2} \|v _{x}^{b,0}\varphi _{x}^{B,1}\|_{L ^{\infty}(0,T;L ^{2}(0,1/2))}+\varepsilon ^{1/2} \|v _{x}^{b,0}\varphi _{x}^{B,1}\|_{L ^{\infty}(0,T;L ^{2}(1/2,1))}
  \nonumber \\
  &\displaystyle \leq \varepsilon ^{\frac{m+1}{2}}\left( \|\varepsilon ^{-m/2}v _{x}^{b,0}\|_{L ^{\infty}((0,1/2)\times (0,T))}\|\varphi _{x}^{B,1}\|_{L _{T}^{\infty}L ^{2}} \right.
   \nonumber \\
   &\displaystyle  \qquad \qquad \  \left.+\|\varepsilon ^{-m/2}\varphi _{x}^{B,1}\|_{L ^{\infty}((1/2,1)\times (0,T))}\|v_{x}^{b,0}\|_{L _{T}^{\infty}L ^{2}} \right)
    \nonumber \\
    &\displaystyle \leq c _{0} \varepsilon ^{\frac{2m+1}{4}} \left(  \|\varphi _{z}^{B,1}\|_{L _{T}^{\infty}L _{z}^{2}}+\|v _{\xi}^{b,0}\|_{L _{T}^{\infty}L _{\xi}^{2}}\right)
     \nonumber \\
     &\displaystyle \leq c(v _{\ast},T)  \varepsilon ^{\frac{2m+1}{4}} .
     \end{align}
     where we have used \eqref{some-l-infty-layer-for-vfi-b-1-stab}, \eqref{l-infty-v-bd-stat} and \eqref{inte-transfer}. Thus $ \|\mathcal{P }  _{6}\| _{L _{T}^{\infty}L ^ 2}\leq c(v _{\ast},T) \varepsilon ^{3/4}$. By the same argument as proving estimates for $ \mathcal{P}_{6} $, one can infer that
\begin{align*}
\displaystyle  \left\|\mathcal{P}_{7}\right\| _{L _{T}^{\infty}L _{z}^{2}} \leq c(v _{\ast},T)\varepsilon ^{3/4}.
\end{align*}
With \eqref{vfi-I-1-V-i-1}, \eqref{some-l-infty-layer-for-vfi-b-1-stab}, \eqref{l-infty-v-bd-stat} and \eqref{inte-transfer}, we obtain
\begin{align*}
\displaystyle  \|\mathcal{P}_{8}\|_{L _{T}^{\infty}L ^{2}}& \leq c(v _{\ast},T) \varepsilon ^{3/4} \|\varphi _{x}^{I,1}\|_{L _{T}^{\infty}L ^{\infty}} \left( \|v _{z}^{B,1}\|_{L _{T}^{\infty} L _{z}^{2}}+\|v _{\xi}^{b,1}\|_{L _{T}^{\infty} L _{\xi}^{2}} \right)
 \nonumber \\
 & \displaystyle \quad +c(v _{\ast},T) \varepsilon ^{3/4} \|v _{x}^{I,1}\|_{L _{T}^{\infty}L ^{\infty}} \left( \|\varphi _{z}^{B,1}\|_{L _{T}^{\infty}L _{z}^{2}}+\|\varphi _{\xi}^{b,1}\|_{L _{T} ^{\infty}L _{\xi}^{2}} \right)  \leq c(v _{\ast},T) \varepsilon ^{3/4}.
\end{align*}
Similarly, we also have
\begin{align*}
\displaystyle  \|\mathcal{P}_{i}\|_{L _{T}^{\infty}L ^{2}} \leq c(v _{\ast},T) \varepsilon ^{3/4},\ \ \ i =9,10,11,12.
\end{align*}
For the last term $ F ^{\varepsilon} $, we first deduce from \eqref{b-vfi-ve} and \eqref{some-l-infty-layer-for-vfi-b-1-stab}--\eqref{l-infty-v-bd-stat} that
\begin{align}\label{b-v-ve-esti-1}
&\displaystyle \|\partial _{t}^{k}b _{\varphi}^{\varepsilon}\|_{L _{T}^{\infty}H ^{1}}
 \nonumber \\
 & ~\displaystyle \leq c(v _{\ast},T) \varepsilon \Big( \|\varepsilon ^{-1/2} \partial _{t}^{k}\varphi ^{b,1}(- \varepsilon ^{-1/2} ,t)
\|_{L ^{\infty}(0,T)}+\| \partial _{t}^{k}\varphi ^{b,2}(- \varepsilon ^{-1/2},t)\|_{L ^{\infty}(0,T)}
+\|\partial _{t}^{k}\varphi^{B,2}(0,t)\|_{L ^{\infty}(0,T)}\Big)
  \nonumber \\
   & ~ \displaystyle \leq c(v _{\ast},T)\varepsilon \left( \|\langle \xi \rangle \partial _{t}^{k}\varphi ^{b,1} \|_{L _{T}^{\infty}L _{\xi}^{\infty}}+\| \partial _{t}^{k}\varphi ^{b,2} \|_{L _{T}^{\infty}L _{\xi}^{\infty}}+\|\partial _{t}^{k}\varphi ^{B,2} \| _{L _{T}^{\infty}L _{z}^{\infty}} \right)
  \nonumber \\
     &~\displaystyle\leq c(v _{\ast},T) \varepsilon
\end{align}
for $ k=0,1 $. By similar arguments, we have from \eqref{b-v-ve} that
\begin{align}\label{b-v-ve-esti}
\displaystyle  \|\partial _{t}^{k} b _{v}^{\varepsilon}\|_{L _{T}^{\infty}H ^{1}} \leq c(v _{\ast},T) \varepsilon, \ \ \ k=0,1.
\end{align}
Similar arguments along with \eqref{v-B-0-regularity}, \eqref{con-b-0}, \eqref{con-vfi-v-B-2} and \eqref{con-vfi-b-2-v} further imply that
\begin{align}\label{b-v-ve-l-2-esti}
\displaystyle \|\partial _{t}^{2}b _{\varphi}^{\varepsilon}\|_{L _{T}^{2}H ^{1}}+\left\|\partial _{t}^{2} b _{v}^{\varepsilon}\right\| _{L _{T}^{2}H ^{1}} \leq c(v _{\ast},T) \varepsilon.
\end{align}
Notice also that $ \partial _{x}b _{\varphi}^{\varepsilon} $ and $ \partial _{x}b _{v}^{\varepsilon} $ are independent of $ x $. Thus it holds that
\begin{align}\label{b-time-esti}
\displaystyle  \|\partial _{t}^{2}\partial _{x} b _{\varphi}^{\varepsilon}\|_{L ^{2}(0,T)}+\|\partial _{t}^{2}\partial _{x} b _{v}^{\varepsilon} \|_{L ^{2}(0,T)}+\|\partial _{t}^{k}\partial _{x} b _{\varphi}^{\varepsilon}\|_{L ^{\infty}(0,T)}+\|\partial _{t} ^{k}\partial _{x} b _{v}^{\varepsilon} \|_{L ^{\infty}(0,T)} \leq c(v _{\ast},T) \varepsilon,
\end{align}
where $ k=0,1 $. With \eqref{con-vfi-v-I-0-regula},  \eqref{some-l-infty-layer-for-vfi-b-1-stab}--\eqref{l-infty-v-bd-stat} and \eqref{b-v-ve-esti-1}--\eqref{b-time-esti}, recalling the definition of $ F ^{\varepsilon} $ in \eqref{F-defi}, we have
\begin{align}\label{F-es-t-1}
&\displaystyle  \|F ^{\varepsilon}\|_{L _{T}^{\infty} L ^{2}}
 \nonumber \\
 &~ \leq c(v _{\ast},T) \|\partial _{x}b _{v}^{\varepsilon}\|_{L ^{\infty}}\left( 1+\left\|\varphi ^{I,0}\right\|_{ L _{T}^{\infty}H ^{1}}+\varepsilon ^{1/2}\|\varphi ^{I,1}\|_{L _{T}^{\infty}H ^{1}}+\varepsilon ^{1/4}\|\varphi ^{B,1}\|_{L _{T}^{\infty}H _{z}^{1}}\right.
 \nonumber \\
 & ~\displaystyle \left. \qquad \qquad \qquad \quad+ \varepsilon ^{1/4}\|\varphi ^{b,1}\| _{L _{T}^{\infty}H _{\xi}^{1}}+\varepsilon ^{3/4}\|\varphi ^{B,2}\|_{L _{T}^{\infty}H _{z}^{1}}+\varepsilon ^{3/4}\|\varphi ^{b,2}\|_{L _{T}^{\infty}H _{\xi}^{1}}\right)
  \nonumber \\
  &~\displaystyle \quad +\|\partial _{x}b _{\varphi}^{\varepsilon}\|_{L ^{\infty}(0,T)}\left( \|v ^{I,0}\|_{L _{T}^{\infty}H ^{1}}+\varepsilon ^{1/4}\|v ^{B,1}\|_{L _{T}^{\infty}H _{z}^{1}}+ \varepsilon ^{1/4}\|v ^{b,1}\| _{L _{T}^{\infty}H _{\xi}^{1}}\right)
   \nonumber \\
   &~\displaystyle \quad+ \|\partial _{x} b _{\varphi}^{\varepsilon}\|_{L ^{\infty}(0,T)}\|\partial _{x} b _{v}^{\varepsilon}\| _{L ^{\infty}(0,T)}+ \|\partial _{t}b _{\varphi}^{\varepsilon}\|_{L _{T}^{\infty}L^{2}}
    \nonumber \\
    &~\displaystyle \leq c(v _{\ast},T)\varepsilon ^{5/4}.
\end{align}
In summary, we now have for $ 0< \varepsilon<1 $ that
\begin{gather}\label{R-1-first-esti}
\displaystyle \|\mathcal{R}_{1}^{\varepsilon}\|_{L _{T}^{\infty}L ^{2}} \leq \sum _{i=1}^{12}\|\mathcal{P}_{i}\|_{L _{T}^{\infty}L ^{2}}+\|F ^{\varepsilon}\|_{L _{T}^{\infty}L ^{2}} \leq c(v _{\ast},T) \varepsilon ^{3/4}.
\end{gather}
Repeating the above procedure with $ L ^{2} $-norm replaced by $ L ^{\infty} $-norm, we have that
\begin{gather}\label{R-1-l-infty}
\displaystyle \|\mathcal{R}_{1} ^{\varepsilon}\| _{L _{T}^{\infty}L ^{\infty}}\leq c(v _{\ast},T) \varepsilon ^{1/2}.
\end{gather}
We proceed to estimate $ \|\partial _{t} \mathcal{R}_{1}^{\varepsilon}\|_{L _{T} ^{2}L ^{2} } $. Notice that if $ \|h \chi\|_{Z} \leq c _{0} \|h\|_{X}\| \chi\|_{Y} $ for $ h \in X $ and $ \chi \in Y $ with $ X $, $ Y $ and $ Z $ being Banach spaces, then
\begin{gather}\label{product-tim}
\displaystyle \|\partial _{t}(h \chi)\|_{Z} \leq c _{0}\|\partial _{t}h\|_{X}\|\chi\|_{Y}+ c _{0} \|h\|_{X}\| \partial _{t}\chi\|_{Y},
\end{gather}
provided that $ \partial _{t}f \in X $ and $ \partial _{t}\chi \in Y $. Therefore, by \eqref{con-vfi-v-I-0-regula}, \eqref{l-infty-v-bd-stat-a} and similar arguments to proving \eqref{P-1-esti}, we have
\begin{align}\label{P-1-t-esti}
&\displaystyle \|\partial _{t}\mathcal{P}_{1}\|_{L _{T}^{\infty}L ^{2}}
 \nonumber \\
 &~ \displaystyle \leq  \left\|\frac{\partial _{t} \varphi_{x} ^{I,0}(x,t)-\partial _{t}\varphi_{x} ^{I,0}(0,t)-x\partial _{t}\varphi _{xx} ^{I,0}(0,t)}{x ^{2}}x ^{2}v _{x}^{B,0}\right\|_{L _{T}^{\infty}L ^{2}}
 \nonumber \\
 & ~\displaystyle \quad+ \left\|\frac{\varphi_{x} ^{I,0}(x,t)-\varphi _{x}^{I,0}(0,t)-x\varphi _{xx}^{I,0}(0,t)}{x ^{2}}x ^{2} v _{xt}^{B,0}\right\|_{L _{T} ^{\infty}L ^{2}}
  \nonumber \\
  & ~\displaystyle \leq  \|\partial _{t}\partial _{x}^{3}\varphi ^{I,0}\|_{L _{T}^{\infty} L ^{\infty}} \|x ^{2}v _{x}^{B,0}\|_{L _{T}^{\infty}L  ^{2}}+ \|\partial _{x}^{3}\varphi ^{I,0}\|_{L _{T}^{\infty}L ^{\infty}} \|x ^{2}v _{xt}^{B,0}\|_{L _{T}^{\infty}L  ^{2}}
   \nonumber \\
   & ~\displaystyle \leq c(v _{\ast},T)\varepsilon ^{3/4}\|\varphi  _{t}^{I,0}\|_{L _{T}^{\infty}H ^{4}}\|z ^{2}v _{z}^{B,0}\|_{L _{T}^{\infty}L  _{z}^{2}}+c(v _{\ast},T) \varepsilon ^{3/4}\|\varphi ^{I,0}\|_{L _{T}^{\infty}H ^{4}}\|z ^{2} v _{zt}^{B,0}\|_{L _{T}^{\infty}L  _{z}^{2}}
    \nonumber \\
    &~\displaystyle \leq c(v _{\ast},T) \varepsilon ^{3/4}.
\end{align}
Similar arguments further yield that
\begin{align}\label{P-i-t-esti}
\displaystyle  \|\partial _{t}\mathcal{P}_{i}\|_{L _{T}^{\infty}L ^{2}} \leq c(v _{\ast},T) \varepsilon ^{3/4},\ \ i=2,3,\cdots,11.
\end{align}
Now it remains to prove $ \| \partial _{t}\mathcal{P}_{12}\|_{L _{T}^{2}L ^{2}} \leq c(v _{\ast},T) \varepsilon ^{3/4}$ and $ \|\partial _{t}F ^{\varepsilon}\|_{L _{T} ^{2}L ^{2}}\leq c(v _{\ast},T) \varepsilon ^{3/4} $. For the former, it follows from \eqref{con-vfi-B-1}, \eqref{con-b-0}, \eqref{con-vfi-v-B-2}, \eqref{con-vfi-b-2-v} and $ 0<\varepsilon<1 $ that
\begin{align}\label{P-t-12-esti}
 \| \partial _{t}\mathcal{P}_{12}\|_{L _{T} ^{2}L ^{2}}
  &\displaystyle\leq c _{0}\varepsilon ^{\frac{3}{4}}( \|\partial _{t}^{2} \varphi ^{B,1}\|_{L _{T}^{2}L _{z} ^{2}}+\|\partial _{t}^{2} \varphi ^{b,1}\|_{L _{T}^{2}L _{\xi}^{2}})  \nonumber \\
   &\displaystyle \quad+ c _{0}\varepsilon ^{5/4} (\| \partial_{t}^{2}\varphi ^{B,2}\|_{L _{T}^{2}L _{z} ^{2}}+\| \partial_{t}^{2}\varphi  ^{b,2}\| _{L _{T}^{2}L _{\xi} ^{2}}) \leq c(v _{\ast},T) \varepsilon ^{3/4}.
\end{align}
For the latter, we split $\partial _{t} F ^{\varepsilon}$ into two parts:
\begin{align*}
\displaystyle \partial _{t}F ^{\varepsilon} &= - \partial _{t}\left[\partial _{x} b _{v}^{\varepsilon}\left( \partial _{x}\varphi ^{I,0}+M+\varepsilon ^{1/2}(\partial _{x}\varphi ^{I,1}+\partial _{x}\varphi ^{B,1}+\partial _{x}\varphi ^{b,1}) +\varepsilon(\partial _{x}\varphi ^{B,2}+\partial _{x}\varphi ^{b,2})   \right) \right.
 \nonumber \\
  & \displaystyle \quad  -\left.\partial _{x} b _{\varphi}^{\varepsilon}\left( v ^{I,0}+v ^{B,0}+v ^{b,0}+\varepsilon ^{1/2}(v ^{I,1}+v ^{B,1}+v ^{b,1}) \right)_{x}  +(\partial _{x} b _{\varphi}^{\varepsilon}\partial _{x} b _{v}^{\varepsilon}) \right]-\partial _{t}^{2} b _{\varphi}^{\varepsilon}
   \nonumber \\
   &\displaystyle =:\tilde{F}-\partial _{t}^{2} b _{\varphi}^{\varepsilon},
\end{align*}
where $ \|\partial _{t}^{2} b _{\varphi}^{\varepsilon}\|_{L _{T}^{2}L ^{2}} \leq c(v _{\ast},T) \varepsilon  $ due to \eqref{b-v-ve-l-2-esti}. In view of \eqref{product-tim} along with a modification of the arguments in \eqref{F-es-t-1}, it holds that $ \|\tilde{F}\|_{L _{T}^{2}L ^{2}} \leq c(v _{\ast},T) \varepsilon ^{3/4} $. Therefore we have
\begin{gather*}
\displaystyle \|\partial _{t}F ^{\varepsilon}\|_{L _{T}^{2}L ^{2}} \leq c(v _{\ast},T) \varepsilon ^{3/4}.
\end{gather*}
This alongside \eqref{R-1-first-esti}, \eqref{R-1-l-infty}, \eqref{P-1-t-esti}--\eqref{P-t-12-esti} gives rise to \eqref{con-R-1-in-lem}, and thus complete the proof of Lemma \ref{lem-R-1-VE}.
\end{proof}

\begin{lemma}\label{lem-R-2-ESTI}
For any $ 0<T<\infty $ and $ 0< \varepsilon<1 $, it holds that
\begin{align}\label{con-R-2-ESTI-in-lem}
\displaystyle   \|\mathcal{R}_{2}^{\varepsilon}\|_{L _{T}^{\infty}L ^{2}}+\|\partial _{t}\mathcal{R}_{2}^{\varepsilon}\|_{L _{T}^{2}L ^{2}} \leq c(v _{\ast},T) \varepsilon ^{3/4},\ \ \ \|\mathcal{R}_{2}^{\varepsilon}\|_{L _{T}^{\infty}L ^{\infty}} \leq c(v _{\ast},T) \varepsilon ^{1/2}.
\end{align}
\end{lemma}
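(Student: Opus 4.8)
The proof is entirely parallel to that of Lemma \ref{lem-R-1-VE}, so I only describe the organization and the new ingredients. First I would substitute the definition \eqref{approximate-va} of $V^{A}$ together with the corrector \eqref{b-v-ve} into the formula for $\mathcal{R}_{2}^{\varepsilon}$ in \eqref{E-rror-fomula}, and then use the equations satisfied by every profile occurring in $V^{A}$ --- $(\varphi^{I,0},v^{I,0})$ from \eqref{eq-outer-0}, $(\varphi^{I,1},v^{I,1})$ from \eqref{first-outer-problem}, $v^{B,0}$ from \eqref{first-bd-layer-pro}, $v^{b,0}$ from \eqref{first-bd-pro-rt}, $(\varphi^{B,2},v^{B,1})$ from \eqref{second-bd-eq} and $(\varphi^{b,2},v^{b,1})$ from \eqref{sec-bd-eq-rt} --- to cancel the principal terms (those of orders $\varepsilon^{-1/2}$, $\varepsilon^{0}$, and the genuine $\varepsilon^{1/2}$-contributions). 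A point of care here is the diffusion term $\varepsilon V_{xx}^{A}$: when $\varepsilon\partial_{xx}$ falls on a boundary-layer profile $v^{B,j}(z,t)$ (resp. $v^{b,j}(\xi,t)$) it restores an $\varepsilon^{-1}$ and produces $\varepsilon^{j/2}v^{B,j}_{zz}$, which is precisely the diffusive term retained in the profile equations \eqref{first-bd-layer-pro} and \eqref{second-bd-eq} and is therefore cancelled; when it falls on an outer profile $v^{I,j}(x,t)$ it gives a term of order $\varepsilon$ (for $j=0$) or $\varepsilon^{3/2}$ (for $j=1$), and on the affine-in-$x$ corrector $b_{v}^{\varepsilon}$ it vanishes. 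After these cancellations $\mathcal{R}_{2}^{\varepsilon}$ is a finite sum of remainders of exactly the same three types as the $\mathcal{P}_{i}$ and $F^{\varepsilon}$ appearing in Lemma \ref{lem-R-1-VE}.

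Next I would estimate these three families. (i) \emph{Taylor remainders}: terms in which a boundary-layer profile (or its derivative) is multiplied by a difference such as $\varphi_{x}^{I,0}(x,t)-\varphi_{x}^{I,0}(0,t)$ or $\varphi_{x}^{I,0}(x,t)-\varphi_{x}^{I,0}(0,t)-x\varphi_{xx}^{I,0}(0,t)$ (and their counterparts near $x=1$); writing the difference as $x$ or $x^{2}$ times a bounded factor and passing to the boundary-layer variable $z=x/\varepsilon^{1/2}$ via \eqref{z-transfer} and \eqref{inte-transfer} turns the weighted $L^{2}_{z}$-bounds on the profiles supplied by Lemmas \ref{lem-v-B-0}--\ref{lem-v-b-1} into $L^{2}_{x}$-bounds carrying a gain of $\varepsilon^{3/4}$, exactly as in \eqref{P-1-esti}; the exponential factors $e^{v^{B,0}}$ and $e^{v^{B,0}}-1$ that enter through \eqref{first-bd-layer-pro} are handled by \eqref{con-exp-v-B-0-final} and \eqref{exp-v-B-0-linfty}. (ii) \emph{Cross-layer remainders}: products of a left-layer factor with a right-layer factor, estimated separately on $[0,1/2]$ and $[1/2,1]$ using \eqref{half-l-infty}, which are in fact $O(\varepsilon^{m})$ for every $m$. (iii) \emph{Corrector remainders}: everything multiplied by $b_{v}^{\varepsilon}$ or $b_{\varphi}^{\varepsilon}$, together with the $O(\varepsilon)$ and $O(\varepsilon^{3/2})$ leftovers from $\varepsilon V_{xx}^{A}$, bounded by $c(v_{\ast},T)\varepsilon$ via \eqref{b-v-ve-esti-1}--\eqref{b-time-esti}. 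Summing yields $\|\mathcal{R}_{2}^{\varepsilon}\|_{L^{\infty}_{T}L^{2}}\le c(v_{\ast},T)\varepsilon^{3/4}$, and running the identical computation with $L^{2}$ replaced by $L^{\infty}$ costs one factor $\varepsilon^{1/4}$ in the scaling identities and gives $\|\mathcal{R}_{2}^{\varepsilon}\|_{L^{\infty}_{T}L^{\infty}}\le c(v_{\ast},T)\varepsilon^{1/2}$. For $\|\partial_{t}\mathcal{R}_{2}^{\varepsilon}\|_{L^{2}_{T}L^{2}}$ I would differentiate the decomposition in $t$, apply the product rule \eqref{product-tim}, and invoke the time-derivative weighted estimates on all profiles from Lemmas \ref{lem-regul-outer-layer-0}, \ref{lem-v-B-0}--\ref{lem-v-b-1} and \ref{lemf-vfi-V-I-1} together with \eqref{b-v-ve-l-2-esti}; each resulting term again retains at least $\varepsilon^{3/4}$.

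The routine but delicate part is the bookkeeping of the cancellation step: one must check that \emph{every} surviving term carries at least $\varepsilon^{3/4}$ in $L^{2}$ (equivalently $\varepsilon^{1/2}$ in $L^{\infty}$), i.e. that no $O(\varepsilon^{1/2})$ or $O(\varepsilon^{1/4})$ residue is left after using the profile equations. This is exactly where the precise form of the corrector \eqref{b-v-ve} and the inclusion of the first-order boundary-layer profiles $v^{B,1},v^{b,1}$ (solving \eqref{second-bd-eq}, \eqref{sec-bd-eq-rt}) in $V^{A}$ are essential: without them the $\varepsilon^{1/2}$-level part of $V_{t}^{A}+(\Phi_{x}^{A}+M)V^{A}$ would fail to match $\varepsilon V_{xx}^{A}$. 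I expect this matching, together with the coupled treatment of the exponential nonlinearity $e^{v^{B,0}}$ against the additive structure $v^{I,0}+v^{B,0}$ of $V^{A}$, to be the only genuinely delicate point; all the subsequent norm estimates are mechanical repetitions of the arguments already carried out for $\mathcal{R}_{1}^{\varepsilon}$ in Lemma \ref{lem-R-1-VE}.
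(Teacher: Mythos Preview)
Your proposal is correct and follows essentially the same route as the paper: the paper rewrites the leading boundary-layer equations in the form $v_{zz}^{B,0}=v_{t}^{B,0}+(\varphi_{x}^{I,0}(0,t)+M)v^{B,0}+\varphi_{z}^{B,1}(v^{B,0}+v^{I,0}(0,t))$ (and the analogue at $x=1$), substitutes this into $\mathcal{R}_{2}^{\varepsilon}$, and expands into a sum $\sum_{i=1}^{16}\mathcal{K}_{i}$ whose members are exactly the three families you describe---Taylor remainders, cross-layer products handled on $[0,\tfrac12]$ and $[\tfrac12,1]$ separately, and corrector/leftover terms---each estimated by the same devices as in Lemma~\ref{lem-R-1-VE}. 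Your concern about the coupling between the exponential nonlinearity and the additive structure $v^{I,0}+v^{B,0}$ is precisely what the rewritten form above resolves, since $\varphi_{z}^{B,1}=(\varphi_{x}^{I,0}(0,t)+M)(e^{v^{B,0}}-1)$ already encodes the exponential.
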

\begin{proof}
From \eqref{first-bd-layer-pro}--\eqref{firs-bd-1-rt}, we know that
\begin{gather}\label{v-ZZ}
\displaystyle \begin{cases}
 \displaystyle v _{zz}^{B,0}=v _{t}^{B,0} +(\partial _{x}\varphi ^{I,0}(0,t)+M)v ^{B,0}+\varphi _{z}^{B,1}(v ^{B,0}+v ^{I,0}(0,t)),\\[2mm]
 \displaystyle v _{\xi \xi}^{b,0} =v _{t}^{b,0} +(\partial _{x}\varphi ^{I,0}(1,t)+M)v ^{b,0}+\varphi _{z}^{b,1}(v ^{b,0}+v ^{I,0}(1,t)).
\end{cases}
\end{gather}
Plugging \eqref{v-ZZ} into $ \mathcal{R}_{2}^{\varepsilon} $ in \eqref{E-rror-fomula}, recalling the definition of $ \Phi^{A} $ and $ V^{A} $, we have
\begin{align*}
\displaystyle \mathcal{R}_{2}^{\varepsilon}&=    -\left[ v ^{ B,0}( \varphi _{x}^{I,0}(x,t)-\varphi _{x} ^{I,0}(0,t)-x\varphi _{xx} ^{I,0}(0,t))+v ^{ b,0}( \varphi _{x} ^{I,0}(x,t)- \varphi _{x}^{I,0}(1,t)- (x-1)\varphi _{xx}^{I,0}(1,t))  \right]
   \nonumber \\
   & \displaystyle  \quad - \varepsilon ^{1/2}\left[v ^{ B,0}( \varphi _{x}^{I,1}(x,t)-\varphi _{x} ^{I,1}(0,t))  +v ^{ b,0}( \varphi _{x} ^{I,1}(x,t)- \varphi _{x}^{I,1}(1,t))\right]
    \nonumber \\
    & \displaystyle  \quad  - \varepsilon ^{1/2}\left[v ^{B,1}( \varphi _{x}^{I,0}(x,t)-\varphi _{x} ^{I,0}(0,t))  +v ^{b,1}( \varphi _{x} ^{I,0}(x,t)- \varphi _{x}^{I,0}(1,t))\right]
    \nonumber \\
        &\displaystyle \quad -\varepsilon ^{1/2} \left[ \varphi _{x}^{ B,1}(v ^{I,0}-v ^{I,0}(0,t)-xv _{x}^{I,0}(0,t))+\varphi _{x}^{ b,1}\left( v ^{I,0}-v ^{I,0}(1,t)-(x-1)v _{x}^{I,0}(1,t) \right) \right]
               \nonumber \\
               & \displaystyle \quad -\varepsilon \left[ \varphi _{x}^{ B,1}(v ^{I,1}(x,t)-v ^{I,1}(0,t))+\varphi _{x}^{ b,1}(v ^{I,1}(x,t)-v ^{I,1}(1,t)) \right]
                \nonumber \\
&\displaystyle  \quad-\varepsilon \left[ \varphi _{x}^{B,2}(v ^{I,0}(x,t)-v ^{I,0}(0,t))+ \varphi _{x}^{b,2}(v ^{I,0}(x,t)-v ^{I,0}(1,t))\right]
                 \nonumber \\
        & \quad -\varepsilon \varphi _{x} ^{I,1}(v ^{I,1}+ v ^{B,1}+v ^{b,1}) -\varepsilon ^{1/2}( \varphi _{x}^{ B,1}v ^{ b,0}+\varphi _{x}^{ b,1}v ^{ B,0} ) - \varepsilon ( \varphi _{x}^{ B,1}v ^{b,1}+\varphi _{x}^{ b,1}v ^{B,1} )
        \nonumber \\
& \displaystyle \quad - \varepsilon (  \varphi _{x}^{B,2} v ^{ b,0}+\varphi _{x}^{b,2} v ^{\varepsilon B,0} )  -\varepsilon ^{3/2}  \varphi _{x}^{B,2}(v ^{I,1}+v ^{B,1}+v ^{b,1})  - \varepsilon ^{3/2} \varphi _{x}^{b,2}  (v ^{I,1}+v ^{B,1}+v ^{b,1})
         \nonumber \\
&\displaystyle \quad-b _{v}^{\varepsilon}[\varphi _{x}^{I,0}+M+\varepsilon ^{1/2}(\varphi_{x} ^{I,1}+\varphi_{x}^{ B,1}+\varphi_{x} ^{b,1})+\partial _{x}b _{\varphi}^{\varepsilon}  ]
 \nonumber \\
 &\displaystyle \quad
 - \partial _{x}b _{\varphi}^{\varepsilon}(v ^{I,0}+v ^{B,0}+v ^{b,0}+\varepsilon ^{\frac{1}{2}}(v ^{I,1}+ v ^{B,1}+v ^{b,1}))
           +\left[ \varepsilon v _{xx}^{I,0}+\varepsilon ^{3/2} v _{xx}^{I,1} \right] -\partial _{t} b _{v}^{\varepsilon}  =:\sum _{i=1}^{16}\mathcal{K}_{i}.
\end{align*}
To prove \eqref{con-R-2-ESTI-in-lem}, it suffices to establish estimates for $ \mathcal{K}_{i}~(1 \leq i \leq 15) $. The proof is quite similar to the one for Lemma \ref{lem-R-1-VE}. We first prove $ \|\mathcal{R}_{2}^{\varepsilon}\|_{L _{T}^{\infty}L ^{2}} \leq c(v _{\ast},T) \varepsilon ^{3/4} $. By \eqref{half-l-infty}, \eqref{inte-transfer} and Taylor's formula, we get
 \begin{align*}
 \displaystyle \|\mathcal{K}_{1}\|_{L _{T}^{\infty}L ^{2}}& \leq c _{0} \varepsilon \|\partial _{x}^{3}\varphi ^{I,0}\|_{L _{T}^{\infty}L ^{\infty}}\left( \|v ^{B,0}\|_{L _{T}^{\infty}L _{z}^{2}}+\|v ^{b,0}\|_{L _{T}^{\infty}L _{\xi}^{2}} \right)  \leq c(v _{\ast},T) \varepsilon ^{3/4}.
 \end{align*}
 Similar arguments imply that $ \|\mathcal{K}_{i}\| _{L _{T}^{\infty}L ^{2}}\leq c(v _{\ast},T) \varepsilon ^{3/4} $ for $ i=2,3,4,5,6 $. From \eqref{con-vfi-v-I-0-regula}, \eqref{some-l-infty-layer-for-vfi-b-1-stab}--\eqref{l-infty-v-bd-stat} and \eqref{inte-transfer}, we get
 \begin{align*}
 \displaystyle \| \mathcal{K} _{7}\|_{L _{T}^{\infty}L ^{2}}&  \leq \varepsilon \|\varphi _{x}^{I,1}\|_{L _{T}^{\infty}L ^{\infty}}\left( \|v ^{I,1}\|_{L _{T}^{\infty}L ^{2}}+\|v ^{B,1}\|_{L _{T}L ^{2}}+\|v ^{b,1}\|_{L _{T}^{\infty}L ^{2}} \right) \leq c(v _{\ast},T) \varepsilon ,
 \end{align*}
 where the constraint $ 0< \varepsilon<1 $ has been used. Analogously, we further have that
\begin{align*}
 \displaystyle   \|\mathcal{K}_{i}\| _{L _{T}^{\infty}L ^{2}} \leq c(v _{\ast},T) \varepsilon ^{5/4}, \ \ \ i=11, 12,
\end{align*}
and
\begin{align*}
\displaystyle \|\mathcal{K}_{13}\|_{L _{T}^{\infty}L ^{2}} &\leq c _{0}\left(1+\|\varphi _{x}^{I,0}\|_{L _{T}^{\infty} L ^{2}}+\varepsilon ^{\frac{1}{2}}\|\varphi _{x}^{I,1}\|_{L _{T}^{\infty}L ^{2}}+\varepsilon ^{\frac{1}{4}}\|\varphi _{z}^{ B,1}\|_{L _{T}^{\infty}L _{z}^{2}} \right.
        \nonumber \\
        &\qquad \qquad ~\qquad \qquad\left. +\varepsilon ^{\frac{1}{4}}\|\varphi _{\xi}^{ b,1}\|_{L _{T}^{\infty}L _{\xi}^{2}}+\|\partial _{x}b _{\varphi}^{\varepsilon}\|_{L _{T}^{\infty}L ^{2}}\right)\|b _{v}^{\varepsilon}\|_{L _{T}^{\infty}L ^{\infty}}  \leq c(v _{\ast},T) \varepsilon,\\ \displaystyle  \|\mathcal{K}_{14}\|_{L _{T}^{\infty}L ^{2}} &\leq c(v _{\ast},T)\|\partial _{x}b _{\varphi}^{\varepsilon}\|_{L _{T}^{\infty}L ^{2}}\left( 1+ \varepsilon ^{\frac{1}{2}}( \|v ^{I,1}\|_{L _{T}^{\infty}L ^{\infty}}+\|v ^{B,1}\|_{L _{T}^{\infty}L ^{\infty}}+\|v ^{b,1}\|_{L _{T}^{\infty}L ^{\infty}}) \right)
         \nonumber \\
         & \displaystyle \leq c(v _{\ast},T)  \varepsilon .
\end{align*}
Recalling the arguments in \eqref{P-6-1} and \eqref{p-6-2}, we proceed to estimate $ \|\mathcal{K}_{8}\|_{L _{T} ^{\infty}L ^{2}} $ as follows:
\begin{align*}
\displaystyle  \|\mathcal{K}_{8}\|_{L _{T}^{\infty}L ^{2}}& \leq \varepsilon ^{1/2}\|v ^{b,0}\varphi _{x}^{B,1}\|_{L _{T}^{\infty}L ^{2}}+\varepsilon ^{1/2}\|v ^{B,0}\varphi _{x}^{b,1}\|_{L _{T}^{\infty}L ^{2}}
            \nonumber \\
&  \displaystyle \leq  c _{0}\varepsilon ^{\frac{2m+1}{4}}\left( \|\varepsilon ^{-m/2}v ^{b,0}\|_{L ^{\infty}((0,1/2)\times (0,T))}\|\varphi _{z}^{B,1}\|_{L _{T}^{\infty}L _{z} ^{2}} \right.
   \nonumber \\
&\displaystyle  \qquad \qquad \  \left. \quad+\|\varepsilon ^{-m/2}\varphi _{x}^{B,1}\|_{L ^{\infty}((1/2,1)\times (0,T))}\|v^{b,0}\|_{L _{T}^{\infty}L _{\xi}^{2}} \right)
    \nonumber \\
    & \displaystyle \quad  + c _{0}\varepsilon ^{\frac{2m+1}{4}}\left( \|\varepsilon ^{-m/2}\varphi _{x}^{b,1}\|_{L ^{\infty}((0,1/2 )\times(0,T))}\|v ^{B,0}\|_{L _{T}^{\infty}L _{z} ^{2}} \right.
   \nonumber \\
   &\displaystyle  \qquad \qquad \quad \quad \left.+\|\varepsilon ^{-m/2}v ^{B,0}\|_{L ^{\infty}((1/2,1)\times (0,T))}\|\varphi _{\xi}^{b,1}\|_{L _{T}^{\infty}L _{\xi}^{2}} \right)
    \nonumber \\
    & \displaystyle \leq c(v _{\ast},T)\varepsilon ^{\frac{2m+1}{4}}
\end{align*}
for any integer $ m \geq 1 $, where we have used \eqref{half-l-infty}, \eqref{vfi-B-1-HALF-INTY}, \eqref{inte-transfer} and $ 0< \varepsilon<1 $. Thus, $ \|\mathcal{K}_{8}\|_{L _{T}^{\infty}L ^{2}} \leq c(v _{\ast},T) \varepsilon ^{3/4} $. Similarly, we have also $ \|\mathcal{K}_{i}\|_{L _{T} ^{\infty}L ^{2}} \leq c(v _{\ast},T) \varepsilon ^{3/4} $ for $ i=9,10 $. For $ \mathcal{K}_{15} $ and $ \mathcal{K}_{16} $, it follows from \eqref{con-vfi-v-I-0-regula}, \eqref{vfi-I-1-V-i-1} and \eqref{b-v-ve-esti} that
\begin{align*}
\displaystyle \|\mathcal{K}_{15}\| _{L _{T}^{\infty}L ^{2}} \leq \varepsilon \|v _{xx}^{I,0}\|_{L _{T}^{\infty}L ^{2}}+ \varepsilon ^{\frac{3}{2}}\|v _{xx}^{I,1}\|_{L _{T}^{\infty}L ^{2}} \leq c(v _{\ast},T) \varepsilon, \ \  \|\mathcal{K}_{16}\| _{L _{T}^{\infty}L ^{2}} \leq c(v _{\ast},T) \varepsilon,
\end{align*}
where we have used $ 0< \varepsilon<1 $. Therefore we conclude that
\begin{gather*}
 \displaystyle \|\mathcal{R}_{2}^{\varepsilon}\|_{L _{T} ^{\infty}L ^{2}} \leq c _{0} \sum _{i=1}^{16}\|\mathcal{K}_{i}\|_{L _{T}^{\infty}L ^{2}} \leq c(v _{\ast},T) \varepsilon ^{3/4}.
 \end{gather*}
Repeating the above arguments with $ \|\cdot\|_{L _{T}^{\infty}L ^{2}} $ replaced by $ \|\cdot\|_{L _{T}^{\infty}L ^{\infty}} $, from \eqref{con-vfi-v-I-0-regula}, \eqref{some-l-infty-layer-for-vfi-b-1-stab}--\eqref{l-infty-v-bd-stat}, \eqref{half-l-infty}, \eqref{b-v-ve-esti-1}--\eqref{b-time-esti} and \eqref{inte-transfer}, one can deduce that
 \begin{gather*}
 \displaystyle \|\mathcal{R}_{2} ^{\varepsilon}\|_{L _{T}^{\infty}L ^{\infty}} \leq c(v _{\ast},T) \varepsilon ^{1/2}.
 \end{gather*}
Finally, in view of \eqref{product-tim}, the above estimates for $ \mathcal{K}_{i} ~(1 \leq i \leq 16)$ and Lemmas \ref{lem-regul-outer-layer-0}--\ref{lem-v-b-1}, we have that $ \|\partial _{t}\mathcal{R}_{2}^{\varepsilon} \| _{L _{T}^{\infty}L ^{2}} \leq c(v _{\ast},T) \varepsilon ^{3/4}$. This ends the proof of Lemma \ref{lem-R-2-ESTI}.
\end{proof}

\subsubsection{Lower-order estimates} % (fold)
\label{ssub:lower_order_estimates}
From now on, we shall establish some uniform-in-$ \varepsilon $ estimates for $ (\Phi ^{\varepsilon}, V ^{\varepsilon}) $. Throughout this section, we assume that $ (\Phi ^{\varepsilon}, V^{\varepsilon}) $ satisfies for any $ T>0 $,
\begin{gather}\label{apriori-assumption}
\displaystyle \sup _{t \in [0,T]} \|\Phi ^{\varepsilon}(\cdot, t)\|_{L ^{\infty}}^{2} \leq \delta,
\end{gather}
where $ \delta>0 $ is a small constant to be determined later, and may depend on $ T $. The results in Lemmas \ref{lem-regul-outer-layer-0}--\ref{lem-v-b-1}, \ref{lem-R-1-VE}, \ref{lem-R-2-ESTI} will be frequently used in the subsequent analysis without further clarification. We emphasize that these estimates are all independent of $ \delta $. We begin with the $ L ^{2} $ estimates of $ (\Phi ^{\varepsilon}, V ^{\varepsilon}) $.

\begin{lemma}\label{lem-L2-pertur}
Let the conditions in Proposition \ref{prop-pertur} hold. Assume $ 0< \varepsilon<1 $ and that the solution $ (\Phi ^{\varepsilon},V ^{\varepsilon}) $ to \eqref{eq-perturbation} on $ [0,T] $ satisfies \eqref{apriori-assumption}. Then there exist a positive constant $ \delta _{1}>0 $  independent of $ \varepsilon $ and $ \delta $ such that for any $ t \in [0,T] $,
\begin{gather}\label{con-L2-PERT}
\|V ^{\varepsilon}(\cdot,t)\|_{L ^{2}}^{2}+ \|\Phi ^{\varepsilon}(\cdot,t)\|_{L ^{2}}^{2}+ \int _{0}^{t}\left( \|V ^{\varepsilon}\|_{L ^{2}}^{2}+\varepsilon \|V _{x}^{\varepsilon}\|_{L ^{2}}^{2}+\|\Phi _{x}^{\varepsilon}\|_{L ^{2}}^{2} \right) \mathrm{d}\tau \leq c(v _{\ast},T) \varepsilon ^{1/2},
\end{gather}
provided $ \delta \leq \delta _{1} $ and $ K _{1}(T,v _{\ast})v _{\ast} \leq 1/16 $, where $ K _{1}(T,v _{\ast}) $ is given in \eqref{five-l2}, $ c(v _{\ast},T)>0 $ is a constant depending on $ T $ but independent of $ \varepsilon $ and $ \delta $.
\end{lemma}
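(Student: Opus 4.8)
The plan is to perform the standard $L^2$ energy estimate on the coupled perturbation system \eqref{eq-perturbation}, being careful to track the powers of $\varepsilon$ and to absorb the bad cross terms using the a priori smallness assumption \eqref{apriori-assumption} together with the smallness of $v_*$ encoded in the constants $K_0(T,v_*)v_*$. First I would test the equation $\eqref{eq-perturbation}_1$ against $\Phi^\varepsilon$ and the equation $\eqref{eq-perturbation}_2$ against $V^\varepsilon$, integrate over $\mathcal{I}$, and use the homogeneous Dirichlet boundary conditions $(\Phi^\varepsilon,V^\varepsilon)|_{\partial\mathcal{I}}=0$ to integrate by parts, obtaining
\begin{align*}
\frac{1}{2}\frac{\mathrm{d}}{\mathrm{d}t}\big(\|\Phi^\varepsilon\|_{L^2}^2+\|V^\varepsilon\|_{L^2}^2\big)+\|\Phi_x^\varepsilon\|_{L^2}^2+\varepsilon\|V_x^\varepsilon\|_{L^2}^2+\int_{\mathcal{I}}(\Phi_x^A+M)|V^\varepsilon|^2\mathrm{d}x
\end{align*}
equals a sum of terms involving $\varepsilon^{1/2}\Phi_x^\varepsilon V_x^\varepsilon$, $\Phi_x^\varepsilon V_x^A$, $V_x^\varepsilon(\Phi_x^A+M)$ (paired with $\Phi^\varepsilon$), their analogues paired with $V^\varepsilon$, and the source contributions $\varepsilon^{-1/2}\mathcal{R}_1^\varepsilon\Phi^\varepsilon$, $\varepsilon^{-1/2}\mathcal{R}_2^\varepsilon V^\varepsilon$. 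The key structural observation, as in \cite{HW2}, is that $\Phi_x^A+M\geq 0$ up to an $O(v_*)+O(\varepsilon^{1/4})$ error (from $\varphi_x^{I,0}+M\geq0$ in \eqref{con-vfi-v-I-0-regula} and the boundary-layer corrections), so the zeroth-order term $\int(\Phi_x^A+M)|V^\varepsilon|^2$ is essentially a good term; the piece of $V_x^\varepsilon(\Phi_x^A+M)$ that multiplies $\Phi^\varepsilon$ must be integrated by parts back onto $\Phi_x^\varepsilon$ and $V^\varepsilon$, producing terms controllable by $\|\Phi_x^\varepsilon\|_{L^2}^2$ (absorbed by the diffusion) plus $\|V^\varepsilon\|_{L^2}^2$ with a coefficient $c(v_*,T)$.

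The delicate terms are the ones carrying factors that do not vanish with $\varepsilon$, namely those built from $V_x^A$ and from $\Phi_x^A+M$; here one uses that $\|V_x^A\|_{L_T^\infty L^\infty}$ is only $O(\varepsilon^{-1/2})$ — see \eqref{esti-approx-fida-2b} — so $\varepsilon^{1/2}\|V_x^A\|_{L_T^\infty L^\infty}\leq c(v_*,T)$, which is exactly what is needed to pair $\varepsilon^{1/2}$-free factors against $\Phi_x^\varepsilon$ after Cauchy--Schwarz. For the genuinely nonlinear term $\varepsilon^{1/2}\Phi_x^\varepsilon V_x^\varepsilon\cdot\Phi^\varepsilon$ I would use $\|\Phi^\varepsilon\|_{L^\infty}\leq\delta^{1/2}$ from \eqref{apriori-assumption} to write it as $\leq \delta^{1/2}\varepsilon^{1/2}\|\Phi_x^\varepsilon\|_{L^2}\|V_x^\varepsilon\|_{L^2}\leq \frac14\|\Phi_x^\varepsilon\|_{L^2}^2+\frac14\varepsilon\|V_x^\varepsilon\|_{L^2}^2$ once $\delta$ is small, so it is absorbed by the two dissipation terms; similarly $\varepsilon^{1/2}\Phi_x^\varepsilon V^\varepsilon\cdot V^\varepsilon$ is handled by $\|V^\varepsilon\|_{L^\infty}\lesssim\|V^\varepsilon\|_{L^2}^{1/2}\|V_x^\varepsilon\|_{L^2}^{1/2}$ and Young's inequality, distributing weights $\varepsilon^{1/2}$ appropriately so the $\varepsilon\|V_x^\varepsilon\|_{L^2}^2$ term stays on the left. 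The source terms are estimated by $\varepsilon^{-1/2}\|\mathcal{R}_i^\varepsilon\|_{L^2}\|(\Phi^\varepsilon,V^\varepsilon)\|_{L^2}\leq c(v_*,T)\varepsilon^{1/4}\|(\Phi^\varepsilon,V^\varepsilon)\|_{L^2}\leq c(v_*,T)\varepsilon^{1/2}+c(v_*,T)\|(\Phi^\varepsilon,V^\varepsilon)\|_{L^2}^2$ using Lemmas \ref{lem-R-1-VE} and \ref{lem-R-2-ESTI}. After collecting everything one arrives at a differential inequality of the form
\begin{align*}
\frac{\mathrm{d}}{\mathrm{d}t}\big(\|\Phi^\varepsilon\|_{L^2}^2+\|V^\varepsilon\|_{L^2}^2\big)+\tfrac12\|\Phi_x^\varepsilon\|_{L^2}^2+\tfrac12\varepsilon\|V_x^\varepsilon\|_{L^2}^2+c_1\|V^\varepsilon\|_{L^2}^2\leq c(v_*,T)\big(\|\Phi^\varepsilon\|_{L^2}^2+\|V^\varepsilon\|_{L^2}^2\big)+c(v_*,T)\varepsilon^{1/2},
\end{align*}
where the coefficient multiplying $\|V^\varepsilon\|_{L^2}^2$ on the right must be made strictly smaller than the good constant $c_1$ coming from the $(\Phi_x^A+M)$ term — this is where the hypothesis $K_1(T,v_*)v_*\leq 1/16$ enters, controlling the boundary-layer contributions to the sign of $\Phi_x^A+M$ and the size of the cross terms.

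Finally, since $\|\Phi^\varepsilon\|_{L^2}\leq\|\Phi_x^\varepsilon\|_{L^2}$ by Poincar\'e (using $\Phi^\varepsilon(0,t)=0$), the $\|\Phi^\varepsilon\|_{L^2}^2$ on the right can partly be absorbed by $\tfrac12\|\Phi_x^\varepsilon\|_{L^2}^2$ on the left; applying Gronwall's inequality on $[0,t]$ with zero initial data $(\Phi^\varepsilon,V^\varepsilon)(x,0)=(0,0)$ then yields $\|\Phi^\varepsilon(\cdot,t)\|_{L^2}^2+\|V^\varepsilon(\cdot,t)\|_{L^2}^2\leq c(v_*,T)\varepsilon^{1/2}$, and integrating the differential inequality gives the stated bound on $\int_0^t(\|V^\varepsilon\|_{L^2}^2+\varepsilon\|V_x^\varepsilon\|_{L^2}^2+\|\Phi_x^\varepsilon\|_{L^2}^2)\mathrm{d}\tau$. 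I expect the main obstacle to be the bookkeeping of the powers of $\varepsilon$ in the many cross terms arising from $V_x^A$ and $\Phi_x^A+M$: one must check that every term either carries an honest positive power of $\varepsilon$, or is absorbed by the dissipation $\|\Phi_x^\varepsilon\|_{L^2}^2+\varepsilon\|V_x^\varepsilon\|_{L^2}^2$ after using $\delta$ small, or is absorbed by the good zeroth-order term $c_1\|V^\varepsilon\|_{L^2}^2$ after using $K_1(T,v_*)v_*\leq1/16$ — and that no term of size $O(1)$ times $\|V^\varepsilon\|_{L^2}^2$ slips through the Gronwall argument. The secondary subtlety is ensuring the terms without an $\varepsilon$ factor that are quadratic in $V_x^\varepsilon$ (arising when the $\varepsilon V_{xx}^\varepsilon$ diffusion is the only control on $V_x^\varepsilon$) are paired so that only the coefficient $\varepsilon$, not $1$, multiplies $\|V_x^\varepsilon\|_{L^2}^2$, which forces a careful choice of which factor in each product absorbs the $\varepsilon^{1/2}$.
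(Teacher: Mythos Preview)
Your overall framework (test against $\Phi^\varepsilon$ and $V^\varepsilon$, combine, Gronwall) is correct, but there is a genuine gap in how you propose to handle the term $-\int_{\mathcal{I}}\Phi^\varepsilon\Phi_x^\varepsilon V_x^A\,\mathrm{d}x$. This term carries \emph{no} $\varepsilon^{1/2}$ prefactor, so your claim that ``$\varepsilon^{1/2}\|V_x^A\|_{L_T^\infty L^\infty}\leq c(v_*,T)$ is exactly what is needed'' does not apply. Using only the crude bound $\|V_x^A\|_{L^\infty}\leq c(v_*,T)\varepsilon^{-1/2}$ from \eqref{esti-approx-fida-2b} together with Poincar\'e gives $c(v_*,T)\varepsilon^{-1/2}\|\Phi_x^\varepsilon\|_{L^2}^2$, which blows up as $\varepsilon\to 0$ and cannot be absorbed by the dissipation. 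The same difficulty arises for $-\int_{\mathcal{I}}(\Phi_x^A+M)V_x^\varepsilon\Phi^\varepsilon\,\mathrm{d}x$: after integrating by parts one produces $\int V^\varepsilon\Phi^\varepsilon\Phi_{xx}^A$, and $\Phi_{xx}^A$ contains $\varepsilon^{-1/2}\varphi_{zz}^{B,1}$.

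The paper resolves this by splitting off the singular boundary-layer part of $V_x^A$, namely $\varepsilon^{-1/2}(v_z^{B,0}+v_\xi^{b,0})$, and using the Hardy inequality \eqref{hardy} (available since $\Phi^\varepsilon|_{\partial\mathcal{I}}=0$) together with the weighted decay of the boundary-layer profiles: one writes
\[
\Big|\int_{\mathcal{I}}\Phi^\varepsilon\Phi_x^\varepsilon\,\varepsilon^{-1/2}v_z^{B,0}\,\mathrm{d}x\Big|\leq \Big\|\frac{\Phi^\varepsilon}{x(1-x)}\Big\|_{L^2}\|\Phi_x^\varepsilon\|_{L^2}\Big\|\frac{x(1-x)}{\varepsilon^{1/2}}v_z^{B,0}\Big\|_{L^\infty}\leq c_0\|\langle z\rangle v_z^{B,0}\|_{L_z^\infty}\|\Phi_x^\varepsilon\|_{L^2}^2,
\]
and then invokes \eqref{some-l-infty-layer-for-v-b-0-stab} to bound $\|\langle z\rangle v_z^{B,0}\|_{L_z^\infty}\leq K_0(T,v_*)v_*$. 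This is precisely where the hypothesis $K_1(T,v_*)v_*\leq 1/16$ enters (see \eqref{five-l2}): it allows the resulting $K_1(T,v_*)v_*\|\Phi_x^\varepsilon\|_{L^2}^2$ to be absorbed by the diffusion on the left. It has nothing to do with the sign of $\Phi_x^A+M$ as you suggest; in fact the paper does not treat $\int(\Phi_x^A+M)|V^\varepsilon|^2$ as a good term at all --- it is simply bounded by $c(v_*,T)\|V^\varepsilon\|_{L^2}^2$ and fed into Gronwall. So your proposed mechanism (competing constants in front of $\|V^\varepsilon\|_{L^2}^2$) is not what is happening, and without the Hardy-inequality step the argument does not close.
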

\begin{proof}
  Multiplying the first equation in \eqref{eq-perturbation} by $\Phi ^{\varepsilon} $, followed by an integration over $ \mathcal{I} $ and integration by parts, one deduces that
\begin{align}\label{L-2-diff-id}
&\displaystyle \frac{1}{2}\frac{\mathrm{d}}{\mathrm{d}t}\int _{\mathcal{I} }\vert \Phi ^{\varepsilon}\vert ^{2} \mathrm{d}x +\int _{\mathcal{I} }\vert \Phi _{x}^{\varepsilon}\vert ^{2}\mathrm{d}x
 \nonumber \\
 &~\displaystyle = - \varepsilon ^{1/2}\int _{\mathcal{I} }\Phi ^{\varepsilon}\Phi _{x}^{\varepsilon}V _{x}^{\varepsilon} \mathrm{d}x +\int _{\mathcal{I} }\varepsilon ^{-1/2}\mathcal{R}_{1}^{\varepsilon}\Phi^{\varepsilon} \mathrm{d}x- \int _{\mathcal{I} }(\Phi_{x}^{A}+M)V _{x}^{\varepsilon}\Phi ^{\varepsilon} \mathrm{d}x-\int _{\mathcal{I} }\Phi ^{\varepsilon}\Phi _{x}^{\varepsilon}V_{x}^{A}  \mathrm{d}x.
\end{align}
The terms on the right hand side of \eqref{L-2-diff-id} can be treated as follows. Thanks to \eqref{apriori-assumption}, Lemma \ref{lem-R-1-VE} and the Cauchy-Schwarz inequality, it holds that
\begin{align}\label{firs-l2-nonlinear}
\displaystyle -\varepsilon ^{1/2}\int _{\mathcal{I} } \Phi ^{\varepsilon}\Phi _{x}^{\varepsilon}V _{x}^{\varepsilon}\mathrm{d}x &\leq \frac{\varepsilon}{8}\int _{\mathcal{I} }\left\vert V _{x}^{\varepsilon}\right\vert ^{2}\mathrm{d}x+ c _{0}\|\Phi ^{\varepsilon}\|_{L ^{\infty}}^{2} \int _{\mathcal{I} }\vert \Phi _{x}^{\varepsilon}\vert ^{2} \mathrm{d}x
 \nonumber \\
 \displaystyle&
\leq \frac{\varepsilon}{8}\int _{\mathcal{I} }\left\vert V _{x}^{\varepsilon}\right\vert ^{2}\mathrm{d}x
+c_{0}\delta \int _{\mathcal{I} }\vert \Phi _{x}^{\varepsilon}\vert ^{2} \mathrm{d}x,
\end{align}
and that
\begin{align*}
\displaystyle \int _{\mathcal{I} }\varepsilon ^{-1/2}\mathcal{R}_{1}^{\varepsilon}\Phi^{\varepsilon} \mathrm{d}x \leq c _{0} \|\Phi ^{\varepsilon}\|_{L ^{2}}^{2}+ c _{0}\varepsilon ^{-1}\|\mathcal{R}_{1}^{\varepsilon}\|_{L ^{2}}^{2} \leq  c _{0} \|\Phi ^{\varepsilon}\|_{L ^{2}}^{2}+ c(v _{\ast},T) \varepsilon ^{1/2}.
\end{align*}
Hereafter the constant $ c(v _{\ast},T)>0 $ is independent of $ \varepsilon $ and $ \delta $. The integration by parts along with \eqref{con-vfi-v-I-0-regula}, \eqref{some-l-infty-layer-for-vfi-b-1-stab}, \eqref{some-l-infty-layer-for-stab}, \eqref{esti-approx-fida-2a}, the fact $ \partial _{x}^{2}b  _{\varphi}^{\varepsilon}=0 $, the Hardy inequality \eqref{hardy} and the Cauchy-Schwarz inequality gives
\begin{align*}
&\displaystyle - \int _{\mathcal{I} }(\Phi_{x}^{A}+M)V _{x}^{\varepsilon}\Phi ^{\varepsilon} \mathrm{d}x = \int _{\mathcal{I}}V ^{\varepsilon}\Phi ^{\varepsilon}\Phi _{xx}^{A}\mathrm{d}x +\int _{\mathcal{I}} (\Phi_{x}^{A}+M)V ^{\varepsilon}\Phi _{x}^{\varepsilon} \mathrm{d}x
 \nonumber \\
 &~\displaystyle \leq \varepsilon ^{1/2}\int _{\mathcal{I}}V ^{\varepsilon}\Phi ^{\varepsilon}(\varphi _{xx}^{B,1}+\varphi _{xx}^{b,1})\mathrm{d}x + \int _{\mathcal{I}}V ^{\varepsilon}\Phi ^{\varepsilon}\left( \varphi _{xx}^{I,0}+\varepsilon ^{1/2}\varphi _{xx}^{I,1}+\varepsilon \varphi _{xx}^{B,2}+\varepsilon \varphi _{xx}^{b,2}+\partial _{x}^{2} b _{\varphi}^{\varepsilon}\right) \mathrm{d}x
  \nonumber \\
  &~\displaystyle \quad + c _{0} \|\Phi_{x}^{A}+M\|_{L ^{\infty}} \|V ^{\varepsilon}\|_{L ^{2}}\|\Phi _{x}^{\varepsilon}\|_{L ^{2}}
  \nonumber \\
  &~\displaystyle \leq \|V ^{\varepsilon}\|_{L ^{2}} \left\|\frac{\Phi ^{\varepsilon}}{x(1-x)}\right\|_{L ^{2}}\left( \left\|\frac{x(1-x)}{\varepsilon ^{1/2}}\varphi _{zz}^{B,1}\right\|_{L ^{\infty}}+\left\|\frac{x(1-x)}{\varepsilon ^{1/2}}\varphi _{\xi \xi}^{b,1}\right\|_{L ^{\infty}} \right)
    +c(v _{\ast},T) \|V ^{\varepsilon}\|_{L ^{2}}\|\Phi _{x}^{\varepsilon}\|_{L ^{2}}
   \nonumber \\
   &~\displaystyle \quad+c(v _{\ast},T) \|V ^{\varepsilon}\|_{L ^{2}}\|\Phi ^{\varepsilon}\|_{L ^{2}}\left( \|\varphi _{xx}^{I,0}\|_{L ^{\infty}}+\varepsilon ^{1/2}\|\varphi _{xx}^{I,1}\|_{L ^{\infty}}+\|\varphi _{zz}^{B,2}\|_{L _{z}^{\infty}}+\|\varphi _{\xi \xi}^{b,2}\|_{L _{\xi}^{\infty}}+\|\partial _{x}^{2}b _{\varphi}^{\varepsilon}\|_{L ^{\infty}} \right)
     \nonumber \\
     &~ \displaystyle \leq c(v _{\ast},T) \|V ^{\varepsilon}\|_{L ^{2}}\|\Phi _{x}^{\varepsilon}\|_{L ^{2}} \left( \|\langle z \rangle \varphi _{zz}^{B,1}\|_{L _{z}^{\infty}}+\|\langle \xi \rangle \varphi _{\xi \xi}^{b,1}\|_{L _{\xi}^{\infty}} \right)
     +c(v _{\ast},T) \|V ^{\varepsilon}\|_{L ^{2}}\left( \|\Phi _{x}^{\varepsilon}\|_{L ^{2}}+\|\Phi ^{\varepsilon}\|_{L ^{2}}\right)
      \nonumber \\
      & ~\displaystyle
     \leq \frac{1}{8}\|\Phi _{x}\|_{L ^{2}}^{2}+c(v _{\ast},T) (\|\Phi ^{\varepsilon}\|_{L ^{2}}^{2}+\|V ^{\varepsilon}\|_{L ^{2}}^{2}).
\end{align*}
For the last term on the right hand side of \eqref{L-2-diff-id}, from \eqref{con-vfi-v-I-0-regula}, \eqref{vfi-I-1-V-i-1}, \eqref{approximate-va}, \eqref{l-infty-v-bd-stat}, \eqref{some-l-infty-layer-for-v-b-0-stab}, \eqref{Sobolev-z-xi}, the Hardy inequality \eqref{hardy} and the Cauchy-Schwarz inequality, we get
\begin{align}\label{five-l2}
&\displaystyle -\int _{\mathcal{I} }\Phi ^{\varepsilon}\Phi _{x}^{\varepsilon}V_{x}^{A}  \mathrm{d}x = -\int _{\mathcal{I}}\Phi ^{\varepsilon}\Phi _{x}^{\varepsilon} \left( v _{x}^{B,0}+ v _{x}^{b,0} \right)\mathrm{d}x -\int _{\mathcal{I}}   \Phi ^{\varepsilon}\Phi _{x}^{\varepsilon}\left[v _{x} ^{I,0} +\varepsilon ^{1/2}(v _{x}^{I,1}+ v _{x}^{B,1}+ v _{x}^{b,1}) +\partial _{x}b _{v}^{\varepsilon}\right]
 \nonumber \\
 &~\displaystyle \leq \left\|\frac{\Phi ^{\varepsilon}}{x(1-x)}\right\|_{L ^{2}}\|\Phi _{x}^{\varepsilon}\|_{L ^{2}}\left( \left\|\frac{x(1-x)}{\varepsilon ^{1/2}}v _{z}^{B,0}\right\|_{L ^{\infty}}+ \left\|\frac{x(1-x)}{\varepsilon ^{1/2}}v _{\xi}^{b,0}\right\|_{L ^{\infty}} \right)
  \nonumber \\
  &~\displaystyle \quad+ \|\Phi ^{\varepsilon}\|_{L ^{2}}\|\Phi _{x}^{\varepsilon}\|_{L ^{2}} \left( \|v _{x}^{I,0}\|_{L ^{\infty}}+\|v _{z}^{B,1}\|_{L _{z}^{\infty}}+\|v _{\xi}^{b,1}\|_{L _{\xi}^{\infty}}+\varepsilon ^{1/2}\|v _{x}^{I,1}\|_{L ^{\infty}}+\|\partial _{x}b _{v}^{\varepsilon}\|_{L ^{\infty}} \right)
   \nonumber \\
   &~\displaystyle \leq c _{0} \|\Phi _{x}^{\varepsilon}\|_{L ^{2}}^{2}\left( \|\langle z \rangle v _{z}^{B,0}\| _{L _{z}^{\infty}}+\|\langle \xi \rangle v _{\xi}^{b,0}\|_{L _{\xi}^{\infty}} \right)+ c(v _{\ast},T) \|\Phi _{x}^{\varepsilon}\|_{L ^{2}}\|\Phi ^{\varepsilon}\|_{L ^{2}}
    \nonumber \\
    &~ \displaystyle \leq \Big( \frac{1}{8}+ K _{1}(T,v _{\ast}) v _{\ast} \Big) \|\Phi _{x}^{\varepsilon}\|_{L ^{2}}^{2}+c(v _{\ast},T) \|\Phi ^{\varepsilon}\|_{L ^{2}}^{2},
\end{align}
where $ K _{1}(T,v _{\ast}):=K _{0}(T,v _{\ast})c_{0}>0 $ is constant with $ K _{0}(v _{\ast},T) $ as in \eqref{some-l-infty-layer-for-v-b-0-stab}. Thus, plugging \eqref{firs-l2-nonlinear}--\eqref{five-l2} into \eqref{L-2-diff-id}, it follows that
\begin{align}\label{fida-l2-final-esti}
\displaystyle  \frac{\mathrm{d}}{\mathrm{d}t}\int _{\mathcal{I} }\vert\Phi ^{\varepsilon} \vert ^{2} \mathrm{d}x +\int _{\mathcal{I} }\vert \Phi _{x}^{\varepsilon}\vert ^{2} \mathrm{d}x \leq \frac{\varepsilon}{4}\int _{\mathcal{I} }\vert V _{x}^{\varepsilon}\vert ^{2}\mathrm{d}x+c(v _{\ast},T) (\|\Phi ^{\varepsilon}\|_{L ^{2}}^{2}+\|V ^{\varepsilon}\|_{L ^{2}}^{2}) +c(v _{\ast},T) \varepsilon ^{1/2},
\end{align}
provided $ c _{0} \delta \leq 1/8 $ in \eqref{firs-l2-nonlinear} and $ K _{1}(T,v _{\ast}) v _{\ast} \leq 1/8 $ in \eqref{five-l2}. To proceed, multiplying the second equation in \eqref{eq-perturbation} by $ V ^{\varepsilon} $ and integrating the resulting equation over $ \mathcal{I} $, we get
\begin{align}\label{V-L2-differ}
\displaystyle \frac{1}{2}\frac{\mathrm{d}}{\mathrm{d}t}\int _{\mathcal{I} }\vert V ^{\varepsilon}\vert ^{2}\mathrm{d}x+ \varepsilon\int _{\mathcal{I} }\vert V _{x}^{\varepsilon}\vert ^{2} \mathrm{d}x&=- \int _{\mathcal{I} }\varepsilon ^{1/2}\Phi _{x}^{\varepsilon}V ^{\varepsilon} V ^{\varepsilon} \mathrm{d}x- \int _{\mathcal{I} }\Phi _{x}^{\varepsilon}V^{A} V ^{\varepsilon} \mathrm{d}x
 \nonumber \\
 & \displaystyle \quad-\int _{\mathcal{I} }(\Phi_{x}^{A}+M)\vert V ^{\varepsilon}\vert ^{2} \mathrm{d}x+ \int _{\mathcal{I} }V ^{\varepsilon}\varepsilon ^{-1/2}\mathcal{R}_{2}^{\varepsilon} \mathrm{d}x= \sum _{i=1}^{4}\mathcal{N}_{i},
\end{align}
where, due to \eqref{apriori-assumption}, integration by parts and the Cauchy-Schwarz inequality,
\begin{align}\label{N-1}
\displaystyle \mathcal{N}_{1} &=  \varepsilon ^{1/2}\int _{\mathcal{I} }\Phi ^{\varepsilon}V _{x}^{\varepsilon}V ^{\varepsilon} \mathrm{d}x \leq \frac{\varepsilon}{8}\int _{\mathcal{I} }\vert V _{x}^{\varepsilon}\vert ^{2}\mathrm{d}x+ c(v _{\ast},T) \|\Phi ^{\varepsilon}\|_{L ^{\infty}}^{2}\int _{\mathcal{I} }\vert V ^{\varepsilon}\vert ^{2} \mathrm{d}x \nonumber \\
 &\displaystyle\leq \frac{\varepsilon}{8}\int _{\mathcal{I} }\vert V _{x}^{\varepsilon}\vert ^{2}\mathrm{d}x+c(v _{\ast},T) \|V ^{\varepsilon}\|_{L ^{2}}^{2},
\end{align}
provided $ \delta<1 $. For $ \mathcal{N}_{2} $ and $ \mathcal{N}_{3} $, the estimates in \eqref{esti-approx-fida-2} along with the Cauchy-Schwarz inequality yields that
\begin{align*}
\displaystyle \mathcal{N}_{2}& \leq  \|V ^{A}\|_{L ^{\infty}}\|\Phi _{x}^{\varepsilon}\|_{L ^{2}}\|V ^{\varepsilon}\|_{L ^{2}} \leq \frac{1}{8} \|\Phi _{x}^{\varepsilon}\|_{L ^{2}}^{2} +c(v _{\ast},T)\|V ^{\varepsilon}\|_{L ^{2}}  ^{2},\ \ \ \mathcal{N}_{3} \leq c(v _{\ast},T) \|V ^{\varepsilon}\|_{L ^{2}}^{2}.
\end{align*}
For the last term $ \mathcal{N}_{4} $, by the Cauchy-Schwarz inequality, one has $ \mathcal{N}_{4} \leq \|V ^{\varepsilon}\|_{L ^{2}} ^{2}+c(v _{\ast},T) \varepsilon ^{-1}\|\mathcal{R}_{2}^{\varepsilon}\| _{L ^{2}} ^{2}$. Inserting the estimates on $ \mathcal{N}_{i}~(1 \leq i \leq 4) $ into \eqref{V-L2-differ}, we get by virtue of Lemma \ref{lem-R-2-ESTI} that
\begin{align}\label{V-l2-ineq}
\displaystyle  &\displaystyle \frac{1}{2}\frac{\mathrm{d}}{\mathrm{d}t}\int _{\mathcal{I} }\vert V ^{\varepsilon}\vert ^{2}\mathrm{d}x+ \varepsilon\int _{\mathcal{I} }\vert V _{x}^{\varepsilon}\vert ^{2} \mathrm{d}x \leq \frac{1}{4}\int _{\mathcal{I} }\vert \Phi _{x}^{\varepsilon}\vert ^{2}\mathrm{d}x +c(v _{\ast},T)\|V ^{\varepsilon}\|_{L ^{2}}  ^{2}+c(v _{\ast},T) \varepsilon ^{1/2}.
\end{align}
Combining \eqref{V-l2-ineq} with \eqref{fida-l2-final-esti}, we obtain that
\begin{align*}
&\displaystyle \frac{\mathrm{d}}{\mathrm{d}t}\int _{\mathcal{I} } \left( \vert V ^{\varepsilon}\vert ^{2} +\vert \Phi ^{\varepsilon}\vert ^{2}\right) \mathrm{d}x+ \int _{\mathcal{I} }\left( \vert \Phi _{x}^{\varepsilon}\vert ^{2}+ \varepsilon\vert V _{x}^{\varepsilon}\vert ^{2}\right)  \mathrm{d}x
 \leq c(v _{\ast},T)\int _{\mathcal{I} } \left( \vert V ^{\varepsilon}\vert ^{2} +\vert \Phi ^{\varepsilon}\vert ^{2}\right) \mathrm{d}x+c(v _{\ast},T) \varepsilon ^{1/2},
\end{align*}
which along with the Gronwall inequality entails for any $ t \in [0,T] $ that
\begin{gather*}
\displaystyle  \|V ^{\varepsilon}(\cdot,t)\|_{L ^{2}}^{2}+ \|\Phi ^{\varepsilon}(\cdot,t)\|_{L ^{2}}^{2}+ \int _{0}^{t}\left(\varepsilon \|V _{x}^{\varepsilon}\| _{L ^{2}}^{2}+\|\Phi _{x}^{\varepsilon}\|_{L ^{2}}^{2} \right) \mathrm{d}\tau \leq c(v _{\ast},T) \varepsilon ^{1/2}.
\end{gather*}
The proof of Lemma \ref{lem-L2-pertur} is complete.
\end{proof}

We proceed to establish $ H ^{1} $ estimate for $ (\Phi ^{\varepsilon}, V ^{\varepsilon}) $.
% subsubsection lower_order_estimates (end)
\begin{lemma}\label{lem-V-x-per}
Under the conditions of Lemma \ref{lem-L2-pertur}, it holds that
\begin{align}\label{con-fida-v-x-lem}
\displaystyle  \varepsilon\|\Phi _{x}^{\varepsilon}(\cdot,t)\|_{L ^{2}}^{2}+\varepsilon\|V _{x}^{\varepsilon}(\cdot,t)\|_{L ^{2}}^{2}+\int _{0}^{t}\left( \varepsilon\|\Phi _{\tau}^{\varepsilon}\| _{ L ^{2}}^{2}+\|V _{\tau}^{\varepsilon}\| _{ L ^{2}}^{2}\right) \mathrm{d}\tau \leq c(v _{\ast},T) \varepsilon ^{1/2},\ \ \ \forall t \in (0,T],
\end{align}
where the constant $ c(v _{\ast},T)>0 $ is independent of $ \varepsilon $ and $ \delta $.

\end{lemma}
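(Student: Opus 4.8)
The plan is to run a weighted time-derivative energy estimate. I would test the first equation of \eqref{eq-perturbation} against $\varepsilon\Phi_t^\varepsilon$ and the second against $V_t^\varepsilon$, integrate over $\mathcal{I}$ and add; since $\Phi^\varepsilon|_{\partial\mathcal{I}}=V^\varepsilon|_{\partial\mathcal{I}}=0$ entails $\Phi_t^\varepsilon|_{\partial\mathcal{I}}=V_t^\varepsilon|_{\partial\mathcal{I}}=0$, integration by parts converts $\varepsilon\int_{\mathcal I}\Phi_{xx}^\varepsilon\Phi_t^\varepsilon\,\mathrm{d}x$ and $\varepsilon\int_{\mathcal I}V_{xx}^\varepsilon V_t^\varepsilon\,\mathrm{d}x$ into $-\tfrac{\varepsilon}{2}\tfrac{\mathrm{d}}{\mathrm{d}t}\|\Phi_x^\varepsilon\|_{L^2}^2$ and $-\tfrac{\varepsilon}{2}\tfrac{\mathrm{d}}{\mathrm{d}t}\|V_x^\varepsilon\|_{L^2}^2$, so that $\tfrac12\tfrac{\mathrm{d}}{\mathrm{d}t}\big(\varepsilon\|\Phi_x^\varepsilon\|_{L^2}^2+\varepsilon\|V_x^\varepsilon\|_{L^2}^2\big)+\varepsilon\|\Phi_t^\varepsilon\|_{L^2}^2+\|V_t^\varepsilon\|_{L^2}^2=\mathcal{J}$, where $\mathcal{J}$ consists of: the source contributions $\varepsilon^{1/2}\int_{\mathcal I}\mathcal{R}_1^\varepsilon\Phi_t^\varepsilon\,\mathrm{d}x$ and $\varepsilon^{-1/2}\int_{\mathcal I}\mathcal{R}_2^\varepsilon V_t^\varepsilon\,\mathrm{d}x$; the linear coupling terms $-\varepsilon\int_{\mathcal I}\Phi_x^\varepsilon V_x^A\Phi_t^\varepsilon\,\mathrm{d}x$, $-\varepsilon\int_{\mathcal I}V_x^\varepsilon(\Phi_x^A+M)\Phi_t^\varepsilon\,\mathrm{d}x$, $-\int_{\mathcal I}\Phi_x^\varepsilon V^A V_t^\varepsilon\,\mathrm{d}x$, $-\int_{\mathcal I}(\Phi_x^A+M)V^\varepsilon V_t^\varepsilon\,\mathrm{d}x$; and the nonlinear terms $-\varepsilon^{3/2}\int_{\mathcal I}\Phi_x^\varepsilon V_x^\varepsilon\Phi_t^\varepsilon\,\mathrm{d}x$ and $-\varepsilon^{1/2}\int_{\mathcal I}\Phi_x^\varepsilon V^\varepsilon V_t^\varepsilon\,\mathrm{d}x$.

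The source and linear terms are handled routinely. By Cauchy--Schwarz the source terms are bounded by $\tfrac{\varepsilon}{16}\|\Phi_t^\varepsilon\|_{L^2}^2+\tfrac1{16}\|V_t^\varepsilon\|_{L^2}^2+c\|\mathcal{R}_1^\varepsilon\|_{L^2}^2+c\varepsilon^{-1}\|\mathcal{R}_2^\varepsilon\|_{L^2}^2$, and Lemmas \ref{lem-R-1-VE} and \ref{lem-R-2-ESTI} make the last two contributions $O(\varepsilon^{3/2})$ and $O(\varepsilon^{1/2})$. For the linear coupling terms the only dangerous factor is $V_x^A$, which through the boundary-layer derivatives $\varepsilon^{-1/2}(v_z^{B,0}+v_\xi^{b,0})+\cdots$ is only $O(\varepsilon^{-1/2})$ in $L^\infty$; however \eqref{esti-approx-fida-2} gives $\varepsilon^{1/2}\|V_x^A\|_{L_T^\infty L^\infty}+\|V^A\|_{L_T^\infty L^\infty}+\|\Phi_x^A+M\|_{L_T^\infty L^\infty}\le c(v_*,T)$, hence $\varepsilon\|V_x^A\|_{L^\infty}^2\le c(v_*,T)$ -- this is precisely why the weight $\varepsilon$ is placed on $\Phi_t^\varepsilon$ -- so Cauchy--Schwarz bounds the four coupling terms by $\tfrac{\varepsilon}{8}\|\Phi_t^\varepsilon\|_{L^2}^2+\tfrac18\|V_t^\varepsilon\|_{L^2}^2+c(v_*,T)\big(\|\Phi_x^\varepsilon\|_{L^2}^2+\|V^\varepsilon\|_{L^2}^2+\varepsilon\|V_x^\varepsilon\|_{L^2}^2\big)$. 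Importantly these last quantities do not feed a Gronwall loop: their time-integrals are already bounded by $c(v_*,T)\varepsilon^{1/2}$ via \eqref{con-L2-PERT}.

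The main difficulty is the two nonlinear terms, because the Dirichlet condition provides no a priori $L^\infty$ bound on $\Phi_x^\varepsilon$ or $V_x^\varepsilon$. For $-\varepsilon^{1/2}\int_{\mathcal I}\Phi_x^\varepsilon V^\varepsilon V_t^\varepsilon\,\mathrm{d}x$ I would use $\tfrac1{16}\|V_t^\varepsilon\|_{L^2}^2+c\varepsilon\|V^\varepsilon\|_{L^\infty}^2\|\Phi_x^\varepsilon\|_{L^2}^2$ together with the one-dimensional inequality $\|V^\varepsilon\|_{L^\infty}^2\le 2\|V^\varepsilon\|_{L^2}\|V_x^\varepsilon\|_{L^2}$ and the bound $\|V^\varepsilon\|_{L^2}\le c(v_*,T)\varepsilon^{1/4}$ of Lemma \ref{lem-L2-pertur}; for $-\varepsilon^{3/2}\int_{\mathcal I}\Phi_x^\varepsilon V_x^\varepsilon\Phi_t^\varepsilon\,\mathrm{d}x$ I would integrate by parts moving $\partial_x$ off $V_x^\varepsilon$ (the boundary terms vanish since $\Phi_t^\varepsilon=V^\varepsilon=0$ on $\partial\mathcal{I}$) and use $\Phi_x^\varepsilon\Phi_{tx}^\varepsilon=\tfrac12\partial_t(\Phi_x^\varepsilon)^2$, which produces a time-derivative of a lower-order correction $\tfrac{\varepsilon^{3/2}}{2}\int_{\mathcal I}(\Phi_x^\varepsilon)^2V^\varepsilon\,\mathrm{d}x$ (absorbed into the left-hand energy for $\varepsilon$ small) together with terms in which $\Phi_{xx}^\varepsilon$, replaced from the first equation of \eqref{eq-perturbation}, appears only multiplied by an extra power of $\varepsilon$ or by $\|\Phi^\varepsilon\|_{L^\infty}^2\le\delta$ from \eqref{apriori-assumption}. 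The super-linear factors remaining after these manipulations are controlled by invoking, as a bootstrap hypothesis, the very bound $\varepsilon\|\Phi_x^\varepsilon\|_{L^2}^2+\varepsilon\|V_x^\varepsilon\|_{L^2}^2\le c(v_*,T)\varepsilon^{1/2}$ to be established (so that, e.g., $\|\Phi_x^\varepsilon\|_{L^2}^2\le c(v_*,T)\varepsilon^{-1/2}$ on the bootstrap interval), after which a final appeal to Lemma \ref{lem-L2-pertur} turns their time-integrals into $c(v_*,T)\varepsilon^{1/2}$.

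Putting the three groups together, integrating in $t$ and using the zero initial data $\Phi_x^\varepsilon(\cdot,0)=V_x^\varepsilon(\cdot,0)=0$, one reaches $\varepsilon\|\Phi_x^\varepsilon(t)\|_{L^2}^2+\varepsilon\|V_x^\varepsilon(t)\|_{L^2}^2+\int_0^t\!\big(\varepsilon\|\Phi_\tau^\varepsilon\|_{L^2}^2+\|V_\tau^\varepsilon\|_{L^2}^2\big)\,\mathrm{d}\tau\le c(v_*,T)\varepsilon^{1/2}+\theta\sup_{[0,t]}\big(\varepsilon\|\Phi_x^\varepsilon\|_{L^2}^2+\varepsilon\|V_x^\varepsilon\|_{L^2}^2\big)$ with $\theta=\theta(\delta,\varepsilon_0)\le\tfrac12$ after shrinking $\delta$ and $\varepsilon_0$; a standard continuity argument (the left side is continuous in $t$, vanishes at $t=0$, and is thereby kept below $c(v_*,T)\varepsilon^{1/2}$ on all of $[0,T]$) then yields \eqref{con-fida-v-x-lem}.
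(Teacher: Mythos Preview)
Your overall energy setup is sound and the handling of the linear and source terms is correct; in fact the paper tests the second equation against $V_t^\varepsilon$ exactly as you do. The gap is in your treatment of the nonlinear term $-\varepsilon^{3/2}\int_{\mathcal I}\Phi_x^\varepsilon V_x^\varepsilon\Phi_t^\varepsilon\,\mathrm{d}x$. Your integration-by-parts plus substitution scheme does not close: after moving $\partial_x$ off $V_x^\varepsilon$ you obtain, besides the harmless time-derivative correction, the residual $-\tfrac{\varepsilon^{3/2}}{2}\int_{\mathcal I}V_t^\varepsilon(\Phi_x^\varepsilon)^2\,\mathrm{d}x$, which is \emph{quadratic} in $\Phi_x^\varepsilon$ and therefore needs $L^4$ or $L^\infty$ control of $\Phi_x^\varepsilon$---not available at this stage and not supplied by your bootstrap hypothesis, which only controls $\|\Phi_x^\varepsilon\|_{L^2}$. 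Moreover, when you substitute for $\Phi_{xx}^\varepsilon$ from $\eqref{eq-perturbation}_1$ in $\varepsilon^{3/2}\int_{\mathcal I}V^\varepsilon\Phi_{xx}^\varepsilon\Phi_t^\varepsilon\,\mathrm{d}x$, one of the resulting pieces is $\varepsilon^{2}\int_{\mathcal I}V^\varepsilon\Phi_x^\varepsilon V_x^\varepsilon\Phi_t^\varepsilon\,\mathrm{d}x$, i.e.\ the original term multiplied by $\varepsilon^{1/2}V^\varepsilon$; this is not ``multiplied by $\|\Phi^\varepsilon\|_{L^\infty}^2\le\delta$'' as you claim (the factor is $V^\varepsilon$, not $\Phi^\varepsilon$), and iterating it does not eliminate the $(\Phi_x^\varepsilon)^2$ residual above.

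The paper circumvents this by \emph{decoupling} the two estimates. It first closes the $V$-energy alone: testing $\eqref{eq-perturbation}_2$ against $V_t^\varepsilon$ involves only the nonlinear term $-\varepsilon^{1/2}\int_{\mathcal I}\Phi_x^\varepsilon V^\varepsilon V_t^\varepsilon\,\mathrm{d}x$, where the undifferentiated $V^\varepsilon$ can be put in $L^\infty$ via $\|V^\varepsilon\|_{L^\infty}^2\le c\|V^\varepsilon\|_{L^2}\|V_x^\varepsilon\|_{L^2}$, yielding a Gronwall factor $\|\Phi_x^\varepsilon\|_{L^2}^2$ integrable by Lemma~\ref{lem-L2-pertur}. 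This gives $\varepsilon\|V_x^\varepsilon\|_{L^2}^2+\int_0^t\|V_\tau^\varepsilon\|_{L^2}^2\le c\varepsilon^{1/2}$ without any reference to $\Phi_t^\varepsilon$. With $\int_0^t\|V_\tau^\varepsilon\|_{L^2}^2$ in hand, the paper reads $\varepsilon^2\|V_{xx}^\varepsilon\|_{L^2}^2$ off the equation $\eqref{eq-perturbation}_2$ and obtains the key intermediate bound $\varepsilon\int_0^T\|V_x^\varepsilon\|_{L^\infty}^2\,\mathrm{d}t\le c(v_\ast,T)$. Only then is $\eqref{eq-perturbation}_1$ tested against $\Phi_t^\varepsilon$ (without your extra $\varepsilon$ weight), and the nonlinear term is handled directly as $\tfrac18\|\Phi_t^\varepsilon\|_{L^2}^2+c\,\varepsilon\|V_x^\varepsilon\|_{L^\infty}^2\|\Phi_x^\varepsilon\|_{L^2}^2$, with $\varepsilon\|V_x^\varepsilon\|_{L^\infty}^2$ now an integrable Gronwall coefficient. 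The missing idea in your proposal is precisely this sequential structure and the intermediate $\|V_x^\varepsilon\|_{L^\infty}$ bound.
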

\begin{proof}
  Multiplying the second equation in \eqref{eq-perturbation} by $ V _{t}^{\varepsilon} $ and integrating the resulting equation over $ \mathcal{I} $, we have
\begin{align}\label{V-x-diff-iden}
&\displaystyle  \frac{\varepsilon}{2}\frac{\mathrm{d}}{\mathrm{d}t}\int _{\mathcal{I}}\left\vert V _{x}^{\varepsilon}\right\vert ^{2}  \mathrm{d}x+\int _{\mathcal{I} }\left\vert V _{t} ^{\varepsilon}\right\vert ^{2} \mathrm{d}x
 \nonumber \\
 &~\displaystyle = -\int _{\mathcal{I}}(\Phi_{x}^{A}+M)V ^{\varepsilon}V _{t}^{\varepsilon}\mathrm{d}x - \int _{\mathcal{I} }\Phi _{x}^{\varepsilon}V ^{A} V _{t}^{\varepsilon} \mathrm{d}x-\varepsilon ^{1/2} \int _{\mathcal{I}}\Phi _{x}^{\varepsilon}V ^{\varepsilon}V_{t}^{\varepsilon}\mathrm{d}x
 + \varepsilon ^{-1/2}\int _{\mathcal{I}}\mathcal{R}_{2}^{\varepsilon} V_{t}^{\varepsilon}\mathrm{d}x,
\end{align}
where, due to $ \|\Phi _{x}^{A}\|_{L _{T} ^{\infty}L ^{\infty}} \leq c(v _{\ast},T) $ from \eqref{esti-approx-fida-2a} and the Cauchy-Schwarz inequality,
\begin{gather}\label{v-x-sti-one}
\displaystyle   -\int _{\mathcal{I}}(\Phi_{x}^{A}+M)V ^{\varepsilon}V _{t}^{\varepsilon}\mathrm{d}x  \leq \|\Phi_{x}^{A}+M\|_{L ^{\infty}} \|V ^{\varepsilon}\|_{L ^{2}} \|V _{t}^{\varepsilon}\|_{L ^{2}} \leq \frac{1}{4}\|V _{t}^{\varepsilon}\|_{L ^{2}}^{2}+ c(v _{\ast},T)\|V ^{\varepsilon}\|_{L ^{2}}^{2}.
\end{gather}
By \eqref{esti-approx-fida-2b} and the Cauchy-Schwarz inequality, we get
\begin{align}\label{v-x-esti-first}
\displaystyle   -\int _{\mathcal{I} }\Phi _{x}^{\varepsilon}V ^{A} V _{t}^{\varepsilon} \mathrm{d}x \leq \|V ^{A}\|_{L ^{\infty}}\|\Phi _{x}^{\varepsilon}\|_{L ^{2}}\|V _{t}^{\varepsilon}\|_{L ^{2}} \leq \frac{1}{4}\|V _{t} ^{\varepsilon}\|_{L ^{2}}^{2}+c(v _{\ast},T) \|\Phi _{x}^{\varepsilon}\|_{L ^{2}}^{2}.
\end{align}
Thanks to \eqref{con-L2-PERT}, the Cauchy-Schwarz inequality, the Sobolev inequality \eqref{Sobolev-infty} and Lemma \ref{lem-R-2-ESTI}, we deduce that
\begin{align*}
\displaystyle -\varepsilon ^{1/2} \int _{\mathcal{I}}\Phi _{x}^{\varepsilon}V ^{\varepsilon}V_{t}^{\varepsilon}\mathrm{d}x &\leq \frac{1}{8}\int _{\mathcal{I}} \vert V _{t}^{\varepsilon}\vert ^{2} \mathrm{d}x+c(v _{\ast},T) \varepsilon \|V ^{\varepsilon}\|_{L ^{\infty}}^{2}\int _{\mathcal{I}}\vert \Phi _{x}^{\varepsilon}\vert ^{2}\mathrm{d}x
 \nonumber \\
 & \displaystyle \leq \frac{1}{8}\int _{\mathcal{I}} \vert V _{t}^{\varepsilon}\vert ^{2} \mathrm{d}x+c(v _{\ast},T) \varepsilon \|V ^{\varepsilon}\|_{L ^{2}}\|V _{x}^{\varepsilon}\|_{L ^{2}}\int _{\mathcal{I}} \vert \Phi _{x}^{\varepsilon}\vert ^{2}\mathrm{d}x
  \nonumber \\
   &\displaystyle\leq \frac{1}{8}\int _{\mathcal{I}} \vert V _{t}^{\varepsilon}\vert ^{2} \mathrm{d}x+c(v _{\ast},T) \varepsilon ^{5/4}\|V _{x}^{\varepsilon}\|_{L ^{2}}  \int _{\mathcal{I}} \vert \Phi _{x}^{\varepsilon}\vert ^{2}\mathrm{d}x
    \nonumber \\
    & \displaystyle \leq \frac{1}{8}\int _{\mathcal{I}} \vert V _{t}^{\varepsilon}\vert ^{2} \mathrm{d}x+c(v _{\ast},T) \varepsilon \|V _{x}^{\varepsilon}\| _{L ^{2}}^{2}\|\Phi _{x}^{\varepsilon}\|_{L ^{2}}^{2}+c(v _{\ast},T) \varepsilon ^{3/2}\|\Phi _{x}^{\varepsilon}\|_{L ^{2}}^{2},
\end{align*}
and that
\begin{gather}\label{v-x-esti-last}
\displaystyle  \varepsilon ^{-1/2}\int _{\mathcal{I}}\mathcal{R}_{2}^{\varepsilon} V_{t}^{\varepsilon}\mathrm{d}x \leq \frac{1}{8}\int _{\mathcal{I}} \vert V _{t}^{\varepsilon}\vert ^{2} \mathrm{d}x+c(v _{\ast},T)\varepsilon ^{-1}\|\mathcal{R}_{2}^{\varepsilon}\|_{L ^{2}}^{2} \leq \frac{1}{8}\int _{\mathcal{I}} \left\vert V _{t}^{\varepsilon}\right\vert ^{2} \mathrm{d}x+c(v _{\ast},T) \varepsilon ^{1/2}.
\end{gather}
With \eqref{v-x-sti-one}--\eqref{v-x-esti-last} and the fact $ 0< \varepsilon<1 $, we thus update \eqref{V-x-diff-iden} as
\begin{align*}
\displaystyle  \frac{\mathrm{d}}{\mathrm{d}t}\int _{\mathcal{I}} \varepsilon\vert V _{x}^{\varepsilon}\vert ^{2}  \mathrm{d}x+ \|V _{t} ^{\varepsilon}\|_{L ^{2}}^{2}\leq c(v _{\ast},T) \|\Phi _{x}^{\varepsilon}\| _{L ^{2}}^{2}(\varepsilon \|V _{x}^{\varepsilon}\| _{L ^{2}}^{2})+C\|\Phi _{x}^{\varepsilon}\| _{L ^{2}}^{2}+c(v _{\ast},T) \varepsilon ^{1/2},
\end{align*}
which followed by an integration over $ [0,t] $ for any $ t \in (0,T] $ gives
\begin{align}\label{esti-v-x-final}
\displaystyle  \varepsilon\|V _{x} ^{\varepsilon}(\cdot,t)\|_{L ^{2}}^{2}+ \int _{0}^{t}\|V _{\tau} ^{\varepsilon}\|_{L ^{2}}^{2}\mathrm{d}\tau \leq c(v _{\ast},T)\int _{0} ^{t} \|\Phi _{x}^{\varepsilon}\| _{L ^{2}}^{2}(\varepsilon \|V _{x}^{\varepsilon}\| _{L ^{2}}^{2}) \mathrm{d}\tau+c(v _{\ast},T) \varepsilon ^{1/2},
\end{align}
where we have used Lemma \ref{lem-L2-pertur}. Applying the Gronwall inequality to \eqref{esti-v-x-final}, we then obtain that
\begin{align}\label{con-V-x-L2}
\displaystyle   \varepsilon\|V _{x} ^{\varepsilon}(\cdot,t)\|_{L ^{2}}^{2}+ \int _{0}^{t}\|V _{\tau} ^{\varepsilon}\|_{L ^{2}}^{2}\mathrm{d}\tau \leq c(v _{\ast},T) \varepsilon ^{1/2},
\end{align}
where we have used \eqref{con-L2-PERT}. Now let us turn to the estimate on $ \Phi _{x}^{\varepsilon} $. Taking the $ L ^{2} $ inner product of the first equation in \eqref{eq-perturbation} with $ \Phi _{t}^{\varepsilon} $, followed by the integration by parts, we have
\begin{align}\label{fida-x-diff-id}
\displaystyle \frac{1}{2} \frac{\mathrm{d}}{\mathrm{d}t}\int _{\mathcal{I}}\vert \Phi _{x}^{\varepsilon}\vert ^{2}\mathrm{d}x+ \int _{\mathcal{I}}\vert \Phi _{t}^{\varepsilon}\vert ^{2} \mathrm{d}x
&\displaystyle =-\int _{\mathcal{I}} \Phi _{t}^{\varepsilon}\Phi _{x}^{\varepsilon}V_{x}^{A}   \mathrm{d}x -\int _{\mathcal{I}} \varepsilon ^{1/2}  \Phi _{t}^{\varepsilon}\Phi _{x}^{\varepsilon}V _{x}^{\varepsilon}\mathrm{d}x
 \nonumber \\
 &\displaystyle \quad- \int _{\mathcal{I}} V_{x}^{\varepsilon}\Phi _{t}^{\varepsilon}(\Phi_{x}^{A}+M) \mathrm{d}x+\int _{\mathcal{I}} \varepsilon ^{-1/2}\mathcal{R}_{1}^{\varepsilon}\Phi _{t}^{\varepsilon} \mathrm{d}x=:\sum _{i=1}^{4}\mathcal{Q}_{i}.
\end{align}
Next we estimate $ \mathcal{Q}_{i}~(1 \leq i \leq 4) $. From \eqref{esti-approx-fida-2} and the Cauchy-Schwarz inequality, we deduce that
\begin{gather*}
\displaystyle \mathcal{Q}_{1} \leq \|V _{x}^{A}\|_{L ^{\infty}}\|\Phi _{t}^{\varepsilon}\|_{L ^{2}}\|\Phi _{x}^{\varepsilon}\|_{L ^{2}} \leq \frac{1}{8}\|\Phi _{t}^{\varepsilon}\|_{L ^{2}}^{2}+c(v _{\ast},T) \varepsilon ^{-1}\|\Phi _{x}^{\varepsilon}\|_{L ^{2}}^{2}, \\
\displaystyle \mathcal{Q}_{3} \leq \|\Phi _{x}^{A}+M\|_{L ^{\infty}}\|V _{x}^{\varepsilon}\|_{L ^{2}}\|\Phi _{t}\|_{L ^{2}} \leq \frac{1}{8}\|\Phi _{t}^{\varepsilon}\|_{L ^{2}}^{2}+c(v _{\ast},T) \|V _{x}^{\varepsilon}\|_{L ^{2}}^{2}.
\end{gather*}
By the Cauchy-Schwarz inequality, we get
\begin{align}\label{Q-2-esti}
\displaystyle  \mathcal{Q}_{2}   \leq \frac{1}{8}\int _{\mathcal{I}} \vert \Phi _{t}^{\varepsilon}\vert ^{2} \mathrm{d}x+C \varepsilon \int _{\mathcal{I}} \vert \Phi _{x}\vert ^{2}\vert V _{x}^{\varepsilon}\vert ^{2}\mathrm{d}x  \leq \frac{1}{8}\int _{\mathcal{I}} \vert \Phi _{t}^{\varepsilon}\vert ^{2} \mathrm{d}x+ c _{0} \varepsilon \|V _{x}^{\varepsilon}\| _{L ^{\infty}} ^{2} \|\Phi _{x}^{\varepsilon}\|_{L ^{2}}^{2}.
\end{align}
Similarly, we have
\begin{align}\label{Q-4-esti}
\displaystyle \displaystyle \mathcal{Q} _{4} \leq \frac{1}{8}\int _{\mathcal{I}} \vert \Phi _{t}^{\varepsilon}\vert ^{2} \mathrm{d}x+c _{0} \varepsilon ^{-1} \|\mathcal{R}_{1}^{\varepsilon} \| _{L ^{2}}^{2} \leq \frac{1}{8}\int _{\mathcal{I}} \vert \Phi _{t}^{\varepsilon}\vert ^{2} \mathrm{d}x+c(v _{\ast},T) \varepsilon ^{1/2},
\end{align}
where we have used Lemma \ref{lem-R-1-VE}. Therefore, inserting the estimates on $ \mathcal{Q} _{i}~(0 \leq i  \leq 4) $ into \eqref{fida-x-diff-id}, we get
\begin{align*}
&\displaystyle   \frac{1}{2} \frac{\mathrm{d}}{\mathrm{d}t}\int _{\mathcal{I}}\vert \Phi _{x}^{\varepsilon}\vert ^{2}\mathrm{d}x+ \frac{1}{2}\int _{\mathcal{I}}\vert \Phi _{t}^{\varepsilon}\vert ^{2} \mathrm{d}x
\nonumber \\
 &~\displaystyle
 \leq c(v _{\ast},T)(\varepsilon\|V _{x}^{\varepsilon}\|_{ L ^{\infty}}^{2})\|\Phi _{x}^{\varepsilon}\|_{L ^{2}}^{2}+c(v _{\ast},T)( \varepsilon ^{-1}\|\Phi _{x}^{\varepsilon}\|_{L ^{2}}^{2}+\|V_{x}^{\varepsilon}\|_{L ^{2}}^{2}) +c(v _{\ast},T) \varepsilon ^{1/2}.
\end{align*}
Integrating the above inequality over $ (0,t) $ for any $ t \in [0,T] $, we arrive at
\begin{align}\label{fida-x-esti}
\displaystyle  \|\Phi _{x}^{\varepsilon}(\cdot,t)\|_{L ^{2}}+ \int _{0}^{t}\|\Phi _{\tau}^{\varepsilon}\|_{L ^{2}}^{2}\mathrm{d}\tau &\leq c(v _{\ast},T) \int _{0}^{t}\varepsilon\|V _{x}^{\varepsilon}\|_{L ^{\infty}}^{2}\|\Phi _{x}^{\varepsilon}\|_{L ^{2}}^{2}\mathrm{d}\tau+c(v _{\ast},T) \varepsilon ^{-1/2},
\end{align}
where we have used \eqref{con-L2-PERT} and $ 0< \varepsilon<1 $. To close the estimate, it now suffices to show that
\begin{align}\label{v-l-infty}
\displaystyle \varepsilon\int _{0}^{T}\|V _{x} ^{\varepsilon}\|_{L ^{\infty}}^{2}\mathrm{d}t\leq  c(v _{\ast},T)
\end{align}
for some constant $ c(v _{\ast},T)>0 $ independent of $ \varepsilon $ and $ \delta $. Indeed, if \eqref{v-l-infty} holds true, then by the Gronwall inequality, we get from \eqref{fida-x-esti} that for any $ t \in [0,T] $,
\begin{align*}
\displaystyle  \|\Phi _{x}^{\varepsilon}(\cdot,t)\|_{L ^{2}}+ \int _{0}^{t}\|\Phi _{\tau}^{\varepsilon}\|_{L ^{2}}^{2}\mathrm{d}\tau \leq c(v _{\ast},T) \varepsilon ^{-1/2}.
\end{align*}
This along with \eqref{con-V-x-L2} yields \eqref{con-fida-v-x-lem}. To prove \eqref{v-l-infty}, we first derive from the second equation in \eqref{eq-perturbation} that
\begin{gather}\label{V-xx-L2-single}
\displaystyle  \makebox[-0.5pt]{~}
\displaystyle \varepsilon ^{2}\|V _{xx}^{\varepsilon}\|_{L ^{2}}^{2} \leq  \|V _{t}^{\varepsilon}\|_{L ^{2}}^{2}+\varepsilon \|V ^{\varepsilon}\|_{L ^{\infty}}^{2}\|\Phi _{x}^{\varepsilon}\|_{L ^{2}}^{2}+c(v _{\ast},T)\|\Phi _{x}^{\varepsilon}\| _{L ^{2}}^{2} +c(v _{\ast},T)\|V ^{\varepsilon}\|_{L ^{2}}^{2}+\varepsilon ^{-1} \|\mathcal{R}_{2}^{\varepsilon}\| ^{2},
\end{gather}
where \eqref{esti-approx-fida-2} has been used. Therefore,
\begin{align}\label{V-xx-muti}
\displaystyle \varepsilon ^{2}\int _{0} ^{T}\|V _{xx}^{\varepsilon}\|_{L ^{2}}^{2}\mathrm{d}t &\leq \int _{0}^{T}\|V _{t}^{\varepsilon}\|_{L ^{2}}^{2}\mathrm{d}t+\int _{0}^{T}\|\Phi _{x}^{\varepsilon}\|_{L ^{2}}^{2}\mathrm{d}t + c(v _{\ast},T) \int _{0}^{T}\|\Phi _{x}^{\varepsilon}\| _{L ^{2}}^{2} \mathrm{d}t
 \nonumber \\
 & \displaystyle \quad + c(v _{\ast},T)\int _{0}^{T}\|V ^{\varepsilon}\|_{L ^{2}}^{2}\mathrm{d}t+ \varepsilon ^{-1}\int _{0}^{T}\|\mathcal{R}_{2}^{\varepsilon}\|_{L ^{2}}^{2}\mathrm{d}\tau
  \nonumber \\
  & \displaystyle \leq c(v _{\ast},T)\varepsilon ^{1/2},
\end{align}
where we have used \eqref{esti-approx-fida-2b}, \eqref{con-L2-PERT}, \eqref{con-V-x-L2}, Lemma \ref{lem-R-2-ESTI} and the following fact
\begin{gather}\label{V-linfty-single}
\displaystyle \|V ^{\varepsilon}\| _{L ^{\infty}} ^{2} \leq c(v _{\ast},T) \left( \|V ^{\varepsilon}\|_{L ^{2}}^{2} +\|V ^{\varepsilon}\| _{L ^{2}}\|V _{x}^{\varepsilon}\|_{L ^{2}}  \right)  \leq c(v _{\ast},T) \varepsilon ^{1/2}+c(v _{\ast},T)\leq c(v _{\ast},T) ,
\end{gather}
due to \eqref{con-L2-PERT}, \eqref{con-V-x-L2}, $ 0< \varepsilon<1 $ and the Sobolev inequality \eqref{Sobolev-infty}. Therefore, we utilize the Sobolev inequality \eqref{Sobolev-infty} again, along with \eqref{con-L2-PERT}, to derive that
\begin{align*}
\displaystyle \int _{0}^{T}\|V _{x}^{\varepsilon}\|_{L ^{\infty}}^{2}\mathrm{d}t &\leq c(v _{\ast},T) \int _{0}^{T} \|V _{x}^{\varepsilon}\|_{L ^{2}}\|V _{xx}^{\varepsilon}\|_{L ^{2}}\mathrm{d}t +c(v _{\ast},T) \int _{0}^{T}\|V _{x}^{\varepsilon}\|_{L ^{2}}^{2}\mathrm{d}t
 \nonumber \\
 & \displaystyle \leq c(v _{\ast},T) \varepsilon ^{1/2}\int _{0}^{T}\|V _{xx}^{\varepsilon}\|_{ L ^{2}}^{2}\mathrm{d}t+c(v _{\ast},T) \varepsilon ^{-1/2}\int _{0}^{T}\|V _{x}^{\varepsilon}\|_{L ^{2}}^{2}\mathrm{d}t \leq c(v _{\ast},T) \varepsilon ^{-1},
\end{align*}
where the constant $ c(v _{\ast},T)>0 $ is independent of $ \varepsilon $ and $ \delta $. This gives \eqref{v-l-infty}. Thus we finish the proof of Lemma \ref{lem-V-x-per}.
\end{proof}
As a direct consequence of Lemmas \ref{lem-L2-pertur} and \ref{lem-V-x-per}, we have the following corollary.
\begin{corollary}\label{cor-in-lower-order}
Assume the conditions of Lemmas \ref{lem-L2-pertur} and \ref{lem-V-x-per} hold. Then for any solution $ (\Phi ^{\varepsilon},V ^{\varepsilon}) $ to \eqref{eq-perturbation} on $ [0,T] $ satisfying \eqref{apriori-assumption}, we have
\begin{align}\label{fida-xx-multi}
\displaystyle \int _{0}^{T}\left( \varepsilon ^{1/2} \|\Phi _{xx}^{\varepsilon}\|_{L ^{2}}^{2}+\| \Phi _{x}^{\varepsilon}\|_{L ^{\infty}}^{2}+\varepsilon ^{3/2}\|V _{xx} ^{\varepsilon}\|_{L ^{2}}^{2} \right)\mathrm{d}t \leq c(v _{\ast},T),
\end{align}
where $ c(v _{\ast},T)>0 $ is a constant depending on $ T $ but independent of $ \varepsilon $ and $ \delta $.
\end{corollary}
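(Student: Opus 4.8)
The plan is to obtain each of the three bounds directly, without any new differential inequality, by combining the equations in \eqref{eq-perturbation} with the lower-order estimates of Lemmas \ref{lem-L2-pertur} and \ref{lem-V-x-per}, the source-term bounds of Lemmas \ref{lem-R-1-VE} and \ref{lem-R-2-ESTI}, and the one-dimensional interpolation inequality \eqref{Sobolev-infty}. First I would solve the first equation in \eqref{eq-perturbation} for $\Phi_{xx}^{\varepsilon}$ and use the coefficient bounds $\|\Phi_{x}^{A}+M\|_{L_{T}^{\infty}L^{\infty}}+\varepsilon^{1/2}\|V_{x}^{A}\|_{L_{T}^{\infty}L^{\infty}}\le c(v_{\ast},T)$ from \eqref{esti-approx-fida-2}, which yields, pointwise in $t$,
\[
\varepsilon^{1/2}\|\Phi_{xx}^{\varepsilon}\|_{L^{2}}^{2}\le c_{0}\varepsilon^{1/2}\|\Phi_{t}^{\varepsilon}\|_{L^{2}}^{2}+c_{0}\varepsilon^{3/2}\|V_{x}^{\varepsilon}\|_{L^{\infty}}^{2}\|\Phi_{x}^{\varepsilon}\|_{L^{2}}^{2}+c(v_{\ast},T)\varepsilon^{-1/2}\|\Phi_{x}^{\varepsilon}\|_{L^{2}}^{2}+c(v_{\ast},T)\varepsilon^{1/2}\|V_{x}^{\varepsilon}\|_{L^{2}}^{2}+c(v_{\ast},T)\varepsilon^{-1/2}\|\mathcal{R}_{1}^{\varepsilon}\|_{L^{2}}^{2}.
\]
Integrating in time over $[0,T]$, the first, third, fourth and fifth terms are controlled by $c(v_{\ast},T)$ thanks to $\int_{0}^{T}\varepsilon^{1/2}\|\Phi_{\tau}^{\varepsilon}\|_{L^{2}}^{2}\,\mathrm{d}\tau\le c(v_{\ast},T)\varepsilon^{1/2}$ (Lemma \ref{lem-V-x-per}), $\int_{0}^{T}\|\Phi_{x}^{\varepsilon}\|_{L^{2}}^{2}\,\mathrm{d}t\le c(v_{\ast},T)\varepsilon^{1/2}$ and $\int_{0}^{T}\varepsilon\|V_{x}^{\varepsilon}\|_{L^{2}}^{2}\,\mathrm{d}t\le c(v_{\ast},T)\varepsilon^{1/2}$ (Lemma \ref{lem-L2-pertur}), and $\|\mathcal{R}_{1}^{\varepsilon}\|_{L_{T}^{\infty}L^{2}}\le c(v_{\ast},T)\varepsilon^{3/4}$ (Lemma \ref{lem-R-1-VE}).

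The remaining, genuinely delicate, contribution is the nonlinear term $c_{0}\varepsilon^{3/2}\|V_{x}^{\varepsilon}\|_{L^{\infty}}^{2}\|\Phi_{x}^{\varepsilon}\|_{L^{2}}^{2}$, which couples the top-order norms of $V^{\varepsilon}$ and $\Phi^{\varepsilon}$; here I would absorb $\|\Phi_{x}^{\varepsilon}\|_{L^{2}}^{2}$ using the bound $\|\Phi_{x}^{\varepsilon}\|_{L_{T}^{\infty}L^{2}}^{2}\le c(v_{\ast},T)\varepsilon^{-1/2}$ from Lemma \ref{lem-V-x-per} and then invoke the key estimate $\varepsilon\int_{0}^{T}\|V_{x}^{\varepsilon}\|_{L^{\infty}}^{2}\,\mathrm{d}t\le c(v_{\ast},T)$ established inside the proof of Lemma \ref{lem-V-x-per} (cf.\ \eqref{v-l-infty}), so that $\int_{0}^{T}\varepsilon^{3/2}\|V_{x}^{\varepsilon}\|_{L^{\infty}}^{2}\|\Phi_{x}^{\varepsilon}\|_{L^{2}}^{2}\,\mathrm{d}t\le c(v_{\ast},T)$. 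This gives $\int_{0}^{T}\varepsilon^{1/2}\|\Phi_{xx}^{\varepsilon}\|_{L^{2}}^{2}\,\mathrm{d}t\le c(v_{\ast},T)$. For the $L^{\infty}$ bound on $\Phi_{x}^{\varepsilon}$ I would apply \eqref{Sobolev-infty} to $\Phi_{x}^{\varepsilon}$, giving $\|\Phi_{x}^{\varepsilon}\|_{L^{\infty}}^{2}\le c_{0}\|\Phi_{x}^{\varepsilon}\|_{L^{2}}^{2}+c_{0}\|\Phi_{x}^{\varepsilon}\|_{L^{2}}\|\Phi_{xx}^{\varepsilon}\|_{L^{2}}$, integrate, bound the first term by Lemma \ref{lem-L2-pertur}, and, for the second, split $\|\Phi_{x}^{\varepsilon}\|_{L^{2}}\|\Phi_{xx}^{\varepsilon}\|_{L^{2}}=\bigl(\varepsilon^{-1/4}\|\Phi_{x}^{\varepsilon}\|_{L^{2}}\bigr)\bigl(\varepsilon^{1/4}\|\Phi_{xx}^{\varepsilon}\|_{L^{2}}\bigr)$ and apply Cauchy--Schwarz in $t$ together with $\int_{0}^{T}\varepsilon^{-1/2}\|\Phi_{x}^{\varepsilon}\|_{L^{2}}^{2}\,\mathrm{d}t\le c(v_{\ast},T)$ (Lemma \ref{lem-L2-pertur}) and the estimate $\int_{0}^{T}\varepsilon^{1/2}\|\Phi_{xx}^{\varepsilon}\|_{L^{2}}^{2}\,\mathrm{d}t\le c(v_{\ast},T)$ just obtained.

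Finally, the bound $\int_{0}^{T}\varepsilon^{3/2}\|V_{xx}^{\varepsilon}\|_{L^{2}}^{2}\,\mathrm{d}t\le c(v_{\ast},T)$ is already contained in the proof of Lemma \ref{lem-V-x-per}: estimate \eqref{V-xx-muti} there reads $\varepsilon^{2}\int_{0}^{T}\|V_{xx}^{\varepsilon}\|_{L^{2}}^{2}\,\mathrm{d}t\le c(v_{\ast},T)\varepsilon^{1/2}$, which is exactly the claimed inequality after multiplying through by $\varepsilon^{-1/2}$, so one simply records it. Thus the corollary follows immediately once these ingredients are assembled; no Gronwall argument is needed, and the only step requiring care is the coupling term in the $\Phi_{xx}^{\varepsilon}$ estimate, which is dispatched by the $\varepsilon$-weighted $L^{2}_{T}L^{\infty}$ bound on $V_{x}^{\varepsilon}$ already at hand from Lemma \ref{lem-V-x-per}.
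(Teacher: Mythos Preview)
Your proposal is correct and follows essentially the same route as the paper: solve the first equation in \eqref{eq-perturbation} for $\Phi_{xx}^{\varepsilon}$, integrate the resulting pointwise bound, handle the nonlinear term via $\|\Phi_{x}^{\varepsilon}\|_{L_{T}^{\infty}L^{2}}^{2}\le c(v_{\ast},T)\varepsilon^{-1/2}$ together with \eqref{v-l-infty}, then interpolate for $\|\Phi_{x}^{\varepsilon}\|_{L^{\infty}}$ and quote \eqref{V-xx-muti} for $V_{xx}^{\varepsilon}$. One small slip: Lemma \ref{lem-V-x-per} gives $\int_{0}^{T}\varepsilon\|\Phi_{\tau}^{\varepsilon}\|_{L^{2}}^{2}\,\mathrm{d}\tau\le c(v_{\ast},T)\varepsilon^{1/2}$, so $\int_{0}^{T}\varepsilon^{1/2}\|\Phi_{\tau}^{\varepsilon}\|_{L^{2}}^{2}\,\mathrm{d}\tau\le c(v_{\ast},T)$ rather than $c(v_{\ast},T)\varepsilon^{1/2}$; this does not affect the argument since only an $O(1)$ bound is needed there.
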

\begin{proof}
  The estimate on $ V _{xx}^{\varepsilon} $ follows from \eqref{V-xx-muti} directly. We now show estimates on $ \Phi _{xx} ^{\varepsilon}$ and $ \Phi _{x}^{\varepsilon} $. By the first equation in \eqref{eq-perturbation}, we have
  \begin{gather}\label{fida-xx-single}
  \displaystyle  \|\Phi _{xx}^{\varepsilon}\|_{L ^{2}}^{2} \leq \|\Phi _{t}^{\varepsilon}\|_{L ^{2}}^{2}+\varepsilon \|\Phi _{x}^{\varepsilon}\|_{L ^\infty}^{2}\|V _{x}^{\varepsilon}\|_{L ^{2}}^{2}+\|V _{x}^{A}\|_{L ^{\infty}}^{2}\|\Phi _{x}^{\varepsilon}\|_{L ^{2}}^{2}+c(v _{\ast},T) \|V _{x}^{\varepsilon}\|_{L ^{2}}^{2}+\varepsilon ^{-1}\|\mathcal{R}_{1}^{\varepsilon}\|_{L ^{2}}^{2},
  \end{gather}
  where we have used \eqref{esti-approx-fida-2a}. Therefore we derive that
  \begin{align}
   \displaystyle \int _{0}^{T}\|\Phi _{xx}^{\varepsilon}\|_{L ^{2}}^{2}\mathrm{d}t& \leq  \int _{0}^{T}\|\Phi _{t}^{\varepsilon}\|_{L ^{2}}^{2}\mathrm{d}t + \varepsilon\sup _{t \in [0,T]}\|\Phi _{x}^{\varepsilon}\|_{L ^{2}}^{2} \int _{0}^{T}\|V _{x}^{\varepsilon}\|_{ L^{\infty}}^{2}\mathrm{d}t+ c(v _{\ast},T) \varepsilon ^{-1} \int _{0}^{T}\|\Phi _{x}^{\varepsilon}\|_{L ^{2}}^{2}\mathrm{d}t
    \nonumber \\
    & \displaystyle \quad+c(v _{\ast},T) \int _{0}^{T}\|V _{x}^{\varepsilon}\|_{L ^{2}}^{2}\mathrm{d}t + \varepsilon ^{-1}\int _{0}^{T}\|\mathcal{R}_{1}^{\varepsilon}\|_{ L ^{2}}^{2}\mathrm{d}t
     \nonumber \\
           & \displaystyle \leq c(v _{\ast},T) \varepsilon ^{-1/2}+c(v _{\ast},T) \varepsilon ^{1/2} \leq c(v _{\ast},T) \varepsilon ^{-1/2}
        \end{align}
 for some constant $ c(v _{\ast},T)>0 $ depending on $ T $ but independent of $ \varepsilon $ and $ \delta $,
        where we have used \eqref{esti-approx-fida-2}, \eqref{con-L2-PERT}, \eqref{con-V-x-L2}, \eqref{v-l-infty}, $ 0< \varepsilon<1 $ and Lemma \ref{lem-R-1-VE}. This along with \eqref{con-L2-PERT} and the Sobolev inequality \eqref{Sobolev-infty} further entails for $ 0< \varepsilon<1 $ that
        \begin{align*}
         \displaystyle \int _{0}^{T}\| \Phi _{x}^{\varepsilon}\|_{L ^{\infty}}^{2}\mathrm{d}t &\leq c(v _{\ast},T) \int _{0}^{T}\left( \| \Phi _{x}^{\varepsilon}\|_{L ^{2}}^{2}+\| \Phi _{x}^{\varepsilon}\|_{L  ^{2}}\| \Phi _{xx}^{\varepsilon}\|_{L ^{2}} \right)\mathrm{d}t
          \nonumber \\
            & \displaystyle \leq c(v _{\ast},T)(1+\varepsilon ^{-1/2})\int _{0}^{T}\| \Phi _{x}^{\varepsilon}\|_{L ^{2}}^{2}\mathrm{d}t+c(v _{\ast},T) \varepsilon ^{1/2} \int _{0}^{T}\| \Phi _{xx}^{\varepsilon}\|_{L ^{2}}^{2}\mathrm{d}t
             \nonumber \\
             & \displaystyle\leq c(v _{\ast},T).
         \end{align*}
          The proof is complete.
\end{proof}

\subsubsection{Higher-order estimates} % (fold)
\label{sub:higher_order_estimates}
To prove the convergence result in Theorem \ref{thm-stabi-refor}, we derive some higher-order estimates for $ (\Phi ^{\varepsilon},V ^{\varepsilon}) $ in this subsection.
\begin{lemma}\label{lem-FIDA-V-XX}
Assume that the conditions of Lemmas \ref{lem-L2-pertur} and \ref{lem-V-x-per} hold. Then it holds for any $ t \in (0,T] $ that
\begin{align}\label{fida-t-single}
&\displaystyle   \|\Phi _{t} ^{\varepsilon}(\cdot,t)\|_{L ^{2}}^{2}+\|V _{t}^{\varepsilon}(\cdot,t)\|_{L ^{2}}^{2}+\varepsilon ^{1/2}\|\Phi _{xx}^{\varepsilon}(\cdot,t)\|_{ L ^{2}}^{2}+\varepsilon\|V _{xx}^{\varepsilon}(\cdot,t)\|_{L ^{2}}^{2}
 \nonumber \\
 & \displaystyle \quad + \int _{0}^{t} \left( \|\Phi _{x \tau}^{\varepsilon}\|_{L ^{2}}^{2}+\varepsilon\|V _{x \tau}\|_{L ^{2}}^{2}\right) \mathrm{d}\tau  \leq c(v _{\ast},T) \varepsilon ^{-1/2},\ \ \ t \in (0,T],
\end{align}
where $ c(v _{\ast},T)>0 $ is a constant independent of $ \varepsilon $ and $ \delta $.
\end{lemma}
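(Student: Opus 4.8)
The plan is to upgrade the time-integrated bounds of Lemmas~\ref{lem-L2-pertur}--\ref{lem-V-x-per} and Corollary~\ref{cor-in-lower-order} to pointwise-in-time control of $(\Phi_t^\varepsilon,V_t^\varepsilon)$ by differentiating the perturbation system \eqref{eq-perturbation} in $t$. Writing $\psi:=\Phi_t^\varepsilon$ and $w:=V_t^\varepsilon$, one obtains a linear parabolic system whose principal parts are $\psi_t=\psi_{xx}-(\cdots)$ and $w_t=\varepsilon w_{xx}-(\cdots)$, with zero boundary values $\psi(0,t)=\psi(1,t)=w(0,t)=w(1,t)=0$ inherited from $(\Phi^\varepsilon,V^\varepsilon)|_{\partial\mathcal I}=0$, and with initial data $\psi(\cdot,0)=\varepsilon^{-1/2}\mathcal R_1^\varepsilon|_{t=0}$, $w(\cdot,0)=\varepsilon^{-1/2}\mathcal R_2^\varepsilon|_{t=0}$ read off from \eqref{eq-perturbation} using $(\Phi^\varepsilon,V^\varepsilon)(\cdot,0)=0$; by Lemmas~\ref{lem-R-1-VE}--\ref{lem-R-2-ESTI} these satisfy $\|\psi(\cdot,0)\|_{L^2}^2+\|w(\cdot,0)\|_{L^2}^2\le c(v_*,T)\varepsilon^{1/2}$. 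The basic estimate is obtained by testing the $\psi$-equation against $\psi$ and the $w$-equation against $w$, integrating by parts (all boundary terms vanish), and summing, which produces $\tfrac12\tfrac{d}{dt}(\|\psi\|_{L^2}^2+\|w\|_{L^2}^2)+\|\psi_x\|_{L^2}^2+\varepsilon\|w_x\|_{L^2}^2$ on the left-hand side.

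The bulk of the work is estimating the right-hand side. The source contributions $\varepsilon^{-1/2}\partial_t\mathcal R_1^\varepsilon$ and $\varepsilon^{-1/2}\partial_t\mathcal R_2^\varepsilon$ give, after Cauchy--Schwarz and integration in time, a term bounded by $c\,\varepsilon^{-1}(\|\partial_t\mathcal R_1^\varepsilon\|_{L^2_TL^2}^2+\|\partial_t\mathcal R_2^\varepsilon\|_{L^2_TL^2}^2)\le c(v_*,T)\varepsilon^{1/2}$ via Lemmas~\ref{lem-R-1-VE}--\ref{lem-R-2-ESTI}. The genuinely nonlinear terms arising from $\partial_t(\Phi_x^\varepsilon V_x^\varepsilon)$ and $\partial_t(\Phi_x^\varepsilon V^\varepsilon)$ split by the Leibniz rule; the pieces containing $\psi_x=\Phi_{xt}^\varepsilon$ are absorbed into $\|\psi_x\|_{L^2}^2$ (arranging the accumulated coefficient of $\|\psi_x\|_{L^2}^2$ to be $<1$) using the $L^\infty$ bounds on $V^\varepsilon$, $V_x^\varepsilon$ from Corollary~\ref{cor-in-lower-order} and \eqref{Sobolev-infty}, while the pieces with $w$ produce Gronwall coefficients $c\delta$, $c\|\Phi_x^\varepsilon\|_{L^\infty}^2$ and $c\,\varepsilon\|V_x^\varepsilon\|_{L^\infty}^2$, which are $L^1_T$ with bound independent of $\varepsilon$ by \eqref{apriori-assumption} and the key estimates $\int_0^T(\|\Phi_x^\varepsilon\|_{L^\infty}^2+\varepsilon\|V_x^\varepsilon\|_{L^\infty}^2)\,dt\le c(v_*,T)$ proved in Lemma~\ref{lem-V-x-per} and Corollary~\ref{cor-in-lower-order}. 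The cross terms coming from $\partial_t\big(V_x^\varepsilon(\Phi_x^A+M)\big)$ and $\partial_t(\Phi_x^\varepsilon V^A)$ involve $V_{xt}^\varepsilon$ paired against $\psi$ (respectively $\Phi_{xt}^\varepsilon$ paired against $w$): one must integrate by parts in $x$ first (legitimate since $\psi,w$ vanish on $\partial\mathcal I$) to trade $V_{xt}^\varepsilon$ for $V_t^\varepsilon$ and the derivative $\Phi_{xx}^A$ of the approximate solution; the resulting singular coefficients $\Phi_{xx}^A$, $\partial_x V^A$, $\partial_x^2 V^A$ of size $O(\varepsilon^{-1/2})$ are tamed by the Hardy inequality \eqref{hardy}, writing each such coefficient as $\frac{1}{x(1-x)}\cdot x(1-x)(\cdots)$ and using that $x(1-x)$ times the singular boundary-layer factors is bounded in $L^\infty$ by the algebraically-weighted estimates \eqref{some-l-infty-layer-for-vfi-b-1-stab}, \eqref{some-l-infty-layer-for-stab}, \eqref{l-infty-v-bd-stat}, \eqref{some-l-infty-layer-for-v-b-0-stab}, exactly as in the proof of Lemma~\ref{lem-L2-pertur}; the non-singular pieces (controlled by \eqref{esti-approx-fida-2a}, \eqref{esti-approx-fida-2b}) are routine. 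Collecting everything, shrinking $\delta$ (and keeping the smallness of $v_*$ through $T$ already imposed in Lemmas~\ref{lem-L2-pertur}--\ref{lem-V-x-per}) so the absorbed constants are $\le\tfrac12$, and applying the Gronwall inequality with the $\varepsilon$-independent $L^1_T$ coefficients, we get $\|\psi(\cdot,t)\|_{L^2}^2+\|w(\cdot,t)\|_{L^2}^2+\int_0^t(\|\psi_x\|_{L^2}^2+\varepsilon\|w_x\|_{L^2}^2)\,d\tau\le c(v_*,T)\varepsilon^{-1/2}$, the factor $\varepsilon^{-1/2}$ entering only through terms such as $\int_0^t\|V_x^\varepsilon\|_{L^2}^2\,d\tau$, which is $O(\varepsilon^{-1/2})$ by Lemma~\ref{lem-V-x-per}.

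Finally, the second-order spatial bounds $\varepsilon^{1/2}\|\Phi_{xx}^\varepsilon(\cdot,t)\|_{L^2}^2$ and $\varepsilon\|V_{xx}^\varepsilon(\cdot,t)\|_{L^2}^2$ are recovered algebraically from the two equations in \eqref{eq-perturbation}, rewritten as $\Phi_{xx}^\varepsilon=\Phi_t^\varepsilon+(\text{lower order})$ and $\varepsilon V_{xx}^\varepsilon=V_t^\varepsilon+(\text{lower order})$: squaring, integrating over $\mathcal I$, and inserting the just-established pointwise bounds on $\|\Phi_t^\varepsilon\|_{L^2},\|V_t^\varepsilon\|_{L^2}$ together with Lemmas~\ref{lem-L2-pertur}--\ref{lem-V-x-per}, Corollary~\ref{cor-in-lower-order}, the error bounds of Lemmas~\ref{lem-R-1-VE}--\ref{lem-R-2-ESTI}, and the interpolation $\|f\|_{L^\infty}^2\le c(\|f\|_{L^2}^2+\|f\|_{L^2}\|f_x\|_{L^2})$ to handle the factors $\|\Phi_x^\varepsilon\|_{L^\infty}$, $\|V_x^\varepsilon\|_{L^\infty}$ occurring there (as in Corollary~\ref{cor-in-lower-order}). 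I expect the main obstacle to be the bookkeeping of powers of $\varepsilon$: every term generated by a singular coefficient of size $\varepsilon^{-1/2}$ must be matched either against a dissipative term, against a factor $x(1-x)$ that cancels the singularity through the Hardy inequality, or against an $L^1_T$ Gronwall coefficient whose time integral is bounded uniformly in $\varepsilon$; if any one of these pairings fails the Gronwall constant becomes $\exp(c\varepsilon^{-1})$ and the argument collapses, so the delicate point is choosing, for each term, the correct one of these three mechanisms. Once this accounting is carried out the Gronwall argument closes and yields \eqref{fida-t-single}.
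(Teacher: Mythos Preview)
Your plan for the time-derivative estimates is essentially the paper's: differentiate \eqref{eq-perturbation} in $t$, test the resulting equations against $\psi=\Phi_t^\varepsilon$ and $w=V_t^\varepsilon$, integrate by parts in $x$ on the cross term $V_{xt}^\varepsilon(\Phi_x^A+M)\psi$ to produce $V_t^\varepsilon\Phi_{xx}^A\psi$, and kill the $\varepsilon^{-1/2}$ singularity in $\Phi_{xx}^A$ and in $V_x^A$ via Hardy (since $\psi|_{\partial\mathcal I}=0$). The Gronwall coefficients $\varepsilon\|V_x^\varepsilon\|_{L^\infty}^2$ and $\|\Phi_x^\varepsilon\|_{L^\infty}^2$ are indeed $L^1_T$-bounded uniformly in $\varepsilon$ by \eqref{v-l-infty} and Corollary~\ref{cor-in-lower-order}. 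This part is fine and matches the paper.

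The gap is in your final step for $\varepsilon^{1/2}\|\Phi_{xx}^\varepsilon\|_{L^2}^2$. Squaring the first equation of \eqref{eq-perturbation} produces the term $\|\Phi_x^\varepsilon V_x^A\|_{L^2}^2$; since $\|V_x^A\|_{L^\infty}\sim\varepsilon^{-1/2}$ and, at this stage of the argument, only $\|\Phi_x^\varepsilon\|_{L^2}^2\le c\,\varepsilon^{-1/2}$ is available (Lemma~\ref{lem-V-x-per}; the improvement to $\|\Phi_x^\varepsilon\|_{L^2}^2\le c$ comes \emph{after} this lemma in Corollary~\ref{cor-fida-x-modified}), the algebraic bound is $c\,\varepsilon^{-3/2}$, which gives only $\varepsilon^{1/2}\|\Phi_{xx}^\varepsilon\|_{L^2}^2\le c\,\varepsilon^{-1}$, one factor of $\varepsilon^{1/2}$ short of \eqref{fida-t-single}. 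Hardy does \emph{not} rescue this term: unlike $\Phi^\varepsilon$ or $\Phi_t^\varepsilon$, the factor $\Phi_x^\varepsilon$ does not vanish on $\partial\mathcal I$, so $\|\Phi_x^\varepsilon/(x(1-x))\|_{L^2}$ is not controlled and the singular boundary-layer part of $V_x^A$ cannot be neutralized. The paper circumvents this by a separate energy argument: it multiplies the $t$-differentiated $\Phi$-equation by $\Phi_{xx}^\varepsilon$ (not by $\Phi_t^\varepsilon$), integrates, and rewrites $\int\Phi_{tt}^\varepsilon\Phi_{xx}^\varepsilon=\tfrac{d}{dt}\int\Phi_t^\varepsilon\Phi_{xx}^\varepsilon+\int|\Phi_{xt}^\varepsilon|^2$. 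The singular coefficient then enters as $\|V_x^A\|_{L^\infty}\int_0^t(\|\Phi_{x\tau}^\varepsilon\|_{L^2}^2+\|\Phi_{xx}^\varepsilon\|_{L^2}^2)\,d\tau$, and these \emph{time-integrated} quantities are already bounded by $c\,\varepsilon^{-1/2}$ (Corollary~\ref{cor-in-lower-order} and the time-derivative estimate you just obtained), so one picks up $\varepsilon^{-1/2}\cdot\varepsilon^{-1/2}=\varepsilon^{-1}$; a final Gronwall step with the $L^1_T$ coefficient $\varepsilon\|V_x^\varepsilon\|_{L^\infty}^2+\|\Phi_x^\varepsilon\|_{L^\infty}^2$ then closes to $\|\Phi_{xx}^\varepsilon\|_{L^2}^2\le c\,\varepsilon^{-1}$.
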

\begin{proof}
Differentiating the equations in \eqref{eq-perturbation} with respect to $ t $, we end up with
\begin{gather}\label{high-order-eq}
\displaystyle\displaystyle \displaystyle  \begin{cases}
\displaystyle \Phi _{tt}^{\varepsilon}=\Phi _{xxt}^{\varepsilon}-\varepsilon ^{\frac{1}{2}}\Phi _{xt}^{\varepsilon}V _{x}^{\varepsilon} -\varepsilon ^{\frac{1}{2}}\Phi _{x}^{\varepsilon}V _{xt}^{\varepsilon} -\Phi _{xt}^{\varepsilon} V_{x}^{A}-\Phi _{x}^{\varepsilon}V_{xt}^{A}- V _{xt}^{\varepsilon}(\Phi_{x}^{A}+M)- V _{x}^{\varepsilon}\Phi_{xt}^{A}+\varepsilon ^{-\frac{1}{2}}\partial _{t}\mathcal{R}_{1}^{\varepsilon},\\[1mm]
  \displaystyle V _{tt}^{\varepsilon}=\varepsilon V _{xxt}^{\varepsilon}-\varepsilon ^{\frac{1}{2}}\Phi _{xt}^{\varepsilon}V ^{\varepsilon}-\varepsilon ^{\frac{1}{2}}\Phi _{x}^{\varepsilon}V _{t}^{\varepsilon}-\Phi _{xt}^{\varepsilon}V^{A}-\Phi _{x}^{\varepsilon}V_{t}^{A}-\Phi_{xt}^{A}V ^{\varepsilon}-(\Phi_{x}^{A}+M)V _{t}^{\varepsilon}
  +\varepsilon ^{-\frac{1}{2}}\partial _{t}\mathcal{R}_{2}^{\varepsilon}.
    \end{cases}\makebox[-2.5pt]{~}
\end{gather}
Multiplying the first equation in \eqref{high-order-eq} by $ \Phi _{t}^{\varepsilon} $, and then integrating the resulting equation over $ \mathcal{I} $, we get after using the integration by parts that
\begin{align}\label{fida-t-diff-iden}
&\displaystyle \frac{1}{2}\frac{\mathrm{d}}{\mathrm{d}t}\int _{\mathcal{I}} \vert \Phi _{t}^{\varepsilon}\vert ^{2} \mathrm{d}x
+\int _{\mathcal{I}} \vert \Phi _{xt}^{\varepsilon}\vert ^{2} \mathrm{d}x
 \nonumber \\
&~\displaystyle =-\varepsilon ^{1/2}\int _{\mathcal{I}} \Phi _{xt}^{\varepsilon}V_{x}^{\varepsilon}\Phi _{t}^{\varepsilon} \mathrm{d}x - \int _{\mathcal{I}} \varepsilon ^{1/2}\Phi _{x}^{\varepsilon}V _{xt}^{\varepsilon}\Phi _{t}^{\varepsilon} \mathrm{d}x - \int _{\mathcal{I}} \Phi _{x}^{\varepsilon}V_{xt}^{A}\Phi _{t}^{\varepsilon} \mathrm{d}x -  \int _{\mathcal{I}}  V _{x}^{\varepsilon}\Phi_{xt}^{A}\Phi _{t}^{\varepsilon} \mathrm{d}x
 \nonumber \\
 &~\displaystyle \quad + \int _{\mathcal{I}}  \varepsilon ^{-1/2}\partial _{t}\mathcal{R}_{1}^{\varepsilon}\Phi _{t}^{\varepsilon} \mathrm{d}x- \int _{\mathcal{I}} \Phi _{xt}^{\varepsilon}V_{x}^{A}\Phi _{t}^{\varepsilon} \mathrm{d}x - \int _{\mathcal{I}}  V _{xt}^{\varepsilon}(\Phi_{x}^{A}+M) \Phi _{t}^{\varepsilon} \mathrm{d}x=:\sum _{i=1}^{7}\mathcal{H}_{i},
\end{align}
where, in view of the Cauchy-Schwarz inequality, $ \mathcal{ H}_{i}~(1 \leq i \leq 5) $ can be estimated as follows:
\begin{gather}\label{h-esti}
 \displaystyle \begin{cases}
 	\mathcal{H}_{1} \leq \frac{1}{4}\|\Phi _{xt}^{\varepsilon}\|_{L ^{2}}^{2} +c _{0}\varepsilon \|V _{x}^{\varepsilon}\|_{L ^{\infty}}^{2}\|\Phi _{t}^{\varepsilon}\|_{L ^{2}}^{2},\  \displaystyle \mathcal{H}_{2} \leq \frac{\varepsilon}{16}\|V_{xt}^{\varepsilon}\|_{L ^{2}}^{2}+c _{0}\|\Phi _{x}^{\varepsilon}\|_{L ^{\infty}}^{2}\|\Phi _{t}^{\varepsilon}\|_{L ^{2}}^{2},\\[2mm]
 	\displaystyle\mathcal{H}_{3} \leq c _{0} \|V _{xt}^{A}\|_{L ^{\infty}}\left( \varepsilon ^{-1/2}\|\Phi _{x}^{\varepsilon}\|_{L ^{2}} ^{2}+\varepsilon ^{1/2}\|\Phi _{t}^{\varepsilon}\|_{L ^{2}}^{2}\right),	 \ \mathcal{H}_{4} \leq c _{0} \|\Phi _{xt}^{A}\|_{L ^{\infty}}\left(  \|V _{x}^{\varepsilon}\|_{L ^{2}}^{2}+\|\Phi _{t}^{\varepsilon}\|_{L ^{2}}^{2} \right),\\[2mm]
  	\displaystyle\mathcal{H}_{5} \leq  \|\Phi _{t}^{\varepsilon}\|_{L ^{2}}^{2}+ c _{0}\varepsilon ^{-1}\|\partial _{t}\mathcal{R}_{1}^{\varepsilon}\|_{L  ^{2}}^{2}.
 	\end{cases}
 \end{gather}
 For $ \mathcal{H}_{6} $ and $ \mathcal{H}_{7} $, it follows from \eqref{con-vfi-v-I-0-regula}, \eqref{vfi-I-1-V-i-1}, \eqref{approximate-va}, \eqref{l-infty-v-bd-stat}, \eqref{some-l-infty-layer-for-v-b-0-stab}, \eqref{Sobolev-z-xi}, the integration by parts and the Hardy inequality \eqref{hardy} that
 \begin{align}\label{H-6}
 \displaystyle  \displaystyle  \mathcal{H}_{6} &=-\int _{\mathcal{I}}\Phi _{xt}^{\varepsilon}\varepsilon ^{-1/2}( v _{z}^{B,0}+v _{\xi}^{b,0})\Phi _{t}^{\varepsilon}\mathrm{d}x - \int _{\mathcal{I}}\Phi _{xt}^{\varepsilon} ( v _{x}^{I,0}+v _{z}^{B,1}+v _{\xi}^{b,1}+\varepsilon ^{1/2}v _{x}^{I,1}+\partial _{x} b _{v}^{\varepsilon}) \Phi _{t}^{\varepsilon}\mathrm{d}x
  \nonumber \\
  &\displaystyle \leq \left\|\frac{\Phi _{t}^{\varepsilon}}{x(1-x)}\right\|_{L ^{2}}\|\Phi _{xt}^{\varepsilon}\|_{L ^{2}}\left( \left\|\frac{x(1-x)}{\varepsilon ^{1/2}}v _{z}^{B,0}\right\|_{L ^{\infty}}+ \left\|\frac{x(1-x)}{\varepsilon ^{1/2}}v _{\xi}^{b,0}\right\|_{L ^{\infty}} \right)
  \nonumber \\
  &\displaystyle \quad+ \|\Phi _{t}^{\varepsilon}\|_{L ^{2}}\|\Phi _{xt}^{\varepsilon}\|_{L ^{2}} \left( \|v _{x}^{I,0}\|_{L ^{\infty}}+\|v _{z}^{B,1}\|_{L _{z}^{\infty}}+\|v _{\xi}^{b,1}\|_{L _{\xi}^{\infty}}+\varepsilon ^{1/2}\|v _{x}^{I,1}\|_{L ^{\infty}}+\|\partial _{x}b _{v}^{\varepsilon}\|_{L ^{\infty}} \right)
   \nonumber \\
   &\displaystyle \leq c_{0} \|\Phi _{xt}^{\varepsilon}\|_{L ^{2}}^{2}\left( \|\langle z \rangle v _{z}^{B,0}\| _{L _{z}^{\infty}}+\|\langle \xi \rangle v _{\xi}^{b,0}\|_{L _{\xi}^{\infty}} \right)+ c(v _{\ast},T) \|\Phi _{xt}^{\varepsilon}\|_{L ^{2}}\|\Phi _{t}^{\varepsilon}\|_{L ^{2}}
    \nonumber \\
    & \displaystyle \leq \Big( \frac{1}{8}+ K _{1}(T,v _{\ast}) v _{\ast} \Big) \|\Phi _{xt}^{\varepsilon}\|_{L ^{2}}^{2}+ c(v _{\ast},T)\|\Phi _{t}^{\varepsilon}\|_{L ^{2}}^{2}
      \end{align}
      with $ K _{1}(T,v _{\ast}) >0$ as in \eqref{five-l2}, and that
 \begin{align}\label{H-7}
 \displaystyle  \displaystyle \mathcal{H}_{7} &=- \int _{\mathcal{I}}V _{t}^{\varepsilon}\Phi _{t} ^{\varepsilon}\Phi _{xx}^{A}\mathrm{d}x -\int _{\mathcal{I}} (\Phi_{x}^{A}+M)V  _{t}^{\varepsilon}\Phi _{xt}^{\varepsilon} \mathrm{d}x
 \nonumber \\
 &\displaystyle \leq
 - \varepsilon ^{1/2}\int _{\mathcal{I}}V  _{t}^{\varepsilon}\Phi_{t} ^{\varepsilon}(\varphi _{xx}^{B,1}+\varphi _{xx}^{b,1})\mathrm{d}x -\int _{\mathcal{I}}V  _{t}^{\varepsilon}\Phi_{t} ^{\varepsilon}\left( \varphi _{xx}^{I,0}+\varepsilon ^{1/2}\varphi _{xx}^{I,1}+\varepsilon \varphi _{xx}^{B,2}+\varepsilon \varphi _{xx}^{b,2}+\partial _{x}^{2} b _{\varphi}^{\varepsilon}\right) \mathrm{d}x
  \nonumber \\
  &\displaystyle \quad + \|\Phi_{x}^{A}+M\|_{L ^{\infty}} \|V  _{t}^{\varepsilon}\|_{L ^{2}}\|\Phi _{xt}^{\varepsilon}\|_{L ^{2}}
  \nonumber \\
  &\displaystyle \leq \|V  _{t}^{\varepsilon}\|_{L ^{2}} \left\|\frac{\Phi_{t} ^{\varepsilon}}{x(1-x)}\right\|_{L ^{2}}\left( \left\|\frac{x(1-x)}{\varepsilon ^{1/2}}\varphi _{zz}^{B,1}\right\|_{L ^{\infty}}+\left\|\frac{x(1-x)}{\varepsilon ^{1/2}}\varphi _{\xi \xi}^{b,1}\right\|_{L ^{\infty}} \right)
   + c(v _{\ast},T) \|V  _{t}^{\varepsilon}\|_{L ^{2}}\|\Phi _{xt}^{\varepsilon}\|_{L ^{2}}
   \nonumber \\
   &\displaystyle \quad+ \|V  _{t}^{\varepsilon}\|_{L ^{2}}\|\Phi _{t}^{\varepsilon}\|_{L ^{2}}\left( \|\varphi _{xx}^{I,0}\|_{L ^{\infty}}+\|\varphi _{zz}^{B,2}\|_{L _{z}^{\infty}}+\|\varphi _{\xi \xi}^{b,2}\|_{L _{\xi}^{\infty}}+\varepsilon ^{1/2}\|\varphi _{xx}^{I,1}\|_{L ^{\infty}}+\|\partial _{x}^{2}b _{\varphi}^{\varepsilon}\|_{L ^{\infty}} \right)
     \nonumber \\
     & \displaystyle \leq c _{0}\|V  _{t}^{\varepsilon}\|_{L ^{2}}\|\Phi _{xt}^{\varepsilon}\|_{L ^{2}} \left( \|\langle z \rangle \varphi _{zz}^{B,1}\|_{L _{z}^{\infty}}+\|\langle \xi \rangle \varphi _{\xi \xi}^{b,1}\|_{L _{\xi}^{\infty}} \right)
      \nonumber \\
      & \displaystyle \quad
     + c(v _{\ast},T) \|V  _{t}^{\varepsilon}\|_{L ^{2}}\left( \|\Phi _{t}^{\varepsilon}\|_{L ^{2}}+\|\Phi _{xt}^{\varepsilon}\|_{L ^{2}}\right)
     \leq \frac{1}{8}\|\Phi _{xt}\|_{L ^{2}}^{2}+c(v _{\ast},T) (\|\Phi _{t}^{\varepsilon}\|_{L ^{2}}^{2}+\|V_{t} ^{\varepsilon}\|_{L ^{2}}^{2}),
 \end{align}
 where the constant $ C>0 $ is independent of $ \varepsilon $ and $ \delta $. Plugging \eqref{h-esti}--\eqref{H-7} into \eqref{fida-t-diff-iden} followed by an integration over $ (0,t) $ for any $ t \in (0,T] $, it follows that
\begin{align}\label{fida-t-final-diff-per}
&\displaystyle \|\Phi _{t} ^{\varepsilon}(\cdot,t)\|_{L ^{2}}^{2}+ \int _{0}^{t} \|\Phi _{x \tau}^{\varepsilon}\|_{L ^{2}}^{2}\mathrm{d}\tau
 \nonumber \\
 &~\displaystyle\leq \frac{\varepsilon}{4}\int _{0}^{t}\|V _{x \tau}\| _{L ^{2}}^{2}\mathrm{d}\tau+c(v _{\ast},T) \int _{0}^{t}\left(  \varepsilon \|V _{x}^{\varepsilon}\|_{L ^{\infty}}^{2}+\|\Phi _{x}^{\varepsilon}\|_{L ^{\infty}}^{2}\right) \|\Phi _{\tau} ^{\varepsilon}(\cdot,\tau)\|_{L ^{2}}^{2}\mathrm{d}\tau+c(v _{\ast},T) \varepsilon ^{-1/2},
\end{align}
provided that $ K _{1}(v _{\ast},T) v _{\ast} \leq 1/16 $ in \eqref{H-6}, where we have used \eqref{esti-approx-fida-2}, $ 0< \varepsilon <1 $ and Lemmas \ref{lem-R-1-VE}, \ref{lem-L2-pertur} and \ref{lem-V-x-per}.

To proceed, multiplying the second equation in \eqref{high-order-eq} by $ V _{t}^{\varepsilon} $, we get after integrating the resulting equation over $ \mathcal{I} $ that
\begin{align}\label{V-t-est-diff-iden}
&\displaystyle \frac{1}{2}\frac{\mathrm{d}}{\mathrm{d}t} \int _{\mathcal{I}} \left\vert V _{t}^{\varepsilon}\right\vert ^{2} \mathrm{d}x+ \varepsilon \int _{\mathcal{I}} \left\vert V _{xt}^{\varepsilon}\right\vert ^{2} \mathrm{d}x + \int _{\mathcal{I}} \left( \Phi_{x}^{A}+M \right)\left\vert V _{t}^{\varepsilon}\right\vert ^{2}  \mathrm{d}x
 \nonumber \\
 &~\displaystyle = - \varepsilon ^{1/2}\int _{\mathcal{I}} \Phi _{xt}^{\varepsilon}V ^{\varepsilon}V _{t}^{\varepsilon} \mathrm{d}x -\int _{\mathcal{I}} \varepsilon ^{1/2}\Phi _{x}^{\varepsilon}V _{t}^{\varepsilon} V _{t}^{\varepsilon}\mathrm{d}x - \int _{\mathcal{I}} \Phi _{xt}^{\varepsilon}V^{A} V _{t}^{\varepsilon} \mathrm{d}x - \int _{\mathcal{I}} \Phi _{x}^{\varepsilon}V_{t}^{A}V _{t}^{\varepsilon} \mathrm{d}x
  \nonumber \\
  & ~\displaystyle \quad - \int _{\mathcal{I}} \Phi_{xt}^{A} V ^{\varepsilon}V _{t}^{\varepsilon} \mathrm{d}x + \varepsilon ^{-1/2}\int _{\mathcal{I}} \partial _{t}\mathcal{R}_{2}^{\varepsilon}V _{t}^{\varepsilon} \mathrm{d}x=:\sum _{i=1}^{6}\mathcal{L}_{i},
\end{align}
where, similar to \eqref{h-esti}, $ \mathcal{L}_{i}\,(1 \leq i \leq 6) $ enjoy the following estimates:
\begin{align}\label{L-ESTI-fINAL}
\begin{cases}	
\displaystyle \mathcal{L}_{1} \leq \frac{1}{4}\|\Phi _{xt}^{\varepsilon}\|_{ L ^{2}}^{2}+ c(v _{\ast},T) \varepsilon \|V ^{\varepsilon}\|_{L ^{\infty}}^{2}\|V _{t}^{\varepsilon}\|_{L ^{2}}^{2},\ \ \ \mathcal{L}_{2}  \leq  \varepsilon ^{1/2} \|\Phi _{x}^{\varepsilon}\|_{L ^{\infty}}\|V _{t}\|_{L ^{2}}^{2},\\[2mm]
\displaystyle \mathcal{L}_{3} \leq \frac{1}{4}\|\Phi _{xt}^{\varepsilon}\|_{ L ^{2}}^{2}+c(v _{\ast},T) \|V ^{A}\|_{L ^{\infty}}^{2}\|V _{t}^{\varepsilon}\|_{L ^{2}}^{2},\ \ \ \mathcal{L}_{4} \leq \|V _{t} ^{A}\|_{L ^{\infty}} \left( \|\Phi _{x}^{\varepsilon}\| _{L ^{2}}^{2}+\|V _{t}^{\varepsilon}\|_{L ^{2}}^{2}\right),\\[2mm]
\displaystyle \mathcal{L}_{5} \leq  \|\Phi _{xt}^{A}\|_{L ^{\infty}}\left( \|V ^{\varepsilon}\|_{L ^{2}}^{2}+\|V _{t}^{\varepsilon}\|_{L ^{2}}^{2} \right),\ \ \ \mathcal{L }_{6} \leq \|V _{t}^{\varepsilon}\|_{L ^{2}}^{2}+ c(v _{\ast},T) \varepsilon ^{-1}\|\partial _{t}\mathcal{R}_{2}^{\varepsilon}\|_{L ^{2}}^{2}.
\displaystyle
\end{cases}
\end{align}
Therefore, we integrate \eqref{V-t-est-diff-iden} over $ (0,t)\subset (0,T] $ to get
\begin{align}\label{V-t-final-diff-pert}
\displaystyle  \|V _{t}^{\varepsilon}(\cdot,t)\|_{L ^{2}}^{2}+ \int _{0}^{t} \int _{\mathcal{I}}\varepsilon \left\vert V _{x \tau}^{\varepsilon}\right\vert ^{2} \mathrm{d}x\mathrm{d}\tau & \leq c(v _{\ast},T) \int _{0}^{t} \left(1+ \varepsilon \|V ^{\varepsilon}\|_{L ^{\infty}}^{2}+\varepsilon ^{1/2}\|\Phi _{x}^{\varepsilon}\|_{L ^{\infty}} \right)\|V _{\tau}^{\varepsilon}(\cdot,\tau)\| _{L ^{2}}^{2}\mathrm{d}\tau
 \nonumber \\
 &\displaystyle \quad+\frac{1}{2}\int _{0}^{t}\|\Phi _{xt}^{\varepsilon}\|_{L ^{2}}^{2}\mathrm{d}\tau +c(v _{\ast},T) \varepsilon ^{1/2},
\end{align}
where we have used \eqref{esti-approx-fida-2}, \eqref{con-L2-PERT}, \eqref{con-fida-v-x-lem}, $ 0< \varepsilon <1 $ and Lemma \ref{lem-R-2-ESTI}. Combining \eqref{V-t-final-diff-pert} with \eqref{fida-t-final-diff-per}, we arrive at
\begin{align}\label{Final-tim-deriv-esti}
&\displaystyle  \|\Phi _{t} ^{\varepsilon}(\cdot,t)\|_{L ^{2}}^{2}+\|V _{t}^{\varepsilon}(\cdot,t)\|_{L ^{2}}^{2}+ \int _{0}^{t} \left( \|\Phi _{x \tau}^{\varepsilon}\|_{L ^{2}}^{2}+\varepsilon\|V _{x \tau}^{\varepsilon}\|_{L ^{2}}^{2} \right) \mathrm{d}\tau
 \nonumber \\
 &~\displaystyle \leq c(v _{\ast},T) \varepsilon ^{-1/2} +c(v _{\ast},T) \int _{0}^{t}\left(  \varepsilon \|V _{x}^{\varepsilon}\|_{L ^{\infty}}^{2}+\|\Phi _{x}^{\varepsilon}\|_{L ^{\infty}}^{2}\right) \|\Phi _{\tau} ^{\varepsilon}(\cdot,\tau)\|_{L ^{2}}^{2}\mathrm{d}\tau
  \nonumber \\
  &~\displaystyle \quad+c(v _{\ast},T) \int _{0}^{t} \left(1+ \varepsilon \|V ^{\varepsilon}\|_{L ^{\infty}}^{2}+\varepsilon ^{1/2}\|\Phi _{x}^{\varepsilon}\|_{L ^{\infty}} \right)\|V _{\tau}^{\varepsilon}(\cdot,\tau)\| _{L ^{2}}^{2}\mathrm{d}\tau,
\end{align}
Applying the Gronwall inequality to \eqref{Final-tim-deriv-esti}, alongside \eqref{v-l-infty}, \eqref{V-linfty-single} and \eqref{fida-xx-multi}, we get that
\begin{align*}
\displaystyle  \|\Phi _{t} ^{\varepsilon}(\cdot,t)\|_{L ^{2}}^{2}+\|V _{t}^{\varepsilon}(\cdot,t)\|_{L ^{2}}^{2}+ \int _{0}^{t} \left( \|\Phi _{x \tau}^{\varepsilon}\|_{L ^{2}}^{2}+\varepsilon\|V _{x \tau}^{\varepsilon}\|_{L ^{2}}^{2}\right) \mathrm{d}\tau  \leq c(v _{\ast},T) \varepsilon ^{-1/2}.
\end{align*}
This along with \eqref{esti-approx-fida-2}, \eqref{con-L2-PERT}, \eqref{con-fida-v-x-lem}, \eqref{V-xx-L2-single} and Lemmas \ref{lem-R-1-VE}, \ref{lem-R-2-ESTI} further entails that
\begin{align*}
\displaystyle \varepsilon ^{3/2}\|V _{xx}^{\varepsilon}\|_{L _{T}^{\infty}L ^{2}}^{2} \leq c(v _{\ast},T).
\end{align*}
It now remains to derive the estimate for $ \Phi _{xx}^{\varepsilon} $. Multiplying the first equation in \eqref{high-order-eq} by $ \Phi _{xx}^{\varepsilon} $, followed by an integration over $ \mathcal{I} $, we get
\begin{align}\label{esti-fida-xx-single}
\displaystyle \frac{1}{2}\frac{\mathrm{d}}{\mathrm{d}t}\int _{\mathcal{I}}\vert \Phi _{xx}^{\varepsilon}\vert^{2} \mathrm{d}x &= \int _{\mathcal{I}}\Phi _{tt}^{\varepsilon}\Phi _{xx}^{\varepsilon}\mathrm{d}x +\varepsilon ^{1/2}\int _{\mathcal{I}} \Phi _{xt}^{\varepsilon}V_{x}^{\varepsilon}\Phi _{xx}^{\varepsilon} \mathrm{d}x + \int _{\mathcal{I}} \varepsilon ^{1/2}\Phi _{x}^{\varepsilon}V _{xt}^{\varepsilon}\Phi _{xx}^{\varepsilon} \mathrm{d}x  + \int _{\mathcal{I}} \Phi _{x}^{\varepsilon}V_{xt}^{A}\Phi _{xx}^{\varepsilon} \mathrm{d}x
 \nonumber \\
 &~\displaystyle \quad+  \int _{\mathcal{I}}  V _{x}^{\varepsilon}\Phi_{xt}^{A}\Phi _{xx}^{\varepsilon} \mathrm{d}x-\int _{\mathcal{I}}  \varepsilon ^{-1/2}\partial _{t}\mathcal{R}_{1}^{\varepsilon}\Phi _{xx}^{\varepsilon} \mathrm{d}x+ \int _{\mathcal{I}} \Phi _{xt}^{\varepsilon}V_{x}^{A}\Phi _{xx}^{\varepsilon} \mathrm{d}x
  \nonumber \\
  &~\displaystyle \quad+ \int _{\mathcal{I}}  V _{xt}^{\varepsilon}(\Phi_{x}^{A}+M) \Phi _{xx}^{\varepsilon} \mathrm{d}x:= \int _{\mathcal{I}}\Phi _{tt}^{\varepsilon}\Phi _{xx}^{\varepsilon}\mathrm{d}x +\sum _{i=1}^{7}\hat{\mathcal{H}}_{i},
\end{align}
where, thanks to integration by parts, we have
\begin{align}\label{fid-tt}
\displaystyle  \int _{\mathcal{I}}\Phi _{tt}^{\varepsilon}\Phi _{xx}^{\varepsilon}\mathrm{d}x= \frac{\mathrm{d}}{\mathrm{d}t} \int _{\mathcal{I}}\Phi _{t}^{\varepsilon}\Phi _{xx}^{\varepsilon}\mathrm{d}x -\int _{\mathcal{I}}\Phi _{t}^{\varepsilon}\Phi _{xxt}^{\varepsilon}= \frac{\mathrm{d}}{\mathrm{d}t} \int _{\mathcal{I}}\Phi _{t}^{\varepsilon}\Phi _{xx}^{\varepsilon}\mathrm{d}x+\int _{\mathcal{I}}\vert \Phi _{xt}^{\varepsilon}\vert ^{2}\mathrm{d}x .
\end{align}
For $ \hat{\mathcal{ H}}_{i}~(1 \leq i \leq 7) $, we get by the Cauchy-Schwarz inequality that
\begin{align}
\displaystyle  \begin{cases}
	\hat{\mathcal{H}}_{1} \leq c _{0}(\|\Phi _{xt}^{\varepsilon}\|_{L ^{2}}^{2}+\varepsilon \|V _{x}^{\varepsilon}\|_{L ^{\infty}}^{2}\|\Phi _{xx}^{\varepsilon}\|_{L ^{2}}^{2}),\ \ \hat{\mathcal{H}}_{2} \leq c _{0}( \varepsilon \|V _{xt}^{\varepsilon}\|_{L ^{2}}^{2} + \|\Phi _{x}^{\varepsilon}\|_{L ^{\infty}}^{2}\|\Phi _{xx}^{\varepsilon}\|_{L ^{2}}^{2}), \\[2pt]
	\displaystyle \hat{\mathcal{H}}_{3} \leq c _{0}( \|V _{xt}^{A}\|_{L ^{\infty}}^{2}\|\Phi _{x}^{\varepsilon}\|_{L ^{2}}^{2}+\|\Phi _{xx}^{\varepsilon}\|_{L ^{2}}^{2}), \ \ \hat{\mathcal{H}}_{4} \leq c _{0}( \|\Phi _{xt}^{A}\|_{L ^{\infty}}^{2}\|V _{x}^{\varepsilon}\|_{L ^{2}}^{2}+ \|\Phi _{xx}^{\varepsilon}\|_{L ^{2}}^{2})
	 \nonumber \\[2pt]
	 \displaystyle \hat{\mathcal{H}}_{5} \leq c _{0}( \|\Phi _{xx}^{\varepsilon}\|_{L ^{2}}^{2}+ \varepsilon ^{-1}\|\partial _{t}\mathcal{R}_{1}^{\varepsilon}\|_{L ^{2}}^{2}),\ \ \hat{\mathcal{H}}_{6} \leq  \|V _{x}^{A}\|_{L ^{\infty}}\left( \|\Phi _{xt}^{\varepsilon}\|_{L ^{2}}^{2}+\|\Phi _{xx}^{\varepsilon}\|_{L ^{2}}^{2} \right)
	  \nonumber \\[2pt]
	  \displaystyle   \hat{\mathcal{H}}_{7} \leq c _{0}( \varepsilon ^{1/2}\|\Phi _{x}^{A}+M\|_{L ^{\infty}}^{2}\|V _{xt}^{\varepsilon}\|_{L ^{2}}^{2}+\varepsilon ^{-1/2}\|\Phi _{xx}^{\varepsilon}\|_{L ^{2}}^{2}).
 \end{cases}
\end{align}
Inserting \eqref{fid-tt} and estimates on $ \hat{\mathcal{H}}_{i}\,(1 \leq i \leq 7) $ into \eqref{esti-fida-xx-single}, followed by an integration in $ t $, we get
\begin{align*}
&\displaystyle  \|\Phi _{xx}^{\varepsilon}(\cdot,t)\|_{L ^{2}}^{2}
 \nonumber \\
 &\displaystyle~ \leq  \int _{\mathcal{I}}\Phi _{t}^{\varepsilon}\Phi _{xx}^{\varepsilon}\mathrm{d}x+ c(v _{\ast},T) \varepsilon ^{-1/2} +c(v _{\ast},T) \varepsilon ^{-1/2}\int _{0}^{t}\left( \|\Phi _{x \tau}^{\varepsilon}\|_{L ^{2}}^{2}+\|\Phi _{xx}^{\varepsilon}\|_{L ^{2}}^{2} \right)\mathrm{d}\tau
 \nonumber \\
 & \displaystyle~ \quad+c(v _{\ast},T) \int _{0}^{t}\Big( \varepsilon ^{1/2}\|V _{x \tau}^{2}\|_{L ^{2}}^{2}+\varepsilon ^{-1/2}\|\Phi _{xx}^{\varepsilon}\|_{L ^{2}}^{2} \Big)\mathrm{d}\tau +c(v _{\ast},T) \int _{0}^{t}\left( \varepsilon \|V _{x}^{\varepsilon}\|_{L ^{\infty}}^{2}+\|\Phi _{x}^{\varepsilon}\|_{L ^{\infty}}^{2} \right) \|\Phi _{xx}^{\varepsilon}\|_{L ^{2}}^{2}\mathrm{d}\tau
 \nonumber \\
 & \displaystyle~ \leq \frac{1}{2}\|\Phi _{xx}^{\varepsilon}(\cdot,t)\|_{L ^{2}}^{2}+c(v _{\ast},T) \|\Phi _{t}^{\varepsilon}\|_{L ^{2}}^{2}+c(v _{\ast},T) \varepsilon ^{-1}+c(v _{\ast},T) \int _{0}^{t}\left( \varepsilon \|V _{x}^{\varepsilon}\|_{L ^{\infty}}^{2}+\|\Phi _{x}^{\varepsilon}\|_{L ^{\infty}}^{2} \right) \|\Phi _{xx}^{\varepsilon}\|_{L ^{2}}^{2}\mathrm{d}\tau
  \nonumber \\
  & \displaystyle~ \leq \frac{1}{2}\|\Phi _{xx}^{\varepsilon}(\cdot,t)\|_{L ^{2}}^{2}+c(v _{\ast},T) \varepsilon ^{-1}+c(v _{\ast},T) \int _{0}^{t}\left( \varepsilon \|V _{x}^{\varepsilon}\|_{L ^{\infty}}^{2}+\|\Phi _{x}^{\varepsilon}\|_{L ^{\infty}}^{2} \right) \|\Phi _{xx}^{\varepsilon}\|_{L ^{2}}^{2}\mathrm{d}\tau ,
\end{align*}
where we have used \eqref{esti-approx-fida-2}, \eqref{con-L2-PERT}, \eqref{con-fida-v-x-lem}, \eqref{fida-xx-multi} and Lemma \ref{lem-R-1-VE}. That is,
\begin{align*}
\displaystyle \|\Phi _{xx}^{\varepsilon}(\cdot,t)\|_{L ^{2}}^{2}   \leq c(v _{\ast},T) \varepsilon ^{-1}+c(v _{\ast},T)\int _{0}^{t}\left( \varepsilon \|V _{x}^{\varepsilon}\|_{L ^{\infty}}^{2}+\|\Phi _{x}^{\varepsilon}\|_{L ^{\infty}}^{2} \right) \|\Phi _{xx}^{\varepsilon}\|_{L ^{2}}^{2}\mathrm{d}\tau,
\end{align*}
which along with \eqref{fida-xx-multi} and the Gronwall inequality gives
\begin{align*}
\displaystyle  \|\Phi _{xx}^{\varepsilon}(\cdot,t)\|_{L ^{2}}^{2} \leq c(v _{\ast},T) \varepsilon ^{-1}, \ \ t \in (0,T]
\end{align*}
for some constant $ c(v _{\ast},T)>0 $ independent of $ \varepsilon $ and $ \delta $, and thus ends the proof of Lemma \ref{lem-FIDA-V-XX}.
\end{proof}
With Lemma \ref{lem-FIDA-V-XX}, we can get an improved estimate for $ \Phi _{x}^{\varepsilon} $.
\begin{corollary}\label{cor-fida-x-modified}
Assume the conditions in Lemmas \ref{lem-L2-pertur}--\ref{lem-FIDA-V-XX} hold. Let $ (\Phi ^{\varepsilon},V ^{\varepsilon}) $ be the solution of the problem \eqref{eq-perturbation} on $ [0,T] $ satisfying \eqref{apriori-assumption}. Then we have
\begin{align}\label{con-fida-x-modified}
\displaystyle  \|\Phi _{x}^{\varepsilon}(\cdot,t)\|_{L ^{2}}^{2} +\int _{0}^{t}\int _{\mathcal{I}}\vert \Phi _{\tau}^{\varepsilon}\vert ^{2} \mathrm{d}x \mathrm{d}\tau\leq c(v _{\ast},T), \ \ t \in (0,T],
\end{align}
where $ c(v _{\ast},T)>0 $ is a constant independent of $ \varepsilon $ and $ \delta $.

\end{corollary}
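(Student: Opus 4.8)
The plan is to revisit the energy identity used in the proof of Lemma~\ref{lem-V-x-per}, namely the one obtained by testing the first equation of \eqref{eq-perturbation} against $\Phi_t^\varepsilon$,
\[
\tfrac12\tfrac{d}{dt}\|\Phi_x^\varepsilon\|_{L^2}^2+\|\Phi_t^\varepsilon\|_{L^2}^2=\mathcal{Q}_1+\mathcal{Q}_2+\mathcal{Q}_3+\mathcal{Q}_4,
\]
with $\mathcal{Q}_1=-\int_\mathcal{I}\Phi_t^\varepsilon\Phi_x^\varepsilon V_x^A\,dx$ and $\mathcal{Q}_3=-\int_\mathcal{I}V_x^\varepsilon\Phi_t^\varepsilon(\Phi_x^A+M)\,dx$, and to estimate $\mathcal{Q}_1,\mathcal{Q}_3$ more sharply than before, now that Lemma~\ref{lem-FIDA-V-XX} supplies the bound $\int_0^t\|\Phi_{x\tau}^\varepsilon\|_{L^2}^2\,d\tau\le c(v_*,T)\varepsilon^{-1/2}$. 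The terms $\mathcal{Q}_2$ and $\mathcal{Q}_4$ will be handled exactly as in Lemma~\ref{lem-V-x-per}: using Lemma~\ref{lem-R-1-VE}, Corollary~\ref{cor-in-lower-order}, and $\sup_t\|V_x^\varepsilon\|_{L^2}^2\le c(v_*,T)\varepsilon^{-1/2}$ (Lemma~\ref{lem-V-x-per}), one gets, after integrating in time, $\int_0^t(\mathcal{Q}_2+\mathcal{Q}_4)\,d\tau\le\frac14\int_0^t\|\Phi_\tau^\varepsilon\|_{L^2}^2\,d\tau+c(v_*,T)\varepsilon^{1/2}$.

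The improvement for $\mathcal{Q}_1$ and $\mathcal{Q}_3$ rests on the fact that both $\Phi_t^\varepsilon$ and $V^\varepsilon$ vanish on $\partial\mathcal{I}$ (differentiate $\Phi^\varepsilon|_{\partial\mathcal{I}}=0$ in $t$ for the former), so that the Hardy inequality~\eqref{hardy} is in force. For $\mathcal{Q}_1$, I will factor $\Phi_t^\varepsilon=x(1-x)\cdot\Phi_t^\varepsilon/(x(1-x))$ and use that $\|x(1-x)V_x^A\|_{L_T^\infty L^\infty}\le c(v_*,T)$: the only dangerous, $O(\varepsilon^{-1/2})$, contributions to $V_x^A$ are $\varepsilon^{-1/2}(v_z^{B,0}+v_\xi^{b,0})$, and $\frac{x(1-x)}{\sqrt\varepsilon}v_z^{B,0}$ is controlled by $\|\langle z\rangle v_z^{B,0}\|_{L_z^\infty}$ exactly as in \eqref{five-l2}. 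This gives $\mathcal{Q}_1\le c(v_*,T)\|\Phi_{xt}^\varepsilon\|_{L^2}\|\Phi_x^\varepsilon\|_{L^2}$. For $\mathcal{Q}_3$, integrating by parts in $x$ (the boundary contributions vanish since $V^\varepsilon|_{\partial\mathcal{I}}=0$) gives $\mathcal{Q}_3=\int_\mathcal{I}V^\varepsilon\Phi_{xt}^\varepsilon(\Phi_x^A+M)\,dx+\int_\mathcal{I}V^\varepsilon\Phi_t^\varepsilon\Phi_{xx}^A\,dx$; the first integral is $\le c(v_*,T)\|V^\varepsilon\|_{L^2}\|\Phi_{xt}^\varepsilon\|_{L^2}$ using $\|\Phi_x^A+M\|_{L_T^\infty L^\infty}\le c(v_*,T)$ from \eqref{esti-approx-fida-2a}, while for the second I again apply Hardy to $\Phi_t^\varepsilon$ and use $\|x(1-x)\Phi_{xx}^A\|_{L_T^\infty L^\infty}\le c(v_*,T)$ (the part $\varepsilon^{-1/2}(\varphi_{zz}^{B,1}+\varphi_{\xi\xi}^{b,1})$ being tamed by $\|\langle z\rangle\varphi_{zz}^{B,1}\|_{L_z^\infty}$, cf. \eqref{H-7}), obtaining $\int_\mathcal{I}V^\varepsilon\Phi_t^\varepsilon\Phi_{xx}^A\,dx\le c(v_*,T)\|\Phi_{xt}^\varepsilon\|_{L^2}\|V^\varepsilon\|_{L^2}$. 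Altogether $\mathcal{Q}_1+\mathcal{Q}_3\le c(v_*,T)\|\Phi_{xt}^\varepsilon\|_{L^2}\bigl(\|\Phi_x^\varepsilon\|_{L^2}+\|V^\varepsilon\|_{L^2}\bigr)$.

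Integrating the identity over $[0,t]$ and recalling $\Phi_x^\varepsilon(\cdot,0)=0$, the contribution of $\mathcal{Q}_1+\mathcal{Q}_3$ is bounded, by the Cauchy--Schwarz inequality in time, by
\[
c(v_*,T)\Bigl(\int_0^t\|\Phi_{x\tau}^\varepsilon\|_{L^2}^2\,d\tau\Bigr)^{1/2}\Bigl(\int_0^t\bigl(\|\Phi_x^\varepsilon\|_{L^2}^2+\|V^\varepsilon\|_{L^2}^2\bigr)\,d\tau\Bigr)^{1/2}\le c(v_*,T)\,\varepsilon^{-1/4}\,\varepsilon^{1/4}=c(v_*,T),
\]
where I use $\int_0^t\|\Phi_{x\tau}^\varepsilon\|_{L^2}^2\,d\tau\le c(v_*,T)\varepsilon^{-1/2}$ from Lemma~\ref{lem-FIDA-V-XX} together with $\int_0^t(\|\Phi_x^\varepsilon\|_{L^2}^2+\|V^\varepsilon\|_{L^2}^2)\,d\tau\le c(v_*,T)\varepsilon^{1/2}$ from Lemma~\ref{lem-L2-pertur}. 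Combining with the bounds for $\mathcal{Q}_2,\mathcal{Q}_4$ and absorbing the $\frac14\int_0^t\|\Phi_\tau^\varepsilon\|_{L^2}^2\,d\tau$ term into the left-hand side yields \eqref{con-fida-x-modified} directly, with no Gronwall argument needed — the smallness $\int_0^t\|\Phi_x^\varepsilon\|_{L^2}^2\le c\varepsilon^{1/2}$ from the lower-order estimate exactly compensates the mild growth $\int_0^t\|\Phi_{x\tau}^\varepsilon\|_{L^2}^2\le c\varepsilon^{-1/2}$. The delicate point, and the reason the factor $\varepsilon^{-1/2}$ could not be removed already at the level of Lemma~\ref{lem-V-x-per}, is the handling of $\int_\mathcal{I}V^\varepsilon\Phi_t^\varepsilon\Phi_{xx}^A\,dx$ in $\mathcal{Q}_3$: one must transfer the $O(\varepsilon^{-1/2})$ boundary-layer curvature $\Phi_{xx}^A$ onto $\Phi_t^\varepsilon$ via the Hardy inequality so that it multiplies $\|V^\varepsilon\|_{L^2}=O(\varepsilon^{1/4})$ rather than $\|V_x^\varepsilon\|_{L^2}$, the latter being only $O(\varepsilon^{-1/4})$ and hence too large to close the estimate at order $O(1)$.
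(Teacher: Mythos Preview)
Your proof is correct and follows essentially the same route as the paper: the key step is exactly the Hardy-inequality treatment of $\mathcal{Q}_1$ and $\mathcal{Q}_3$ you describe (the paper denotes these $\hat{\mathcal{Q}}_1,\hat{\mathcal{Q}}_2$), exploiting $\Phi_t^\varepsilon|_{\partial\mathcal{I}}=0$ to trade the $O(\varepsilon^{-1/2})$ boundary-layer factors in $V_x^A$ and $\Phi_{xx}^A$ for $\|\Phi_{xt}^\varepsilon\|_{L^2}$, and then using the new bound $\int_0^t\|\Phi_{x\tau}^\varepsilon\|_{L^2}^2\,d\tau\le c\varepsilon^{-1/2}$ from Lemma~\ref{lem-FIDA-V-XX}. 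The only difference is organizational: the paper applies Young's inequality with $\varepsilon^{\pm1/2}$ weights to the product $\|\Phi_{xt}^\varepsilon\|_{L^2}\|\Phi_x^\varepsilon\|_{L^2}$, integrates, and then closes with Gronwall on the residual $\varepsilon\|V_x^\varepsilon\|_{L^\infty}^2\|\Phi_x^\varepsilon\|_{L^2}^2$ term, whereas you keep the products intact, apply Cauchy--Schwarz in time, and rearrange $\mathcal{Q}_2$ as $\varepsilon\|\Phi_x^\varepsilon\|_{L^\infty}^2\|V_x^\varepsilon\|_{L^2}^2$ so as to avoid Gronwall altogether; the underlying arithmetic $\varepsilon^{-1/4}\cdot\varepsilon^{1/4}=1$ is the same in both.
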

\begin{proof}
  Recalling \eqref{fida-x-diff-id}, \eqref{Q-2-esti} and \eqref{Q-4-esti}, we have
  \begin{align}\label{fida-x-diff-modified}
&\displaystyle \frac{1}{2} \frac{\mathrm{d}}{\mathrm{d}t}\int _{\mathcal{I}}\vert \Phi _{x}^{\varepsilon}\vert ^{2}\mathrm{d}x+ \int _{\mathcal{I}}\vert \Phi _{t}^{\varepsilon}\vert ^{2} \mathrm{d}x
 \nonumber \\
&~\displaystyle =-\int _{\mathcal{I}} \Phi _{t}^{\varepsilon}\Phi _{x}^{\varepsilon}V_{x}^{A}   \mathrm{d}x
- \int _{\mathcal{I}} V_{x}^{\varepsilon}\Phi _{t}^{\varepsilon}(\Phi_{x}^{A}+M) \mathrm{d}x -\int _{\mathcal{I}} \varepsilon ^{1/2}  \Phi _{t}^{\varepsilon}\Phi _{x}^{\varepsilon}V _{x}^{\varepsilon}\mathrm{d}x+\int _{\mathcal{I}} \varepsilon ^{-1/2}\mathcal{R}_{1}^{\varepsilon}\Phi _{t}^{\varepsilon} \mathrm{d}x
  \nonumber \\
  &~ \displaystyle \leq -\int _{\mathcal{I}} \Phi _{t}^{\varepsilon}\Phi _{x}^{\varepsilon}V_{x}^{A}   \mathrm{d}x - \int _{\mathcal{I}} V_{x}^{\varepsilon}\Phi _{t}^{\varepsilon}(\Phi_{x}^{A}+M) \mathrm{d}x +  \frac{1}{4}\int _{\mathcal{I}} \left\vert \Phi _{t}^{\varepsilon}\right\vert ^{2} \mathrm{d}x \nonumber \\
   & \displaystyle~ \quad+c(v _{\ast},T)\varepsilon \|V _{x}^{\varepsilon}\| _{L ^{\infty}} ^{2} \|\Phi _{x}^{\varepsilon}\|_{L ^{2}}^{2}+c(v _{\ast},T) \varepsilon ^{1/2}
   \nonumber \\
   &~\displaystyle =: \hat{\mathcal{Q}}_{1}+\hat{\mathcal{Q}}_{2}+  \frac{1}{4}\int _{\mathcal{I}} \left\vert \Phi _{t}^{\varepsilon}\right\vert ^{2} \mathrm{d}x+c(v _{\ast},T) \varepsilon \|V _{x}^{\varepsilon}\| _{L ^{\infty}} ^{2} \|\Phi _{x}^{\varepsilon}\|_{L ^{2}}^{2}+c(v _{\ast},T) \varepsilon ^{1/2},
\end{align}
where, thanks to \eqref{con-vfi-v-I-0-regula}, \eqref{vfi-I-1-V-i-1}, \eqref{approximate-va}, \eqref{l-infty-v-bd-stat}, \eqref{some-l-infty-layer-for-v-b-0-stab}, \eqref{Sobolev-z-xi}, integration by parts and the Hardy inequality \eqref{hardy}, $ \hat{Q}_{1} $ and $ \hat{\mathcal{Q}}_{2} $ enjoy the following estimates:
\begin{align*}
\displaystyle \hat{\mathcal{Q}}_{1}&=  -\int _{\mathcal{I}} \Phi _{t}^{\varepsilon}\Phi _{x}^{\varepsilon}\varepsilon ^{-1/2}\left( v _{z}^{B,0}+v _{\xi}^{b,0}\right)   \mathrm{d}x - \int _{\mathcal{I}}  \Phi _{t}^{\varepsilon}\Phi _{x}^{\varepsilon}\left( v _{x}^{I,0}+v _{z}^{B,1}+v _{\xi}^{b,1}+ \varepsilon ^{1/2} v _{x}^{I,1}+\partial _{x} b _{v}^{\varepsilon} \right) \mathrm{d}x
 \nonumber \\
   &\displaystyle \leq \left\|\frac{\Phi _{t}^{\varepsilon}}{x(1-x)}\right\|_{L ^{2}}\|\Phi _{x}^{\varepsilon}\|_{L ^{2}}\left( \left\|\frac{x(1-x)}{\varepsilon ^{1/2}}v _{z}^{B,0}\right\|_{L ^{\infty}}+ \left\|\frac{x(1-x)}{\varepsilon ^{1/2}}v _{\xi}^{b,0}\right\|_{L ^{\infty}} \right)
  \nonumber \\
  &\displaystyle \quad+ \|\Phi _{t}^{\varepsilon}\|_{L ^{2}}\|\Phi _{x}^{\varepsilon}\|_{L ^{2}} \left( \|v _{x}^{I,0}\|_{L ^{\infty}}+\|v _{z}^{B,1}\|_{L _{z}^{\infty}}+\|v _{\xi}^{b,1}\|_{L _{\xi}^{\infty}}+\varepsilon ^{1/2}\|v _{x}^{I,1}\|_{L ^{\infty}}+\|\partial _{x}b _{v}^{\varepsilon}\|_{L ^{\infty}} \right)
   \nonumber \\
   &\displaystyle \leq c _{0} \|\Phi _{xt}^{\varepsilon}\|_{L ^{2}} \|\Phi _{x}^{\varepsilon}\|_{L ^{2}}\left( \|\langle z \rangle v _{z}^{B,0}\| _{L _{z}^{\infty}}+\|\langle \xi \rangle v _{\xi}^{b,0}\|_{L _{\xi}^{\infty}} \right)+c(v _{\ast},T) \|\Phi _{x}^{\varepsilon}\|_{L ^{2}}\|\Phi _{t}^{\varepsilon}\|_{L ^{2}}
    \nonumber \\
    & \displaystyle \leq \frac{1}{8} \|\Phi _{t}^{\varepsilon}\|_{L ^{2}}^{2}+c(v _{\ast},T) \varepsilon ^{1/2} \|\Phi _{xt}^{\varepsilon}\|_{L ^{2}}^{2}+c(v _{\ast},T) (1+\varepsilon ^{-1/2})\|\Phi _{x}^{\varepsilon}\|_{L ^{2}}^{2},
\end{align*}
\begin{align*}
\displaystyle  \hat{\mathcal{Q}}_{2}&=  \int _{\mathcal{I}}V ^{\varepsilon}\Phi _{t} ^{\varepsilon}\Phi _{xx}^{A}\mathrm{d}x +\int _{\mathcal{I}} (\Phi_{x}^{A}+M)V  ^{\varepsilon}\Phi _{xt}^{\varepsilon} \mathrm{d}x
 \nonumber \\
 &\displaystyle \leq \varepsilon ^{1/2}\int _{\mathcal{I}}V  ^{\varepsilon}\Phi_{t} ^{\varepsilon}(\varphi _{xx}^{B,1}+\varphi _{xx}^{b,1})\mathrm{d}x + \int _{\mathcal{I}}V  ^{\varepsilon}\Phi_{t} ^{\varepsilon}\left( \varphi _{xx}^{I,0}+\varepsilon ^{1/2}\varphi _{xx}^{I,1}+\varepsilon \varphi _{xx}^{B,2}+\varepsilon \varphi _{xx}^{b,2}+\partial _{x}^{2} b _{\varphi}^{\varepsilon}\right) \mathrm{d}x
  \nonumber \\
  &\displaystyle \quad + \|\Phi_{x}^{A}+M\|_{L ^{\infty}} \|V  ^{\varepsilon}\|_{L ^{2}}\|\Phi _{xt}^{\varepsilon}\|_{L ^{2}}
  \nonumber \\
  &\displaystyle \leq \|V  ^{\varepsilon}\|_{L ^{2}} \left\|\frac{\Phi_{t} ^{\varepsilon}}{x(1-x)}\right\|_{L ^{2}}\left( \left\|\frac{x(1-x)}{\varepsilon ^{1/2}}\varphi _{zz}^{B,1}\right\|_{L ^{\infty}}+\left\|\frac{x(1-x)}{\varepsilon ^{1/2}}\varphi _{\xi \xi}^{B,1}\right\|_{L ^{\infty}} \right)
  +c(v _{\ast},T) \|V  ^{\varepsilon}\|_{L ^{2}}\|\Phi _{xt}^{\varepsilon}\|_{L ^{2}}
   \nonumber \\
   &\displaystyle \quad+c  _{0} \|V  ^{\varepsilon}\|_{L ^{2}}\|\Phi _{t}^{\varepsilon}\|_{L ^{2}}\left( \|\varphi _{xx}^{I,0}\|_{L ^{\infty}}+\varepsilon ^{1/2}\|\varphi _{xx}^{I,1}\|_{L ^{\infty}}+\|\varphi _{zz}^{B,2}\|_{L _{z}^{\infty}}+\|\varphi _{\xi \xi}^{b,2}\|_{L _{\xi}^{\infty}}+\|\partial _{x}^{2}b _{\varphi}^{\varepsilon}\|_{L ^{\infty}} \right)
         \nonumber \\
     & \displaystyle \leq c _{0} \|V  ^{\varepsilon}\|_{L ^{2}}\|\Phi _{xt}^{\varepsilon}\|_{L ^{2}} \left( \|\langle z \rangle \varphi _{zz}^{B,1}\|_{L _{z}^{\infty}}+\|\langle \xi \rangle \varphi _{\xi \xi}^{b,1}\|_{L _{\xi}^{\infty}} \right)
           +c(v _{\ast},T) \|V  ^{\varepsilon}\|_{L ^{2}}\left( \|\Phi _{t}^{\varepsilon}\|_{L ^{2}}+\|\Phi _{xt}^{\varepsilon}\|_{L ^{2}}\right) \nonumber \\
      & \displaystyle
     \leq \frac{1}{8}\|\Phi _{t}^{\varepsilon}\|_{L ^{2}}^{2}+c(v _{\ast},T) \varepsilon ^{1/2} \|\Phi _{xt}^{\varepsilon}\|_{L ^{2}}^{2}+c(v _{\ast},T) (1+\varepsilon ^{-1/2})\|V ^{\varepsilon}\|_{L ^{2}}^{2}.
\end{align*}
Therefore, we update \eqref{fida-x-diff-modified} as
\begin{align}\label{fida-xx-modi-final}
&\displaystyle   \frac{1}{2} \frac{\mathrm{d}}{\mathrm{d}t}\int _{\mathcal{I}}\vert \Phi _{x}^{\varepsilon}\vert ^{2}\mathrm{d}x+ \frac{1}{2}\int _{\mathcal{I}}\vert \Phi _{t}^{\varepsilon}\vert ^{2} \mathrm{d}x
 \nonumber \\
 & \displaystyle~ \leq  c(v _{\ast},T) \varepsilon \|V _{x}^{\varepsilon}\| _{L ^{\infty}} ^{2} \|\Phi _{x}^{\varepsilon}\|_{L ^{2}}^{2}+c(v _{\ast},T) \varepsilon ^{1/2}+c(v _{\ast},T) \varepsilon ^{1/2} \|\Phi _{xt}^{\varepsilon}\|_{L ^{2}}^{2} \nonumber \\
  &~\displaystyle \quad+c(v _{\ast},T)\varepsilon ^{-1/2}(\|\Phi _{x}^{\varepsilon}\|_{L ^{2}}^{2}+\|V ^{\varepsilon}\|_{L ^{2}}^{2}).
\end{align}
Integrating \eqref{fida-xx-modi-final} with respect to $ t $ gives
\begin{align*}
\displaystyle  \int _{\mathcal{I}}\vert \Phi _{x}^{\varepsilon}\vert ^{2}(\cdot,t)\mathrm{d}x+\int _{0}^{t}\vert \Phi _{\tau}^{\varepsilon}\vert ^{2}\mathrm{d}x \mathrm{d}\tau \leq c(v _{\ast},T)+ c(v _{\ast},T) \int _{0}^{t} \varepsilon \|V _{x}^{\varepsilon}\| _{L ^{\infty}} ^{2} \|\Phi _{x}^{\varepsilon}\|_{L ^{2}}^{2}\mathrm{d}\tau, \ \ t \in [0,T]
\end{align*}
for some constant $ c(v _{\ast},T) >0$ independent of $ \varepsilon $ and $ \delta $, where we have used \eqref{con-L2-PERT}, \eqref{fida-t-single} and $ 0< \varepsilon<1 $. This alongside the Gronwall inequality and \eqref{v-l-infty} immediately implies \eqref{con-fida-x-modified}, and the proof is complete.
\end{proof}
\begin{remark}
In view of \eqref{con-L2-PERT} and \eqref{con-fida-x-modified}, the a priori assumption  \eqref{apriori-assumption} is verified. Indeed, from \eqref{con-L2-PERT}, \eqref{con-fida-x-modified} and the Sobolev inequality \eqref{Sobolev-infty}, we get for $ 0< \varepsilon<1 $ that
\begin{align}\label{FIDA-X-INFTY-INCOR}
\displaystyle \sup _{t \in [0,T]} \|\Phi ^{\varepsilon}(\cdot, t)\|_{L ^{\infty}}^{2} &\leq c(v _{\ast},T) \sup _{t \in [0,T]}\left( \|\Phi ^{\varepsilon}(\cdot,t)\|_{L ^{2}}^{2}+\|\Phi _{x}^{\varepsilon}(\cdot,t)\|_{L ^{2}}\|\Phi ^{\varepsilon}(\cdot,t)\|_{L ^{2}} \right)
 \nonumber \\
 & \leq c(v _{\ast},T) \left( \varepsilon ^{1/2} +\varepsilon ^{1/4}\right)  \leq c(v _{\ast},T)  \varepsilon ^{1/4},
\end{align}
where the constant $ c(v _{\ast},T)>0 $ is independent of $ \varepsilon $ and $ \delta $. Furthermore, if we take $ \delta= \frac{\delta _{1}}{2} $ with $ \delta _{1} $ as in Lemma \ref{lem-L2-pertur}, then we have $ \sup _{t \in [0,T]}\|\Phi ^{\varepsilon}(\cdot,t)\|_{L ^{\infty}} \leq c(v _{\ast},T) \varepsilon ^{1/8}< \frac{\delta}{2} $ provided
$ c(v _{\ast},T) \varepsilon ^{1/8} \leq \delta _{1}/4 $. Hence, all the estimates in Lemmas \ref{lem-L2-pertur}--\ref{lem-FIDA-V-XX} and Corollaries \ref{cor-in-lower-order} and \ref{cor-fida-x-modified} exactly hold true with the constant $ c(v _{\ast},T) $ independent of $ \varepsilon $.
\end{remark}

% subsection higher_order_estimates (end)
\subsection{Proof of Proposition \ref{prop-pertur}} % (fold)
\label{sub:proof_of_propo}

Thanks to the analysis and results in the preceding subsection, we know that for any $ T >0$ such that $ C _{1}(T , v _{\ast})v _{\ast}< 1/16 $ with $ K_{1}(T,v _{\ast}) $ presented in \eqref{five-l2} and \eqref{H-6}, the solution $ (\Phi ^{\varepsilon},V ^{\varepsilon}) $ satisfies
for any $ t \in [0,T] $,
\begin{gather}\label{esti1-proof-pro}
\displaystyle  \|\Phi^{\varepsilon}(\cdot,t)\|_{L ^{2}}^{2}+ \varepsilon\|\Phi _{x}^{\varepsilon}(\cdot,t)\|_{L ^{2}}^{2}+\varepsilon ^{3/2}\|\Phi _{xx}^{\varepsilon}\|_{L ^{2}}^{2}+\varepsilon ^{\ell}\|\partial _{x}^{\ell}V ^{\varepsilon}(\cdot,t)\|_{L ^{2}}^{2} \leq c(v _{\ast},T) \varepsilon ^{1/2}
\end{gather}
and
\begin{eqnarray}\label{esti2-proof-pro}
\begin{aligned}
&\displaystyle \int _{0}^{t}\left( \|\Phi _{x}^{\varepsilon}\|_{L ^{2}}^{2}+ \varepsilon ^{1/2}\|\Phi _{\tau} ^{\varepsilon}\|_{L ^{2}}^{2}+\varepsilon \|\Phi _{x \tau}^{\varepsilon}\|_{L ^{2}}^{2}+\varepsilon \|V _{x}^{2}\|_{L ^{2}}^{2}
+\|V _{\tau}^{\varepsilon}\|_{L ^{2}}^{2}+\varepsilon ^{5/2}\|V _{x \tau}\|_{L ^{2}}^{2} \right) \mathrm{d}\tau\\
&\ \ \ \ \ \  \leq c(v _{\ast},T) \varepsilon ^{1/2},
\end{aligned}
\end{eqnarray}
where $ \ell=0,1,2 $, $ c(v _{\ast},T) >0$ is a constant depending on $ T $ but independent of $ \varepsilon $. In particular, since $ K _{1}(T,v _{\ast}) $ is increasing in $ T $,
if $ v _{\ast} $ is fixed, then there exists an increasing function $ \phi(\cdot,v _{\ast})=K _{1} ^{-1}(\cdot, v _{\ast}) $ such that $ K _{1}(T, v _{\ast})v _{\ast}\leq 1/16 $ provided $ T \leq \phi(\frac{1}{16 v _{\ast}},v _{\ast})=:T _{0} $. Then the estimates \eqref{esti1-proof-pro} and \eqref{esti2-proof-pro} hold for any $t \in [0,T _{0}]  $. This along with the local existence result and the continuation argument implies that the problem \eqref{eq-perturbation} admits a unique solution $ (\Phi ^{\varepsilon}, V ^{\varepsilon}) \in L ^{\infty}(0,T _{0};H ^{2}\times H ^{2}) $ satisfying \eqref{esti1-proof-pro} and \eqref{esti2-proof-pro}. In what follows, we shall show that $ T _{0} \rightarrow \infty $ as $ v _{\ast}\rightarrow 0 $. To achieve this, without loss of generality, we first assume that $ v _{\ast} \leq 1 $. Then we may strengthen the condition $ K_{1}(T, v _{\ast})v _{\ast}\leq 1/16 $ for \eqref{five-l2} and \eqref{H-6} as $ K _{1}(T, 1)v _{\ast}\leq 1/16 $. Here we write $ K _{1}(T):=K _{1}(T,1) $ for simplicity. Clearly, since $ K _{1}(T, v _{\ast})$ is increasing in $ v _{\ast} $, we have $ K _{1}(T, v _{\ast})v _{\ast}\leq 1/16 $ as long as $ K_{1}(T)v _{\ast}\leq 1/16 $. Therefore we know that the estimates \eqref{esti1-proof-pro} and \eqref{esti2-proof-pro} hold for any $ t \leq K _{1}^{-1}(\frac{1}{16 v _{\ast}})=:T _{0} $ with $ T _{0}\rightarrow \infty $ as $ v _{\ast} \rightarrow 0 $ due to the increasing monotonicity of $ K _{1}^{-1}(\cdot) $. This completes the proof of Proposition \ref{prop-pertur}. \hfill $ \square $

\subsection{Proof of Theorem \ref{thm-stabi-refor}} % (fold)
\label{sub:proof_of_theorem_ref}
From Proposition \ref{prop-pertur}, we know that for any $ v _{\ast} >0$, there exist constants $ T _{0}>0 $ and $ \varepsilon _{0}>0 $ such that for any $ \varepsilon \in (0,\varepsilon _{0}) $, the problem \eqref{refor-eq}--\eqref{BD-POSITIVE-VE} admits a unique solution $ (\varphi ^{\varepsilon},v ^{\varepsilon})\in L ^{\infty}(0,T _{0} ;H ^{2}\times H ^{2}) $. To finish the proof of Theorem \ref{thm-stabi-refor}, now it remains only to show the estimates in \eqref{conver-thm1}. Recalling \eqref{ansaz} and Lemma \ref{lemf-vfi-V-I-1}, it suffices to show the estimates of $ \mathcal{E}_{1}^{\varepsilon} $ and $ \mathcal{E} _{2}^{\varepsilon} $ stated in \eqref{R-1-R-2}. Thanks to \eqref{some-l-infty-layer-for-stab}, \eqref{l-infty-v-bd-stat}, \eqref{b-v-ve-esti}, \eqref{b-v-ve-l-2-esti}, Lemma \ref{lemf-vfi-V-I-1} and the fact that $ \partial _{x} b _{\varphi} $ is independent of $ x $, there holds that
 \begin{gather}
 \displaystyle
 \displaystyle \|\partial _{z}^{l}\varphi ^{B,2}\|_{ L _{T}^{\infty}L _{z}^{\infty}}+\|\partial _{\xi}^{l}\varphi ^{b,2}\|_{ L _{T}^{\infty}L _{\xi}^{\infty}}+
  \|\varphi _{x}^{I,1}\|_{L _{T}^{\infty} L ^{\infty}} +\varepsilon ^{-1}\|\partial _{x}^{l}b _{\varphi} ^{\varepsilon}\|_{L _{T}^{\infty}L ^{\infty}} \leq c(v _{\ast},T),\label{vfi-B-l2-proofthm}\\
  \displaystyle \| v ^{B,1}\|_{ L _{T}^{\infty}L _{z}^{\infty}}+\|v ^{b,1}\|_{ L _{T}^{\infty}L _{\xi}^{\infty}}+
  \|v^{I,1}\|_{L _{T}^{\infty} L ^{\infty}}+ \varepsilon ^{-1}\|b _{v} ^{\varepsilon}\|_{L _{T}^{\infty}L ^{\infty}} \leq c(v _{\ast},T),
 \end{gather}
 where $ l=0,1 $, the constant $ c(v _{\ast},T)>0 $ is independent of $ \varepsilon $. Furthermore, from \eqref{FIDA-X-INFTY-INCOR}, \eqref{Sobolev-infty}, \eqref{Sobolev-modified}, Lemmas \ref{lem-L2-pertur}--\ref{lem-FIDA-V-XX} and Corollary \ref{cor-fida-x-modified}, we get
\begin{gather}
\displaystyle  \|\Phi ^{\varepsilon}\| _{L _{T} ^{\infty}L ^{\infty}} \leq c(v _{\ast},T)\left(\|\Phi ^{\varepsilon}\|_{L _{T}^{\infty}L ^{2}}+\|\Phi ^{\varepsilon}\|_{L _{T}^{\infty}L ^{2}}^{1/2}\|\Phi _{x}^{\varepsilon}\|_{L _{T}^{\infty}L ^{2}}^{1/2} \right)\leq c(v _{\ast},T) \varepsilon ^{1/8},\\
\displaystyle \|\Phi _{x}^{\varepsilon}\| _{L _{T}^{\infty}L ^{\infty}} \leq c(v _{\ast},T)\left(\|\Phi _{x}^{\varepsilon}\|_{L _{T}^{\infty}L ^{2}} +\|\Phi _{x}^{\varepsilon}\|_{L _{T}^{\infty}L ^{2}}^{1/2}\|\Phi _{xx}^{\varepsilon}\|_{L _{T}^{\infty}L ^{2}}^{1/2} \right)  \leq c(v _{\ast},T) \varepsilon ^{- 1/4}
\end{gather}
and
\begin{gather}\label{V-ep-proof}
\displaystyle  \|V ^{\varepsilon}\| _{L _{T}^{\infty}L ^{\infty}} \leq \sqrt{2}\|V ^{\varepsilon}\|_{L _{T}^{\infty}L ^{2}}^{1/2}\|V _{x}^{\varepsilon}\|_{L _{T}^{\infty}L ^{2}}^{1/2} \leq c(v _{\ast},T)
\end{gather}
for some constant $ c(v _{\ast},T)>0 $ independent of $ \varepsilon $. Therefore we get from \eqref{R-1-precise}, \eqref{R-2-pres}, \eqref{vfi-B-l2-proofthm}--\eqref{V-ep-proof} that
\begin{align*}
\displaystyle  \|\mathcal{E} _{1}^{\varepsilon}\|_{L _{T}^{\infty}L ^{\infty}} &\leq c_{0}\varepsilon \left( \|\varphi ^{B,2}(z,t)\|_{L _{T}^{\infty}L ^{\infty}}+\|\varphi ^{b,2}(\xi,t)\| _{L _{T}^{\infty}L ^{\infty}}\right)
 \nonumber \\
 & \displaystyle \quad +c _{0}\varepsilon ^{1/2} \|\Phi ^{\varepsilon}(x,t)\|_{L _{T}^{\infty}L ^{\infty}} +c_{0}\|b _{\varphi}^{\varepsilon}\|_{L _{T}^{\infty}L ^{\infty}}
 \nonumber \\
 & \displaystyle \leq c(v _{\ast},T) \varepsilon ^{5/8},
\end{align*}
\begin{align*}
\displaystyle  \|\mathcal{E} _{2}^{\varepsilon}\|_{L _{T}^{\infty}L ^{\infty}}& \leq c _{0}\varepsilon ^{1/2}\Big(\| v ^{I,1}(x,t)\|_{L _{T}^{\infty}L ^{\infty}}+\|v ^{B,1}(z,t)\|_{L _{T}^{\infty}L ^{\infty}}+\|v ^{b,1}(\xi,t)\|_{L _{T}^{\infty}L ^{\infty}}\Big)
 \nonumber \\
 & \displaystyle \quad+ c _{0}\varepsilon ^{1/2} \|V(x,t)\|_{L _{T}^{\infty}L ^{\infty}}
 +c_{0}\|b _{v}^{\varepsilon}(x,t)\|_{L _{T}^{\infty}L ^{\infty}}
  \nonumber \\
  & \displaystyle \leq c(v _{\ast},T) \varepsilon ^{1/2}
\end{align*}
and
\begin{align}\label{R-1-x-esti-proof}
\displaystyle \displaystyle  \|\partial _{x} \mathcal{E} _{1}^{\varepsilon}\|_{L _{T}^{\infty}L ^{\infty}} &\leq c_{0}\varepsilon ^{1/2} \left( \|\varphi _{z}^{B,2}(z,t)\|_{L _{T}^{\infty}L _{z} ^{\infty}}+\|\varphi  _{\xi}^{b,2}(\xi,t)\| _{L _{T}^{\infty}L _{\xi}^{\infty}}\right)
 \nonumber \\
 & \displaystyle \quad +c_{0}\varepsilon ^{1/2} \|\Phi  _{x}^{\varepsilon}(x,t)\|_{L _{T}^{\infty}L ^{\infty}} +c _{0}\|\partial _{x} b _{\varphi}^{\varepsilon}\|_{L _{T}^{\infty}L ^{\infty}}
 \nonumber \\
 & \displaystyle \leq c(v _{\ast},T)\varepsilon ^{1/4},
\end{align}
where $ 0< \varepsilon<1 $ has been used. Combining the above estimates on $ \mathcal{E} _{i}^{\varepsilon}\,(i=1,2) $, we get \eqref{conver-thm1}, and thus finish the proof of Theorem \ref{thm-stabi-refor}. \hfill $ \square $

\subsection{Proof of Theorem \ref{thm-original}}
Theorem \ref{thm-original} follows directly from Theorem \ref{thm-stabi-refor} except the estimate \eqref{u-converg}. To prove \eqref{u-converg}, we first notice from \eqref{ansaz} that
\begin{gather*}
\displaystyle  \varphi _{x}^{\varepsilon}= \varphi _{x}^{I,0}+\varphi _{z}^{B,1}(z,t)+\varphi _{\xi}^{b,1}(\xi,t)+\varepsilon ^{1/2} \varphi _{x}^{I,1} +\partial _{x}\mathcal{E} _{1}^{\varepsilon},
\end{gather*}
which implies that
\begin{align}\label{u-expan-thm}
\displaystyle  \displaystyle  \displaystyle u ^{\varepsilon}&=u ^{I,0}+  u^{B,0}+u ^{b,0} +\varepsilon ^{1/2} \varphi _{x}^{I,1} +\partial _{x}\mathcal{E} _{1}^{\varepsilon}.
\end{align}
On the other hand, from \eqref{vfi-B-l2-proofthm} and \eqref{R-1-x-esti-proof}, we have $ \|\varepsilon ^{1/2} \varphi _{x}^{I,1}\|_{L _{T}^{\infty}L ^{\infty}} +\|\partial _{x}\mathcal{E} _{1}^{\varepsilon}\|_{L _{T}^{\infty}L ^{\infty}}\leq c(v _{\ast},T) \varepsilon ^{1/4} $ with the constant $ c(v _{\ast},T)>0 $ independent of $ \varepsilon $. This along with \eqref{u-expan-thm} gives rise to \eqref{u-converg}. \hfill $ \square $
% subsection proof_of_theorem_ref (end)

\vspace{4mm}

\appendix
\renewcommand{\appendixname}{Appendix~\Alph{section}}
\section{Local existence result on \texorpdfstring{$ v ^{B,0} $}{v-B0}} % (fold)
\label{sub:appendix_a}
% subsection appendix_a (end)
In this section, we detail the proof of local existence and uniqueness of solutions to the problem \eqref{first-bd-layer-pro} for the leading-order boundary layer profile $ v ^{B,0}  $. Equivalently, we study the reformulated problem \eqref{eq-for-esti-v-B-0}, i.e.,
\begin{align}\label{eq-for-esti-v-B-0-appendix}
 \displaystyle \begin{cases}
  \displaystyle \vartheta_{t}=\vartheta_{zz} -\overline{u ^{I,0}}    {\mathop{\mathrm{e}}}^{\vartheta+ \phi}(\vartheta+ \phi)- \overline{u ^{I,0}}\,v ^{I,0}(0,t)\left(    {\mathop{\mathrm{e}}}^{\vartheta+ \phi}-   1) \right)+\varrho,\\
\displaystyle \vartheta(0,t)=0,\ \ \vartheta(+\infty,t)=0,\\
\displaystyle \vartheta(z,0)=0.
 \end{cases}
  \end{align}
The solution space for the problem reads
\begin{gather*}
\mathcal{X}_{T}=\left\{ u \in L _{T}^{2}L _{z}^{2} \vert\, \partial _{t}^{l} u \vert _{t=0}=\theta _{l},\   \partial _{t}^{k}u \in L _{T}^{2} H _{z} ^{6-2k},\ l=0,1,2,\, k=0,1,2,3 \right\}
\end{gather*}
for some $ T>0 $, where $\theta _{0}\equiv 0 $, and $ \theta _{l}:= \partial _{t}^{l}\vartheta \vert _{t=0}~(l=1,2) $ are determined by $ u _{0} $, $ v _{0} $ and $ \vartheta (z,0) $ through the equation $ \eqref{eq-for-esti-v-B-0-appendix}_{1} $. By \eqref{compatibility-vfi}, we know that the initial datum is compatible up to order two. We shall divide the proof into three steps in the following.

{\bf Step 1:} \emph{Linearization.} Given $ \omega \in \mathcal{X}_{T} $, we first consider the following linearized problem for \eqref{eq-for-esti-v-B-0-appendix}:
\begin{align}\label{line-eq-appen}
\displaystyle
\begin{cases}
  \displaystyle v _{t}= v _{zz}- \overline{u ^{I,0}}    v- \overline{u ^{I,0}}    {\mathop{\mathrm{e}}}^{w+\phi}\phi -\overline{u ^{I,0}}(     {\mathop{\mathrm{e}}}^{w+ \phi}-1 ) (v ^{I,0}(0,t)+w)+ \varrho=:v _{zz} -\overline{u ^{I,0}}    v+F+ \varrho ,\\
  \displaystyle v(0,t)=0,\ \ v(+\infty,t)=0,\\
  \displaystyle v(z,0)=0,
\end{cases}
\end{align}
where $ \overline{u ^{I,0}} $ is as in \eqref{eq-for-esti-v-B-0}. Let $ V=     {\mathop{\mathrm{e}}}^{ \int _{0}^{t}\overline{u ^{I,0}}\mathrm{d}s}v   $. Then $ V $ satisfies
\begin{align*}
\begin{cases}
  \displaystyle V _{t}= V _{zz}+F {\mathop{\mathrm{e}}}^{ \int _{0}^{t}\overline{u ^{I,0}}\mathrm{d}s}+ \varrho  {\mathop{\mathrm{e}}}^{ \int _{0}^{t}\overline{u ^{I,0}}\mathrm{d}s} ,\\
  \displaystyle V(0,t)=0,\ \ V(+\infty,t)=0,\\
  \displaystyle V(z,0)=0,
\end{cases}
\end{align*}
which can be solved explicitly by the reflection method:
\begin{align*}
\displaystyle  V&= \int _{0}^{t}\int _{0}^{\infty}\Gamma(z-y,t- \tau)\left[ F(y,\tau) {\mathop{\mathrm{e}}}^{ \int _{0}^{\tau}\overline{u ^{I,0}}\mathrm{d}s}+ \varrho(y, \tau)  {\mathop{\mathrm{e}}}^{ \int _{0}^{\tau}\overline{u ^{I,0}}\mathrm{d}s
}\right] \mathrm{d}y \mathrm{d}\tau
 \nonumber \\
 & \displaystyle \quad -\int _{0}^{t}\int _{-\infty}^{0}\Gamma(z-y,t- \tau)\left[ F(-y,\tau) {\mathop{\mathrm{e}}}^{ \int _{0}^{\tau}\overline{u ^{I,0}}\mathrm{d}s}+ \varrho(-y, \tau)  {\mathop{\mathrm{e}}}^{ \int _{0}^{\tau}\overline{u ^{I,0}}\mathrm{d}s
}\right] \mathrm{d}y \mathrm{d}\tau,
\end{align*}
where $ \Gamma(z,t)=\frac{1}{\sqrt{4 \pi t}}		{\mathop{\mathrm{e}}}^{- \frac{z ^{2}}{4t}} $ is the heat kernel. Hence one can recover $ v $ from the above identity along with the definition of $ V $. The uniqueness of solutions to the problem \eqref{line-eq-appen} is standard, so the details are omitted here.

{\bf Step 2:} \emph{A priori estimates.} We shall show for \eqref{line-eq-appen} that there exists a suitably large constant $ K >0$ and a small $ T _{0}>0 $ such that if
\begin{gather}\label{w-bound-regularity}
\displaystyle \sum _{k=0}^{3} \|\partial _{t}^{k} w\|_{L _{T}^{2}H _{z}^{6-2k}}^{2} \leq 2 K \ \ \ \mbox{for\ }T \leq T _{0},
\end{gather}
then it holds that
\begin{align}\label{invariant-esti}
\displaystyle  \sum _{k=0}^{3} \|\partial _{t}^{k} v\|_{L _{T}^{2}H _{z}^{6-2k}}^{2} \leq K,\ \ \ \forall\, T \leq T _{0}.
\end{align}
First, by the Sobolev embedding theorem and \eqref{w-bound-regularity}, we have
\begin{gather}\label{w-infty-sobo}
 \displaystyle \partial _{t} ^{k} w \in C([0,T]; H _{z}^{5-2k}),\ \ \ k=0,1,2.
 \end{gather}
 Furthermore, for any $ t \in [0,T] $ and for $ k=0,1,2 $, it follows that
 \begin{align}\label{w-infty-sobo-2}
 \displaystyle \|\partial _{t}^{k}w(\cdot,t)\|_{H ^{4-2 k}}&= \Big\|\theta _{k} + \int _{0}^{t} \partial _{\tau}^{k+1}w(\cdot,\tau) \mathrm{d}\tau \Big\|_{H ^{4-2k}}  \leq C \|\theta _{k}\|_{H ^{4-2k}}+ \int _{0}^{T} \| \partial _{t}^{k+1}w(\cdot,t) \| _{H ^{4-2k}} \mathrm{d} t
  \nonumber \\
  & \displaystyle  \leq \hat{C} _{0} + T ^{1/2}\left( \int _{0}^{T}\| \partial _{t}^{k+1}w(\cdot,t) \| _{H ^{4-2k}} ^{2} \mathrm{d}t \right) ^{1/2}\\
  & \leq \hat{C} _{0}+ \tilde{C} K ^{1/2} T ^{1/2},
 \end{align}
 where $ \hat{C} _{0} $ is a positive constant depending only on the initial data $ \theta _{j} $. Hereafter $ \tilde{C} >0$ is a generic constant independent of $ T $. With \eqref{con-v-I-0}, \eqref{vfi-bd-I-0}, \eqref{w-bound-regularity}, \eqref{w-infty-sobo} and \eqref{w-infty-sobo-2}, similar to the proof of \eqref{V-B-0-exp} and \eqref{con-exp-v-B-0-final}, we proceed to derive for $ k=0,1,2 $ that
\begin{align}\label{F-ESTI}
&\displaystyle  \left\|\partial _{t}^{k}F\right\|_{L _{T}^{2}H _{z}^{4-2k}}^{2}
 \nonumber \\
 & \displaystyle~\leq \sum _{j=0}^{k}\left\{ \|\partial _{t}^{j}\overline{u ^{I,0}} \partial _{t}^{k-j}({\mathop{\mathrm{e}}}^{w+\phi}\phi)\|_{ L _{T}^{2} H _{z}^{4-2k}}^{2} + \|\partial _{t}^{j}\overline{u ^{I,0}} \partial _{t}^{k-j}[w(     {\mathop{\mathrm{e}}}^{w+ \phi}-1 )]\|_{ L _{T} ^{2} H _{z}^{4-2k}}^{2}\right.
  \nonumber \\
  & \displaystyle~ \qquad\quad\  \left.+  \|\partial _{t}^{j}(\overline{u ^{I,0}} v ^{I,0}(0,t))\partial _{t}^{k-j}(     {\mathop{\mathrm{e}}}^{w+ \phi}-1 )\|_{ L _{T}^{2} H _{z}^{4-2k}}^{2} \right\}
   \nonumber \\
   & \displaystyle~ \leq \tilde{C}T\sum _{j=0}^{k}\|\partial _{t}^{j}\overline{u ^{I,0}}\|_{L ^{\infty}(0,T)}^{2}\left( \| \partial _{t}^{k-j}({\mathop{\mathrm{e}}}^{w+\phi}\phi)\|_{ L _{T}^{\infty} H _{z}^{4-2k}}^{2} + \|\partial _{t}^{k-j}(     {\mathop{\mathrm{e}}}^{w+ \phi}-1 )\|_{ L _{T}^{\infty} H _{z}^{4-2k}}^{2} \right)
    \nonumber \\
    & \displaystyle~ \quad+ \tilde{C}T\sum _{j=0}^{k} \|\partial _{t}^{j}\overline{u ^{I,0}} (v ^{I,0}(0,t))\|_{L ^{\infty}(0,T)}^{2} \|\partial _{t}^{k-j}[w(     {\mathop{\mathrm{e}}}^{w+ \phi}-1 )]\|_{ L _{T}^{\infty}H _{z}^{4-2k}}^{2}
    \nonumber \\
    & \displaystyle~\leq \tilde{C}     K{\mathop{\mathrm{e}}}^{\tilde{C}K} T(1+ \sum _{j=0}^{k} \|\partial _{t}^{j} w\|_{L _{T}^{\infty}H _{z}^{4-2k}}^{2}) \leq \tilde{C}  TK{\mathop{\mathrm{e}}}^{\tilde{C}K}(1+K T )
\end{align}
where we have used the fact $     {\mathop{\mathrm{e}}}^{w+\phi} \leq     {\mathop{\mathrm{e}}}^{\tilde{C}K} $ due to \eqref{w-bound-regularity}, the constant $ \tilde{C}>0 $ may depend on $ \overline{u ^{I,0}} $, $ v ^{I,0} $ and $ \phi $, but independent of $ T $ and $ K $. Thanks to \eqref{con-v-I-0}, we get for $ k=0,1,2 $ that
\begin{align}\label{VRHO-L2}
\displaystyle   \left\|\partial _{t}^{k} \varrho\right\|_{ L _{T}^{2}H _{z}^{4-2k}} ^{2}& \leq \tilde{C}T+ \tilde{C} \sum _{j=0}^{k} \|\partial _{t}^{j} v  ^{I,0}(0,t)\|_{L ^{2}(0,T)} ^{2} \leq \tilde{C}T,
\end{align}
where $ \tilde{C}>0 $ is a constant independent of $ K $ and $ T $. By a procedure similar to the one in the proof of Lemma \ref{lem-v-B-0}, one can deduce for $ T \leq 1 $ that
\begin{align*}
&\displaystyle   \sum _{k=0}^{3} \|\partial _{t}^{k} V\|_{L _{T}^{2}H _{z}^{6-2k}}^{2}
 \nonumber \\
  &~\displaystyle\leq \tilde{C}+ \sum _{k=0}^{2} \left( \left\|\partial _{t}^{k}\left( F {\mathop{\mathrm{e}}}^{ \int _{0}^{t}\overline{u ^{I,0}}\mathrm{d}s} \right) \right\|_{L _{T}^{2}H _{z}^{4-2k}} ^{2}+ \left\|\partial _{t}^{k}\left(  \varrho  {\mathop{\mathrm{e}}}^{ \int _{0}^{t}\overline{u ^{I,0}}\mathrm{d}s} \right)  \right\|_{L _{T}^{2}H _{z}^{4-2k}}^{2} \right)
 \nonumber \\
 & ~\displaystyle \leq   \tilde{C} 		\sum _{k=0}^{2}{\mathop{\mathrm{e}}}^{\tilde{C}T}\left( \left\| \partial _{t}^{k}F\right\|_{L _{T}^{2}H _{z}^{4-2k}} ^{2} +\|\partial _{t}^{k}\varrho\|_{L _{T}^{2}H _{z}^{4-2k}} ^{2}\right)
  \nonumber \\
  & ~\displaystyle \leq \tilde{C}+ \tilde{C}T  \Big(1+K ^{2}{\mathop{\mathrm{e}}}^{\tilde{C}T+\tilde{C}K}\Big),
\end{align*}
where we have used \eqref{F-ESTI} and \eqref{VRHO-L2}, the constant $ \tilde{C} >0 $ is independent of $ K $ and $ T $. In view of the definition of $ V $, there holds that
\begin{align*}
\displaystyle  \sum _{k=0}^{2} \|\partial _{t}^{k} v\|_{L _{T}^{2}H _{z}^{6-2k}}^{2} \leq \hat{C} _{1}+\tilde{C}T \Big(1+K ^{2}{\mathop{\mathrm{e}}}^{CT+CK}\Big)\end{align*}
for some constants $ \hat{C} _{1}  $ and $ \tilde{C} $ independent of $ K $ and $ T $. Hence, we get
\begin{align}\label{con-invarian}
\displaystyle   \sum _{k=0}^{2} \|\partial _{t}^{k} v\|_{L _{T}^{2}H _{z}^{6-2k}}^{2} \leq 2 \hat{C} _{1}=:K,
\end{align}
provided
\begin{gather*}
\displaystyle T \leq \min \left\{ 1, \left[\tilde{C}+ (2 \hat{C} _{1})^{2} 		{\mathop{\mathrm{e}}}^{\tilde{C}+2\tilde{C} \hat{C} _{1}} \right]^{-1}  \right\}=:T _{0}.
\end{gather*}
This gives \eqref{invariant-esti}.

{\bf Step 3:} \emph{Contraction.} Denote
\begin{gather*}
\displaystyle \mathcal{Y}_{T}:=\left.\Big\{ u \in \mathcal{X}_{T} \right\vert\, \sum _{k=0}^{3} \|\partial _{t}^{k} u\|_{L _{T}^{2}H _{z}^{6-2k}}^{2} \leq K  \Big\}
\end{gather*}
with $ K  $ as in \eqref{con-invarian}. In the previous steps, we have proved for  $ T \leq T _{0} $ that the solution map $ \Theta:~ \mathcal{Y}_{T}\rightarrow \mathcal{Y}_{T} $ for the linearized problem \eqref{line-eq-appen} is well-defined. To prove the existence of solutions to \eqref{eq-for-esti-v-B-0-appendix}, it now suffices to show the contraction of $ \Theta $ in the norm $ \|\cdot\|_{C(0,T ;L _{z}^{2})} $ for suitably small $ T>0 $. For any $ w _{1},\, w _{2}   \in \mathcal{Y}_{T}$, denote $ v _{i}=\Theta(w _{i})~(i=1,2) $ and
\begin{gather*}
\displaystyle W=w _{1}-w _{2},\ \ V=v _{1}-v _{2}.
\end{gather*}
Then we have from \eqref{line-eq-appen} that
\begin{align*}
\begin{cases}
  \displaystyle V _{t}= V _{zz}-\overline{u ^{I,0}} V -\overline{u ^{I,0}}		{\mathop{\mathrm{e}}}^{\phi}\left(     {\mathop{\mathrm{e}}}^{w _{1}}-    {\mathop{\mathrm{e}}}^{w _{2}} \right)(\phi+v ^{I,0}(0,t)+w _{1})-\overline{u ^{I,0}}  (  {\mathop{\mathrm{e}}}^{w _{2}+\phi}-1)W,\\
  \displaystyle V(0,t)=0,\ \ \ V(+\infty,t)=0,\\
  \displaystyle V(z,0)=0.
\end{cases}
\end{align*}
The standard $ L ^{2} $ estimate implies that
\begin{align*}
\displaystyle \frac{\mathrm{d}}{\mathrm{d}t}\left\|V\right\|_{L _{z}^{2}} ^{2}+\int _{\mathbb{R}_{+}}\left( V _{z}^{2}+V ^{2} \right)  \mathrm{d}z \leq \tilde{C} 		{\mathop{\mathrm{e}}}^{\tilde{C}K}\|W\|_{L ^{2}}^{2}.
 \end{align*}
It thus holds that
 \begin{align*}
 \displaystyle \sup _{t \in [0,T]}\left\|V\right\|_{L _{z}^{2}}^{2} + \|V\|_{L _{T}^{2}H _{z}^{1}}^{2} \leq \tilde{C} 	T	{\mathop{\mathrm{e}}}^{K}\sup _{t \in [0,T]} \|W\|_{ L _{z}^{2}}^{2} \leq \frac{1}{2}\sup _{t \in [0,T]} \left( \|W\|_{ L _{z}^{2}}^{2}+\|W\| _{L _{T}^{2}H _{z}^{1}}^{2}\right) ,
 \end{align*}
 provided
 \begin{gather*}
 \displaystyle T \leq \min \left\{ T _{0}, \frac{1}{2}\left[ \tilde{C} 		{\mathop{\mathrm{e}}}^{\tilde{C}K} \right]^{-1} \right\}=:T _{1}.
 \end{gather*}
 Hence the desired contraction of $ \Theta $ is proved.

 Finally, based on the analysis in Steps 1 to 3, we conclude that the problem \eqref{eq-for-esti-v-B-0-appendix} admits a solution $ \vartheta\in \mathcal{Y}_{T _{1}} $. The uniqueness of the solution is standard, so we omit the details here. The proof is complete.

\vspace{4mm}

\section{Proof of \texorpdfstring{\eqref{g-esti}}{(3.122)}.} % (fold)
\label{sub:appendix_b}
We shall prove that
\begin{gather}\label{g-to-prove-appendix}
 \displaystyle  \|\langle z \rangle ^{l} \partial _{t}^{k}g \|_{L _{T}^{2}H _{z}^{4-2k}} \leq c(v _{\ast},T) \ \ \ \mbox{for}\ k=0,1,2,
 \end{gather}
 where the constant is independent of $ \varepsilon $ and $ \delta $. For this, we first split the function $ g $ into three parts
\begin{gather*}
\displaystyle g=\mathcal{J}_{1}+\mathcal{J}_{2}+\mathcal{J}_{3},
\end{gather*}
where
\begin{align*}
\displaystyle \mathcal{J} _{1}&= ({\mathop{\mathrm{e}}}^{v ^{B,0}}-1)\int _{z}^{\infty} \eta v ^{I,1}(0,t)\partial _{y}\left[(\varphi _{x}^{I,0}(0,t)+M+\varphi _{y}^{B,1}) {\mathop{\mathrm{e}}}^{-v ^{B,0}}\right] \mathrm{d}s (v ^{I,0}(0,t)+v ^{B,0})
 \nonumber \\
 &\displaystyle\quad-\int _{z}^{\infty} \eta v ^{I,1}(0,t)\partial _{y}\left[( \varphi_{x} ^{I,0}(0,t)+M+\varphi _{y}^{B,1}) {\mathop{\mathrm{e}}}^{-v ^{B,0}}\right] \mathrm{d}y (v ^{I,0}(0,t)+v ^{B,0}),
  \nonumber \\
  \displaystyle \mathcal{J}_{2}&=({\mathop{\mathrm{e}}}^{v ^{B,0}}-1)\int _{z}^{\infty}\left[  v _{y}^{B,0}(\varphi _{xx} ^{I,0}(0,t)y+\varphi _{x} ^{I,1}(0,t))+\varphi _{y}^{B,1}v _{x}^{I,0}(0,t) \right]   {\mathop{\mathrm{e}}}^{-v ^{B,0}} \mathrm{d}y (v ^{I,0}(0,t)+v ^{B,0})  \nonumber \\
   & \displaystyle \quad +\int _{z}^{\infty}\left[  v _{y}^{B,0}(\varphi _{xx}^{I,0}(0,t)y+\varphi _{x}^{I,1}(0,t))+\varphi _{y}^{B,1}v _{x}^{I,0}(0,t) \right]   {\mathop{\mathrm{e}}}^{-v ^{B,0}} \mathrm{d}y(v ^{I,0}(0,t)+v ^{B,0}),
    \nonumber \\
\displaystyle \mathcal{J}_{3}& =  (\varphi _{x} ^{I,0}(0,t)+M) \eta(z)v ^{I,1}(0,t)+\eta(z)v _{t}^{I,1}(0,t)- \varphi _{z}^{B,1}(v _{x}^{I,0}(0,t)z+v ^{I,1}(0,t))
  \nonumber \\
  & \displaystyle \quad -(\varphi _{xx}^{I,0}(0,t)z+\varphi _{x}^{I,1}(0,t))v ^{B,0} + \eta(z)v ^{I,1}(0,t) (\varphi _{x}^{I,0}(0,t)+M+\varphi _{z}^{B,1})(v ^{I,0}(0,t)+v ^{B,0})
   \nonumber \\
   & \displaystyle \quad - \eta''(z)v ^{I,1}(0,t)+\eta(z)v ^{I,1}(0,t)\varphi _{z}^{B,1}.
\end{align*}
Thanks to \eqref{con-vfi-v-I-0-regula}, \eqref{vfi-bd-I-0}, \eqref{exp-v-B-0-linfty}, \eqref{con-exp-v-B-0-final}, \eqref{con-v -I-1}, \eqref{some-l-infty-layer-for-vfi-b-1-stab}, \eqref{l-infty-v-bd-stat} and the H\"older inequality, we get for $ k=0,1,2$ and $ l \in \mathbb{N} $ that
\begin{align}\label{pre-J-1}
 &\displaystyle \left\| \langle z \rangle  ^{l}\partial _{t}^{k} \int _{z}^{\infty} \eta v ^{I,1}(0,t)\partial _{y}\left[(\varphi _{x} ^{I,0}(0,t)+M+\varphi _{y}^{B,1}) {\mathop{\mathrm{e}}}^{-v ^{B,0}}\right] \mathrm{d}y \right\| _{L _{T}^{2}H _{z} ^{4-2k}}^{2}
  \nonumber \\
  &~ \displaystyle \leq \left\| \langle z \rangle  ^{l}\partial _{t}^{k} \int _{z}^{\infty} \eta v ^{I,1}(0,t)\partial _{y}\left[(\varphi _{x}^{I,0}(0,t)+M+\varphi _{y}^{B,1}) {\mathop{\mathrm{e}}}^{-v ^{B,0}}\right] \mathrm{d}y \right\| _{L _{T}^{2}L _{z} ^{2}}^{2}
   \nonumber \\
   &~\displaystyle \quad + c(v _{\ast},T)\left\| \langle z \rangle  ^{l}\partial _{t}^{k}\left[ \eta v ^{I,1}(0,t)\partial _{z}\Big((\varphi _{x} ^{I,0}(0,t)+M+\varphi _{z}^{B,1}) {\mathop{\mathrm{e}}}^{-v ^{B,0}} \Big)   \right]  \right\| _{L _{T}^{2}H _{z} ^{3-2k}}^{2}
    \nonumber \\
    & ~\displaystyle \leq c(v _{\ast},T) \sum _{j=0}^{k}\left( \| \langle z \rangle ^{l+2}\partial _{t}^{j}v ^{B,0}\|_{L _{T}^{2}H_{z}^{1}}^{2}+\|\langle z \rangle ^{l+2}\partial _{t}^{j}\varphi ^{B,1}\| _{L _{T}^{2}H _{z}^{2}}^{2}\right) \int _{\mathbb{R}_{+}} \langle y \rangle ^{-2} \mathrm{d}y
     \nonumber \\
     & \displaystyle ~\quad+   c(v _{\ast},T) \sum _{j=0}^{k}\left( \| \langle z \rangle ^{l+2}\partial _{t}^{j}v ^{B,0}\|_{L _{T}^{2}H_{z}^{4-2k}}^{2}+\|\langle z \rangle ^{l+2}\partial _{t}^{j}\varphi ^{B,1}\| _{L _{T} ^{2}H _{z}^{5-2k}}^{2}\right) \int _{\mathbb{R}_{+}} \langle y \rangle ^{-2} \mathrm{d}y
      \nonumber \\
                 &~  \leq c(v _{\ast},T),
 \end{align}
 where we have used $ \partial _{t}^k v^{I,1}(0,t) \in L ^{\infty}(0,T)\,( k=0,1,2 )$ due to \eqref{con-v -I-1} and taken the space-time $ L ^{\infty} $ norms for the terms involving lower-order spatial derivatives which are bounded according to \eqref{some-l-infty-layer-for-vfi-b-1-stab} and \eqref{l-infty-v-bd-stat}. Therefore, $ \mathcal{J}_{1} $ can be estimated as follows:
 \begin{align}\label{J-1-esti}
 &\displaystyle  \left\|\langle z \rangle ^{l}\partial _{t}^{k}\mathcal{J}_{1}\right\|_{L _{T}^{2}H _{z}^{4-2k}}
 \nonumber \\
 &~\displaystyle \leq c(v _{\ast},T)   \left\| \langle z \rangle  ^{l}\partial _{t}^{k} \int _{z}^{\infty} \eta v ^{I,1}(0,t)\partial _{y}\left[(\varphi _{x}^{I,0}(0,t)+M+\varphi _{y}^{B,1}) {\mathop{\mathrm{e}}}^{-v ^{B,0}}\right] \mathrm{d}y \right\| _{L _{T}^{2}H _{z} ^{4-2k}}^{2}
   \nonumber \\
   &~ \displaystyle \qquad\times \left( \|    {\mathop{\mathrm{e}}}^{v ^{B,0}}-1\|_{L _{T}^{\infty}H _{z}^{4-2k}}^{2}+1 \right)  \left( 1+\|v ^{ B,0}\|_{L _{T}^{\infty}H _{z}^{4-2k}}^{2} \right)
   \nonumber \\
   & ~\displaystyle \leq c(v _{\ast},T)
 \end{align}
 for $ k=0,1,2 $, where we have used \eqref{product-law}, \eqref{con-exp-v-B-0-final}, \eqref{some-l-infty-layer-for-vfi-b-1-stab} and \eqref{l-infty-v-bd-stat}. By similar arguments as proving \eqref{pre-J-1}, we get
 \begin{align*}
 &\displaystyle  \left\|\langle z \rangle ^{l}\partial _{t}^{k} \int _{z}^{\infty}\left[  v _{y}^{B,0}(\varphi _{xx} ^{I,0}(0,t)y+\varphi _{x} ^{I,1}(0,t))+\varphi _{y}^{B,1}v _{x}^{I,0}(0,t) \right]   {\mathop{\mathrm{e}}}^{-v ^{B,0}} \mathrm{d}y \right\|_{L _{T}^{2}H _{z}^{4-2k}}^{2}
  \nonumber \\
  &~\displaystyle \leq c(v _{\ast},T)\sum _{j=0}^{k}\|\langle z \rangle ^{l} \int _{z}^{\infty}\left[ \partial _{t}^{k-j}( v _{y}^{B,0}    {\mathop{\mathrm{e}}}^{-v ^{B,0}}) \partial _{t}^{j}(\varphi _{xx}^{I,0}(0,t)y+\varphi _{x}^{I,1}(0,t))\mathrm{d}y\right\|_{L _{T}^{2}H _{z}^{4-2k}}^{2}
   \nonumber \\
   &~\displaystyle \quad +c(v _{\ast},T) \sum _{j=0}^{k} \left\|\langle z \rangle ^{l} \int _{z}^{\infty} \partial _{t}^{j}v _{x}^{I,0}(0,t) \partial _{t}^{k-j}(\varphi _{y}^{B,1}    {\mathop{\mathrm{e}}}^{-v ^{B,0}})\mathrm{d}y\right\|_{L _{T}^{2}H _{z}^{4-2k}}^{2}
    \nonumber \\
       & ~ \displaystyle \leq c(v _{\ast},T) \int _{\mathbb{R}_{+}}\langle y \rangle  ^{-2} \mathrm{d}z \sum _{i,j=0}^{k}\| \langle z \rangle ^{l+3}\partial _{t}^{i}v ^{B,0}\|_{L _{T}^{\infty}H _{z}^{5-2k}}^{2} \int _{0}^{T}\left( \left\vert \partial _{t}^{j}\varphi _{xx}^{I,0}(0,t)\right\vert ^{2}+\left\vert \partial _{t}^{j}\varphi _{x}^{I,1}\right\vert ^{2}\right)\mathrm{d}t
        \nonumber \\
        & ~\displaystyle \quad+ c(v _{\ast},T)\sum _{i,j=0}^{k}\left( \| \langle z \rangle ^{l+2}\partial _{t}^{i}v ^{B,0}\|_{L _{T}^{\infty}H _{z}^{4-2k}}^{2}+\|\langle z \rangle ^{l+2}\partial _{t}^{i}\varphi ^{B,1}\|_{L _{T}^{\infty}H _{z} ^{4-2k}}^{2} \right) \int _{\mathbb{R}_{+}}\langle y \rangle  ^{-2} \mathrm{d}z
         \nonumber \\
         &~\displaystyle \quad \qquad~\qquad \qquad \times \int _{0}^{T} \left\vert \partial _{t}^{j}\partial _{x}v^{I,0}(0,t)\right\vert ^{2}\mathrm{d}t
              \nonumber \\
              &~ \displaystyle \leq c(v _{\ast},T),
 \end{align*}
 due to \eqref{con-vfi-v-I-0-regula}, \eqref{con-vfi-I-1}, \eqref{some-l-infty-layer-for-vfi-b-1-stab}, \eqref{l-infty-v-bd-stat} and the H\"older inequality. Therefore, for $ \mathcal{J}_{2} $, we get
 \begin{align}\label{J-2-esti}
 &\displaystyle \left\|\langle z \rangle ^{l}\mathcal{J}_{2}\right\|_{L _{T}^{2}H _{z}^{4-2k}}
  \nonumber \\
  &~\displaystyle \leq c(v _{\ast},T)  \left\|\langle z \rangle ^{l}\partial _{t}^{k} \int _{z}^{\infty}\left[  v _{y}^{B,0}(\varphi _{xx}^{I,0}(0,t)y+\varphi _{x}^{I,1}(0,t))+\varphi _{y}^{B,1}v _{x}^{I,0}(0,t) \right]   {\mathop{\mathrm{e}}}^{-v ^{B,0}} \mathrm{d}y \right\|_{L _{T}^{2}H _{z}^{4-2k}}^{2}
  \nonumber \\
  & \displaystyle \quad \quad\times  \left( \|    {\mathop{\mathrm{e}}}^{v ^{B,0}}-1\|_{L _{T}^{\infty}H _{z}^{4-2k}}^{2}+1 \right)\left( 1+\|v ^{ B,0}\|_{L _{T}^{\infty}H _{z}^{4-2k}}^{2} \right) \leq c(v _{\ast},T).
 \end{align}
 Now let us turn to $ \mathcal{J}_{3} $. With \eqref{vfi-bd-I-0}, \eqref{con-vfi-I-1}, \eqref{con-v -I-1}, \eqref{some-l-infty-layer-for-vfi-b-1-stab}, \eqref{l-infty-v-bd-stat} and the fact that $ \eta $ is a smooth function with compact support, we deduce for $ k=0,1,2 $ that
 \begin{align*}
  \displaystyle  \left\|\langle z \rangle ^{l}\partial _{t}^{k} \mathcal{J}_{3}\right\|_{L _{T}^{2}H _{z}^{4-2k}}^{2}  & \leq c(v _{\ast},T)  \sum _{i,j=0}^{k}  \left( 1+\|\langle z \rangle\partial _{t}^{j}\varphi ^{B,1}\|_{L _{T}^{\infty}H _{z}^{5-2k}}^{2}+\|\langle z \rangle\partial _{t} ^{j}v ^{B,0}\|_{L _{T}^{\infty}H _{z}^{4-2k}}^{2} \right)
   \nonumber \\
   &~\displaystyle \quad \qquad \quad\times \left( 1+ \int _{0}^{T}(\vert \partial _{t}^{i}  v ^{I,1}(0,t)\vert ^{2}+\vert \partial _{t}^{i}  \varphi _{x} ^{I,1}(0,t)\vert ^{2}+ \vert \partial _{t}^{2}\varphi _{xx}^{I,0}(0,t)\vert ^{2}) \mathrm{d}t\right)
    \nonumber \\
    &\displaystyle\leq c(v _{\ast},T).
     \end{align*}
     This combined with \eqref{J-1-esti} and \eqref{J-2-esti} gives \eqref{g-to-prove-appendix}. The proof of Appendix \ref{sub:appendix_b} is complete.

 \section{Some analytic tools.}\label{Appen-Analysis}
 In this section, we  collect some basic results used in this paper, which include some Sobolev-type inequalities and an embedding theorem on space-time Sobolev spaces. Let us begin with the Sobolev inequalities.

\begin{lemma}[{\cite[Page 236]{brezis-ham-book}}]
Let $ p>1$. Then for any $ \epsilon>0 $, there exists a positive constant $ C=C(\epsilon,p) $ such that
\begin{gather}\label{l-infty-Sobolev}
\displaystyle \|h\|_{L ^{\infty}(\mathcal{I})} \leq \epsilon \|h _{x}\|_{L ^{p}(\mathcal{I})}+ C\|h\|_{L ^{1}(\mathcal{I})}
\end{gather}
for any $ h \in W ^{1,p}(\mathcal{I}) $.
\end{lemma}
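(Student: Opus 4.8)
The plan is to prove the inequality directly by a localization argument, exploiting that the exponent $1-1/p$ is strictly positive when $p>1$; an explicit constant $C(\epsilon,p)$ comes out for free.

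First I would establish the inequality on a short interval without the small parameter. Fix a subinterval $I\subset\mathcal{I}$ of length $\ell>0$. By density of $C^1(\overline{I})$ in $W^{1,p}(I)$ it suffices to treat $h\in C^1(\overline{I})$ (both sides are continuous under $W^{1,p}$ convergence, recalling $W^{1,p}(\mathcal{I})\hookrightarrow C(\overline{\mathcal{I}})$ for $p>1$). For $x,y\in I$ write $h(x)=h(y)+\int_y^x h_x(s)\,\mathrm{d}s$ and average over $y\in I$ against $\frac1\ell\int_I$, which gives
\[
|h(x)|\le \frac1\ell\int_I|h(y)|\,\mathrm{d}y+\int_I|h_x(s)|\,\mathrm{d}s .
\]
Hölder's inequality bounds the last term by $\ell^{1-1/p}\|h_x\|_{L^p(I)}$, so that
\[
\|h\|_{L^\infty(I)}\le \frac1\ell\,\|h\|_{L^1(I)}+\ell^{1-1/p}\,\|h_x\|_{L^p(I)} .
\]

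Next I would choose $\ell=\ell(\epsilon,p)\in(0,1]$ small enough that $\ell^{1-1/p}\le\epsilon$ — possible precisely because $1-1/p>0$ — and cover $\overline{\mathcal{I}}=[0,1]$ by $N:=\lceil 1/\ell\rceil$ closed subintervals $I_1,\dots,I_N$ each of length at most $\ell$. Since $\|h\|_{L^1(I_j)}\le\|h\|_{L^1(\mathcal{I})}$ and $\|h_x\|_{L^p(I_j)}\le\|h_x\|_{L^p(\mathcal{I})}$ for every $j$, taking the maximum over $j$ of the interval estimate yields
\[
\|h\|_{L^\infty(\mathcal{I})}=\max_{1\le j\le N}\|h\|_{L^\infty(I_j)}\le \frac1\ell\,\|h\|_{L^1(\mathcal{I})}+\epsilon\,\|h_x\|_{L^p(\mathcal{I})},
\]
and one sets $C(\epsilon,p):=1/\ell(\epsilon,p)$, which gives \eqref{l-infty-Sobolev}.

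There is no serious obstacle here; the only mild care needed is the density reduction to $C^1$ functions and the bookkeeping of the Hölder constant together with the fact that the number of covering intervals stays controlled by $1/\ell$. As an alternative I could argue by contradiction and compactness: if \eqref{l-infty-Sobolev} failed for some fixed $\epsilon>0$, there would exist $h_n\in W^{1,p}(\mathcal{I})$ with $\|h_n\|_{L^\infty}=1$, $\|(h_n)_x\|_{L^p}\le 1/\epsilon$ and $\|h_n\|_{L^1}\to0$; boundedness in $W^{1,p}(\mathcal{I})$ and the compact embedding $W^{1,p}(\mathcal{I})\hookrightarrow\hookrightarrow C(\overline{\mathcal{I}})$ would give a subsequence converging uniformly to some $h$ with $\|h\|_{L^\infty}=1$ and $\|h\|_{L^1}=0$, a contradiction. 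I would present the first, constructive proof, since it also pins down $C(\epsilon,p)$ explicitly.
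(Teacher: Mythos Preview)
Your proof is correct. The paper does not supply its own proof of this lemma---it merely cites \cite[p.~236]{brezis-ham-book}---and your localization-and-averaging argument is precisely the standard proof found there. One cosmetic remark: when you cover $[0,1]$ by subintervals of length \emph{at most} $\ell$, the constant $1/\ell'$ on a strictly shorter subinterval exceeds $1/\ell$; simply take the $N=\lceil 1/\ell\rceil$ subintervals of equal length $1/N\le\ell$ (so the $L^1$ constant is $N\le 1/\ell+1$) or use overlapping intervals of length exactly $\ell$, and the stated conclusion follows.
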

\begin{lemma}\label{lem-}
For any $ h \in H ^{1}(\mathcal{I}) $, it holds that
\begin{gather}\label{Sobolev-infty}
\displaystyle \|h\|_{ L ^{\infty}(\mathcal{I})}\leq C \left( \|h\|_{L ^{2}}+\|h\|_{L ^{2}}^{1/2}\|h _{x}\|_{L ^{2}}^{1/2} \right)
\end{gather}
where $ C>0 $ is a constant independent of $ h $.
\end{lemma}
We also remark that if $ h \in H _{0}^{1}(\mathcal{I}) $, then
\begin{align}\label{Sobolev-modified}
\displaystyle \|h\|_{L ^{\infty}} \leq \sqrt{2}\|h\|_{L ^{2}}^{1/2}\|h _{x}\|_{L ^{2}}^{1/2}\ \  \mbox{and}\ \  \|h\|_{L ^{\infty}} \leq C \|h _{x}(\cdot,t)\|_{L ^{2}},
\end{align}
and that if $ h \in H _{z}^{1}\,(\mathrm{resp.}~ H _{\xi}^{1} ) $, then
\begin{align}\label{Sobolev-z-xi}
\displaystyle \|h\|_{L _{z}^{\infty}} \leq C \|h \|_{L _{z}^{2}}^{1/2}\|h _{z}\|_{L _{z}^{2}}^{1/2} \leq C \|h\|_{H _{z}^{1}} \,(\mathrm{resp.}~ \|h\|_{L _{\xi}^{\infty}} \leq C \|h \|_{L _{z}^{2}}^{1/2}\|h _{\xi}\|_{L _{\xi}^{2}}^{1/2} \leq C \|h\|_{H _{\xi}^{1}}),
\end{align}
where the constant $ C >0$ is independent of $ h $.\\

Next, we introduce the Hardy's inequality.
 \begin{lemma}[cf. {\cite[Page 233]{brezis-ham-book}}]\label{lem-hardy}
 Let $ u \in W _{0}^{1,p}(\mathcal{I}) $ with $ 1<p< \infty $. Then
 \begin{gather}\label{hardy}
 \displaystyle \left\|\frac{u}{x(1-x)}\right\|_{L ^{p}(\mathcal{I})} \leq C _{p}\|u _{x}\|_{L ^{p}(\mathcal{I})},
 \end{gather}
 where $ C _{p}>0 $ is a constant depending only on $ p $.

 \end{lemma}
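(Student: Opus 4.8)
The plan is to deduce this estimate from the classical one-dimensional Hardy inequality by splitting $\mathcal{I}=(0,1)$ into the two halves $(0,\tfrac12)$ and $(\tfrac12,1)$, on each of which the weight $x(1-x)$ is singular at only one endpoint.

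First I would record (and prove) the half-line Hardy inequality: for $f\in L^p(0,\infty)$ with $1<p<\infty$ and $F(x):=\int_0^x|f(t)|\,\mathrm{d}t$, one has $\|x^{-1}F\|_{L^p(0,\infty)}\le\frac{p}{p-1}\|f\|_{L^p(0,\infty)}$. The quickest argument is the change of variables $x^{-1}F(x)=\int_0^1|f(xs)|\,\mathrm{d}s$ followed by Minkowski's integral inequality, which gives $\|x^{-1}F\|_{L^p}\le\int_0^1\|f(\cdot\,s)\|_{L^p}\,\mathrm{d}s=\int_0^1 s^{-1/p}\,\mathrm{d}s\,\|f\|_{L^p}=\frac{p}{p-1}\|f\|_{L^p}$.

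Next, for $u\in W_0^{1,p}(\mathcal{I})$ --- which embeds into $C(\overline{\mathcal{I}})$ with $u(0)=u(1)=0$ --- I would, on $(0,\tfrac12)$, write $u(x)=\int_0^x u_x(t)\,\mathrm{d}t$, extend $u_x$ by zero to $(0,\infty)$, and use $x(1-x)\ge x/2$ there to get $\|u/(x(1-x))\|_{L^p(0,1/2)}\le 2\|x^{-1}\int_0^x|u_x|\|_{L^p(0,\infty)}\le\frac{2p}{p-1}\|u_x\|_{L^p(\mathcal{I})}$. On $(\tfrac12,1)$ I would apply the same reasoning after the substitution $y=1-x$, using $u(1)=0$ and $x(1-x)\ge(1-x)/2$, to obtain the analogous bound. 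Adding the $p$-th powers of the two contributions, via $\|w\|_{L^p(\mathcal{I})}^p=\|w\|_{L^p(0,1/2)}^p+\|w\|_{L^p(1/2,1)}^p$, yields the claim with $C_p=2^{1+1/p}\,p/(p-1)$.

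There is no real obstacle, as the statement is classical (it is cited from Brezis's book); the only points needing a line of care are the representation $u(x)=\int_0^x u_x$ and the Fubini/Minkowski interchange, both immediate once one works with $|u_x|$ and nonnegative kernels. If one preferred to avoid invoking the half-line lemma, an alternative is a direct integration-by-parts estimate for $\int_0^1|u|^p/(x(1-x))^p\,\mathrm{d}x$ against a primitive of the weight combined with H\"older's inequality; I would nonetheless keep the Minkowski route, since it is shortest and produces an explicit constant.
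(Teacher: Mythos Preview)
Your proposal is correct and follows a standard route to the one-dimensional Hardy inequality. Note, however, that the paper does not actually prove this lemma: it is stated with a citation to Brezis's book and treated as a known analytic tool, so there is no ``paper's own proof'' to compare against. Your argument via the half-line Hardy inequality and the splitting at $x=\tfrac12$ is precisely the kind of elementary derivation one would supply if asked to fill in the details behind the citation.
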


The following embedding theorem is also frequently used in our analysis.
\begin{proposition}[cf. \mbox{\cite[Lemma 1.2]{temam-book-2001}}]\label{prop-embeding-spacetime}
Let $ V $, $ H $ and $ V' $ be three Hilbert spaces satisfying $ V \subset H \subset V ' $ with $ V' $ being the dual of $ V$. If a function $ u $ belongs to $ L ^{2} (0,T;V)$ and its time derivatives $ u _{t} $ belongs to $ L ^{2}(0,T;V') $, then
\begin{gather*}
\displaystyle u \in C([0,T];H) \ \mbox{ and }\ \|u\|_{L ^{\infty}(0,T;H)} \leq C \left( \|u\|_{L ^{2}(0,T;V)}+\|u _{t}\|_{L ^{2}(0,T;V')} \right),
\end{gather*}
where the constant $ C>0 $ depends on $ T $ but independent of $ u $.

\end{proposition}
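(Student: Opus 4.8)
The statement is the classical Lions--Magenes/Temam embedding theorem for a Gelfand triple $V\subset H\subset V'$, and the plan is to reproduce the standard time-regularization proof (one may of course just invoke \cite[Lemma 1.2]{temam-book-2001} verbatim). \textbf{Step 1.} I would first establish, for $u\in C^{1}([0,T];V)$, the energy identity
\begin{align*}
\|u(t)\|_{H}^{2}=\|u(s)\|_{H}^{2}+2\int_{s}^{t}\langle u_{\tau}(\tau),u(\tau)\rangle_{V',V}\,\mathrm{d}\tau,\qquad 0\le s\le t\le T,
\end{align*}
which follows from $\tfrac{\mathrm d}{\mathrm dt}\|u(t)\|_{H}^{2}=2(u_{t}(t),u(t))_{H}=2\langle u_{t}(t),u(t)\rangle_{V',V}$. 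Estimating the integrand by $\|u_{\tau}\|_{V'}\|u\|_{V}$ and applying the Cauchy--Schwarz inequality in time yields
\begin{align*}
\|u(t)\|_{H}^{2}\le \|u(s)\|_{H}^{2}+2\|u_{t}\|_{L^{2}(0,T;V')}\|u\|_{L^{2}(0,T;V)}.
\end{align*}

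\textbf{Step 2.} Next I would remove the smoothness hypothesis by mollifying in time: after extending $u$ to a slightly larger interval (say by reflection at the endpoints) so that the extension still lies in $L^{2}(V)$ with time derivative in $L^{2}(V')$, I would set $u_{\varepsilon}=\rho_{\varepsilon}*u$ for a standard mollifier $\rho_{\varepsilon}$. Then $u_{\varepsilon}\in C^{\infty}([0,T];V)$, $u_{\varepsilon}\to u$ in $L^{2}(0,T;V)$ and $\partial_{t}u_{\varepsilon}\to u_{t}$ in $L^{2}(0,T;V')$. Applying Step 1 to the differences $u_{\varepsilon}-u_{\varepsilon'}$ shows that $(u_{\varepsilon})$ is Cauchy in $C([0,T];H)$; its uniform limit is a continuous $H$-valued representative of $u$, which proves $u\in C([0,T];H)$, and the identity of Step 1 then passes to the limit and holds for $u$ itself.

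\textbf{Step 3.} For the quantitative bound I would integrate the inequality of Step 1 in $s$ over $(0,T)$, use the continuous embedding $V\hookrightarrow H$ to bound $\int_{0}^{T}\|u(s)\|_{H}^{2}\,\mathrm ds\le c_{0}\|u\|_{L^{2}(0,T;V)}^{2}$, divide by $T$ to select a good base point, and then apply Young's inequality to the cross term $2\|u_{t}\|_{L^{2}(0,T;V')}\|u\|_{L^{2}(0,T;V)}$. This yields
\begin{align*}
\|u\|_{L^{\infty}(0,T;H)}\le C(T)\big(\|u\|_{L^{2}(0,T;V)}+\|u_{t}\|_{L^{2}(0,T;V')}\big),
\end{align*}
with $C(T)>0$ depending on $T$ (through the factor $1/T$ and the embedding constant) but not on $u$.

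The only delicate point is the regularization in Step 2: since $u$ is a priori only an equivalence class of measurable $V'$-valued functions, one must verify that the time-extension preserves both integrability properties and that the uniform limit of $u_{\varepsilon}$ is a genuine pointwise representative of $u$ rather than just an a.e.\ modification; the density of $V$ in $H$ is used here, as it is what makes $(\cdot,\cdot)_{H}$ restricted to $V$ coincide with the $V'$--$V$ duality pairing that appears in the energy identity. Everything else is routine.
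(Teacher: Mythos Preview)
Your proof is correct and is precisely the standard Lions--Magenes/Temam argument. The paper itself does not give a proof of this proposition at all: it simply states the result with a citation to \cite[Lemma~1.2]{temam-book-2001} and uses it as a black box, so there is nothing in the paper to compare your argument against beyond the reference you already acknowledged.
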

\begin{remark}
Proposition \ref{prop-embeding-spacetime} implies the following fact for any $ m \in \mathbb{N} $\:,
\begin{gather*}
\displaystyle   \left\{ u \vert\, u \in L ^{2}(0,T;X ^{m+2}), u _{t}\in L ^{2}(0,T;X ^{m}) \right\}\hookrightarrow C([0,T];X ^{m+1})\ \mbox{continuously},
\end{gather*}
where $ X ^{m}:=H ^{m}$, $ H _{z}^{m}$ or $ H _{\xi}^{m}  $.
\end{remark}
Finally, by the change of variables in \eqref{bd-layer-variable}, for any $ G _{1}(z,t) \in H _{z}^{m} $ and $ G _{2}(\xi,t) \in H _{\xi}^{m} $ with $ m \in \mathbb{N} $, we have the following inequalities
\begin{subequations}\label{inte-transfer}
\begin{gather}
\displaystyle \left\|\partial _{x}^{m}G _{1}\left(\frac{x}{\varepsilon ^{1/2}},t \right)\right\|_{L ^{2}}= \varepsilon ^{\frac{1-2m}{4}}\|\partial _{z}^{m}G _{1}(z,t)\|_{L _{z}^{2}},\ \ \  \left\|\partial _{x}^{m}G _{1}\left(z,t \right)\right\|_{L ^{\infty}}= \varepsilon ^{-\frac{m}{2}}\|\partial _{z}^{m}G _{1}(z,t)\|_{L _{z}^{\infty}}, \label{z-transfer}\\
\displaystyle
\displaystyle \left\|\partial _{x}^{m}G _{2}\left(\frac{x-1}{\varepsilon ^{1/2}},t \right)\right\|_{L ^{2}}= \varepsilon ^{\frac{1-2m}{4}}\|\partial _{\xi}^{m}G _{2}(\xi,t)\|_{L _{\xi}^{2}},\ \ \ \left\|\partial _{x}^{m}G _{2}\left(\xi,t \right)\right\|_{L ^{\infty}}= \varepsilon ^{-\frac{m}{2}}\|\partial _{\xi}^{m}G _{2}(\xi,t)\|_{L _{\xi}^{\infty}}. \label{xi-transfer}
\end{gather}
\end{subequations}

 \vspace{4mm}

 \section*{Acknowledgement} % (fold)
JAC was partially supported by the Advanced Grant Nonlocal-CPD (Nonlocal PDEs for Complex Particle Dynamics: Phase Transitions, Patterns and Synchronization) of the European Research Council Executive Agency (ERC) under the European Union’s Horizon 2020 research and innovation programme (grant agreement No. 883363) and the EPSRC grant EP/V051121/1. GH is partially supported by the National Natural Science Foundation (grant No. 12201221), the Guangdong Basic and Applied Basic Research Foundation (grant No. 2021A1515111038), and the CAS AMSS-POLYU Joint Laboratory of Applied Mathematics postdoctoral fellowship scheme. ZAW was supported in part by the Hong Kong RGC GRF grant No. PolyU 153031/17P and an internal grant ZZPY.

\vspace{4mm}

\bibliographystyle{mysiam}

\end{document}